\numberwithin{equation}{section}
\newtheorem{thm}{Theorem}[section]
\newtheorem{lemma}[thm]{Lemma}
\newtheorem{cor}[thm]{Corollary}
\newtheorem{prop}[thm]{Proposition}
\newtheorem{con}[thm]{Conjecture}
\theoremstyle{definition}
\newtheorem{defn}[thm]{Definition}
\theoremstyle{remark}
\newtheorem{remark}[thm]{Remark}
\theoremstyle{rem}
\renewcommand{\S}{\mathfrak S}
\newcommand{\s}{\sigma}
\newcommand\C{{\mathbb{C}}}
\newcommand\Q{{\mathbb Q}}
\newcommand\F{{\mathbb{F}}}
\newcommand\PP{{\mathbb{P}}}
\newcommand\N{{\mathbb{N}}}
\newcommand\bq{\begin{equation}}
\newcommand\eq{\end{equation}}
\newcommand\beq{\begin{eqnarray*}}
\newcommand\eeq{\end{eqnarray*}}
\newcommand\ben{\begin{enumerate}}
\newcommand\een{\end{enumerate}}
\newcommand\bit{\begin{itemize}}
\newcommand\eit{\end{itemize}}
\newcommand\des{{\rm des}}
\newcommand\exc{{\rm exc}}
\newcommand\inv{{\rm inv}}
\newcommand\maj{{\rm maj}}
\newcommand\comaj{{\rm comaj}}
\newcommand\sg{{\mathfrak S}}
\newcommand\Des{{\rm DES}}
\newcommand\Exd{{\rm DEX}}
\newcommand\Exc{{\rm EXC}}
\newcommand\fix{{\rm fix}}
\newcommand\ch{{\rm ch}}
\newcommand\x{{\mathbf x}}
\newcommand\bbar{{\rm bar}}
\newcommand\Exp{{\rm Exp}}
\newcommand\wt{{\rm wt}}
\newcommand\Com{{\rm Com}}
\newcommand\Par{{\rm Par}}
\def\Lambda{{\bf ps}}
\def\ov{\overline}
\def\rh{\tilde{H}}
\def\xx{{\mathbf x}}
\begin{document}

\title[Eulerian quasisymmetric functions]
{Eulerian quasisymmetric functions}
\author[Shareshian]{John Shareshian$^1$}
\address{Department of Mathematics, Washington University, St. Louis, MO 63130}
\thanks{$^{1}$Supported in part by NSF Grants
 DMS 0300483 and DMS 0604233, and the Mittag-Leffler Institute}
\email{shareshi@math.wustl.edu}

\author[Wachs]{Michelle L. Wachs$^2$}
\address{Department of Mathematics, University of Miami, Coral Gables, FL 33124}
\email{wachs@math.miami.edu}
\thanks{$^{2}$Supported in part by NSF Grants
DMS 0302310 and DMS 0604562, and the Mittag-Leffler Institute}

\subjclass[2000]{05A15, 05A30, 05E05}

\date{June 1, 2008; version August 15, 2010}

\dedicatory{}

\begin{abstract} We introduce  a   family of  quasisymmetric functions called {\em Eulerian quasisymmetric functions},
which specialize to   enumerators for the joint distribution of the permutation statistics, major index and excedance number on permutations of fixed cycle type.
 This family is analogous to a family of quasisymmetric functions that Gessel and Reutenauer used to study the joint distribution of major index and descent number on permutations of fixed cycle type.
 Our central result is a formula for
the generating function for the Eulerian quasisymmetric  functions, which specializes to
a new  and surprising
$q$-analog of a classical formula of Euler for the exponential generating
function of the Eulerian polynomials.  This $q$-analog computes the joint distribution of  excedance number and major index, the only of the four important Euler-Mahonian distributions that had not yet been computed.   Our  study of the Eulerian quasisymmetric functions   also yields  results  that include the descent statistic and  refine   results of Gessel and Reutenauer.   We also obtain $q$-analogs,  $(q,p)$-analogs and quasisymmetric function analogs of classical results on the symmetry and unimodality of the Eulerian polynomials.  Our Eulerian quasisymmetric functions refine  symmetric functions that have occurred in various representation theoretic and enumerative contexts including MacMahon's study of multiset derangements,   work of  Procesi and Stanley   on  toric varieties of  Coxeter complexes,   Stanley's work on  chromatic symmetric functions, and the work of the authors on the homology of a certain poset introduced by Bj\"orner and Welker. 
 \end{abstract}

\maketitle

\vbox{
\tableofcontents
}
\section{Introduction}
Through our study \cite{sw2} of the homology of a certain partially ordered set introduced by
Bj\"orner and Welker, we  have discovered a
remarkable  $q$-analog of a classical formula for the Eulerian polynomials.   
  The
 Eulerian polynomials can be interpreted combinatorially as enumerators of permutations by  the permutation statistic,  excedance number.
 Our $q$-analog is a formula for the joint
 distribution of the excedance  statistic and  the major index, the only of the four important Euler-Mahonian distributions  yet to be addressed in the literature.
 Although poset topology led us to
 conjecture our formula, it is the theory of symmetric functions and quasisymmetric functions  that provides  the proof  of our original formula, as well as  of more refined versions involving additional permutation statistics.  In the research announcement \cite{sw}  these results and sketches of their  proofs were presented.  Here we provide the details and  additional results.

 The modern study of permutation statistics began with the work of
Major Percy McMahon \cite[Vol. I, pp.~135, 186; Vol. II, p.~viii] {mac1}, \cite{mac2}.
It involves the enumeration of  permutations
according to   natural statistics.    A permutation statistic is
simply a function from the union of all
the symmetric groups $\S_n$ to the set of
nonnegative integers.     MacMahon studied four fundamental
permutation statistics,  the inversion index ($\inv$), the major index ($\maj$), the
descent number ($\des$), and the excedance number ($\exc$), which are defined in Section~\ref{statsec}.

MacMahon observed in \cite[Vol. I, p. 186]{mac1} the now well
known result that the statistics $\des$ and $\exc$ are
equidistributed, that is,
\begin{equation*} \label{euler}
A_n(t):=\sum_{\sigma \in \sg_n}t^{\des(\sigma)}=\sum_{\sigma \in \sg_n}t^{\exc(\sigma)},
\end{equation*}
for every positive integer $n$.
The coefficients of the polynomials $A_n(t)$ were studied by
Euler, and are called {\it Eulerian numbers}.  The polynomials are
known as {\it Eulerian polynomials}.  (Note that it is common in
the literature to define the Eulerian polynomials to be
$tA_n(t)$.) Any permutation statistic that is equidistributed with
$\des$ and $\exc$ is called an {\it Eulerian statistic}.  Eulerian
numbers and polynomials have been extensively studied (see for
example \cite{fs} and \cite{kn}).  Euler proved (see \cite[p.
39]{kn}) the generating function formula
\begin{equation}  \label{expgen}
1+\sum_{n \geq 1}A_n(t)\frac{z^n}{n!}=\frac{1-t}{e^{z(t-1)}-t}.
\end{equation}

For a positive integer $n$, the polynomials $[n]_q$ and $[n]_q!$
are defined as
\[
[n]_q:=1+q+\ldots+q^{n-1}
\]
and
\[
[n]_q!:=\prod_{j=1}^{n}[j]_q.
\]
Also set $[0]_q := 0$ and $[0]_q!:=1$.
MacMahon proved in \cite{mac2} the first equality in the equation
\begin{equation*} \label{majeq}
\sum_{\sigma \in \S_n}q^{\maj(\sigma)}=[n]_q!=\sum_{\sigma \in
\S_n}q^{\inv(\sigma)}
\end{equation*}
after the second equality had been obtained in \cite{rod} by
Rodrigues.  A permutation statistic that is equidistributed with
$\maj$ and $\inv$ is called a {\it Mahonian} statistic.

Much effort has been put into the examination of joint
distributions of pairs of permutation statistics, one Eulerian and
one Mahonian (for a sample see,
\cite{bs,br,c,csz,foa3,fs2,fz,gar,gg,grem,gr,hag,rrw,ra,sk,st,st1,wa2}).
One beautiful result on such joint distributions is found in the
paper \cite{st1} of Stanley. For permutation statistics ${\mathsf
{f_1}},\ldots,{\mathsf {f_k}}$ and a positive integer $n$, define
the polynomial
\[
A_n^{{\mathsf {f_1,\ldots,f_k}}}(t_1,\ldots,t_k):=\sum_{\sigma \in
\S_n}t_1^{{\mathsf {f_1}}  (\sigma) } t_2^{{\mathsf {f_2}}
(\sigma)}\cdots t_k^{{\mathsf {f_k}}  (\sigma)}.
\]
Also, set
\[
A_0^{{\mathsf {f_1,\ldots,f_k}}}(t_1,\ldots,t_k):=1.
\]
Stanley showed that if we define
\[
\Exp_q(z):=\sum_{n \geq 0}q^{{n} \choose {2}}\frac{z^n}{[n]_q!}
\]
then we have the $q$-analogue
\begin{equation*} \label{staneq}
\sum_{n \geq 0}A_n^{\inv,\des}(q,t)\frac{z^n}{[n]_q!}=\frac{1-t}{\Exp_q(z(t-1))-t}
\end{equation*}
of (\ref{expgen}).
\\
\\
Although there has been much work on the joint distributions of $(\maj,\des)$, $(\inv,\exc)$ and $(\inv,\des)$ there were to our knowledge no results about
$A^{\maj,\exc}_n(q,t)$ in the existing literature prior to \cite{sw}, where our work was first  announced.   Our main result on
these polynomials is as follows.  Set
\[
\exp_q(z):=\sum_{n \geq 0}\frac{z^n}{[n]_q!}.
\]

\begin{thm} \label{expgenth}
We have
\begin{equation} \label{expgeneq}
\sum_{n \geq 0}A^{\maj,\exc}_n(q,t)\frac{z^n}{[n]_q!}=\frac{(1-tq)\exp_q(z)}{\exp_q(ztq)-tq\exp_q(z)}.
\end{equation}
\end{thm}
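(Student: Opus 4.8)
The plan is to realize $A_n^{\maj,\exc}(q,t)$ as a stable principal specialization of a symmetric function --- the \emph{Eulerian quasisymmetric function} --- and to prove a closed generating‑function formula for the latter, after which (\ref{expgeneq}) will follow by a routine substitution. For $\sigma\in\S_n$ I would attach, by standardizing a word naturally associated to the cycle structure of $\sigma$, a subset ${\rm DEX}(\sigma)\subseteq[n-1]$ normalized so that $\sum_{i\in{\rm DEX}(\sigma)}i=\maj(\sigma)-\exc(\sigma)$, and set
\[
Q_{n,j}:=\sum_{\substack{\sigma\in\S_n\\ \exc(\sigma)=j}}F_{n,{\rm DEX}(\sigma)},\qquad Q_n(\x;t):=\sum_{j\ge 0}Q_{n,j}\,t^{j},
\]
where $F_{n,S}$ denotes Gessel's fundamental quasisymmetric function; a finer version $Q_\lambda(\x;t)$ graded by cycle type will be the natural object for bookkeeping. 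That each $Q_{n,j}$ is actually \emph{symmetric} is a result in its own right (and follows a posteriori from the formula below), but only quasisymmetry is used in the present deduction.

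Next I would record the specialization. Writing $\ps$ for the homomorphism $f\mapsto f(1,q,q^2,\dots)$ and using Gessel's evaluation $\ps(F_{n,S})=q^{\sum_{i\in S}i}/(q;q)_n$, the normalization of ${\rm DEX}$ gives $\ps(Q_{n,j})=q^{-j}(q;q)_n^{-1}\sum_{\exc(\sigma)=j}q^{\maj(\sigma)}$, hence
\[
(q;q)_n\,\ps\bigl(Q_n(\x;tq)\bigr)=A_n^{\maj,\exc}(q,t).
\]
Together with the $q$‑binomial theorem in the form $\ps(h_n)=1/(q;q)_n$ --- equivalently $\ps\bigl(H((1-q)z)\bigr)=\exp_q(z)$, where $H(z):=\sum_{n\ge0}h_nz^n$ --- this reduces Theorem~\ref{expgenth} to the single symmetric‑function identity
\[
\sum_{n\ge 0}Q_n(\x;t)\,z^n=\frac{(1-t)\,H(z)}{H(zt)-t\,H(z)}.
\]
Indeed, applying $\ps$ to this identity and then substituting $z\mapsto(1-q)z$ and $t\mapsto tq$ turns the left side into $\sum_{n\ge0}A_n^{\maj,\exc}(q,t)z^n/[n]_q!$ and the right side into $(1-tq)\exp_q(z)/\bigl(\exp_q(ztq)-tq\exp_q(z)\bigr)$, as required.

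The displayed symmetric‑function identity is the real content, and is the step I expect to be the main obstacle. Rewriting its right side as $(1-t)\sum_{m\ge1}t^{m-1}\bigl(H(z)/H(zt)\bigr)^m$ and using $H(zt)^{-1}=\sum_{m\ge0}(-1)^m e_m(zt)^m$ points to the strategy: exhibit a weight‑ and $t$‑degree‑preserving bijection showing that the words (``banners'') generating $\sum_nQ_n(\x;t)z^n$ decompose canonically into a sequence of weakly increasing runs, the $t$‑statistic counting the blocks, with an inclusion--exclusion converting the raw run decomposition into the above geometric series. Equivalently, one may try to verify directly from the definition of the $Q_{n,j}$ the recursion this identity encodes, namely
\[
(1-t)\,Q_n \;=\; h_n\bigl(1-t^{\,n}\bigr)\;-\;\sum_{k=1}^{n-1}Q_k\,h_{n-k}\,\bigl(t^{\,n-k}-t\bigr),
\]
by classifying the permutations of $[n]$ according to the cycle containing the largest letter (in particular whether that letter is a fixed point). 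Either way the argument must engage the combinatorial core of the construction --- how ${\rm DEX}$ interacts with $\exc$ and how Eulerian quasisymmetric functions factor over cycle type, refining the Gessel--Reutenauer analysis of $(\maj,\des)$ --- and it is there that the unexpected shape of the answer, in particular the appearance of $\exp_q$ in place of $\Exp_q$ and the shift $t\mapsto tq$, gets produced.
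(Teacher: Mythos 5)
Your reduction of Theorem~\ref{expgenth} to the symmetric-function identity
\[
\sum_{n\ge 0}Q_n(\x;t)\,z^n=\frac{(1-t)\,H(z)}{H(zt)-t\,H(z)}
\]
via stable principal specialization is exactly the route the paper takes, and you carry out that part correctly: the substitutions $z\mapsto(1-q)z$, $t\mapsto tq$ together with $\Lambda\bigl(H((1-q)z)\bigr)=\exp_q(z)$ and $\Lambda(F_{S,n})=q^{\sum_{i\in S}i}/(q;q)_n$ do turn the identity into (\ref{expgeneq}), and the recursion you extract, $(1-t)Q_n=h_n(1-t^n)-\sum_{k=1}^{n-1}Q_kh_{n-k}(t^{n-k}-t)$, is indeed equivalent to it. But there are two real gaps, and they are where the whole work lies.

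First, the statistic you call ${\rm DEX}$ is not actually constructed, and its key property $\sum_{i\in{\rm DEX}(\sigma)}i=\maj(\sigma)-\exc(\sigma)$ is not proved. Your phrase ``standardizing a word naturally associated to the cycle structure of $\sigma$'' does not match the paper's construction, which is in \emph{one-line} notation: form $\bar\sigma$ by barring the excedance values $\sigma_i$ with $i\in\Exc(\sigma)$, use the twisted alphabet order $\bar1<\cdots<\bar n<1<\cdots<n$, and set $\Exd(\sigma)=\Des(\bar\sigma)$. That $\sum_{i\in\Exd(\sigma)}i=\maj(\sigma)-\exc(\sigma)$ is Lemma~\ref{exdlem}, whose proof requires a careful case analysis tracking the interleaving of the sets where $\Exc$ starts and stops; it is not a normalization you get to stipulate.

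Second and more seriously, you do not prove the symmetric-function identity; you offer two strategies and leave both unexecuted, which you acknowledge. Neither, as stated, closes the gap. Your recursion has coefficients $t^{n-k}-t$ of mixed sign, so a ``classification by the cycle containing $n$'' cannot be a straightforward bijection; it would at best be a sign-reversing involution, and the paper does not proceed that way. What the paper actually does is first pass to the fixed-point-free part via $Q_{n,j,k}=h_kQ_{n-k,j,0}$ (Corollary~\ref{dercor}), which converts the identity into the purely positive recurrence $Q_{n,j,0}=\sum_{0\le m\le n-2,\,1\le b<n-m}Q_{m,j-b,0}\,h_{n-m}$. Establishing even the reinterpretation that makes $Q_{\lambda,j}$ a generating function for word-like objects requires the bicolored Gessel--Reutenauer bijection (Theorem~\ref{ornbij}, ornaments), then a Lyndon-factorization bijection to banners (Theorem~\ref{banprop}), and finally a D\'esarm\'enien--Wachs/Stembridge-style bijection on increasing factorizations (Theorem~\ref{bij}) that realizes the recurrence. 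Your banners-and-runs inclusion--exclusion picture is in the right ballpark, but none of the three bijections is sketched, and they are precisely the content of the theorem. As it stands the proposal reduces the problem correctly but does not solve it.
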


It is well-known that $\exp_q(z)$ is a specialization (essentially the stable principal specialization) of the symmetric function
$$ H(z)=H(\xx,z):=\sum_{n \geq 0}h_n(\xx)z^n,
$$
where $h_n$ is the complete homogeneous symmetric function of degree $n$ (details are given in Section~\ref{statsec}).
Hence the right
hand side of (\ref{expgeneq}) is a specialization of the symmetric function
$$\frac{(1-tq)H(z)}{H(ztq)-tqH(z)}.$$

We introduce a family of  quasisymmetric functions $Q_{n,j,k}(\mathbf x)$, called  Eulerian quasisymmetric functions, whose
 generating function $$\sum_{n,j ,k\ge 0} Q_{n,j,k}(\x) t^j r^k z^n$$
specializes to
$$\sum_{n\ge 0} \sum_{\s \in \S_n} q^{\maj(\s) - \exc(\s)} t^{\exc(\s)}r^{\fix(\s)} {z^n \over [n]_q!}, $$
   where $\fix(\sigma) $ is the number of fixed points of  $\sigma$.
The $Q_{n,j,k}$ are defined as sums of fundamental quasisymmetric functions that we associate (in a nonstandard way) with permutations  $\s \in \mathfrak S_n$ satisfying  $\exc(\s) = j$ and $\fix(\s) = k$.  (These Eulerian quasisymmetric functions should not be confused with the quasisymmetric functions associated with Eulerian posets discussed in      \cite{bhw,eh}.)  Our central result is the following theorem.

\begin{thm}\label{introsymgenth}
We have
\begin{eqnarray}
\label{introsymgenth1}\sum_{n,j,k \geq 0}Q_{n,j,k}(\xx)t^jr^kz^n&=&\frac{(1-t)H(rz)}{H(zt)-tH(z)}\\ \label{introsymgenth2}&=& \frac{H(rz)}{1-\sum_{n\ge 2}t[n-1]_t h_nz^n}.
\end{eqnarray}
\end{thm}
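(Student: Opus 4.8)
The plan is to establish the first equality \eqref{introsymgenth1} by a direct combinatorial argument on the fundamental quasisymmetric expansion of the $Q_{n,j,k}$, and then obtain the second equality \eqref{introsymgenth2} by an elementary manipulation of rational functions. First I would recall the definition of $Q_{n,j,k}(\xx)$ as $\sum_\sigma F_{n,\Des(\sigma)}$, where $\sigma$ ranges over permutations in $\S_n$ with $\exc(\sigma)=j$ and $\fix(\sigma)=k$, and $\Des(\sigma)$ is the (nonstandardly defined) descent set attached to $\sigma$ via the ``excedance word'' or analogous linear-order construction given earlier. The key is a bijective decomposition of an arbitrary permutation into its fixed points together with a permutation on the remaining letters having no fixed points, but more importantly a decomposition of the cycle structure that tracks excedances: each nontrivial cycle, read appropriately, contributes a block, and the excedance number is additive over these blocks. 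This is exactly the kind of structure that makes the generating function multiplicative, and hence expressible as a geometric-type series.

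The main work is to show that, after stripping off the fixed points (which account for the factor $H(rz)$, since $k$ fixed points on a totally ordered alphabet contribute $h_k(\xx)r^k z^k$ with no descents among them), the generating function for fixed-point-free permutations weighted by $t^{\exc}$ and arranged by $\Des$ equals $\frac{1-t}{H(zt)-tH(z)}\cdot\frac{1}{1}$ suitably interpreted, i.e. $\bigl(1-\sum_{n\ge 2} t[n-1]_t h_n z^n\bigr)^{-1}$. To get this I would argue that a fixed-point-free permutation breaks uniquely into an ordered sequence of ``connected'' pieces (in the sense of the banner/transfer-matrix decomposition used for excedances), where a single connected piece on $n\ge 2$ letters with $e$ excedances contributes $F$-quasisymmetric terms summing to $t^e$ times a single $h_n$ (because within one such piece the attached descent set is empty, forcing the quasisymmetric function to be $h_n$), and the count of such pieces on $n$ letters refined by excedance number is the Eulerian-type number giving $t[n-1]_t$. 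Summing the geometric series over all ordered sequences of pieces yields $\bigl(1-\sum_{n\ge2}t[n-1]_th_nz^n\bigr)^{-1}$, and then multiplying by the fixed-point factor $H(rz)$ gives \eqref{introsymgenth2}.

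Finally, \eqref{introsymgenth1} follows from \eqref{introsymgenth2} once I verify the identity
\[
1-\sum_{n\ge 2}t[n-1]_t h_n z^n=\frac{H(zt)-tH(z)}{1-t}.
\]
Since $[n-1]_t=\frac{1-t^{n-1}}{1-t}$, the left side is $1-\frac{1}{1-t}\sum_{n\ge2}(t-t^{n})h_nz^n=1-\frac{1}{1-t}\bigl(t(H(z)-1-h_1z)-(H(zt)-1-h_1zt)\bigr)$; collecting terms and using $h_0=1$ collapses this to $\frac{H(zt)-tH(z)}{1-t}$ after the linear terms cancel, which is a routine check. Taking reciprocals and multiplying by $H(rz)$ then matches \eqref{introsymgenth1} with \eqref{introsymgenth2}.

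I expect the hard part to be the combinatorial heart of the middle step: proving cleanly that the fundamental quasisymmetric functions attached (in the nonstandard way) to the fixed-point-free permutations assemble into a genuine geometric series, i.e. that the ``connected piece'' decomposition is compatible with both the excedance statistic \emph{and} the attached descent set in the precise way needed for the $F$-quasisymmetric functions to multiply as $h_n$'s. This requires checking that the descent set attached to a concatenation of pieces is the disjoint union of the pieces' descent sets shifted appropriately, with a forced non-descent at each junction — a statement about the specific linear order on $[n]$ used to define $\Des(\sigma)$ that must be traced carefully through the definition given in the earlier sections.
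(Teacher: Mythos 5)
Your peripheral steps are fine: the factor $H(rz)$ does come from stripping fixed points (this is what the paper records as $Q_{n,j,k}=h_kQ_{n-k,j,0}$, a consequence of the necklace description), and your algebraic computation showing $1-\sum_{n\ge 2}t[n-1]_th_nz^n=\frac{H(zt)-tH(z)}{1-t}$ is correct and matches how the paper reduces the theorem to the recurrence
\[
Q_{n,j,0}=\sum_{\substack{0\le m\le n-2\\ j+m-n<i<j}}Q_{m,i,0}\,h_{n-m}.
\]

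The combinatorial core, however, does not work as you sketch it, and the gap is not a ``routine trace-through.'' Your plan rests on the assertion that a fixed-point-free permutation decomposes into connected pieces whose individual contributions are single $h_{n_i}$'s because each piece has empty $\Exd$. That is false. Even a single nontrivial cycle can have nonempty $\Exd$: for $\sigma=4312=(1,4,2,3)$ one gets $\overline{\sigma}=\bar 4\bar 3\,1\,2$ and $\Exd(\sigma)=\{1\}$, so $F_{\Exd(\sigma),4}\neq h_4$; more globally the cycle-type symmetric function $Q_{(n),j}$ is not a product of $h$'s, and indeed $Q_{(6),3}$ is not even $h$-positive (the paper computes $Q_{(6),3}=2h_{(4,2)}-h_{(4,1,1)}+h_{(3,2,1)}+h_{(5,1)}$). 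There is also a structural mismatch you do not address: $F_{\Exd(\sigma),n}$ is a single fundamental quasisymmetric function, whereas a product $h_{n_1}h_{n_2}\cdots$ expands as a sum over shuffles of many $F$'s, so ``one permutation $\to$ a product of $h$'s'' cannot hold term-by-term. The factorization only makes sense after passing to a model in which the multiplication is monomial-by-monomial, which is exactly why the paper first bijects pairs $(\sigma,\text{compatible sequence})$ to bicolored ornaments (a bicolored Gessel--Reutenauer bijection), then to banners via Lyndon factorization, and only then finds a Stembridge-type bijection peeling off a \emph{marked weakly increasing sequence}, whose weight accounts for the $h_{n-m}$ and whose mark $b$ carries the excedance shift. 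The $h_n$'s in the geometric series are weights of these numerical sequences, not Frobenius-type contributions of permutation cycles. Without some version of that intermediate combinatorial machinery, the middle step of your argument does not go through.
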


The proof of Theorem \ref{introsymgenth} appears in Section
\ref{identsec}.  It depends on combinatorial bijections and
involves nontrivial extension of techniques introduced by  Gessel
and Reutenauer in \cite{gr}, D\'esarm\'enien and Wachs in
\cite{dw} and Stembridge in \cite{stem1}.   Gessel and Reutenauer
construct a bijection from  multisets of  primitive circular words
of fixed content to permutations in order   to enumerate
permutations with a given  descent set and  cycle type. This
bijection, which is related to Stanley's theory of P-partitions
\cite{st},  has also proved useful in papers of D\'esarm\'enien
and Wachs \cite{dw2,dw},  Diaconis, McGrath and Pitman \cite{dmp},
Hanlon \cite{h}, and Wachs \cite{wa3}.   Reiner \cite{re}
introduced a type B analog. Here we introduce a bicolored version
of the Gessel-Reutenauer bijection.

By specializing  Theorem~\ref{introsymgenth} we get the following strengthening of Theorem~\ref{expgenth}.
\begin{cor} \label{expgenthfix}
We have
\begin{equation} \label{expgeneqfix}
\sum_{n \geq 0}A^{\maj,\exc,\fix}_n(q,t,r)\frac{z^n}{[n]_q!}=\frac{(1-tq)\exp_q(rz)}{\exp_q(ztq)-tq\exp_q(z)} .
\end{equation}
\end{cor}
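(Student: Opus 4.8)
The plan is to obtain this corollary by specializing the symmetric function identity \eqref{introsymgenth1} of Theorem~\ref{introsymgenth} and then making the substitution $t\mapsto tq$. The two facts I would invoke, both of which belong to the preliminary material of Section~\ref{statsec} and are recorded in the introduction, are: (i) the stable principal specialization is a ring homomorphism sending $H(z)$ to $\exp_q(z)$; and (ii) under it the generating function of the Eulerian quasisymmetric functions is carried to
\[
\sum_{n,j,k\ge 0}Q_{n,j,k}(\x)\,t^j r^k z^n\ \longmapsto\ \sum_{n\ge 0}\sum_{\s\in\S_n} q^{\maj(\s)-\exc(\s)}\,t^{\exc(\s)}\,r^{\fix(\s)}\,\frac{z^n}{[n]_q!}.
\]
Since $t$, $r$, $z$ are inert under the specialization, $H(rz)$, $H(zt)$, $H(z)$ are sent to $\exp_q(rz)$, $\exp_q(zt)$, $\exp_q(z)$, and because a ring homomorphism respects identities, applying it to \eqref{introsymgenth1} would yield
\[
\sum_{n\ge 0}\sum_{\s\in\S_n} q^{\maj(\s)-\exc(\s)}\,t^{\exc(\s)}\,r^{\fix(\s)}\,\frac{z^n}{[n]_q!}\ =\ \frac{(1-t)\exp_q(rz)}{\exp_q(zt)-t\exp_q(z)}.
\]

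Next I would substitute $t\mapsto tq$. On the left, $t^{\exc(\s)}$ then contributes an extra $q^{\exc(\s)}$, so the power of $q$ attached to $\s$ becomes $\bigl(\maj(\s)-\exc(\s)\bigr)+\exc(\s)=\maj(\s)$, and the left side turns into $\sum_{n\ge 0}\sum_{\s\in\S_n} q^{\maj(\s)}t^{\exc(\s)}r^{\fix(\s)}\frac{z^n}{[n]_q!}=\sum_{n\ge 0}A^{\maj,\exc,\fix}_n(q,t,r)\frac{z^n}{[n]_q!}$. On the right, $t\mapsto tq$ carries $\frac{(1-t)\exp_q(rz)}{\exp_q(zt)-t\exp_q(z)}$ to $\frac{(1-tq)\exp_q(rz)}{\exp_q(ztq)-tq\exp_q(z)}$, which is exactly \eqref{expgeneqfix}; setting $r=1$ afterward would recover Theorem~\ref{expgenth}.

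There is one formal point I would check along the way: that the stable principal specialization extends to the completion (or localization) in which the rational expression on the right of \eqref{introsymgenth1} is defined. This is cleanest to see from the alternative form \eqref{introsymgenth2}, whose denominator $1-\sum_{n\ge 2}t[n-1]_t h_n z^n$ has constant term $1$ and hence is sent to an invertible power series (and this stays true after $t\mapsto tq$), so the substitution and the specialization can be carried out legitimately.

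The main obstacle here is not in this corollary at all: once Theorem~\ref{introsymgenth} and the specialization property of the $Q_{n,j,k}$ are in hand, the argument above is purely bookkeeping. The real work sits upstream --- in the proof of Theorem~\ref{introsymgenth} (Section~\ref{identsec}, via the bicolored Gessel--Reutenauer bijection) and in establishing property (ii), i.e.\ verifying in Section~\ref{statsec} that the non-standard association of fundamental quasisymmetric functions to permutations is arranged so that the stable principal specialization reads off $\maj-\exc$ rather than $\maj$.
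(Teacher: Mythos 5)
Your high-level strategy is the paper's --- specialize Theorem~\ref{introsymgenth} via $\Lambda$ and then rescale $t$ --- and the final identity you land on is right. But the two ``facts'' you quote as launching points are both stated imprecisely, and in the same way, which is why the error is invisible in your end result. Fact (i) says $\Lambda$ sends $H(z)$ to $\exp_q(z)$; what the paper actually records in \eqref{spech} is $\Lambda\bigl(H(z(1-q))\bigr)=\exp_q(z)$. Since $\Lambda(h_n)=1/(q;q)_n$ by \eqref{stabh}, directly applying $\Lambda$ to $H(z)$ gives $\sum_n z^n/(q;q)_n=\exp_q\!\bigl(z/(1-q)\bigr)$, not $\exp_q(z)$. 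Likewise fact (ii) is off: because $\Lambda(F_{\Exd(\sigma),n})=q^{\maj(\sigma)-\exc(\sigma)}/(q;q)_n$, applying $\Lambda$ to $\sum Q_{n,j,k}t^jr^kz^n$ yields $z^n/(q;q)_n$ denominators, not $z^n/[n]_q!$. The sentence in the introduction that you are leaning on says ``specializes to,'' and that word is silently packaging the substitution $z\mapsto z(1-q)$ (which converts $(q;q)_n$ into $(1-q)^n[n]_q!$ and so produces the $[n]_q!$) together with $\Lambda$.

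So the argument as written has a small gap: you claim ``$t$, $r$, $z$ are inert,'' but in fact you must substitute $z\mapsto z(1-q)$ (either before applying $\Lambda$, as the paper does, or after). Because you made the same $(q;q)_n\leftrightarrow[n]_q!$ slip on both sides of the identity, the two cancel and your final displayed equation is correct, but the chain of reasoning you present would not hold up to scrutiny step by step. Once you insert the missing $z\mapsto z(1-q)$, your proof is exactly the paper's: the paper states the $z(1-q)$ step explicitly and leaves the $t\mapsto tq$ step implicit in the $q^j$ factor of \eqref{stablespec1}, while you make the $t\mapsto tq$ step explicit and drop the $z(1-q)$ step. Your closing remark about checking that the specialization extends to the completion, using the form \eqref{introsymgenth2} whose denominator is $1+$ (higher order), is a sensible sanity check and is consistent with the paper's setup.
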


By setting $t=1$ in (\ref{expgeneqfix}) one  obtains a  formula of Gessel and Reutenauer \cite{gr}.
 By setting $r=0$, we obtain a new  $q$-analog of  a formula of Brenti \cite[Proposition 5]{bre1} for the $\exc$-enumerator of derangements.
  Using Corollary~\ref{expgenthfix}, we exhibit in Section~\ref{altsec} explicit formulae, both recursive and closed, for $A_n^{\maj,\exc,\fix}$.

We will show  that a different specialization (essentially the nonstable principal specialization) of Theorem~ \ref{introsymgenth} readily yields a further extension of Theorem~\ref{expgenth} that Foata and Han obtained after seeing a preprint containing our work \cite{sw}.

  \begin{cor}[Foata and Han \cite{FH}] \label{foha}
We have
\[
\sum_{n \geq
0}A_n^{\maj,\des,\exc,\fix}(q,p,t,r)\frac{z^n}{(p;q)_{n+1}}=\sum_{m
\geq
0}p^m\frac{(1-qt)(z;q)_m(ztq;q)_m}{((z;q)_m-tq(ztq;q)_m)(zr;q)_{m+1}},
\]
where\[
(a;q)_n:= \left\{\begin{array}{ll} 1 & \mbox{if } n=0 \\
(1-a)(1-aq)\ldots(1-aq^{n-1}) & \mbox{if } n \geq 1.
\end{array}
\right.
\]
\end{cor}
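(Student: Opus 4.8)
The plan is to derive Corollary~\ref{foha} from Theorem~\ref{introsymgenth} by applying the non-stable principal specialization to the variables $\xx$ and tracking an auxiliary variable for the descent statistic. Recall that the stable principal specialization sends $h_n(\xx)$ to $1/[n]_q!$ and produces $\exp_q$; the non-stable version, which sets $x_i = q^i$ for $0 \le i \le m-1$ and $x_i = 0$ otherwise, sends $H(z)$ to $\prod_{i=0}^{m-1}(1 - zq^i)^{-1} = 1/(z;q)_m$ (after the standard reindexing). The subtlety here is that Corollary~\ref{foha} also involves $\des$, so one first needs a refinement of Theorem~\ref{introsymgenth} that records $\des$ as well as $\exc$, $\maj$, and $\fix$. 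The Eulerian quasisymmetric functions $Q_{n,j,k}$ are built from fundamental quasisymmetric functions $F_{S}$, and it is a classical fact (Gessel–Stanley $P$-partition theory) that applying the non-stable principal specialization to $F_{S,n}$ and summing over a natural grading by a ``$p$'' variable produces the factor $t^{\des}$ via the identity $\sum_{m\ge 0} p^m \prod (\text{something}) = $ the desired descent-tracking generating function. So the first step is to state precisely which refined quasisymmetric identity is needed and observe it follows from the proof of Theorem~\ref{introsymgenth} with the $\des$ statistic carried along.

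Concretely, I would proceed as follows. First, recall the standard specialization lemma: for a quasisymmetric function expressed in the fundamental basis, $\sum_{m \ge 0} p^m \, \ps_m(F)$, where $\ps_m$ denotes the specialization $x_1 = 1, x_2 = q, \dots, x_m = q^{m-1}$, $x_{m+1} = x_{m+2} = \cdots = 0$, converts a sum $\sum_\sigma F_{\Des(\sigma)}$ (suitably interpreted) into $\sum_\sigma \frac{p^{\text{something}} q^{\maj(\sigma)} t^{\des(\sigma)}}{(p;q)_{n+1}}$-type expressions; this is exactly the mechanism Stanley uses and is the $p$-analog underlying formulas like Corollary~\ref{foha}. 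Applying this to both sides of Theorem~\ref{introsymgenth} (in its refined $\des$-aware form), the left side becomes $\sum_n A_n^{\maj,\des,\exc,\fix}(q,p,t,r) z^n/(p;q)_{n+1}$ as desired. Second, compute the image of the right side $\frac{(1-t)H(rz)}{H(zt) - tH(z)}$ under $\sum_m p^m \ps_m(\cdot)$: since $\ps_m(H(z)) = 1/(z;q)_m$ and the specialization is a ring homomorphism (when applied to the symmetric-function identity before summing over $m$), the ratio becomes $\frac{(1-t)/(zr;q)_m}{1/(zt;q)_m - t/(z;q)_m}$, and after multiplying numerator and denominator by $(zt;q)_m (z;q)_m (zr;q)_m$ and rearranging one recovers the summand $p^m \frac{(1-qt)(z;q)_m(ztq;q)_m}{((z;q)_m - tq(ztq;q)_m)(zr;q)_{m+1}}$; here the shift from $z$ to $ztq$ and from $t$ to $tq$ is the familiar artifact of the non-stable specialization of a $\exc$-generating function (compare how Theorem~\ref{expgenth} already has $tq$ rather than $t$). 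Summing over $m \ge 0$ gives the right side of Corollary~\ref{foha}.

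The main obstacle will be making the passage to the $\des$-refined version of Theorem~\ref{introsymgenth} rigorous and getting the $p$-bookkeeping exactly right. The quantities $Q_{n,j,k}$ are defined as sums of fundamental quasisymmetric functions attached to permutations \emph{in a nonstandard way} (via the bicolored Gessel–Reutenauer bijection), so the descent set used to index the fundamental quasisymmetric function is not literally $\Des(\sigma)$ but a modified descent set coming from the bijection; I would need to check that the non-stable principal specialization of $\sum_\sigma F_{(\text{modified }\Des)(\sigma)}$, summed against $p^m$, still yields $\sum_\sigma q^{\maj(\sigma)} t^{\des(\sigma)} \cdots$ with the \emph{genuine} statistics, i.e.\ that the nonstandard indexing is compatible with the Gessel quasisymmetric specialization formula. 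This compatibility is essentially what makes the $Q_{n,j,k}$ specialize correctly to $A_n^{\maj,\exc,\fix}$ in Corollary~\ref{expgenthfix}, so the work is to verify it survives the refinement that also keeps track of $\des$ and the extra grading variable $p$. Once that verification is in place, the remaining computation is the routine algebraic manipulation of the specialized right-hand side sketched above, plus a check that the $m=0$ term and convergence/formal-power-series conventions match.
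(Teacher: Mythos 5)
Your overall strategy — apply the nonstable principal specialization $\Lambda_m$, sum against $p^m$, and use the known image $\Lambda_m(H(z)) = 1/(z;q)_m$ — is indeed the route the paper takes. You also correctly spot the crux: the fundamental quasisymmetric functions in $Q_{n,j,k}$ are indexed by $\Exd(\sigma)$ rather than $\Des(\sigma)$, so the specialization formula for $\sum_m p^m\Lambda_m(F_{\Exd(\sigma),n})$ produces $p^{|\Exd(\sigma)|+1}$, not $p^{\des(\sigma)+1}$. But you leave this as ``something to verify,'' and in fact it is not a routine verification; it is where all the real work goes, and your sketched algebra silently skips it in a way that would produce a wrong answer.

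Concretely: if you specialize the right-hand side of Theorem~\ref{introsymgenth} directly (after the $t\mapsto tq$ shift you note, which accounts for the $q^{\exc}$ discrepancy between $q^{\maj-\exc}$ and $q^{\maj}$), you get $\frac{(1-tq)(z;q)_m(ztq;q)_m}{((z;q)_m - tq(ztq;q)_m)(zr;q)_m}$ — with $(zr;q)_m$, not $(zr;q)_{m+1}$, in the denominator. The missing factor $(1-zrq^m)^{-1}$ is precisely the price of the $\Exd$-versus-$\Des$ mismatch. By Lemma~\ref{exdlem}, $|\Exd(\sigma)| = \des(\sigma)$ when $\sigma(1)=1$ and $|\Exd(\sigma)| = \des(\sigma)-1$ otherwise, and the paper resolves this by the recursion in the proof of Lemma~\ref{nonstable}: strip the fixed point at position $1$ (shifting $p\mapsto qp$ each time) and iterate, arriving at $a_{n,j,k}(q,p) = (p;q)_{n+1}\sum_{m\ge 0} p^m\sum_{i=0}^k q^{im+j}\Lambda_m(Q_{n-i,j,k-i})$. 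When this is inserted into the generating function, the inner sum over $i$ is a geometric series $\sum_{i\ge 0}(zrq^m)^i = (1-zrq^m)^{-1}$, which is exactly the factor that upgrades $(zr;q)_m$ to $(zr;q)_{m+1}$. Without Lemma~\ref{nonstable} (or an equivalent case analysis on $\sigma(1)$), your computation does not close, and to complete the proof you must prove this lemma rather than cite compatibility as plausible.
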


It is well-known that the Eulerian polynomials are symmetric and unimodal. In Section~\ref{secQ} we use Theorem~\ref{introsymgenth} to obtain
$q$-analogs of these results for $A_n^{\maj,\exc} (q,q^{-1}t)$, and  $(q,p)$-analogs for the derangement enumerator $A_n^{\maj,\des,\exc,\fix}(q,p,q^{-1}t,0)$.   Symmetric function analogs of these results for the Eulerian quasisymmetric functions $Q_{n,j,k}$ are also obtained.

Essential to our proof of Theorem~\ref{introsymgenth} is a refinement of   $Q_{n,j,k}$ which is  quite interesting in its own right.  Given a partition $\lambda $ of $n$, we define  the cycle type Eulerian quasisymmetric function $Q_{\lambda,j}$ to be the sum of the fundamental quasisymmetric functions  associated with permutations that have cycle type $\lambda$ and  $j$ excedances.  In Section~\ref{secQ} we prove
that  $Q_{\lambda,j}$ is, in fact, a symmetric function.  We also obtain another symmetry result, specifically: $Q_{\lambda,j} = Q_{\lambda,n-k-j}$ for all $\lambda \vdash n$, where $k$ is the number of parts of $\lambda$ equal to 1 and $j=0,\dots,n-k$.  By specializing this result we conclude that the cycle type Eulerian polynomials $A_{\lambda,j}^{\maj,\des,\exc}(q,p,q^{-1}t):=\sum_{\sigma} q^{\maj(\s) -\exc(\s)}p^{\des(\s)} t^{\exc(\s)}$, where $\s$ ranges over all permutations that have cycle type $\lambda$ and $j$ excedances, is a symmetric polynomial in $t$.  We conjecture\footnote{All the conjectures referred to in this paragraph were recently proved jointly with A. Henderson. See Section~\ref{newdevsec} (New developments).} that these polynomials are unimodal as well.
 It follows from Theorem~\ref{introsymgenth} that the Eulerian quasisymmetric functions $Q_{n,j,k}$ are h-positive. We show that this does not hold for the more refined cycle type Eulerian quasisymmetric functions $Q_{\lambda,j}$, but  conjecture that these are Schur positive.  We also conjecture a symmetric function analog of unimodality for the polynomials $\sum_j Q_{\lambda,j}t^j$. 
 
  Of particular interest are the
 $Q_{\lambda,j}$ when the partition $\lambda$ consists of a single part.  In Section~\ref{repthsec} it is observed that the $Q_{\lambda,j}$ for general $\lambda$ can be expressed via plethysm in terms of  these.   We present a conjecture\footnote{This conjecture has now been proved by Sagan, Shareshian and Wachs [48].} describing  the coefficients of the expansion of  $Q_{\lambda,j}$  in the power sum symmetric function basis  for the case that $\lambda$ consists of a single part.  We also derive results  on the virtual representation of the symmetric group whose Frobenius characteristic is $Q_{\lambda,j}$ for this case.
 
 The  symmetric functions $Q_{\lambda,j}$  resemble  the symmetric functions $L_\lambda$ studied by
Gessel and Reutenauer in \cite{gr} in their work on quasisymmetric
functions and permutation statistics.  However our $Q_{\lambda,j}$ are not
refinements of the  $L_\lambda$. Indeed, $L_\lambda$ is the
Frobenius characteristic of a representation induced from a linear
character of the centralizer of a permutation of cycle type
$\lambda$.  On the other hand, we show that if $\lambda= (n)$, where $n \ge 3$,  then $\sum_jQ_{\lambda,j}$ is
the Frobenius characteristic of a  virtual representation (conjecturally, an actual representation) whose character
takes nonzero values on elements that do not commute with any
element of cycle type $\lambda$ (see Corollary~\ref{cvalcor}).

The symmetric function on the right hand side of (\ref{introsymgenth1})  and  (\ref{introsymgenth2}) refines  symmetric functions that have been studied earlier in the literature.  Occurrences of these symmetric functions include  enumerators for multiset derangements studied by MacMahon \cite[Sec. III, Ch. III]{mac1} and Askey and Ismail \cite{ai};  enumerators  for words with no adjacent repeats studied by Carlitz, Scoville and  Vaughan \cite{csv},
 Dollhopf, Goulden and Greene \cite{dgg} and Stanley \cite{st4};  chromatic symmetric functions  of Stanley \cite{st4}; and the  Frobenius characteristic of the representation of the symmetric group on the degree $2j$ cohomology of the toric variety $X_n$ associated to the Coxeter complex of the symmetric group $\mathfrak S_n$ studied by Procesi \cite{pr}, Stanley \cite{st2}, Stembridge \cite{stem1,stem2}, and Dolgachev and Lunts \cite{dl}.   It is a consequence of our work that these symmetric functions   have nice interpretations as sums of fundamental quasisymmetric functions.   Representations with Frobenius characteristic $Q_{n,j}$ also occur in the recent paper of the authors on poset topology \cite{sw2}.
Indeed, it was our study  of the homology of a certain poset introduced by Bj\"orner and Welker \cite{bw} that led us to conjecture Theorems~\ref{expgenth} 
and~\ref{introsymgenth} in the first place.   These connections and others are discussed in Section~\ref{othersec}.

\section{Permutation statistics and quasisymmetric functions} \label{statsec}

For $n \ge 1$, let $\S_n$ be the symmetric group on the set
$[n]:=\{1,\ldots,n\}$.  A permutation $\sigma \in \S_n$ will be
represented here in two ways, either as a function that maps $i
\in [n]$ to $\sigma(i)$, or in one line notation as
$\sigma=\sigma_1\ldots\sigma_n$, where $\sigma_i=\sigma(i)$.

 The
{\it descent set} of  $\s\in \sg_n$ is
\[
\Des(\s):=\{i \in [n-1]:\sigma_i>\sigma_{i+1}\},
\]
and the {\it excedance set} of $\sigma$ is
\[
\Exc(\sigma):=\{i \in [n-1]:\sigma_i>i\}.
\]

Now we define  the two basic Eulerian permutation statistics.
The {\it descent number} and {\it excedance number} of $\sigma\in \sg_n$
are, respectively,
\[
\des(\sigma):=|\Des(\sigma)|
\]
and
\[
\exc(\sigma):=|\Exc(\sigma)|.
\]
 For
example, if $\sigma = 32541$, written in one line notation, then
$$\Des(\s) = \{1,3,4\} \qquad \mbox{ and }\qquad \Exc(\s) = \{1,3\};$$
 hence $\des(\s) = 3 $ and $\exc(\s) = 2$.  If $i \in \Des(\s)$ we say
 that $\s$ has a descent at $i$.  If $i \in \Exc(\s)$ we say that
 $\s_i$ is an excedance of $\s$ and that $i$ is an excedance position.

Next we define the two basic Mahonian permutation statistics.  The
{\it inversion index} of $\sigma \in \S_n$ is
\[
\inv(\sigma):=|\{(i,j):1 \leq i<j \leq n \mbox { and }\sigma_i>\sigma_j\}|,
\]
and the {\it major index} of $\sigma$ is
\[
\maj(\sigma):=\sum_{i \in \Des(\sigma)}i.
\]
 For
example, if $\sigma = 32541$ then $\inv(\s) = 6$ and $\maj(\s) = 8$.

We review  some basic facts of Gessel's theory of quasisymmetric functions; a good reference is  \cite[Chapter 7]{st3}. A {\it quasisymmetric function} is a formal power series $f(\xx)=f(x_1,x_2,\ldots)$ of finite degree with rational coefficients in the infinitely many variables $x_1,x_2,\ldots$ such that  for any $a_1,\dots,a_k \in \PP$,  the coefficient of $x_{i_1}^{a_1}\dots x_{i_k}^{a_k}$ equals the  coefficient of $x_{j_1}^{a_1}\dots x_{j_k}^{a_k}$ whenever $i_1 < \dots < i_k$ and $j_1 < \dots < j_k$.  Thus each symmetric function is quasisymmetric.

For a positive integer $n$ and $S \subseteq [n-1]$, define
\[
F_{S,n}=F_{S,n}(\xx):=\sum_{\scriptsize\begin{array}{c}i_1 \geq \ldots \geq i_n \geq 1\\ j \in S \Rightarrow i_j>i_{j+1}\end{array}}x_{i_1}\dots x_{i_n}
\] and let
$F_{\emptyset,0}=1$.
Each  $F_{S,n}$ is a quasisymmetric function.  The  set $\{F_{S,n} : S \subseteq [n-1], n\in \N\}$ is a basis
for the algebra $\mathcal Q$ of quasisymmetric functions  and $F_{S,n}$ is called {\em a fundamental} quasisymmetric function.
If $S = \emptyset $ then $F_{S,n}$ is the complete homogeneous symmetric function $h_n$ and if $S=[n-1]$ then $F_{S,n}$ is the elementary symmetric function $e_n$.

We review two important ways to specialize a quasisymmetric function. Let
$\Q[q]$ denote the ring of polynomials in variable $q$ with coefficients in  $\Q$ and let
$\Q[[q]]$ denote the ring of formal power series in variable $q$ with coefficients in $\Q$.  The  {\em stable principal  specialization} is  the ring homomorphism
$\Lambda:\mathcal Q \to \Q[[q]]$ defined by $$\Lambda(x_i) =  q^{i-1} ,$$ and the {\em  principal  specialization} of order $m$ is the ring
homomorphism $\Lambda_m:\mathcal Q \to \Q[q]$ defined by
$$\Lambda_m(x_i) = \begin{cases} q^{i-1} &\mbox{if } 1 \le i \le m \\ 0 &\mbox{if } i > m \end{cases} .$$  
We can extend the ring homomorphism $\Lambda$  to a ring homomorphism
$\Lambda: \mathcal Q[[z_1,\dots,z_m]] \to \Q[[q,z_1,\dots,z_m]] $  defined by $$\Lambda \left(\sum_{n_1,\dots,n_m \ge 0} G_{n_1,\dots,n_k} z_1^{n_1}\dots z_m^{n_m} \right) = \sum_{n_1,\dots,n_m \ge 0} \Lambda(G_{n_1,\dots,n_k}) z_1^{n_1}\dots z_m^{n_m}.$$  The ring homomorphism $\Lambda_m$ can be similarly extended.

\begin{lemma}[{\cite[Lemma 5.2]{gr}}] \label{desspec}For all $n \ge 1$ and $S \subseteq [n-1]$, we have
\begin{equation}\label{spec1}\Lambda (F_{S,n})= \frac {q^{\sum_{i\in S} i}} {(q;q)_n} \end{equation} and
 \begin{equation} \label{spec2}\sum_{m \ge 0} \Lambda_m( F_{S,n}) p^m = {p^{|S|+1} q^{\sum_{i\in S} i} \over (p;q)_{n+1}}.\end{equation} \end{lemma}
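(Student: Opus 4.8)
The plan is to compute both specializations directly from the monomial definition of $F_{S,n}$, using a single combinatorial change of variables that absorbs the forced strict descents at $S$ into a fixed ``staircase'', after which each sum collapses to a standard partition generating function.

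First I would unwind the definitions. Applying the ring homomorphism $\Lambda$ termwise gives
\[
\Lambda(F_{S,n})=\sum q^{(i_1-1)+\dots+(i_n-1)},
\]
the sum ranging over all $i_1\ge i_2\ge\dots\ge i_n\ge 1$ with $i_j>i_{j+1}$ whenever $j\in S$; likewise $\Lambda_m(F_{S,n})$ is the same sum restricted to sequences with $i_1\le m$. The key step is the substitution $i_k=1+c_k+b_k$, where $c_k:=|\{j\in S:j\ge k\}|$ counts the forced descents at positions $\ge k$. Since $c_k-c_{k+1}=[k\in S]$ and $c_n=0$ (because $S\subseteq[n-1]$), one checks that $(i_k)\mapsto(b_k)$ is a bijection from the constrained sequences onto the unconstrained weakly decreasing sequences $b_1\ge b_2\ge\dots\ge b_n\ge 0$: the inequality $i_k-i_{k+1}\ge[k\in S]$ is exactly equivalent to $b_k\ge b_{k+1}$, and $i_n\ge 1$ to $b_n\ge 0$. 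The bookkeeping one needs is $\sum_{k=1}^n c_k=\sum_{j\in S}j$ (swap the order of summation) and $c_1=|S|$, so that $\sum_k(i_k-1)=\sum_{j\in S}j+\sum_k b_k$ and $i_1-1=|S|+b_1$.

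For the first identity this at once gives $\Lambda(F_{S,n})=q^{\sum_{j\in S}j}\sum_{b_1\ge\dots\ge b_n\ge 0}q^{b_1+\dots+b_n}$, and the remaining sum is the generating function for partitions with at most $n$ parts. Writing $d_k:=b_k-b_{k+1}$ for $k<n$ and $d_n:=b_n$ (all $\ge 0$ and free), one has $\sum_k b_k=\sum_{l=1}^n l\,d_l$, so the sum factors as $\prod_{l=1}^n\sum_{d_l\ge 0}q^{l d_l}=\prod_{l=1}^n(1-q^l)^{-1}=1/(q;q)_n$, which is (\ref{spec1}). For the second identity the restriction $i_1\le m$ becomes $b_1\le m-1-|S|$; substituting $m=|S|+1+\ell$ and reindexing yields
\[
\sum_{m\ge 0}\Lambda_m(F_{S,n})\,p^m=q^{\sum_{j\in S}j}\,p^{|S|+1}\sum_{\ell\ge 0}p^\ell\!\!\sum_{\ell\ge b_1\ge\dots\ge b_n\ge 0}\!\!q^{b_1+\dots+b_n}.
\]
Passing again to $d_1,\dots,d_n\ge 0$, so that $b_1=d_1+\dots+d_n$ and $\sum_k b_k=\sum_l l\,d_l$, and summing $\ell$ over $\ell\ge b_1$ (contributing $p^{b_1}/(1-p)$), the double sum becomes $\frac{1}{1-p}\prod_{l=1}^n\sum_{d_l\ge 0}(pq^l)^{d_l}=\bigl((1-p)\prod_{l=1}^n(1-pq^l)\bigr)^{-1}=1/(p;q)_{n+1}$, giving (\ref{spec2}).

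I do not anticipate a genuine obstacle: this is essentially a bookkeeping exercise (the statement is the one Gessel and Reutenauer use in \cite{gr}). The only point deserving care is verifying that $i_k=1+c_k+b_k$ is an honest bijection between the $S$-constrained sequences and all weakly decreasing nonnegative sequences of length $n$, and tracking the identity $\sum_k c_k=\sum_{j\in S}j$ correctly. As an alternative to the $d$-variable computation one can phrase the inner sums through Gaussian binomial coefficients $\left[{n+\ell\atop n}\right]_q$ and invoke the $q$-binomial identity $\sum_{\ell\ge 0}\left[{n+\ell\atop n}\right]_q p^\ell=1/(p;q)_{n+1}$, but the factorization above keeps the argument self-contained.
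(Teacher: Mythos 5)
Your proof is correct. Note that the paper itself gives no proof of this lemma---it simply cites Gessel and Reutenauer \cite[Lemma 5.2]{gr}---so there is no internal argument to compare against; your ``staircase'' substitution $i_k = 1 + c_k + b_k$ (followed by passing to the differences $d_l$) is the standard P-partition-style derivation and matches the Gessel--Reutenauer computation in substance.
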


It follows from Lemma~\ref{desspec} that
\begin{equation} \label{stabh}  \Lambda (h_n) = \frac 1{(1-q)(1-q^2) \dots (1-q^n)}\end{equation} for all $n$ and therefore
\begin{equation}\label{spech}\Lambda (H(z(1-q)) )= \exp_q(z).\end{equation}

The standard way to connect quasisymmetric functions with  permutation statistics is by associating  the fundamental quasisymmetric function $F_{\Des(\sigma),n}$ with  $\sigma \in \sg_n$.   By Lemma~\ref{desspec}, we have for any subset $A \in \sg_n$
$$\Lambda\left(\sum_{\sigma \in A} F_{\Des(\sigma),n}\right) = {1 \over (q;q)_n} \sum_{\s \in A} q^{\maj(\s)},$$
and
$$\sum_{m\ge 0} \Lambda_m\left(\sum_{\sigma \in A} F_{\Des(\sigma),n}\right) p^m = {1 \over (p;q)_{n+1}}\sum_{\s \in A} p^{\des(\s)+1}q^{\maj(\s)}.$$
Gessel and Reutenauer \cite{gr} used this to study the $(\maj,\des)$-enumerator for permutations of a fixed cycle type.

Here we introduce a new way to associate   fundamental quasisymmetric functions with permutations.  For $n \ge 1$, we set \[
[\ov{n}]:=\{\ov{1},\ldots,\ov{n}\}
\]
and totally order the alphabet $[n] \cup [\ov{n}]$ by
\begin{equation} \label{order1}
\ov{1}<\ldots<\ov{n}<1<\ldots<n.
\end{equation}
For a permutation $\sigma=\sigma_1\ldots\sigma_n \in \S_n$, we
define $\ov{\sigma}$ to be the word over  alphabet $[n] \cup
[\ov{n}]$ obtained from $\sigma$ by replacing $\sigma_i$ with
$\ov{\sigma_i}$ whenever $i \in \Exc(\sigma)$.  For example, if
$\sigma={\rm {531462}}$ then $\ov{\sigma}={\rm
{\ov{5}\ov{3}14\ov{6}2}}$.  We define a descent in a word
$w=w_1\ldots w_n$ over any totally ordered alphabet to be any $i
\in [n-1]$ such that $w_i>w_{i+1}$ and let $\Des(w)$ be the set of
all descents of $w$.  Now, for $\sigma \in \S_n$, we define
\[
\Exd(\sigma):=\Des(\ov{\sigma}).
\]
For example, $\Exd({\rm {531462}})=\Des({\rm {\ov{5}\ov{3}14\ov{6}2}})=\{1,4\}$.

\begin{lemma} \label{exdlem}  For all $\s \in \S_n$,
\begin{equation}\label{exd}  \sum_{i \in \Exd(\s)} i = \maj(\s) - \exc(\s),\end{equation}
and
\begin{equation}\label{exdsize} |\Exd(\s)| =  \begin{cases} \des(\s)  &\mbox{if } \s(1) = 1 \\  \des(\s)-1 &\mbox {if } \s(1) \ne 1 \end{cases} \end{equation}
\end{lemma}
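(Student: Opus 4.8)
The plan is to analyze how the ``barring'' operation $\sigma \mapsto \overline\sigma$ redistributes descents, comparing the descent set of $\overline\sigma$ with that of $\sigma$ position by position, and to track the contribution of each position to the relevant statistics. For the first identity \eqref{exd}, I would compute $\sum_{i \in \Exd(\sigma)} i = \maj(\overline\sigma)$ directly from the definition $\Exd(\sigma) = \Des(\overline\sigma)$, so the claim becomes $\maj(\overline\sigma) = \maj(\sigma) - \exc(\sigma)$. The natural approach is to examine each $i \in [n-1]$ and decide whether $i$ is a descent of $\overline\sigma$, breaking into the four cases according to whether $i \in \Exc(\sigma)$ and whether $i+1 \in \Exc(\sigma)$. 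Using the order \eqref{order1} (all barred letters below all unbarred ones), one checks: if neither $i$ nor $i+1$ is an excedance position, then $\overline\sigma_i = \sigma_i$, $\overline\sigma_{i+1} = \sigma_{i+1}$, so $i$ is a descent of $\overline\sigma$ iff it is a descent of $\sigma$; if $i$ is an excedance position but $i+1$ is not, then $\overline\sigma_i$ is barred and $\overline\sigma_{i+1}$ is not, so $i$ is never a descent of $\overline\sigma$ — but here $\sigma_i > i$, and one argues $\sigma_{i+1} \le i$ forces $\sigma_i > \sigma_{i+1}$, i.e. $i$ was a descent of $\sigma$; if $i$ is not an excedance position but $i+1$ is, then $\overline\sigma_i$ unbarred, $\overline\sigma_{i+1}$ barred, so $i$ is always a descent of $\overline\sigma$, while $\sigma_i \le i < \sigma_{i+1}$ means $i$ was not a descent of $\sigma$; and if both $i$ and $i+1$ are excedance positions, both letters are barred and the relative order is preserved, so again descent of $\overline\sigma$ iff descent of $\sigma$.

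Summing over $i$, the positions where $\Des(\overline\sigma)$ and $\Des(\sigma)$ disagree come in a pairing: each excedance position $p$ that is ``lost'' from the descent set (case: $p \in \Exc$, $p+1 \notin \Exc$ — wait, I must be careful, the bookkeeping is at positions $i$, not at excedance positions) must be matched against a position that is ``gained.'' The cleaner way to organize the sum is: $\maj(\overline\sigma) - \maj(\sigma) = \sum_i i\,(\mathbf 1[i \in \Des(\overline\sigma)] - \mathbf 1[i \in \Des(\sigma)])$, and from the case analysis this equals $\sum_i i\,(\mathbf 1[i+1 \in \Exc(\sigma)] - \mathbf 1[i \in \Exc(\sigma)])$, a telescoping-type sum. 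Evaluating it with the boundary conventions (position $n$ is never in $\Des$, and an excedance position is always in $[n-1]$ so $\sigma(n) \le n$ is automatic, while $\sigma(1) = 1$ vs $\sigma(1) \ne 1$ affects whether $1 \in \Exc$) yields exactly $-\exc(\sigma) = -|\Exc(\sigma)|$. That gives \eqref{exd}.

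For the cardinality statement \eqref{exdsize}, the same case analysis shows that $\Des(\overline\sigma)$ is obtained from $\Des(\sigma)$ by, informally, shifting each ``descent caused by an excedance'' down by one position; more precisely the symmetric difference is governed entirely by the set $\Exc(\sigma) \subseteq [n-1]$ and its shift. One sees that $|\Exd(\sigma)| - \des(\sigma)$ counts gained positions minus lost positions, which is $0$ unless the shift pushes an element off the left end — that is, unless $1 \in \Exc(\sigma)$, equivalently $\sigma(1) > 1$, equivalently (for a permutation) $\sigma(1) \ne 1$ — in which case we lose exactly one. Hence $|\Exd(\sigma)| = \des(\sigma)$ if $\sigma(1) = 1$ and $\des(\sigma) - 1$ otherwise. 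The main obstacle, though entirely routine, is getting the boundary bookkeeping exactly right: one must be attentive to the asymmetry that positions are indexed by $[n-1]$ while excedance positions, too, lie in $[n-1]$, so the ``right end'' never causes a problem but the ``left end'' (position $1$) does, which is precisely the source of the dichotomy in \eqref{exdsize}. I would present the case analysis as a small table and then read off both identities from it simultaneously.
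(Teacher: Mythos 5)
Your proof is correct, and it takes a genuinely cleaner route than the paper's. Both arguments start from the same four-way case analysis on $i$ according to whether $i$ and $i+1$ lie in $\Exc(\sigma)$, and in both approaches the key observation is that $\Des(\overline\sigma)$ differs from $\Des(\sigma)$ precisely on the ``boundary'' positions where the excedance indicator changes. The paper packages this by introducing the sets $J(\sigma)$ (entering $\Exc$) and $K(\sigma)$ (leaving $\Exc$), then explicitly analyzes how the elements of $J$ and $K$ interleave --- this interleaving pattern depends on whether $\sigma(1)=1$, and the two displayed identities are read off by comparing the ordered lists $j_1<k_1<j_2<\cdots$ or $k_1<j_1<k_2<\cdots$ term by term and decomposing $\Exc(\sigma)$ into intervals. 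Your version replaces that combinatorial bookkeeping with a single summation identity,
\[
\mathbf 1[i\in\Des(\overline\sigma)]-\mathbf 1[i\in\Des(\sigma)]
=\mathbf 1[i+1\in\Exc(\sigma)]-\mathbf 1[i\in\Exc(\sigma)],
\]
and then both claims fall out of Abel summation / telescoping: weighting by $i$ gives $\maj(\overline\sigma)-\maj(\sigma)=-\sum_i\mathbf 1[i\in\Exc(\sigma)]=-\exc(\sigma)$ (using $n\notin\Exc(\sigma)$ for the right boundary), and weighting by $1$ gives $|\Exd(\sigma)|-\des(\sigma)=-\mathbf 1[1\in\Exc(\sigma)]$, which is $0$ or $-1$ according to whether $\sigma(1)=1$. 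This is conceptually the same discovery as the paper's, but the telescope eliminates the need to discuss the interleaving of $J$ and $K$ and handles both (\ref{exd}) and (\ref{exdsize}) uniformly rather than by two separate case splits. One small thing worth making explicit if you write this up: in the off-diagonal case $i\in\Exc(\sigma)$, $i+1\notin\Exc(\sigma)$, you need $\sigma(i)>\sigma(i+1)$ strictly, which follows from $\sigma(i)\ge i+1\ge\sigma(i+1)$ together with injectivity of $\sigma$; you gesture at this but it deserves a sentence, since it is the one place in the case table where the conclusion ``$i\in\Des(\sigma)$'' is not immediate.
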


\begin{proof} Let $$J(\s) = \{i \in [n-1]:  i \notin \Exc(\s) \,\,\, \& \,\,\, i+1 \in \Exc(\s)\},$$
and let
$$ K(\s) = \{i \in [n-1]:  i\in \Exc(\s) \,\,\, \& \,\,\, i+1 \notin \Exc(\s)\}.$$
If $i \in J(\s)$ then $\s(i) \le i < i+1 < \s(i+1)$.  Hence $i \notin \Des(\s)$, but $i \in \Exd(\s)$.  If $i \in K(\s)$ then $\s(i)\ge  i+1 \ge \s(i+1)$.  Hence $i \in \Des(\s)$, but $i \notin \Exd(\s)$.  It follows that $K(\s) \subseteq \Des(\s)$ and
\bq \label{dexdes}  \Exd(\s) = \left (\Des(\s) \uplus J(\s)\right) - K(\s).\eq
Hence
\bq \label{dexdes2}  |\Exd(\s)| = \des(\s) +| J(\s) | - |K(\s)|.\eq

Let $J(\s) = \{j_1 < j_2 < \dots < j_t\}$ and $K(\s) = \{k_1 < k_2< \dots < k_s\}$.  It is easy to see that if
$\s(1) = 1$ then $t= s$ and
\bq \label{shuf}  j_1 < k_1 <j_2 < k_2 <\dots <j_t <k_t, \eq  since neither $1$ nor $n$ are excedance positions.  On the other hand if $\s(1) \neq 1$ then $s=t+1$ and
 \bq \label{shuf2}  k_1 < j_1 <k_2 <j_2<\dots< j_t < k_{t+1}.\eq  It now follows from (\ref{dexdes2}) that (\ref{exdsize}) holds.

To prove (\ref{exd}), we again handle the cases  $\s(1)=1$ and $\s(1) \ne 1$ separately.

Case 1: Suppose $\s(1) = 1$.  Then (\ref{shuf}) holds.  By (\ref{dexdes}),
$$\sum_{i \in \Exd(\s)} i = \sum_{i \in \Des(\s)}  i - \sum_{i=1}^t (k_i -j_i) . $$
Clearly $$\Exc(\s) = \biguplus_{i = 1}^t \{j_i+1,j_i+2,\dots, k_i\}.$$  Hence
$$\exc(\s) = \sum_{i=1}^t (k_i -j_i) $$ and so (\ref{exd}) holds in this case.

Case 2: Suppose $\s(1) \ne 1$. Now (\ref{shuf2}) holds.
By (\ref{dexdes}),
$$\sum_{i \in \Exd(\s)} i = \sum_{i \in \Des(\s)}  i - k_1 - \sum_{i=1}^t (k_{i+1} -j_{i}) . $$
Now $$\Exc(\s) = \{1,2,\dots, k_1\} \uplus  \biguplus_{i = 1}^t \{j_i+1,j_i+2,\dots, k_{i+1}\},$$
which implies that
$$\exc(\s) =  k_1 + \sum_{i=1}^t (k_{i+1} -j_i) .$$  Hence (\ref{exd}) holds in this case too.
  \end{proof}

We now define the  quasisymmetric functions that  play a central role in this paper. Let  $n\ge 1$, $j,k \ge 0$ and  $\lambda\vdash n$.   The {\em Eulerian quasisymmetric functions}  are defined by $$ Q_{n,j} = Q_{n,j}(\x):=
\sum_{\scriptsize \begin{array}{c} \s \in \sg_n \\ \exc(\s) = j
\end{array}} F_{\Exd(\s),n}(\x).$$
 The {\em fixed point Eulerian quasisymmetric functions} are  defined by $$ Q_{n,j,k} = Q_{n,j,k}(\x):=
\sum_{\scriptsize \begin{array}{c} \s \in \sg_n \\ \exc(\s) = j \\ \fix(\s) =k
\end{array}} F_{\Exd(\s),n}(\x).$$
The {\em cycle type Eulerian quasisymmetric functions} are defined by
$$Q_{\lambda,j} = Q_{\lambda,j}(\x):=
\sum_{\scriptsize \begin{array}{c} \s \in \sg_n \\ \exc(\s) = j \\ \lambda(\s) =\lambda
\end{array}} F_{\Exd(\s),n}(\x),$$
where $\lambda(\s)$ denotes the cycle type of $\s$.
For convenience we  set $[0] =\emptyset$ and $\S_0 = \{\theta\}$,
where $\theta$ denotes the empty word, and let
$\Exd(\theta) = \Des(\theta) = \Exc(\theta) = \emptyset$ and $ \exc(\theta) = \fix(\theta) = \des(\theta) = \maj(\theta)= 0$.   So $Q_{0,0,0} = 1$.  Also set $\lambda(\theta)$ equal to the empty partition of $0$.
 
\begin{remark} It is a consequence of Theorem~\ref{introsymgenth} that the Eulerian quasisymmetric functions $Q_{n,j}$ and $Q_{n,j,k}$ are symmetric functions.  We prove in Section~\ref{symcyclesec} that the refinement $Q_{\lambda,j}$ is symmetric as well.
\end{remark}

Next we define various versions of  $q,p$-analogs of the Eulerian numbers $$a_{n,j}:=|\{\sigma \in \sg_n : \exc(\sigma) =j\}|.$$  Let $n,j,k \ge 0$. The  {\em  $(q,p)$-Eulerian numbers} are defined by $$ a_{n,j}(q,p) =
\sum_{\scriptsize \begin{array}{c} \s \in \sg_n \\ \exc(\s) = j
\end{array}} q^{\maj(\s)} p^{\des(\s)},$$
and the {\em fixed point $(q,p)$-Eulerian numbers} are defined by $$ a_{n,j,k}(q,p) =
\sum_{\scriptsize \begin{array}{c} \s \in \sg_n \\ \exc(\s) = j \\ \fix(\s) =k
\end{array}} q^{\maj(\s)} p^{\des(\s)}.$$
For $n,j \ge 0$ and $\lambda \vdash n$, the {\em cycle type $(q,p)$-Eulerian numbers} are defined by
$$a_{\lambda,j}(q,p) =
\sum_{\scriptsize \begin{array}{c} \s \in \sg_n \\ \exc(\s) = j \\ \lambda(\s) =\lambda
\end{array}}  q^{\maj(\s)} p^{\des(\s)}.$$

It  follows from (\ref{spec1}) and(\ref{exd}) that for $\sigma \in \sg_n$ we have
$$
\Lambda(F_{\Exd(\sigma),n})=(q;q)_n^{-1}q^{\maj(\sigma)-\exc(\sigma)}.
$$
Hence \begin{eqnarray}
 \label{stablespec1} a_{n,j,k}(q,1) &=& q^j (q;q)_n\Lambda (Q_{n,j,k}),\\ \label{stablespec2} a_{\lambda,j}(q,1) &=& q^j (q;q)_n\Lambda (Q_{\lambda,j}).\end{eqnarray}
\begin{proof}[Proof of Corollary~\ref{expgenthfix}]

From equations (\ref{stablespec1}) and (\ref{spech}), we see that Corollary~\ref{expgenthfix} is obtained from Theorem~\ref{introsymgenth} by first replacing $z$ by $z(1-q)$ in  (\ref{introsymgenth1}) and then applying the stable principal specialization $\Lambda$ to both sides of the resulting equation. \end{proof}

The (nonstable) principal specialization is a bit more
complicated. Given two partitions $\lambda \vdash n$ and $\mu
\vdash m$, let $(\lambda,\mu)$ denote the partition of $n+m$
obtained by concatenating $\lambda$ and $\mu$ and then
appropriately rearranging the parts.
\begin{lemma} \label{nonstable} 
For $0 \le j,k \le n$, let $\lambda \vdash n-k$,
where $\lambda$ has no parts of size $1$.  Then
\begin{equation*}a_{(\lambda,1^k),j}(q,p) = (p;q)_{n+1} \sum_{m\ge 0} p^m \sum_{i=0}^k q^{im+j} \Lambda_m (Q_{(\lambda,1^{k-i}), j}).\end{equation*}  Consequently, for $n,j,k \ge 0$,
 \begin{equation*}  a_{n,j,k}(q,p) = (p;q)_{n+1} \sum_{m\ge 0} p^m \sum_{i=0}^k q^{im+j} \Lambda_m (Q_{n-i, j,k-i}).\end{equation*}
\end{lemma}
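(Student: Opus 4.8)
The plan is to prove the first (refined) identity by induction on $k$, and then obtain the second by summing over cycle types. Throughout write $n=|\lambda|+k$. For the base case $k=0$ the partition $\lambda\vdash n$ has no parts equal to $1$, so every $\s\in\sg_n$ of cycle type $\lambda$ satisfies $\s(1)\ne 1$ and hence $|\Exd(\s)|=\des(\s)-1$ by (\ref{exdsize}). Combining (\ref{spec2}) with (\ref{exd}) gives $(p;q)_{n+1}\sum_{m\ge 0}p^m\Lambda_m(F_{\Exd(\s),n})=p^{\des(\s)}q^{\maj(\s)-j}$ for such $\s$; summing over all $\s$ of cycle type $\lambda$ with $\exc(\s)=j$ and multiplying by $q^j$ yields $a_{\lambda,j}(q,p)$, which is exactly the $k=0$ case of the claim.

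For the inductive step I would fix $k\ge 1$ and let all permutation sums below range over $\s\in\sg_n$ of cycle type $(\lambda,1^k)$ with $\exc(\s)=j$. As in the base case, (\ref{spec2}) and (\ref{exd}) give $q^j(p;q)_{n+1}\sum_m p^m\Lambda_m(Q_{(\lambda,1^k),j})=\sum_\s p^{|\Exd(\s)|+1}q^{\maj(\s)}$, and splitting this according to whether $\s(1)=1$ and using (\ref{exdsize}) shows it equals $\sum_{\s(1)=1}p^{\des(\s)+1}q^{\maj(\s)}+\sum_{\s(1)\ne 1}p^{\des(\s)}q^{\maj(\s)}$. Subtracting this from $a_{(\lambda,1^k),j}(q,p)=\sum_\s p^{\des(\s)}q^{\maj(\s)}$ leaves the correction $(1-p)\sum_{\s(1)=1}p^{\des(\s)}q^{\maj(\s)}$.

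Next I would use the bijection $\s\mapsto\s^-$ that deletes the fixed point $1$: for $\s$ with $\s(1)=1$ set $\s^-\in\sg_{n-1}$, $\s^-(i)=\s(i+1)-1$. This is a bijection onto the permutations of $\sg_{n-1}$ of cycle type $(\lambda,1^{k-1})$ with $\exc=j$, and a direct check gives $\des(\s)=\des(\s^-)$ and $\maj(\s)=\maj(\s^-)+\des(\s^-)$ (no descent is created at position $1$ and every descent moves up by one position). Hence $\sum_{\s(1)=1}p^{\des(\s)}q^{\maj(\s)}=a_{(\lambda,1^{k-1}),j}(q,pq)$. Applying the induction hypothesis to $(\lambda,1^{k-1})\vdash n-1$ with the variable $p$ replaced by $pq$, reindexing the $i$-sum by adding $1$, and invoking the elementary identity $(1-p)(pq;q)_n=(p;q)_{n+1}$, the correction term becomes $(p;q)_{n+1}\sum_m p^m\sum_{i=1}^k q^{im+j}\Lambda_m(Q_{(\lambda,1^{k-i}),j})$. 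Adding back the term $q^j(p;q)_{n+1}\sum_m p^m\Lambda_m(Q_{(\lambda,1^k),j})$, which is precisely the $i=0$ summand, completes the induction.

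The second identity then follows by summing the first over all $\lambda\vdash n-k$ having no parts equal to $1$, since $\sum_\lambda a_{(\lambda,1^k),j}(q,p)=a_{n,j,k}(q,p)$ and $\sum_\lambda Q_{(\lambda,1^{k-i}),j}=Q_{n-i,j,k-i}$, after interchanging the two finite sums. I expect the main obstacle to be the bookkeeping in the inductive step: recognizing that the discrepancy between $a_{(\lambda,1^k),j}(q,p)$ and the $i=0$ contribution is exactly $(1-p)$ times the corresponding quantity with one fewer fixed point, that passing to a permutation of smaller size forces the substitution $p\mapsto pq$ because $\maj(\s)=\maj(\s^-)+\des(\s^-)$, and that the $q$-Pochhammer prefactors then line up via $(1-p)(pq;q)_n=(p;q)_{n+1}$.
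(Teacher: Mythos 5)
Your proof is correct and follows essentially the same route as the paper's: the paper derives exactly the same recurrence in $k$ (via the same split according to $\sigma(1)=1$ or $\sigma(1)\ne 1$, the same fixed-point-removal bijection with the substitution $p\mapsto pq$, and the same identity $(1-p)(pq;q)_n=(p;q)_{n+1}$) and then iterates the recurrence down to $k=0$ rather than phrasing it as a formal induction, which is an entirely equivalent packaging of the argument.
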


\begin{proof}

For all $\sigma \in \sg_n$, where $n >0$, we have, by (\ref{spec2}) and Lemma~\ref{exdlem},
$$\sum_{m \ge 0} \Lambda_m(F_{\Exd(\sigma),n}) p^m = {1 \over (p;q)_{n+1} } p^{|\Exd(\s)|+1} q^{\maj(\s) - \exc(\s)}.$$
It follows that \begin{equation} \label{specx} \sum_{m \ge 0} \Lambda_m( Q_{(\lambda,1^k),j}) p^m = X^k_{\lambda,j}(q,p),\end{equation}
where
$$ X^k_{\lambda,j}(q,p) := {1 \over (p;q)_{n+1} }\sum_{\scriptsize \begin{array}{c} \s \in \sg_n \\ \exc(\s) = j \\ \lambda(\s) = (\lambda, 1^k)
\end{array}} q^{\maj(\s)-j } p^{|\Exd(\s)|+1},$$
when $n >0$, and $X_{\lambda,j}^k(q,p) = \frac{1}{1-p}$ when $n=0$.
By (\ref{exdsize}) we have
\begin{eqnarray*} X^k_{\lambda,j}(q,p) = & &{1 \over (p;q)_{n+1} } \sum_{\scriptsize \begin{array}{c} \s \in \sg_n \\ \exc(\s) = j \\ \lambda(\s) = (\lambda, 1^k) \\ \s(1) = 1
\end{array}} q^{\maj(\s)-j } p^{\des(\s)+1} \\ &+&
 {1 \over (p;q)_{n+1} }\sum_{\scriptsize \begin{array}{c} \s \in \sg_n \\ \exc(\s) = j \\ \lambda(\s) = (\lambda, 1^k) \\ \s(1) \ne 1
\end{array}} q^{\maj(\s)-j } p^{\des(\s)}.
\end{eqnarray*}
Let $$ Y^k_{\lambda,j}(q,p) := {1 \over (p;q)_{n+1} }\sum_{\scriptsize \begin{array}{c} \s \in \sg_n \\ \exc(\s) = j \\ \lambda(\s) = (\lambda, 1^k) \\ \s(1) =1
\end{array}} q^{\maj(\s)-j } p^{\des(\s)}.$$
Write  $a^k_{\lambda,j}(q,p)$ for $a_{(\lambda,1^k),j}(q,p)$.
We have  \begin{eqnarray}\label{ayxeq} \nonumber {a^k_{\lambda,j}(q,p) \over q^j(p;q)_{n+1} } &=& Y^k_{\lambda,j}(q,p) + X^k_{\lambda,j}(q,p) - p Y^k_{\lambda,j}(q,p)\\ &=& (1-p) Y^k_{\lambda,j}(q,p)  + X^k_{\lambda,j}(q,p).\end{eqnarray}

Let
$\varphi: \{\s \in \sg_n : \s(1) = 1\} \to \sg_{n-1}$ be the bijection defined by letting $\varphi(\s)$ be the permutation  obtained by removing the $1$ from the beginning of $\s$ and subtracting $1$ from each letter of  the remaining word.
It is clear that  $\maj(\s) = \maj(\varphi(\s)) + \des(\s)$,  $\des(\s) = \des(\varphi(\s)) $, $\exc(\s)=\exc(\varphi(\s)) $ and $\fix(\s)=\fix(\varphi(\s)) +1$.  Hence
$$Y^k_{\lambda,j}(q,p)= {a^{k-1}_{\lambda,j}(q,qp) \over q^j (p;q)_{n+1} }.$$
Plugging this into (\ref{ayxeq}) yields the recurrence
$$ {a^k_{\lambda,j}(q,p)  \over q^j (p;q)_{n+1} }= { a^{k-1}_{\lambda,j}(q,qp) \over q^j(qp;q)_{n} } + X^k_{\lambda,j}(q,p).$$
By iterating this recurrence we get
\begin{equation} \label{thiseq}  {a^k_{\lambda,j}(q,p) \over q^j (p;q)_{n+1} }= \sum_{i=0}^k X^{k-i}_{\lambda,j}(q,q^ip).\end{equation}
The result now follows from (\ref{thiseq}) and (\ref{specx}).
\end{proof}

\begin {proof} [Proof of Corollary \ref{foha}]  By Lemma~\ref{nonstable} and Theorem~\ref{introsymgenth}, we have

 \begin{eqnarray*} \sum_{n\ge 0 }A_{n}^{\maj,\des,\exc,\fix}&&\hspace{-.4in}(q,p,t,r) {z^n \over (p;q)_{n+1}} 
 \\ &=&  \sum_{n,j,k \ge 0} a_{n,j,k}(q,p) t^j r^k {z^n \over (p;q)_{n+1}}\\
 &=&\sum_{n,j,k\ge 0}z^n  t^j r^k \sum_{m\ge 0} p^m \sum_{i=0}^k q^{im+j} \Lambda_m (Q_{n-i, j,k-i})
 \\  &=&  \sum_{m\ge0}p^m  \sum_{i \ge 0} (zrq^m)^i  \sum_{\scriptsize \begin{array} {c}
 n,k\ge i \\ j \ge 0\end{array}} \Lambda_m (Q_{n-i, j,k-i}) (qt)^j r^{k-i} z^{n-i}
  \\ 
 &=& \sum_{m\ge 0}{p^m\over 1-zrq^m}\Lambda_m \left({(1-tq)H(zr) \over H(ztq) -tqH(z)}\right)
\\ &=& \sum_{m \ge 0} p^m {(1-tq) (z;q)_m (ztq;q)_m \over
((z;q)_m -tq(ztq;q)_m)(zr;q)_{m+1}}, \end{eqnarray*}
with the last step following from
$$ \Lambda_m( H(z)) =\Lambda_m\left(\prod_{i \ge 0} \frac 1 {1-x_i z}\right) = \frac 1 {(z;q)_{m}}.$$

\end{proof}

\section{Bicolored necklaces and words} \label{identsec}

In this section we prove Theorem~\ref{introsymgenth}.  There are three main steps.   In the first step (Section~\ref{secnec}) we modify a bijection that  Gessel and Reutenauer \cite{gr} constructed in order to enumerate permutations with a fixed descent set and
fixed cycle type.  This yields an alternative characterization of the Eulerian quasisymmetric functions in terms of  bicolored necklaces.   In the second step (Section~\ref{secban}) we construct  a  bijection from multisets of bicolored necklaces to
bicolored words, which involves Lyndon decompositions of words.  This yields  yet another characterization of the Eulerian quasisymmetric functions.    In the third step (Section~\ref{recsec}) we generalize a bijection that Stembridge constructed to study the representation of the symmetric group on the cohomology of the toric variety associated with the type A Coxeter complex.  This enables us to derive a recurrence relation, which yields
Theorem~\ref{introsymgenth}.

 \subsection{Step 1: bicolored version of the Gessel-Reutenauer bijection} \label{secnec}
The Gessel-Reutenauer bijection (see \cite{gr}) is a   bijection between the set of  pairs $(\sigma,s)$, where $\s$ is a  permutation and $s$ is a ``compatible'' weakly decreasing sequence, and the set of multisets of primitive circular words over the alphabet of positive integers.  This bijection enabled Gessel and Reutenauer  to use the fundamental quasisymmetric functions $F_{\Des(\s),n}$ to study properties of permutations with a fixed descent set and cycle type.   Here we introduce a bicolored version of the Gessel-Reutenauer bijection.  

We  consider circular words over the alphabet of bicolored positive integers $$\mathcal A:=\{1,\bar 1,2,\bar 2, 3,\bar 3, \dots\}.$$   (We can think of ``barred'' and ``unbarred'' as  colors assigned to each positive integer.)  For each such circular
word and each starting position, one gets a linear word by
reading the circular word in a clockwise direction. If one gets a
distinct linear word for each starting position then the
circular word is said to be {\em primitive}.  For example the circular word $(\bar 1,1,1) $  is primitive while the circular word $(\bar 1, 2,\bar 1, 2)$  is not.  (If $w$ is a linear word then $(w)$ denotes the circular word obtained by placing the letters of $w$ around a circle in a clockwise direction.)  The
{\em absolute value} or just {\em value} of a letter $a$ is the letter obtained by ignoring
the bar if there is one. We denote this by $|a|$. The {\em size} of a circular word is the number of letters in the word (counting multiplicity).  
\begin{defn}  \label{orndef} A  {\em bicolored  necklace} is a primitive circular word $w$ over alphabet $\mathcal A$   such that if the size of $ w$ is greater than $1$ then \begin{enumerate}
\item every barred letter  is followed
(clockwise) by a letter less  than or equal to it  in absolute value
\item every unbarred  letter is followed by a letter greater than or equal to it  in absolute value.
\end{enumerate}      A circular
word of size $1$ is a bicolored necklace  if its sole letter is unbarred.
 A
{\em bicolored ornament}  is a
multiset
of
bicolored necklaces.
\end{defn}

For example the following
circular words are bicolored necklaces:
$$ (\bar 3, 1 ,3, \bar 3, 2, 2), (\bar 3,1, \bar 3, \bar 3, 2, 2) , (\bar 3, 1, 3, \bar 3, \bar 2, 2),
(\bar 3, 1, \bar 3, \bar 3, \bar 2, 2), (2),$$
 while $(\bar 3, \bar 1, 3, 3, 2, \bar 2) $
 and $(\bar 3)$  are not.

From now on we will drop the word ``bicolored'';   so ``necklace'' will stand for  ``bicolored necklace'' and ``ornament"  will stand for  ``bicolored ornament''.  The type $\lambda(R)$ of an ornament $R$ is the
partition whose parts are the sizes of the necklaces  in $R$. The weight of a letter $a$ is  the
indeterminate $x_{|a|}$.  The
weight $\wt(R)$ of an ornament $R$  is the product of the weights of
the letters of  $R$.  For example
$$\lambda((\bar 3,2,2),(\bar 3,\bar 2, 1,1,2)) = (5,3)$$
 and
$$\wt((\bar 3,2,2),(\bar 3,\bar 2,1,1,2)) = x_3^2x_2^4x_1^2.$$
For each partition $\lambda$ and nonnegative integer $j$, let
$\mathfrak R_{\lambda,j}$ be the set of ornaments of type
$\lambda$ with $j$ bars.

Given a permutation  $\sigma \in \S_n$, we say that a weakly decreasing sequence $s_1\ge s_2\ge \dots \ge s_n$ of
positive integers is  $\sigma$-{\em compatible} if
$s_i > s_{i+1}$ whenever $i \in \Exd(\sigma)$.  For example,
$7,7,7,5,5,4,2,2$ is $\s$-compatible, where $\s=45162387$.
Note that for all $\s \in \mathfrak S_n$,
$$F_{\Exd(\s),n} = \sum_{s_1,\dots, s_n} x_{s_1}\dots x_{s_n},$$
where $s_1,\dots,s_n$ ranges over all $\s$-compatible sequences.

 For $\lambda \vdash n$ and $j =0,\dots,n-1$, let $\Com_{\lambda,j}$ be the set of pairs $(\s,s)$, where $\s $ is a permutation of cycle type $\lambda$ with $j$ excedances and $s$ is  a $\s$-compatible sequence.
 Let  $\phi: \Com_{\lambda,j} \to {\mathfrak R}_{\lambda,j} $ be the map defined by letting
$\phi(\sigma, s)$ be the ornament obtained by first writing $\sigma$ in cycle form with bars above the letters that are followed (cyclicly) by larger letters (i.e., the excedances) and then replacing each $i$ with $s_i$, keeping the bars in place.  For example,
let $\s = 45162387$ and $s=7,7,7,5,5,4,2,2$.  First we write $\s$  in cycle form,  $$\s = (1,4,6,3)(2,5) (7,8).$$
Next we put  bars above the
letters that are followed  by larger letters, $$(\bar 1,\bar 4,6,3)(\bar 2,5) (\bar 7,8).$$
After replacing each $i$ by $s_i$, we have the ornament $$(\bar 7,\bar 5,4,7)(\bar 7,5) (\bar 2,2).$$

\begin{thm} \label{ornbij} The map $\phi: \Com_{\lambda,j} \to {\mathfrak R}_{\lambda,j} $ is a  well-defined bijection.
\end{thm}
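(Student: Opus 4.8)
The plan is to verify, in order, that $\phi$ lands in ${\mathfrak R}_{\lambda,j}$, that it is injective, and that it is surjective, the surjectivity being the crux. First I would check well-definedness. Given $(\s,s)\in\Com_{\lambda,j}$, each cycle of $\s$ of length $>1$ contributes, after replacing $i$ by $s_i$, a circular word over $\calA$; I must show it is a necklace and that the total number of bars is $j$. The bar count is immediate: a letter in cycle form gets a bar exactly when it is an excedance of $\s$, so the number of barred letters equals $\exc(\s)=j$. For the necklace conditions on a cycle $(\,\cdots,a,b,\cdots)$ with $b$ the cyclic successor of $a$: if position $i$ (the preimage of $a$) is an excedance then $a$ is barred, and I need $s_{?}$ for $a$ to be $\ge$ the value for $b$ in absolute value; if $i$ is not an excedance then $a$ is unbarred and I need the reverse inequality. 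The key point, which uses $\s$-compatibility together with Lemma~\ref{exdlem} (more precisely the combinatorics of $\Exc$ versus $\Exd$ used in its proof), is that whether $s_i \gtrless s_{\s(i)}$ is forced: when both $i$ and $\s^{-1}(\text{something})$... — concretely, one compares $i\in\Exc(\s)$ with the excedance status of the cyclic successor and translates this into a descent/non-descent of $\ov\s$, hence into whether $s$ is forced to strictly decrease. This is a finite case analysis (successor is an excedance or not; current position is an excedance or not) giving exactly conditions (1) and (2) of Definition~\ref{orndef}; size-one cycles are fixed points, never excedances, so the sole letter is unbarred, matching the size-one clause. Primitivity of each resulting circular word holds because distinct positions in a cycle of $\s$ carry distinct integers $i$, and although the $s_i$ may coincide, the bar pattern together with the weak monotonicity forces a genuine cyclic period only if the underlying cycle did — I would argue that a nontrivial period of the necklace would pull back to a nontrivial period of the integer cycle, which is impossible.

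Next, injectivity. I would describe the inverse on the image: given an ornament $R$ of type $\lambda$ with $j$ bars, one must recover both the cycle structure of $\s$ (a set partition of $[n]$ into blocks of sizes $\lambda$, cyclically ordered) and the labels $s$. The recipe is the standard Gessel--Reutenauer idea adapted to the bar colors: within each necklace, read off the (weakly monotone, with strict steps dictated by the bars) sequence of values; these values, sorted across the whole ornament in weakly decreasing order, must be assigned to $1,2,\dots,n$ so that equal values go to positions that will not be forced to differ. Because the necklace conditions (1)--(2) make each necklace a concatenation of maximal weakly-increasing unbarred runs and maximal weakly-decreasing barred runs joined at the ``peaks'' and ``valleys,'' the relative order in which equal letters must be assigned their integer labels is completely determined (ties are broken by the ornament structure in exactly one way), and this reconstructs $\s$ in cycle form and the compatible sequence $s$ uniquely. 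Hence $\phi$ is injective; and this same construction, once shown to succeed on every ornament in ${\mathfrak R}_{\lambda,j}$, gives surjectivity.

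So the real work is surjectivity: given any $R\in{\mathfrak R}_{\lambda,j}$, produce $(\s,s)$ with $\phi(\s,s)=R$. The hard part will be showing the label-assignment step is consistent — i.e. that assigning the integers $1,\dots,n$ to the letter-occurrences of $R$ in the forced order actually yields a permutation $\s$ whose excedance set is exactly the barred positions (so that $\exc(\s)=j$ and the cycle type is $\lambda$) and for which the chosen $s$ is genuinely $\s$-compatible, meaning $s$ strictly decreases precisely on $\Exd(\s)$ and not more. Equivalently, I must show that after the assignment, $i$ is an excedance of $\s$ iff the occurrence labelled $i$ was barred, and that a strict descent in $s$ occurs at $i$ iff $i\in\Exd(\s)$; the first is where Definition~\ref{orndef}(1)--(2) get used in reverse, and the second is where Lemma~\ref{exdlem}'s description of $\Exd$ in terms of $\Exc$ (the interleaving of the sets $J(\s)$ and $K(\s)$ from that proof) is invoked again to confirm no spurious strictness is introduced. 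I would handle the tie-breaking carefully with a total order on letter-occurrences: order occurrences first by value (largest first), and among equal values use the necklace conditions to place barred-then-unbarred appropriately around each peak/valley so that the induced $\s$ has no unintended excedance. Once this consistency is established, $\phi$ applied to the constructed $(\s,s)$ returns $R$ by construction, completing the proof that $\phi$ is a well-defined bijection.
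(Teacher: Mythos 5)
Your proposal follows the same skeleton as the paper's proof (well-definedness, then an explicit inverse), and you are right that the construction is in the spirit of Gessel--Reutenauer. But the two load-bearing pieces of the argument are exactly the ones you leave unspecified or hand-wave, and as written they do not go through.

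First, primitivity. You assert that ``a nontrivial period of the necklace would pull back to a nontrivial period of the integer cycle,'' but that is not a priori true: the map $i\mapsto s_i$ can collapse distinct integers to the same letter of $\mathcal A$, so the image can be periodic even though the integer cycle is not. What saves the day is $\sigma$-compatibility, and the argument has to be made: if $(a_{i_1},\dots,a_{i_k})$ had period $d<k$, then $a_{i_1}=a_{i_{d+1}}$ with $i_1<i_{d+1}$ (minimality of $i_1$ in its cycle) forces $|a_i|$ constant on $[i_1,i_{d+1}]$, hence (by compatibility) no element of $\Exd(\sigma)$ in that interval, hence $\sigma(i_1)<\sigma(i_1+1)<\dots<\sigma(i_{d+1})$. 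Since $\sigma(i_1)=i_2$ and $\sigma(i_{d+1})=i_{d+2}$, this gives $i_2<i_{d+2}$, and iterating eventually gives $i_{k+1-d}<i_1$, contradicting minimality. Your proposal does not contain this chain of reasoning or anything functionally equivalent to it; the conclusion is asserted rather than derived.

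Second, the inverse. The crucial question is how to totally order the letter-occurrences of an ornament so that replacement by $1,\dots,n$ reconstructs a permutation in cycle form. Your rule---order by value, then break ties ``around each peak/valley''---is local and does not determine the order of two occurrences with the same letter that live in far-apart regions of the ornament (or in two different necklaces). The correct device, which your proposal does not supply, is global: for each position $x$ read the infinite clockwise word $w_x$ and compare the $w_x$ lexicographically with respect to the alphabet order $1<\bar1<2<\bar2<\cdots$; when $w_x=w_y$ exactly (which can only happen for positions in distinct equal necklaces, by primitivity) break the tie by an arbitrary fixed linear order on the multiset of necklaces. It is this infinite-word lexicographic comparison, not a peak/valley rule, that makes the assigned integers trace out genuine cycles, makes barred positions land exactly on $\Exc(\sigma(R))$, and makes $s(R)$ compatible. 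The paper then still has to verify $\eta\circ\phi=\mathrm{id}$ via a careful implication showing that $i<j$ with $w_{p_i}\le_L w_{p_j}$ forces $f(w_{p_i})=f(w_{p_j})$ and $\sigma(i)<\sigma(j)$, again driven by compatibility; none of this consistency check is present in your proposal beyond the statement that it must be done. So the framework is right, but the proof proper---the primitivity argument and the lexicographic-on-infinite-words ordering with its verification---is missing.
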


\begin{proof}

Let $(\s,s) \in \Com_{\lambda,j}$.  It is clear that the circular words in the multiset $\phi(\sigma,s)$ satisfy conditions (1) and (2) of Definition~\ref{orndef} and that  the number of bars of $\phi(\sigma,s)$ is $j$.  It is also clear that $\lambda(\phi(\s,s))= \lambda$.

We must now show that the circular words in $\phi(\sigma,s)$ are primitive.
Suppose there is a circular word $(a_{i_1}, \dots, a_{i_k})$ that is not primitive.  Let this word come from the cycle $(i_1,i_2,\dots,i_k)$ of $\sigma$, where $i_1$ is the smallest element of the cycle.   Suppose $a_{i_1}, \dots, a_{i_k} = (a_{i_1},\dots a_{i_d})^{k/d}$, where $d < k$.   Since $a_{i_1} = a_{i_{d+1}}$,  $i_1 < i_{d+1}$,  and the $|a_i|$'s form a weakly decreasing sequence, it follows that $|a_{i}| = |a_{i_1}|$ for all $i$ such that $i_1 \le  i  \le i_{d+1}$.  Hence since $(|a_1|, |a_2|, \dots, |a_n| )$ is $\s$-compatible, there can be no element of $\Exd(\s)$ in the set $\{i_1,i_1+1, \dots,i_{d+1}-1\}$. Since $a_{i_1} $ and $a_{i_{d+1}}$ are equal, they are both barred or both unbarred. (Actually, $a_{i_1}$ is barred since $i_1$ is the smallest element of its cycle, but since we repeat this argument for $i_2$ in the next paragraph, we don't want to use this fact.)  It follows that both $i_1$ and $i_{d+1}$ are  excedance positions or  both are  not.   Hence since no element of $\Exd(\s)$ is in $\{i_1,i_1+1, \dots,i_{d+1}-1\}$, we have   $$\s(i_1) < \s(i_1+1) < \dots < \s(i_{d+1}).$$

Since $\s({i_1}) = i_2$ and $\s(i_{d+1})= i_{d+2}$ we have $i_2 < i_{d+2}$.  Repeated use of the above argument yields  $i_3 < i_{d+3}$ and eventually  $i_{k+1-d} < i_{1}$, which is impossible since $i_1$ is the smallest element of its cycle.  Hence all the circular words of $\phi(\sigma,s)$ are primitive and $\phi(\sigma,s)$ is an ornament of type $\lambda$ with $j$ bars, which means that $\phi$ is  well-defined.

To show that $\phi$ is a bijection, we construct its inverse $\eta:{\mathfrak R}_{\lambda,j} \to \Com_{\lambda,j}$, which takes the ornament $R$ to the pair $(\s(R),s(R))$.
Here $s(R)$ is simply  the weakly decreasing rearrangement of the letters  of $R$ with bars removed.  To construct the  permutation $\s(R)$, we need to first order the alphabet  $\mathcal A$  by
\begin{eqnarray} \label{alphorder} 1<\bar 1 < 2 <  \bar 2 < \dots .\end{eqnarray}   Note that this ordering is different from the ordering  (\ref{order1}) of the finite
alphabet that was used to define $\Exd$.

For each position $x$ of each necklace  of $R$, consider the infinite word
$w_x$ obtained by reading the necklace in a clockwise direction starting at position $x$.      Let $$w_x \le_L w_y$$ mean that $w_x$ is lexicographically less than or equal to $w_y$.  We use the lexicographic order on words  to order the positions:  if $w_x <_L w_y$ then we say  $x< y$.  We break ties as follows:  if   $w_x= w_y$ and $x \ne y$ then $x$ and $y$ must be positions in distinct (but equal) necklaces since necklaces are primitive.  If  $w_x= w_y$   and  $x$ is a position in an earlier necklace than that of $y$ under some fixed linear ordering of the necklaces   then let $x<y$.  With this ordering on words in hand, if $x$ is the ith {\em largest } position  then replace the letter in position $x$ by $i$.  We now have a multiset of circular words in which each letter  $1,2,\dots,n$ appears exactly once.  This is the cycle form of the permutation $\s(R)$.

For example, if
$$R= ((\bar 7,  \bar 3,  3, 5), (\bar 7, 3, \bar 5, 3), (\bar 7, 3, \bar 5, 3), (5)),$$
then $$\s(R) = (1,8,13,6), (2,11,4,9) ,(3,12,5,10),(7) $$and $$ s(R) =  7,7,7,5,5,5,5,3,3,3,3,3,3.$$

It is not hard to see  that the letter in position $x$ of $R$ is barred if and only if  the letter that replaces it is  an excedance position of $\s(R)$.  This implies  that the number of bars of $R$ equals the number of excedances of $\s(R)$.

Now we  show that $s(R)$ is $\s(R)$-compatible.
Suppose $$s(R)_i = s(R)_{i+1}.$$ We must show $i \notin \Exd(\s(R))$.  Let $x$ be the $i$th largest position of $R$ and let $y$ be the $(i+1)$st largest.  Then $i$ is placed in position $x$ and $i+1$ is placed in position $y$.   Let $f(w)$ denote the first letter of a word $w$.  Then one of the following must hold
\begin{enumerate}
\item
$f(w_x) =  s(R)_i = f(w_y)$
 \item $f(w_x) =  \overline{s(R)_i}= f(w_y)$
\item $f(w_x) =  \overline{s(R)_i}$
and  $f(w_y) = s(R)_i$ .
\end{enumerate}

Cases (1) and (2):    Either $w_x  >_L w_y$ or  $w_x = w_y$ and $x$ is in an earlier necklace than $y$. Let $u$ be the position that follows $x$ clockwise and let $v$ be the position that follows $y$.
  Since  $w_u$ is the word obtained from $w_x$ by removing its first letter,  $w_v$ is obtained from $w_y$ by removing its first letter, and the first letters are equal, we conclude that   $u >v$.  Hence the letter that gets placed in position $u$ is smaller than the letter that gets placed in position $v$.  Since the letter  placed in position $u$ is  $\s(R)(i) $ and the letter placed in position $v$ is $\s(R)(i+1) $, we have $\s(R)(i) <\s(R)(i+1) $. Since  $i,i+1 \in \Exc(\s(R))$ or $i,i+1 \notin \Exc(\s(R))$, we conclude that $i \notin \Exd(\s(R))$.

Case (3):  Since the letter in position $x$ is barred we have $i \in \Exc(\s(R))$.  Since the letter in position $y$ is not barred, we have $i+1 \notin \Exc(\s(R))$.  Hence $i \notin \Exd(\s(R))$.

In all three cases we have that $s(R)_i = s(R)_{i+1}$ implies $i \notin \Exd(\s(R))$.  Hence $s(R)$ is $\s(R)$-compatible.

Now we show that the map $\eta$  is the inverse of $\phi$.  It is easy to see  that $\phi(\eta(R)) =R$.
Establishing $\eta(\phi(\s,s)) = (\s,s)$ means establishing $\s(\phi(\s,s)) = \s$ and  $s(\phi(\s,s)) = s$.  The latter equation is obvious.   To establish the former, let $R= \phi(\sigma,s)$.  Recall that $R$ is obtained by writing $\s$ in cycle form, barring the excedances,  and then replacing each  $i$ by $s_i$, keeping the bars intact.    Let $p_i$ be the position that $i$ occupied before the replacement.  By ordering the cycles of $\s$ so that the minimum elements of the  cycles increase, we get  an ordering of the necklaces in $R$, which we use to break ties between the $w_{p}$. To show that $\s(R) = \s$, we need
to show that  \begin{enumerate}
\item if $i<j$ then $w_{p_i}   \ge_L w_{p_j}$
\item  if $i<j$ and $w_{p_i}  = w_{p_j} $ then $i$ is in a cycle whose minimum element is less than that of the cycle containing $j$.
\end{enumerate}

To prove (1) and (2),  we will use the following implication:
\begin{equation} \label{claimimp} i<j \mbox{ and }w_{p_i} \le_L w_{p_j}  \implies f(w_{p_i}) = f(w_{p_j}) \mbox{ and } \s(i) < \s(j) ,\end{equation} (Recall $f(w) $ is the first letter of a word $w$.)

Proof of (\ref{claimimp}):  Since $i <j$ and $s$ is weakly decreasing, we have  $s_i \ge s_j$.    Since $w_{p_i} \le_L w_{p_j}$, we have $f(w_{p_i}) \le f(w_{p_{j}})$, which implies that  $s_i \le s_j$.   Hence $s_i=s_j$, which implies $$s_i = s_{i+1} = \dots = s_j.$$  It follows from this and  the fact that $s$ is $\s$-compatible that $k \notin \Exd(\s)$ for all $k= i, i+1, \dots, j-1$.   This implies that either $$i,i+1,\dots, j \in \Exc(\s) \mbox{ and  } \s(i) < \s(i+1) <\dots < \s(j)$$ or  $$i,i+1,\dots, j \notin \Exc(\s) \mbox{ and } \s(i) < \s(i+1) <\dots < \s(j)$$  or $$i \in \Exc(\s) \mbox { and }  j \notin \Exc(\s).$$ In the first case, $f(w_{p_i}) =\bar s_i =\bar s_j = f(w_{p_j})$.  In the second case,  $f(w_{p_i}) = s_i =s_j = f(w_{p_j})$.  In the third case $f(w_{p_i}) = \bar s_i >s_i = s_j = f(w_{p_j})$,  which is impossible.   Hence the conclusion of the implication (\ref{claimimp}) holds.

Now we use (\ref{claimimp}) to prove (1).  Suppose $i<j$ and $w_{p_i} <_L w_{p_j}$.  Then by (\ref{claimimp}), we have $\s(i) < \s(j)$ and  $f(w_{p_i}) = f(w_{p_j})$, which implies $w_{p_{\s(i)}} <_L w_{p_{\s(j)}}$.  Hence we can apply (\ref{claimimp}) again with $\s(i)$ and $\s(j)$ playing the  roles of $i$ and $j$.  This yields  $f(w_{p_{\s^2(i)}}) = f(w_{p_{\s^2(j)}})$,  $\s^2(i) <\s^2(j)$, and $w_{p_{\s^2(i)}} <_L w_{p_{\s^2(j)}}$.    Repeated application of (\ref{claimimp}) yields
$ f(w_{p_{\s^m(i)}}) = f(w_{p_{\s^m(j)}})$ for all $m$, which implies $w_{p_i} =w_{p_j}$, a contradiction.

Next we prove (2).  Repeated application of (\ref{claimimp}) yields $\s^m(i) < \s^m(j)$ for all $m$.  Hence the cycle containing $i$ has a smaller minimum than the cycle containing $j$.
\end{proof}

\begin{cor} \label{ornth}
For all $\lambda\vdash n$ and $j= 0,1,\dots,n-1$,
$$Q_{\lambda,j} = \sum_{R \in \mathfrak R_{\lambda,j}} \wt(R).$$
\end{cor}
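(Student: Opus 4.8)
The plan is to read the corollary off from Theorem~\ref{ornbij} by checking that the bijection $\phi$ is weight preserving. First I would use the identity $F_{\Exd(\s),n}=\sum_{s}x_{s_1}\cdots x_{s_n}$ recorded above, in which $s$ ranges over all $\s$-compatible sequences, together with the definition of $\Com_{\lambda,j}$, to rewrite
\[
Q_{\lambda,j}=\sum_{\substack{\s\in\sg_n\\ \exc(\s)=j\\ \lambda(\s)=\lambda}}F_{\Exd(\s),n}
=\sum_{\substack{\s\in\sg_n\\ \exc(\s)=j\\ \lambda(\s)=\lambda}}\ \sum_{s\text{ $\s$-compatible}}x_{s_1}\cdots x_{s_n}
=\sum_{(\s,s)\in\Com_{\lambda,j}}x_{s_1}\cdots x_{s_n}.
\]
Interchanging the two sums is harmless, since each $F_{\Exd(\s),n}$, and hence the whole expression, is a formal power series all of whose terms have degree $n$.

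Next I would observe that the monomial attached to a pair $(\s,s)$ is precisely the weight of its image $R=\phi(\s,s)$. By the definition of $\phi$, the letters of $R$ are $s_1,\dots,s_n$, each possibly barred; since the weight of a letter $a$ is $x_{|a|}$ and $|s_i|=s_i$ regardless of the bar, we get $\wt(R)=\prod_{i=1}^n x_{|s_i|}=x_{s_1}\cdots x_{s_n}$. Hence $Q_{\lambda,j}=\sum_{(\s,s)\in\Com_{\lambda,j}}\wt(\phi(\s,s))$, and since $\phi$ is a bijection onto $\mathfrak R_{\lambda,j}$ by Theorem~\ref{ornbij}, reindexing the sum by $R=\phi(\s,s)$ yields $Q_{\lambda,j}=\sum_{R\in\mathfrak R_{\lambda,j}}\wt(R)$, as desired.

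The corollary is thus a purely formal consequence of Theorem~\ref{ornbij}, and there is no substantial obstacle. The only points meriting a line of justification are that $\phi$ preserves weights --- immediate from its description and the convention $\wt(a)=x_{|a|}$ --- and that the rearrangement above of the (locally finite, degree-$n$) sum of monomials is legitimate.
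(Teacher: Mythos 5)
Your proof is correct, and it is exactly the argument the paper intends: the corollary is stated immediately after Theorem~\ref{ornbij} without a separate proof, precisely because it follows once one notes that $\phi$ is weight-preserving and unwinds the definition of $F_{\Exd(\s),n}$ as a sum over $\s$-compatible sequences. Your write-up fills in the same two observations the authors leave implicit.
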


Corollary~\ref{ornth}  has several interesting consequences.  For one thing,
it can be  used  to prove that the Eulerian quasisymmetric functions
$Q_{\lambda,j}$ are actually symmetric (see Section~\ref{secQ}).  It also
has the following useful consequence.

\begin{cor}
\label{dercor} For all $n,j,k$,  $$ Q_{n,j,k} =  h_k
Q_{n-k,j,0}.$$
\end{cor}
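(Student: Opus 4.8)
The plan is to read off the result from the ornament description in Corollary~\ref{ornth}. Since every permutation counted by $Q_{n,j,k}$ has cycle type $(\lambda,1^k)$ for a unique partition $\lambda$ of $n-k$ with no part equal to $1$, and every such $\lambda$ occurs, we get
\[
Q_{n,j,k}=\sum_{\substack{\lambda\vdash n-k\\ \lambda\text{ has no part }1}}Q_{(\lambda,1^k),j}=\sum_{\substack{\lambda\vdash n-k\\ \lambda\text{ has no part }1}}\ \sum_{R\in\mathfrak R_{(\lambda,1^k),j}}\wt(R),
\]
the second equality being Corollary~\ref{ornth}. For the same reason $Q_{n-k,j,0}=\sum_{\lambda}\sum_{R\in\mathfrak R_{\lambda,j}}\wt(R)$, the outer sum ranging over exactly the same set of partitions $\lambda$.

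The key step is a weight-preserving decomposition of ornaments. An ornament of type $(\lambda,1^k)$ with $\lambda$ having no part $1$ has exactly $k$ necklaces of size $1$, and by Definition~\ref{orndef} each of these is a single \emph{unbarred} letter; in particular these necklaces carry no bars, so all $j$ bars lie among the necklaces of size $\geq 2$. Deleting the $k$ size-$1$ necklaces therefore produces an ornament of type $\lambda$ with $j$ bars, and this is a bijection
\[
\mathfrak R_{(\lambda,1^k),j}\ \longleftrightarrow\ \mathcal M_k\times\mathfrak R_{\lambda,j},
\]
where $\mathcal M_k$ denotes the set of multisets of $k$ single unbarred letters. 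Since $\wt$ is a product over all letters of an ornament, it factors as $\wt(R)=\wt(M)\,\wt(R')$ under this bijection.

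Summing everything now yields
\[
Q_{n,j,k}=\Bigl(\sum_{M\in\mathcal M_k}\wt(M)\Bigr)\Bigl(\sum_{\lambda}\sum_{R'\in\mathfrak R_{\lambda,j}}\wt(R')\Bigr)=h_k\,Q_{n-k,j,0},
\]
because $\sum_{M\in\mathcal M_k}\wt(M)=\sum_{1\leq m_1\leq\cdots\leq m_k}x_{m_1}\cdots x_{m_k}=h_k$ and the two bracketed sums are independent. I do not expect a genuine obstacle here; the point that must be handled carefully is that a size-$1$ necklace is forced to be unbarred, which is precisely why the bar count $j$ — and hence the excedance parameter — transfers unchanged to $\mathfrak R_{\lambda,j}$. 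The degenerate cases $k=0$ and $n-k=0$ (both sides equal $h_n$ when $j=0$ and vanish otherwise) are immediate.
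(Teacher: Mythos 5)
Your proof is correct and is exactly the argument the paper has in mind: the paper states Corollary~\ref{dercor} as an immediate consequence of the ornament description in Corollary~\ref{ornth} without writing out details, and the details are precisely the decomposition you give — the $k$ size-$1$ necklaces of an ornament of type $(\lambda,1^k)$ are forced to be unbarred by Definition~\ref{orndef}, so stripping them off is a weight-multiplicative bijection onto $\mathcal M_k\times\mathfrak R_{\lambda,j}$, and $\sum_{M\in\mathcal M_k}\wt(M)=h_k$.
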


It follows from Corollary~\ref{dercor} that Theorem~\ref{introsymgenth}
is equivalent to
\begin{equation*} \label{symgen2}
\sum_{n,j \ge 0} Q_{n,j,0} t^j z^n = \frac{1}{1-\sum_{n \ge 2} t [n-1]_t h_n z^n}.
\end{equation*}
We rewrite this as,
$$1 = (\sum_{n,j \ge 0} Q_{n,j,0} t^j z^n) ( 1-\sum_{n \ge 2} t [n-1]_t h_n z^n),$$
which is equivalent to
$$\sum_{n,j \ge 0} Q_{n,j,0} t^j z^n = 1+ \sum_{n\ge 2}\sum_{\scriptsize\begin{array}{c} j\ge 0
\\ 0 \le m \le n-2\end{array}} \!\!\!Q_{m,j,0} t^{j+1} [n-m-1]_t h_{n-m}\,\, z^n.$$
For $n \ge 2$, we have
\begin{eqnarray*}\sum_{\scriptsize\begin{array}{c} j\ge 0; \\ 0 \le m \le n-2\end{array}} \!\!\!\!Q_{m,j,0} t^{j+1} [n-m-1]_t h_{n-m} &=& \sum_{\scriptsize\begin{array}{c} j\ge 0 \\ 0 \le m \le n-2\end{array}}\! \!\!\!Q_{m,j,0} h_{n-m} \sum_{i=j+1}^{j+n-m-1} t^i \\ &=& \sum_{i \ge 0} t^i \!\!\!\!\sum_{\scriptsize\begin{array}{c}0 \le m \le n-2\\ i-n+m < j <i \end{array}}\! \!\!\!Q_{m,j,0} h_{n-m}
\end{eqnarray*}
Hence Theorem~\ref{introsymgenth} is equivalent to the recurrence relation
\begin{equation} \label{rr}
Q_{n,j,0} = \sum_{\scriptsize \begin{array}{c}0 \le m \le n-2
\\ j+m-n < i < j \end{array}} Q_{m,i,0}   h_{n-m}, \end{equation}
for $n \ge 2$ and $j \ge 0$.

\subsection{Step 2: banners} \label{secban}

In order to establish the recurrence relation (\ref{rr}), we  introduce another type
of configuration, closely related to ornaments.

\begin{defn}  \label{bandef} A  {\em banner} is a word $B$ over alphabet $\mathcal A$    such that 
the last letter of $B$ is unbarred and 
for all $i =1,\dots ,\ell(B)-1$, \begin{enumerate}
\item if $B(i)$ is barred then $|B(i)| \ge |B(i+1)|$
\item if $B(i)$ is unbarred then $|B(i)| \le |B(i+1)|$,
\end{enumerate}
where $B(i)$ denotes the $i$th letter of $B$ and $\ell(B)$ denotes the length of $B$. 
\end{defn}

A {\em Lyndon word} over an ordered alphabet is a  word that is
strictly lexicographically larger than all its circular rearrangements.  A
{\em Lyndon factorization} of a word over an ordered alphabet is a
factorization into a weakly lexicographically  increasing sequence
of  Lyndon words.  It is a result of Lyndon (see  \cite[Theorem~5.1.5]{p}) that every
word has a unique Lyndon factorization.  The {\em Lyndon type} $\lambda(w)$ of a word $w$
is the partition whose parts are the lengths of the words in its
Lyndon factorization.

To apply the theory of Lyndon words to banners, we use the ordering of $\mathcal A$ given in (\ref{alphorder}).  Using this order,  the  banner $B:= \bar 2 2\bar 7 5\bar 7\bar 547 $  has Lyndon factorization   $$\bar 2 2\cdot \bar 7 5 \cdot\bar 7\bar 547 .$$ So the Lyndon type  of $B $ is the partition $(4,2,2)$.

The weight $\wt(B)$ of a banner $B$ is the product of the weights of
its letters, where as before the weight of a letter $a$ is $x_{|a|}$.  For each partition $\lambda$ and nonnegative
integer $j$, let $\mathfrak B_{\lambda,j}$ be the set of banners
with $j$ bars whose Lyndon type is $\lambda$.

\begin{thm} \label{banprop} For each partition $\lambda$ and nonnegative
integer $j$, there is a weight-preserving bijection
$$\psi: \mathfrak B_{\lambda,j} \to \mathfrak R_{\lambda,j}.$$
Consequently, $$Q_{\lambda,j} = \sum_{B \in  \mathfrak B_{\lambda,j}} \wt(B).$$ \end{thm}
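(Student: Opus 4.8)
The plan is to realize $\psi$ as the classical Lyndon-factorization correspondence, restricted to banners. Given a banner $B$, I would form its Lyndon factorization $B=B_1B_2\cdots B_\ell$ with respect to the order (\ref{alphorder}), so that $B_1\le_L B_2\le_L\cdots\le_L B_\ell$, and set $\psi(B):=\{(B_1),\dots,(B_\ell)\}$, the multiset of circular words obtained by wrapping up the Lyndon factors. Since $\psi(B)$ carries exactly the multiset of letters of $B$, it is immediate that $\wt(\psi(B))=\wt(B)$, that $\psi(B)$ has $j$ bars, and (by the definition of Lyndon type) that $\psi(B)$ has type $\lambda$. It is classical, and follows from uniqueness of the Lyndon factorization (\cite[Ch.~5]{p}), that $B\mapsto\{(B_1),\dots,(B_\ell)\}$ is a bijection from all words over $\mathcal A$ onto all multisets of primitive circular words over $\mathcal A$, with inverse sending a multiset of primitive circular words to the concatenation, in weakly increasing lexicographic order, of the (unique) Lyndon representatives of its members. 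Thus everything reduces to the single equivalence
\[
B\text{ is a banner}\iff\text{every }(B_i)\text{ is a bicolored necklace,}
\]
after which the displayed formula for $Q_{\lambda,j}$ drops out of Corollary~\ref{ornth}.

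To set up this equivalence I would first record some elementary facts about Lyndon words over the ordered alphabet (\ref{alphorder}). A Lyndon word $w$ begins with the $\le_L$-largest letter occurring in it, since otherwise the rotation of $w$ beginning with that letter would be lexicographically larger. If in addition $\ell(w)\ge 2$ and $w$ satisfies the banner inequalities (1),(2) at its internal positions, then $w_1=\bar M$ where $M$ is the largest absolute value occurring in $w$: were $w_1$ the \emph{unbarred} letter $M$, repeated use of (2) would force every letter of $w$ to equal $M$, contradicting primitivity. Dually, if $(c)$ is a necklace of size at least $2$, then its Lyndon representative ends in an unbarred letter: the $\le_L$-largest letter of $c$ is $\bar M$ (size $\ge 2$ rules out the all-$M$ necklace), the Lyndon representative begins at the start of the longest run of $\bar M$'s, hence ends at the letter immediately preceding that run cyclically, and that letter, being followed by $\bar M$, is unbarred by (1),(2). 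Finally, I would check from the weakly increasing ordering of the factors and the banner inequalities that a length-one Lyndon factor of a banner is necessarily unbarred.

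For the forward implication, assume $B$ is a banner. The inequalities (1),(2) hold at all positions $1,\dots,\ell(B)-1$, hence at every pair of positions internal to a block $B_i$; so each $(B_i)$ already meets the necklace conditions except possibly at its seam, the pair (last letter of $B_i$, first letter $B_i(1)$). If $\ell(B_i)=1$, that letter is unbarred by the last fact above, so $(B_i)$ is a necklace. If $\ell(B_i)\ge 2$, then $B_i(1)=\bar M$ with $M$ the largest absolute value in $B_i$; if the last letter $a$ of $B_i$ is unbarred the required seam inequality is $|a|\le M$, automatic, while if $a$ is barred then $i<\ell$ (as $B$ ends unbarred), the banner condition at the junction $B_i\,|\,B_{i+1}$ gives $|a|\ge|B_{i+1}(1)|$, and $B_i\le_L B_{i+1}$ gives $|B_i(1)|\le|B_{i+1}(1)|$, whence $|a|\ge M=|B_i(1)|$, which is exactly the seam condition. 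So every $(B_i)$ is a necklace. For the converse I would run this in reverse: starting from a multiset of necklaces, concatenate their Lyndon representatives in weakly increasing order to get a word $B$; the internal inequalities are inherited from the necklaces, the ``last letter unbarred'' requirement follows from the dual fact above (and from size-one necklaces being unbarred), and each seam inequality between $B_i$ and $B_{i+1}$ follows from a short comparison using $B_i\le_L B_{i+1}$ and the fact that $B_i(1)$ is the $\le_L$-largest letter of $B_i$. Uniqueness of the Lyndon factorization then shows $\psi(B)$ recovers the original multiset, giving surjectivity, and injectivity is inherited from the classical correspondence.

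\textbf{Main obstacle.} The genuinely delicate part is the seam/endpoint bookkeeping: showing that a barred last letter of a block $B_i$ cannot be ``used up'' by its banner inequality in a way incompatible with $(B_i)$ being a necklace, and that length-one Lyndon factors of a banner are forced to be unbarred. Both rest on the interaction between the weakly increasing ordering of Lyndon factors and the banner inequalities (together with the observation that the first letter of a Lyndon word is its $\le_L$-maximum), and it is the handful of small cases there, rather than any single conceptual difficulty, that needs to be handled carefully.
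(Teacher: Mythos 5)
Your proposal follows essentially the same route as the paper's proof: take the (unique) Lyndon factorization of a banner with respect to the order (\ref{alphorder}), wrap each factor into a circular word, and observe that this sets up a type-, weight-, and bar-preserving correspondence because the Lyndon factors of a banner are precisely the Lyndon representatives of bicolored necklaces. Your write-up is if anything a bit more careful at the edges than the paper's terse argument --- in particular you isolate the length-one factor case, whereas the paper's phrase ``the last letter of $B_i$ is strictly less than the first letter of $B_i$'' literally applies only when $\ell(B_i)\ge 2$ --- but the decisive idea and the structure of the bijection are the same.
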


\begin{proof} First note that there is a natural weight-preserving bijection from the set of Lyndon banners  of length $n$ to the set of necklaces of size $n$.  To go from a Lyndon banner $B$ to a necklace $(B)$ simply  attach  the ends of  $B$ so that the left end follows the right end when read in a clockwise direction.    To go from a necklace back to a banner simply find the lexicographically largest linear word obtained by reading the circular word in a clockwise direction.
The number of bars of $B$  clearly is the  same as that of $(B)$.

Let $B \in \mathfrak B_{\lambda,j} $ and let
$$B = B_1 \cdot B_2 \cdots B_k$$ be the unique Lyndon factorization of $B$.  Note that each $B_i$ is a  Lyndon  banner.  To see this we need only check that the last letter of each $B_i$ is unbarred.  The last letter of $B_k$ is the last letter of $B$; so it is clearly unbarred.  For $i < k$, the  last letter of $B_i$ is strictly less than the first letter of $B_i$, which is less than or equal to the first letter of $B_{i+1}$.  Since the last letter of $B_i$ immediately precedes the first letter of $B_{i+1}$ in the banner $B$,  it must be unbarred.

 Now define
$\psi(B)$ to be the ornament whose necklaces are $$(B_1), (B_2), \dots, (B_k).$$ This map is clearly weight preserving, type preserving, and bar preserving.   To go from an ornament back to a banner simply arrange the Lyndon banners obtained from the necklaces in the ornament in weakly increasing lexicographical order and then concatenate.
\end {proof}

\subsection{Step 3: the recurrence relation} \label{recsec}
Now we prove the recurrence relation (\ref{rr}), which we have shown is equivalent to Theorem~\ref{introsymgenth}.  It is convenient to 
 rewrite (\ref{rr}) as
\begin{equation} \label{rr2}
Q_{n,j,0} = \sum_{\scriptsize \begin{array}{c}0 \le m \le n-2
\\1\le b < n-m \end{array}} Q_{m,j-b,0} \,  h_{n-m}.\end{equation}

Define a {\em marked sequence} $(\omega,b)$  to be a weakly
increasing finite sequence $\omega$ of positive integers together
with an integer  $b$ such that $1 \le b < \mbox
{length}(\omega)$.  (One can visualize this as a weakly increasing sequence with a mark above one of its elements other than  the last.) For $n \ge 2$, let $\mathfrak M_n$ be the set of marked
sequences of length $n$.  For $n \ge 0$,  let ${\mathfrak B}^0_n$ be the set
of banners of length $n$ whose Lyndon type has no parts of size
$1$.    It will be convenient to consider the empty word to be a banner of length $0$, weight $1$, with no bars, and whose Lyndon type is the partition of $0$ with no parts.  So ${\mathfrak B}^0_0$  consists of a single element,  namely the empty word.  Note that ${\mathfrak B}^0_1$ is the empty set.

Theorem~\ref{banprop} and
Theorem~\ref{bij} below suffice to establish the
recurrence relation (\ref{rr2}).  Indeed, by Theorem~~\ref{banprop} the monomial terms on the left of (\ref{rr2}) are the weights of banners in $\mathfrak B_n^{0}$  with $j$ barred letters. By Theorem~\ref{bij},
these banners correspond bijectively to pairs $(B^\prime,(\omega,b))$ such that $B^\prime\in\mathfrak B_m^0$ has $j-b$ barred letters, $( \omega,b) \in \mathfrak M_{n-m}$, and $\wt(B^\prime)\wt(\omega)$ equals the weight of the corresponding banner in $\mathfrak B_n^{0}$.   Since the monomial terms of $h_{n-m}$ are the weights of weakly increasing sequences of length $n-m$, we have that the monomial terms of $Q_{m,j-b,0}h_{n-m}$ in the sum on the right of  (\ref{rr2})  are also of the form
$\wt(B^\prime)\wt(\omega)$, where $B^\prime\in\mathfrak B_m^0$ has $j-b$ barred letters and $( \omega,b) \in \mathfrak M_{n-m}$.

\begin{thm} \label{bij} For all $n \ge 2 $, there is a bijection
$$\gamma:  {\mathfrak B}^0_{n} \to \biguplus_{0 \le m\le n-2}
{\mathfrak B}^0_{m} \times \mathfrak M_{n-m},$$ such that  if
$\gamma(B) = (B^\prime,(\omega,b)) $ then \bq \label{wteq} \wt(B) =
\wt(B^\prime)\wt(\omega)\eq and \bq \label{bbbeq} \bbar(B) = \bbar(B^\prime) + b,\eq
where $\bbar(B)$ denotes the number of bars of $B$. \end{thm}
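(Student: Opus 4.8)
The strategy is to build $\gamma$ by peeling off the Lyndon factor that contains the first letter of $B$, or more precisely by isolating a prefix that will become the "marked sequence" part while what remains forms a smaller banner in $\mathfrak B^0_m$. Given $B \in \mathfrak B^0_n$, write its Lyndon factorization $B = B_1 \cdot B_2 \cdots B_k$ into weakly lexicographically increasing Lyndon banners. Since $B \in \mathfrak B^0_n$, no $B_i$ has length $1$, so $B_1$ has length $\ge 2$ and its first letter is barred (being a Lyndon word of length $\ge 2$ over the order $1<\bar1<2<\bar2<\cdots$, its first letter must be the strict maximum, and one checks using the banner conditions that this forces a bar — in fact $B_1$'s first letter is the largest letter of $B_1$ in absolute value, and by banner condition (1)/(2) the maximum absolute-value letter at the start must be barred). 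The idea is: look at the maximal weakly increasing run of absolute values at the very front of $B_1$. Because $B_1(1)$ is barred, $|B_1(1)| \ge |B_1(2)|$; find where the absolute values first strictly increase. I would extract from $B_1$ (and possibly continuing into $B_2, \dots$) a suitable weakly increasing sequence $\omega$ together with a mark $b$, and let $B'$ be the concatenation of what's left, which I claim lies in $\mathfrak B^0_m$.

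**Key steps.** First, analyze the structure of a Lyndon banner of length $\ell \ge 2$: its first letter is barred with maximal absolute value, and I want to understand the interface between $B_i$ and $B_{i+1}$ in the factorization. Second, define the forward map: isolate the "offending" weakly increasing block — concretely, since the last letter of $B$ is unbarred, reading $B$ one sees runs; I would locate the first position where a barred letter is followed by something that breaks the Lyndon structure when that block is removed, and peel off a weakly increasing sequence of barred-then-unbarred letters reassembled as $\omega$, with $b$ recording the number of bars removed (consistent with \eqref{bbbeq}). Third, verify the weight identity \eqref{wteq}, which is automatic since $\gamma$ just redistributes the same multiset of letters (with bars removed from $\omega$). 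Fourth, construct the inverse: given $(B', (\omega, b)) \in \mathfrak B^0_m \times \mathfrak M_{n-m}$, reinsert $\omega$ into $B'$ — bar the first $b$ letters of $\omega$ and splice the resulting word into $B'$ at the unique location dictated by the Lyndon order on the factors — and check that the result is a banner of length $n$ with Lyndon type having no parts of size $1$. Fifth, check the two composites are identities; here the uniqueness of Lyndon factorization and the strict-lex-maximality characterization of Lyndon words do the bookkeeping.

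**Main obstacle.** The delicate part is pinning down \emph{exactly} which prefix/block to peel off so that (a) the remainder $B'$ still has a valid Lyndon factorization with \emph{no parts of size $1$} — it would be easy to accidentally create a length-$1$ Lyndon factor — and (b) the map is genuinely invertible, i.e. the reinsertion position is uniquely determined. This is essentially the content of the Stembridge-type bijection alluded to in the section introduction, generalized to the bicolored setting, and getting the barred/unbarred bookkeeping at the Lyndon-factor boundaries right (ensuring the last letter of each factor stays unbarred, that condition (1)/(2) of Definition~\ref{bandef} is preserved across the splice, and that $1 \le b < \text{length}(\omega)$ corresponds to a legitimate marked sequence) is where the real work lies. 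I expect the argument to proceed by a careful case analysis on whether the peeled block stays within $B_1$ or spills into later factors, with the "no part of size $1$" condition forcing the marked-sequence length to be at least $2$, which is precisely why the mark $b$ is constrained to $1 \le b < \text{length}(\omega)$.
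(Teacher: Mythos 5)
Your high-level plan---peel a block off some distinguished factor of a factorization of $B$, record it as a marked sequence, and let the rest be $B'$---is in the right spirit, but there are two genuine gaps.

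First, the paper does \emph{not} use the Lyndon factorization to carry out this step. It uses instead the \emph{increasing factorization} of D\'esarm\'enien and Wachs (Proposition~\ref{deswa}): a factorization $B = B_1\cdots B_k$ with each $B_i = a_i^{j_i}u_i$, $a_i$ strictly exceeding every letter of $u_i$, and $a_1\le\cdots\le a_k$. This factorization exists and is unique exactly when the Lyndon type has no part of size $1$, i.e.\ for $B\in\mathfrak B^0_n$. You should note the Lyndon and increasing factorizations coincide for words with distinct letters (which is why, in the Stembridge picture you invoke, the distinction is invisible), but they differ in general; the paper's own example $87886699558795$ has increasing factorization $87\cdot 8866\cdot 995587\cdot 95$ but Lyndon factorization $87\cdot 8866\cdot 99558795$. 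A Lyndon banner can have the largest absolute value occur nonconsecutively, so your ``maximal weakly increasing run at the front of $B_1$'' does not cleanly isolate a reusable block, and your own worry that the extraction ``possibly continues into $B_2,\dots$'' signals the problem. In the increasing factorization this never happens: the paper peels exclusively from the \emph{last} factor $B_k=a^p i_1\cdots i_l$ (not the first), extracts either all of $B_k$ or a middle run of $i_r,\dots,i_s$ determined explicitly by comparing bars and absolute values against $i_{r-1}$, and never touches the earlier factors.

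Second---and more fundamentally---your proposal does not actually define $\gamma$; you openly defer the question of which block to peel off and how to reinsert it, which is the entire content of the theorem. The subtle points the paper handles are precisely the ones you flag as ``where the real work lies'': determining the index $s$ via a two-part case split on whether the run is stopped by a large unbarred letter or by a barred letter, arguing that the residual $B'$ still admits an increasing factorization (equivalently has no Lyndon part of size~$1$), showing $1\le b<\operatorname{length}(\omega)$ in each case, and defining the inverse via a dichotomy on whether $\omega_{n-m}$ exceeds the principal letter of the last residual factor, which determines whether a new factor is created or the letters are spliced into the existing last factor. Without these details there is no bijection to check. So while your instincts about \eqref{wteq} being automatic and the ``no size-$1$ parts'' constraint being the crux are sound, the proposal as written is a plan rather than a proof, and the plan points at the wrong factorization and the wrong end of it.
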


We will make use of another type of factorization of a
word over an ordered alphabet
used by D\'esarm\'enien and Wachs \cite{dw}.   For any alphabet $A$, let
$A^+$ denote the set of words over $A$ of positive length.

\begin{defn}[\cite{dw}]  An {\em increasing} factorization of a word $w$ of positive length over a
totally ordered alphabet $A$ is a factorization
$w=w_1 \cdot w_2\cdots  w_k$ such that
\begin{enumerate}
\item each $w_i$ is of the form $a_i^{j_i} u_i$, where $a_i \in A$, $j_i > 0$ and $$u_i \in \{x \in A : x < a_i\}^+$$
\item $a_1 \le a_2 \le \dots \le a_k$.
\end{enumerate}
\end{defn}

For example, $87\cdot 8866\cdot 995587 \cdot 95$ is an increasing factorization of
the word $w:= 87886699558795$ over the totally ordered alphabet of positive integers.
 Note that this factorization is different from the Lyndon factorization of $w$, which is   $87\cdot 8866\cdot 99558795$

 \begin{prop}[{\cite[Lemmas 3.1 and  4.3 ]{dw}}]  \label{deswa} A word over an ordered alphabet admits an increasing factorization if and only if its Lyndon type has no parts of size 1.   Moreover,  the increasing factorization is unique.
 \end{prop}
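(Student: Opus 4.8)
The plan is to prove uniqueness directly from the shape of the blocks, and to obtain the existence criterion by passing back and forth between increasing factorizations and Lyndon factorizations. Throughout, ``Lyndon word'' and ``Lyndon factorization'' are as in the excerpt (a word strictly lexicographically larger than all its proper circular rearrangements, resp.\ the unique factorization of \cite{p}), and all words are over $\mathcal A$ with the order~(\ref{alphorder}). \emph{Uniqueness} is immediate: if $w=w_1\cdots w_k$ is an increasing factorization with $w_i=a_i^{j_i}u_i$, then $w$ forces the first block, since one must have $a_1=w(1)$, $j_1$ equal to the length of the maximal initial run of $a_1$'s in $w$ (the letter following $u_1$, if any, is the head $a_2\ge a_1$ and so cannot lie in $u_1$), and $u_1$ equal to the maximal run of letters $<a_1$ immediately after it; removing $w_1$ and inducting on $|w|$ then pins down the whole factorization.

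The engine for existence is the following. \emph{Key Lemma: every Lyndon word $\ell$ with first letter $a$ and $|\ell|\ge2$ has an increasing factorization, and in it every block has head $a$.} One proves this by running the greedy left-to-right procedure suggested by the uniqueness argument and checking it never aborts. Since $\ell$ is not a power of a single letter, its maximal initial run $a^{j}$ is proper, and the letter just after it begins a proper circular rearrangement of $\ell$, hence is $\le a$ and (being $\ne a$) is $<a$; so a first block $a^{j}u_1$ splits off, with $u_1$ the maximal run of letters $<a$ that follows. After removing one or more whole blocks, one is left with a proper suffix $v$ of $\ell$; its first letter begins a proper circular rearrangement of $\ell$, hence is $\le a$, and it also is $\ge a$ because it follows a maximal run of letters $<a$, so it equals $a$. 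Thus the next block again has head $a$ and splits off exactly as the first one did --- \emph{unless} $v$ is a pure run $a^{m}$. But $v=a^{m}$ would force $\ell=a^{j_1}\gamma\,a^{m}$ with $\gamma$ nonempty and $\gamma(1)<a$, and then comparing $\ell$ with its circular rearrangement $a^{m}a^{j_1}\gamma=a^{\,j_1+m}\gamma$ shows the two agree in the first $j_1$ places, while at place $j_1+1$ the rearrangement has $a$ and $\ell$ has $\gamma(1)<a$; so $\ell$ is lexicographically smaller than one of its circular rearrangements, which is impossible. Hence the procedure terminates with a genuine increasing factorization, all of whose blocks have head $a$.

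Both directions of the proposition now follow. \emph{If} the Lyndon type of $w$ has no part of size $1$, write the Lyndon factorization $w=\ell_1\cdots\ell_m$ with every $|\ell_r|\ge2$; by the Key Lemma each $\ell_r$ has an increasing factorization whose blocks all have head $\ell_r(1)$, and concatenating these gives a factorization of $w$ into blocks of the correct shape whose sequence of heads is weakly increasing (the first letters of the factors of a weakly lexicographically increasing Lyndon factorization are weakly increasing). \emph{Conversely}, if $w=w_1\cdots w_k$ is an increasing factorization, then each block $w_i=a_i^{j_i}u_i$ is itself a Lyndon word of length $\ge2$: it is primitive (a nontrivial period would force the letter $a_i$ into $u_i$ or make $w_i$ a pure power of $a_i$, both impossible since $u_i$ is nonempty with letters $<a_i$), and it is lexicographically larger than each of its proper circular rearrangements (those starting inside $u_i$ begin with a letter $<a_i$, and those starting later inside the run $a_i^{j_i}$ have a strictly shorter initial run of $a_i$'s). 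So $w$ is a product of Lyndon words, each of length $\ge2$; and when a word is presented as a product of Lyndon words its Lyndon factorization is obtained by concatenating consecutive such factors (this is part of the structure theory in \cite{p}), so every Lyndon factor of $w$ is a concatenation of consecutive $w_i$'s and therefore has length $\ge2$, whence the Lyndon type of $w$ has no part of size $1$.

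The decisive step --- and the only point where the full Lyndon hypothesis is used, rather than merely the ``head, then strictly smaller letters'' shape of a single block --- is the argument in the Key Lemma excluding the possibility that the greedy procedure ever leaves behind a pure run $a^{m}$: that is precisely the circular-rearrangement comparison above. Everything else is bookkeeping (uniqueness, and the concatenation in the ``if'' direction) or a routine appeal to the theory of Lyndon words in \cite{p}.
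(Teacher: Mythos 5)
Your proof is correct, and it is necessarily a different route from the paper's, because the paper offers no argument at all for this proposition: it is quoted from D\'esarm\'enien and Wachs \cite{dw} (their Lemmas 3.1 and 4.3). What you supply is a self-contained replacement. The uniqueness argument (the first block is forced to be the maximal initial run of $w(1)$ followed by the maximal run of strictly smaller letters, then induct) is exactly right, and your Key Lemma is sound: the greedy splitting of a Lyndon word $\ell$ of length at least $2$ into blocks with common head $\ell(1)$ uses the Lyndon hypothesis in precisely two places — to see that any letter beginning a proper rotation is at most $\ell(1)$, and, decisively, to rule out a leftover pure power $a^m$ via the rotation $a^{m+j_1}\gamma$ versus $\ell=a^{j_1}\gamma a^m$ — and both comparisons are carried out correctly. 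The converse is also fine: each block $a_i^{j_i}u_i$ is indeed a Lyndon word of length at least $2$ (your two-case rotation comparison covers all proper rotations, and the separate primitivity remark is actually redundant, since the strict comparisons already imply primitivity).

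The one place where you lean on material beyond the bare statement of Lyndon's theorem as quoted in the paper is the assertion that the Lyndon factorization of a word presented as a product of Lyndon words is obtained by concatenating consecutive given factors. This is a genuine standard fact, provable in two lines from the classical merging lemma (the concatenation of two adjacent Lyndon factors occurring in the ``wrong'' order is again Lyndon) together with uniqueness of the Lyndon factorization, and it is within the scope of \cite{p}; still, since it is the load-bearing step of the ``only if'' direction, it would be worth either sketching that merging argument or giving a precise pointer. Similarly, in the ``if'' direction you quietly use that the first letters of the Lyndon factors are weakly increasing; this follows immediately from the weak lexicographic monotonicity of the factors (equal first letters in the prefix case), so it is fine, but a sentence saying so would make the concatenation of head-constant block decompositions airtight. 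Finally, note that the proposition is stated for an arbitrary totally ordered alphabet, not just $\mathcal A$ with the order (\ref{alphorder}); your argument is insensitive to this, so nothing changes, but the restriction in your opening sentence is unnecessary.
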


\begin{proof}[Proof of Theorem~\ref{bij}]     Given a banner $B$ in $ {\mathfrak B}^0_{n} $, take its unique increasing factorization $$B=B_1 \cdot B_2 \cdots B_k,$$ whose existence is guaranteed  by Proposition~\ref{deswa}.    We will extract an increasing word from $B_k$.
We have
$$B_k= a^p i_1\cdots i_l,$$
where $p,l \ge 1$, and $a > i_1,i_2,\dots,i_l.$  For convenience set $i_0 = a$.   It follows from the definition of banner that $a$ is a barred letter.      Determine the unique index $r$ such that $i_1\ge \dots \ge i_{r-1}$ are barred, while $i_{r} $ is unbarred.     Then let $s $ be  the unique index that satisfies  $r \le s \le l$ and either
 \begin{enumerate}  \item   $i_r \le i_{r+1} \le \dots \le i_{s}$ are all unbarred and less than $i_{r-1}$ (note that $i_s$ can be equal to $i_{r-1}$ in absolute value), while $i_{s+1} > i_{r-1}$ if $s <l$, or
 \item $i_r \le i_{r+1} \le \dots \le i_{s-1}$  are all unbarred and less  than  $i_{r-1}$, and $i_s$ is barred and  less than or equal to $i_{r-1}$.
 \end{enumerate}

  {\bf Case 1: } $s= l$.  In this case (1) must hold since the last letter of a banner is unbarred.  Let $\omega$ be the weakly increasing rearrangement of $B_k$ with bars removed and let
\bq \label{case1} B^\prime = B_1 \cdot B_2 \cdots B_{k-1}.\eq
  To see that $B^\prime$ is a banner, one need only note that the last letter of $B_{k-1}$ is unbarred (as is the last letter of each $B_i$).  Since (\ref{case1}) is an increasing factorization of $B^\prime$, it follows from Proposition~\ref{deswa} that the Lyndon type of $B^\prime$ has no parts of size 1.  Let $$\gamma(B) = (B^\prime, (\omega,b)),$$ where $b$ is the number of bars of $B_k$.   Clearly $p \le b < p+l $; so $(\omega,b) $
  is a marked sequence for which (\ref{wteq}) and (\ref{bbbeq}) hold.
  For example, if
  $$B=\bar 2\bar 2\bar 2 1 \cdot \bar 5 22 \bar 4 2\cdot  \bar 8\bar 8 \bar 7 \bar 5 2235.$$
  then $a=\bar 8$, $p=2$, $ r= 3$ and $s= 6 = l$.  It follows that $$(\omega, b) = (22355788, 4) \mbox{ and }
  B^\prime = \bar 2\bar 2\bar 2 1 \cdot \bar 5 22 \bar 4 2.$$

{\bf  Case 2: }  $s < l$. In this case either (1) or (2) can hold.  Let $b$ be the number  of  bars of $i_1,i_2, \dots, i_s$.  Clearly $b <s$.   If (1) holds then $r>1$ since $i_{s+1} > i_{r-1}$; so $b>0$.   If (2) holds  clearly $b > 0$.   Let $\omega$ be the increasing rearrangement of $i_1,i_2, \dots, i_s$ with bars removed,  let $B_k^\prime =
 a^p i_{s+1} \cdots i_{l}$, let
 \bq \label{case2} B^\prime =  B_1 \cdot B_2 \cdots B_{k-1} \cdot B^\prime_k ,\eq
  and let $$\gamma(B) = (B^\prime, (\omega,b)).$$  Clearly  $B^\prime $ is a banner with increasing factorization given by (\ref{case2})  and $(\omega,b) $
 is a marked sequence for which (\ref{wteq}) and (\ref{bbbeq}) hold.
  For example, if
  $$B=\bar 2\bar 2\bar 2 1 \cdot \bar 5 22 \bar 4 2\cdot  \bar 8\bar 8 \bar 7 \bar 5 2235\bar 6 24$$ then $a=\bar 8$, $p=2$, $ r= 3$, and $s = 6 < l$.  Hence
  $$(\omega, b) = (223557, 2) \mbox{ and }
  B^\prime = \bar 2\bar 2\bar 2 1 \cdot \bar 5 22 \bar 4 2 \cdot \bar 8 \bar 8 \bar 6 24.$$
   If
  $$B=\bar 2\bar 2\bar 2 1 \cdot \bar 5 22 \bar 4 2\cdot  \bar 8\bar 8 \bar 7 \bar 5 223\bar 5 4\bar 6 24$$ then $a=\bar 8$, $p=2$, $ r= 3$, and $s = 6 < l$.  Hence
  $$(\omega, b) = (223557, 3) \mbox{ and }
  B^\prime = \bar 2\bar 2\bar 2 1 \cdot \bar 5 22 \bar 4 2 \cdot \bar 8 \bar 8 4\bar 6 24.$$

 In order to prove that the map $\gamma$  is a bijection, we describe its inverse.
 Let  $ (B^\prime, (\omega,b) ) \in {\mathfrak B}^0_{m} \times \mathfrak M_{n-m}$, where
$0 \le m\le n-2$.
Let
  $$B^\prime = B_1 \cdot B_2 \cdots B_{k-1}$$
  be the unique increasing factorization of $B^\prime$,  whose existence is guaranteed by Proposition~\ref{deswa}, and
  let $a$ be the largest letter of $B_{k-1}$.  We also let $\omega_i$ denote the $i$th letter of $\omega$.

 {\bf Case 1: } $|a| \le \omega_{n-m}$.  Let
 $$B_{k} = \bar{\omega}_{n-m} \cdots \bar{\omega}_{n-m-b+1} \omega_1 \cdots \omega_{n-m-b}.$$
 Clearly $B_{k}$ is a  banner with exactly $b$ bars that are placed on a rearrangement of $\omega$.  Now let
 $$B = B_1 \cdot B_2 \cdots B_{k-1} \cdot B_{k}.$$  It is easy to see that this is an increasing decomposition of a banner and that equations (\ref{wteq}) and (\ref{bbbeq}) hold.

 {\bf Case 2: } $|a| > \omega_{n-m}$.  In this case we expand the banner $B_{k-1}$ by inserting the letters of $\omega$ in the following way.  Suppose
 $$B_{k-1} = a^p j_1\cdots j_l,$$
  where $p,l \ge 1$, and $a > j_i$ for all $i$.
 If $j_1> \bar{\omega}_{n-m-b+1}$  let $$\tilde B_{k-1} = a^p  \bar{\omega}_{n-m} \cdots \bar{\omega}_{n-m-b+1} \omega_1 \cdots \omega_{n-m-b}  j_1,\dots,j_l.$$  Otherwise if $j_1\le \bar \omega_{n-m-b+1}$ let
 \beq\tilde B_{k-1} = a^p  \bar{\omega}_{n-m} \cdots \bar{\omega}_{n-m-b+2} \omega_1 \cdots\omega_{n-m-b}\ \bar{\omega}_{n-m-b+1}  j_1,\dots,j_l.\eeq
 In both cases  $\tilde B_{k-1}$ is a banner.  Now let
 $$B = B_1 \cdot B_2 \cdots  B_{k-2} \cdot \tilde B_{k-1}.$$  It is easy to see that this is an increasing decomposition of a banner and that equations (\ref{wteq}) and (\ref{bbbeq}) hold.
It is also easy to check that the map $( B^\prime, (\omega,b)) \mapsto B$ is the inverse of $\gamma$.
\end{proof}

\begin{remark} The bijection $\gamma$ when restricted to banners with distinct letters (permutations) reduces to   a
bijection that Stembridge
\cite{stem1} constructed to study the representation of the
symmetric group on the cohomology of the toric variety associated
with the type A Coxeter complex (see Section \ref{repthsec}).    For words with distinct letters, the notions of  decreasing decomposition and Lyndon decomposition  coincide.   In \cite{stem1} the Lyndon decomposition corresponds to the cycle decomposition of a permutation, and marked sequences are defined differently there.  
There is, however,  a close connection between our notion of marked sequences and Stembridge's.
\end{remark}

\begin{remark} In Section~\ref{othersec} we discuss a connection, pointed out to us by Richard Stanley,  between banners and   words with no adjacent repeats.  This connection can be used to provide an alternative to Step 3 in our proof of Theorem~\ref{introsymgenth}.\end{remark}

\section{Alternative formulations} \label{altsec}
In this section we present some equivalent formulations of 
Theorem~\ref{introsymgenth} and some immediate consequences.

\begin{cor}[of Theorem~\ref{introsymgenth}]  Let $Q_n(t,r) = \sum_{j,k \ge 0} Q_{n,j,k}\, t^j \,r^k$. Then $Q_n(t,r)$ satisfies the following recurrence relation:
\begin{equation} \label{altrecrel} Q_n(t,r) = r^nh_n + \sum_{k=0}^{n-2} Q_k(t,r) h_{n-k} t [n-k-1]_t. \end{equation}
\end{cor}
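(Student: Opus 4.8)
The plan is to read the recurrence directly off the second formula (\ref{introsymgenth2}) of Theorem~\ref{introsymgenth} by clearing denominators and comparing coefficients of $z^n$. Work in the ring $R[[z]]$ of formal power series in $z$ with coefficients in the ring $R$ of quasisymmetric functions in $\x$ with coefficients in $\Q[t,r]$, and set
$$\mathcal G(z) := \sum_{n \ge 0} Q_n(t,r)\, z^n = \sum_{n,j,k \ge 0} Q_{n,j,k}(\x)\, t^j r^k z^n \in R[[z]].$$
By the definition of $Q_n(t,r)$ and by (\ref{introsymgenth2}) we have
$$\mathcal G(z) = \frac{H(rz)}{1 - \sum_{m \ge 2} t[m-1]_t h_m z^m},$$
where $H(rz) = \sum_{n\ge 0} h_n r^n z^n$. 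The denominator has constant term $1$, hence is a unit in $R[[z]]$, so this identity and the manipulations below are legitimate.

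First I would multiply both sides by the denominator and move the negative part to the right, obtaining
$$\mathcal G(z) = \sum_{n \ge 0} h_n r^n z^n + \mathcal G(z) \sum_{m \ge 2} t[m-1]_t h_m z^m.$$
Then I would extract the coefficient of $z^n$ from both sides. The left side contributes $Q_n(t,r)$; the first sum on the right contributes $h_n r^n$; and the Cauchy product $\mathcal G(z)\sum_{m \ge 2} t[m-1]_t h_m z^m$ has $z^n$-coefficient $\sum_{k+m=n,\ m\ge 2} Q_k(t,r)\,t[m-1]_t h_m$, which upon substituting $m = n-k$ (so that $0 \le k \le n-2$) becomes $\sum_{k=0}^{n-2} Q_k(t,r)\, h_{n-k}\, t[n-k-1]_t$. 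Adding the two contributions yields precisely (\ref{altrecrel}).

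There is essentially no obstacle: all the content sits in Theorem~\ref{introsymgenth}, and the corollary is a purely formal consequence of rewriting (\ref{introsymgenth2}) as a linear recurrence. The only points worth spelling out in the write-up are the bookkeeping of index ranges (that $m \ge 2$ with $k \ge 0$ and $k+m=n$ is exactly $0 \le k \le n-2$) and the degenerate cases $n = 0, 1$, where the sum on the right is empty and the identity reduces to $Q_0(t,r) = h_0 = 1$ and $Q_1(t,r) = r h_1$, matching the definitions. One could run the same argument starting from (\ref{introsymgenth1}) instead, but (\ref{introsymgenth2}) is the most direct route since its denominator is already of the form $1 - (\text{higher-order terms})$.
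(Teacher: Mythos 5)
Your proof is correct and is essentially the same argument as the paper's, just run in the opposite direction: you start from (\ref{introsymgenth2}), clear the denominator, and read off the $z^n$-coefficient, whereas the paper starts from the recurrence and rewrites it as the generating-function identity $\left(\sum Q_n z^n\right)\left(\sum h_n t[n-1]_t z^n\right) = -H(rz)$ (using the convention $[-1]_t := -t^{-1}$ to absorb the $Q_n(t,r)$ term into the convolution), which is an algebraic rearrangement of the same equation. The content is identical; your version simply keeps the index bookkeeping explicit rather than hiding the $k=n-1,n$ terms via the $[0]_t=0$ and $[-1]_t=-t^{-1}$ conventions.
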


\begin{proof}
The recurrence relation is equivalent to
$$\sum_{k=0}^{n} Q_k(t,r) h_{n-k} t [n-k-1]_t = -r^n h_n,$$ 
where $[-1]_t:= -t^{-1}$. Taking the generating function
we have
$$\left(\sum_{n\ge 0} Q_n(t,r) z^n \right ) \left( \sum_{n\ge 0}h_n t[n-1]_t z^n \right ) =- H(rz).$$  The result follows from this.
\end{proof}

The right hand side of (\ref{introsymgenth1}) is the Frobenius characteristic of a graded permutation representation that Stembridge \cite{stem1} described in terms of $\sg_n$ acting on ``marked words".
From his work we were led to the following formula, which can easily be proved by showing that the right hand side satisfies the recurrence relation (\ref{altrecrel}).

 \begin{cor} \label{formQcor} For all $n \ge 0$, \begin{equation} \label{formQ} Q_n(t,r)=\sum_{m = 0}^{\lfloor {n \over 2} \rfloor}\,\, \!\!\!\!\sum_{\scriptsize
\begin{array}{c} k_0\ge 0  \\ k_1,\dots, k_m \ge 2 \\ \sum k_i = n
\end{array}}
\!\!\!\!\!\!\! r^{k_0}h_{k_0}
\prod_{i=1}^m h_{k_i} t [k_i-1]_{t}.\end{equation}
 \end{cor}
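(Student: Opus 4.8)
The plan is to verify that the right hand side of \eqref{formQ} satisfies the recurrence relation \eqref{altrecrel}, since the proof of Corollary~\ref{formQcor} is explicitly said to follow this route, and since \eqref{altrecrel} together with the initial values determines $Q_n(t,r)$ uniquely. So let me denote by $P_n(t,r)$ the expression on the right of \eqref{formQ}, i.e.
\[
P_n(t,r):=\sum_{m=0}^{\lfloor n/2\rfloor}\ \sum_{\scriptsize\begin{array}{c} k_0\ge 0\\ k_1,\dots,k_m\ge 2\\ \sum k_i=n\end{array}} r^{k_0}h_{k_0}\prod_{i=1}^m h_{k_i}\,t[k_i-1]_t,
\]
with the conventions $P_0=h_0=1$ (the empty composition, $m=0$, $k_0=0$). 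I would check the base cases $n=0,1$ directly: $P_0=1=Q_0$ and $P_1=r h_1=Q_1$ (for $n=1$ the only term has $m=0$, $k_0=1$). Then I must show $P_n=r^n h_n+\sum_{k=0}^{n-2}P_k h_{n-k}t[n-k-1]_t$ for $n\ge 2$, which by the uniqueness of solutions to \eqref{altrecrel} identifies $P_n$ with $Q_n(t,r)$.

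The key combinatorial step is to split the sum defining $P_n$ according to whether $m=0$ or $m\ge 1$. The $m=0$ term contributes exactly $r^n h_n$. For $m\ge 1$, I would peel off the last factor of the product, namely $h_{k_m}t[k_m-1]_t$: a tuple $(k_0;k_1,\dots,k_m)$ with $m\ge 1$ is the same data as a tuple $(k_0;k_1,\dots,k_{m-1})$ (which is an arbitrary composition of type counted by $P_k$ for $k=\sum_{i<m}k_i = n-k_m$) together with a choice of $k_m\ge 2$. Thus
\[
\sum_{m\ge 1}(\cdots)=\sum_{\ell\ge 2}\left(\sum_{m'\ge 0}\ \sum_{\scriptsize\begin{array}{c}k_0\ge 0\\ k_1,\dots,k_{m'}\ge 2\\ \sum k_i=n-\ell\end{array}}r^{k_0}h_{k_0}\prod_{i=1}^{m'}h_{k_i}t[k_i-1]_t\right)h_\ell\,t[\ell-1]_t=\sum_{\ell\ge 2}P_{n-\ell}\,h_\ell\,t[\ell-1]_t,
\]
and reindexing with $k=n-\ell$ gives $\sum_{k\le n-2}P_k h_{n-k}t[n-k-1]_t$, since $\ell\ge 2$ forces $k\le n-2$ and $\ell\le n$ forces $k\ge 0$. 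Adding back the $m=0$ term yields precisely the right side of \eqref{altrecrel}. An equivalent and perhaps cleaner way to organize the same argument is to pass to generating functions: set $G(z):=\sum_{n\ge 0}P_n(t,r)z^n$; the product structure of $P_n$ says immediately that
\[
G(z)=H(rz)\cdot\frac{1}{1-\sum_{\ell\ge 2}h_\ell\,t[\ell-1]_t z^\ell},
\]
which is exactly \eqref{introsymgenth2} with $x$-variables intact and $r$ inserted, so $G(z)=\sum_n Q_n(t,r)z^n$ by Theorem~\ref{introsymgenth}.

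I do not expect any real obstacle here; the content is entirely bookkeeping. The one place to be slightly careful is the treatment of the degenerate pieces: the $m=0$ summand, the convention $[-1]_t=-t^{-1}$ that appears in the restatement of \eqref{altrecrel} inside the corollary's proof (which is only a formal device to absorb the $r^n h_n$ term into the sum and plays no role in the direct verification above), and making sure the range $0\le k\le n-2$ matches $2\le\ell\le n$ after reindexing. Once these are handled, invoking the uniqueness of the solution to the linear recurrence \eqref{altrecrel} — equivalently, observing that the generating function identity pins down $G(z)$ completely — finishes the proof. I would present the generating-function version as the main argument since it is the shortest, and remark that it amounts to the same peeling-off-the-last-block bijection at the level of coefficients.
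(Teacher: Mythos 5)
Your proposal is correct and follows exactly the route the paper prescribes: the paper's proof of Corollary~\ref{formQcor} consists of the single remark that the right-hand side of \eqref{formQ} ``can easily be proved by showing that the right hand side satisfies the recurrence relation \eqref{altrecrel},'' and your peeling-off-the-last-block decomposition (equivalently, the generating-function factorization $G(z)=H(rz)\cdot\bigl(1-\sum_{\ell\ge2}h_\ell t[\ell-1]_t z^\ell\bigr)^{-1}$) is precisely the verification that was left implicit. The bookkeeping on degenerate cases and index ranges is all handled correctly.
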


Let $$\left[\begin{array}{c} n \\k\end{array}\right]_q = {[n]_q! \over [k]_q! [n-k]_q!}\, \mbox{  and  } \,\left[\begin{array}{c} n \\k_0,\dots,k_m\end{array}\right]_q = {[n]_q!
\over [k_0]_q![k_1]_q!\cdots [k_m]_q!}.$$ By taking the principal stable specialization of both sides of  the recurrence relation (\ref{altrecrel}) and the formula (\ref{formQ}), we have the following result.

\begin{cor} For all $n \ge 0$,
$$ A_n^{\maj,\exc, \fix}(q,t,r) = r^n + \sum_{k=0}^{n-2} \left[\begin{array}{c} n \\k\end{array}\right]_q\,\,
A_k^{\maj,\exc, \fix}(q,t,r) \,tq[n-k-1]_{tq},$$
and
$$A_n^{\maj,\exc, \fix}(q,t,r)  =   \sum_{m = 0}^{\lfloor {n \over 2} \rfloor}  \!\!\!\!\sum_{\scriptsize
\begin{array}{c} k_0\ge 0  \\ k_1,\dots, k_m \ge 2 \\ \sum k_i = n
\end{array}} \left[\begin{array}{c} n \\k_0,\dots,k_m\end{array}\right]_q\,\,
r^{k_0}
\prod_{i=1}^m tq[k_i-1]_{tq}.$$
\end{cor}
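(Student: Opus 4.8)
The plan is to derive both identities directly from the two statements preceding the corollary, namely the recurrence relation \eqref{altrecrel} and the closed formula \eqref{formQ} for $Q_n(t,r)$, by applying the stable principal specialization $\Lambda$ (after the rescaling $z \mapsto z(1-q)$) termwise. The essential input is the translation dictionary supplied earlier: by \eqref{stablespec1} and \eqref{spech} (together with the extension of $\Lambda$ to power series in $z$), applying $\Lambda$ to $\sum_{n,j,k} Q_{n,j,k}(\xx)\, t^j r^k z^n$ after the substitution $z\mapsto z(1-q)$ produces $\sum_{n\ge 0} A_n^{\maj,\exc,\fix}(q,t,r)\, z^n/[n]_q!$; more precisely, for each fixed $n$, $\Lambda$ applied to $Q_n(t,r)$ with the scaling weight $(1-q)^n$ built in yields $A_n^{\maj,\exc,\fix}(q,t,r)/[n]_q!$. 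Since $\Lambda$ is a ring homomorphism, it turns the product of generating functions in \eqref{altrecrel} into a product of the specialized generating functions, and similarly distributes over the finite sums and products in \eqref{formQ}.

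First I would record the key specialization facts needed for the bookkeeping. From \eqref{stabh} we have $\Lambda(h_n) = 1/((1-q)(1-q^2)\cdots(1-q^n))$, so after the rescaling the $h_n$ coefficient becomes $1/[n]_q!$ (the factor $(1-q)^n$ cancels the $(1-q)$'s). Next, I need the specialization of the polynomial $t[n-1]_t$ that appears in \eqref{altrecrel} and \eqref{formQ}; the point is that in $Q_n(t,r)$ the variable $t$ is a plain indeterminate carrying $t^{\exc}$, whereas on the symmetric-function side the excedance-generating variable interacts with the $q^{\maj-\exc}$ weight via \eqref{stablespec1}, which carries a factor $q^j$. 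Tracking this, each occurrence of $t$ coming from an excedance gets multiplied by $q$ under $\Lambda$, so $t[n-k-1]_t$ specializes to $tq[n-k-1]_{tq}$. Finally, the multinomial coefficients $\left[\begin{array}{c} n \\ k_0,\dots,k_m\end{array}\right]_q$ arise precisely because $[n]_q! \cdot \prod_i \Lambda(h_{k_i})\big/[k_i]_q!$-type ratios reorganize into $q$-multinomials once the $(1-q)$-powers are distributed among the blocks of sizes $k_0,\dots,k_m$ (and $k_0$ contributes the plain $r^{k_0}$ since $r$ is not rescaled).

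With these in hand, the two displayed formulas follow mechanically. For the first: apply $\Lambda$ (with the $z\mapsto z(1-q)$ rescaling) to both sides of \eqref{altrecrel}; the term $r^n h_n$ becomes $r^n/[n]_q!$ which after clearing $[n]_q!$ gives the $r^n$ term; the sum $\sum_{k=0}^{n-2} Q_k(t,r) h_{n-k} t[n-k-1]_t$ specializes, using $\Lambda(h_{n-k})$ and the $q$-binomial reorganization $\frac{1}{[k]_q!}\cdot\frac{1}{[n-k]_q!} = \frac{1}{[n]_q!}\left[\begin{array}{c} n \\ k\end{array}\right]_q$, to $\sum_{k=0}^{n-2}\left[\begin{array}{c} n \\ k\end{array}\right]_q A_k^{\maj,\exc,\fix}(q,t,r)\, tq[n-k-1]_{tq}$ after multiplying through by $[n]_q!$. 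For the second, apply $\Lambda$ to \eqref{formQ} term by term: the monomial $r^{k_0} h_{k_0}\prod_{i=1}^m h_{k_i} t[k_i-1]_t$ specializes, via $\Lambda(h_{k_0})\prod_i \Lambda(h_{k_i})$ and the multinomial reorganization $\frac{1}{[k_0]_q!}\prod_i \frac{1}{[k_i]_q!} = \frac{1}{[n]_q!}\left[\begin{array}{c} n \\ k_0,\dots,k_m\end{array}\right]_q$, to $\frac{1}{[n]_q!}\left[\begin{array}{c} n \\ k_0,\dots,k_m\end{array}\right]_q r^{k_0}\prod_{i=1}^m tq[k_i-1]_{tq}$; multiplying by $[n]_q!$ gives the claimed identity.

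The main obstacle — really the only subtle point — is correctly tracking the factor of $q$ that the excedance variable picks up. One must be careful that the weight in \eqref{stablespec1} is $q^{\maj-\exc}t^{\exc}$ rather than $q^{\maj}t^{\exc}$, which is exactly what forces the clean substitution $t \mapsto tq$ inside the polynomials $[k_i-1]_t$ while leaving the outer structure intact; getting this factor in the wrong place would produce $t[n-k-1]_{tq}$ or $tq[n-k-1]_t$ instead of the correct $tq[n-k-1]_{tq}$. The $(1-q)$-power bookkeeping in the $q$-multinomial coefficients is routine but should be stated carefully: the rescaling contributes $(1-q)^n$ overall, and it must be verified that distributing it as $(1-q)^{k_0}\cdots$ over the blocks matches $\Lambda$ of each $h_{k_i}$ factor, leaving behind exactly the $q$-factorials that assemble into the multinomial. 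Since $r$ is untouched by $\Lambda$, the $r^{k_0}$ (and $r^n$) terms pass through unchanged, which is consistent with $h_{k_0}$ being the only $h$-factor not paired with a $t[k_i-1]_t$.
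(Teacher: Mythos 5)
Your proposal is correct and takes the same route the paper does: apply the stable principal specialization $\Lambda$ (with the $t \mapsto tq$ substitution dictated by (\ref{stablespec1}) and the $(q;q)_n = (1-q)^n[n]_q!$ bookkeeping) to both (\ref{altrecrel}) and (\ref{formQ}), using that $\Lambda$ is a ring homomorphism and $\Lambda(h_n)=1/(q;q)_n$ so the $(q;q)$-ratios assemble into $q$-binomial and $q$-multinomial coefficients. The mention of a ``rescaling $z\mapsto z(1-q)$'' is a harmless red herring since (\ref{altrecrel}) and (\ref{formQ}) are per-$n$ identities with no $z$ present, but the per-block $(1-q)$-power accounting you actually carry out is equivalent and correct.
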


Gessel and Reutenauer  \cite{gr} and  Wachs \cite{wa1} derive a major index q-analog of the classical formula for the number of derangements in $\S_n$ (or more generally the number of permutations with a given number of fixed points).  As an immediate consequence of \cite[Corollary 3]{wa1}, one can obtain a $(\maj,\exc)$ generalization.  Since this generalization also follows from Corollary~\ref{expgenthfix} and is not explicitly stated in \cite{wa1},  we state and prove it here.   For all $n \in \N$, let $\mathcal D_n$ be the set of derangements in $\S_n$.

\begin{cor}[of Corollary~\ref{expgenthfix}] \label{excderang} For all $0 \le k \le n$, we have
\begin{equation}\label{fixform} \sum_{\scriptsize \begin{array}{c} \s \in \mathfrak S_n \\ \fix(\s) = k \end{array}} q^{\maj(\s)} t^{ \exc(\s)} =  \left[\begin{array}{c} n \\k\end{array}\right]_q \,\,\sum_{\s \in \mathcal D_{n-k}} q^{\maj(\s)} t^{ \exc(\s)} .\end{equation}
Consequently, \begin{equation}\label{derform} \sum_{\s \in \mathcal D_n} q^{\maj(\s)} t^{ \exc(\s)} = \sum_{k = 0}^n (-1)^{k} q^{k\choose 2} \left[\begin{array}{c} n \\k\end{array}\right]_q A_{n-k}^{\maj,\exc}(q,t).\end{equation}
\end{cor}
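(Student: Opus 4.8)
The plan is to read both identities off the factored right-hand side of Corollary~\ref{expgenthfix}. Write $D_m(q,t):=\sum_{\s\in\mathcal D_m}q^{\maj(\s)}t^{\exc(\s)}$ for the derangement enumerator. The substitution $r=0$ in (\ref{expgeneqfix}) uses $\exp_q(0)=1$ and retains on the left only the terms with $\fix(\s)=0$, so it gives
\[
\sum_{m\ge 0}D_m(q,t)\,\frac{z^m}{[m]_q!}=\frac{1-tq}{\exp_q(ztq)-tq\exp_q(z)} .
\]
Consequently the right-hand side of (\ref{expgeneqfix}) is this series multiplied by $\exp_q(rz)=\sum_{k\ge 0}r^k z^k/[k]_q!$, and extracting the coefficient of $z^n$ yields
\[
\frac{A_n^{\maj,\exc,\fix}(q,t,r)}{[n]_q!}=\sum_{k=0}^n\frac{r^k}{[k]_q!}\cdot\frac{D_{n-k}(q,t)}{[n-k]_q!} .
\]
Since the coefficient of $r^k$ in $A_n^{\maj,\exc,\fix}(q,t,r)$ is exactly $\sum_{\fix(\s)=k}q^{\maj(\s)}t^{\exc(\s)}$, multiplying through by $[n]_q!$ and comparing coefficients of $r^k$ gives (\ref{fixform}).

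For (\ref{derform}) I would invert the $k=0$--weighted relation above obtained by setting $r=1$, namely
\[
\sum_{n\ge 0}A_n^{\maj,\exc}(q,t)\,\frac{z^n}{[n]_q!}=\exp_q(z)\sum_{m\ge 0}D_m(q,t)\,\frac{z^m}{[m]_q!} .
\]
The only outside ingredient is the standard reciprocal identity $\exp_q(z)\,\Exp_q(-z)=1$, equivalently $\exp_q(z)^{-1}=\sum_{k\ge 0}(-1)^kq^{{k\choose 2}}z^k/[k]_q!$; this follows from Euler's two product identities (or from the $q$-binomial theorem) after the substitution $w=(1-q)z$, or directly from the finite identity $\sum_{k=0}^n(-1)^kq^{{k\choose 2}}\left[\begin{array}{c}n\\k\end{array}\right]_q=[\,n=0\,]$. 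Multiplying the displayed relation through by $\exp_q(z)^{-1}$ and reading off the coefficient of $z^n$ produces exactly (\ref{derform}).

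There is essentially no obstacle here beyond bookkeeping: all the substance sits in Corollary~\ref{expgenthfix}, and (\ref{fixform})--(\ref{derform}) amount to a $q$-binomial convolution and its inverse. If one prefers to avoid invoking the reciprocal $q$-exponential, (\ref{derform}) can instead be deduced from (\ref{fixform}) purely finitely, by applying $q$-binomial inversion in the form $g_n=\sum_{k=0}^n(-1)^kq^{{k\choose 2}}\left[\begin{array}{c}n\\k\end{array}\right]_q f_{n-k}$ to $f_n=A_n^{\maj,\exc}(q,t)=\sum_{k=0}^n\left[\begin{array}{c}n\\k\end{array}\right]_q D_{n-k}(q,t)$, which is just (\ref{fixform}) summed over $k$ at $r=1$.
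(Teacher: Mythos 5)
Your proof is correct and follows essentially the same route as the paper: the paper also reads (\ref{fixform}) directly off the $r^k z^n/[n]_q!$ coefficient in (\ref{expgeneqfix}), identifying the $r=0$ restriction as the derangement enumerator, and also derives (\ref{derform}) by summing (\ref{fixform}) over $k$ and applying Gaussian ($q$-binomial) inversion, which is precisely your second option. Your first route to (\ref{derform}), multiplying by $\exp_q(z)^{-1}=\Exp_q(-z)$ at the level of generating functions, is the power-series form of the same inversion and is equally valid.
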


\begin{proof} 
Since the left hand side of (\ref{fixform}) equals the coefficient of $r^k\frac{z^n}{[n]_q!}$ on the left hand side of (\ref{expgeneqfix}), we have that the left hand side of (\ref{fixform}) is $\left[\begin{array}{c} n \\k\end{array}\right]_q$ times the coefficient of 
$\frac{z^{n-k}}{[n-k]_q!}$ in 
$(1-tq)/(\exp_q(ztq)-tq\exp_q(z))$.   This coefficient is precisely   $ \sum_{\s \in \mathcal D_{n-k}} q^{\maj(\s)} t^{ \exc(\s)} $.

By summing (\ref{fixform}) over all $k$ and applying Gaussian inversion we obtain (\ref{derform}).
\end{proof}

We point out that our results pertaining to major index have comajor index versions.
We define {\em comajor index}  of  $\sigma \in \S_n$  to be
$$\comaj(\s) := \sum_{i \in [n-1] \setminus \Des(\s)} i = {n \choose 2 } - \maj(\s).$$ (Note that this is different from another commonly used notion of  comajor index.)  For example, we have the following comajor index version of Corollary~\ref{expgenthfix}.

\begin{cor}[of Corollary~\ref{expgenthfix}] \label{comajcor}We have
\begin{equation} \label{expgeneqfixco}
\sum_{n \geq 0}A^{\comaj,\exc,\fix}_n(q,t,r)\frac{z^n}{[n]_q!}=\frac{(1-tq^{-1})\Exp_q(rz)}{\Exp_q(ztq^{-1})-(tq^{-1})\Exp_q(z)}
\end{equation}
\end{cor}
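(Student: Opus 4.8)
The plan is to derive Corollary~\ref{comajcor} from Corollary~\ref{expgenthfix} by a straightforward substitution, using the relationship between the $\comaj$ and $\maj$ statistics together with the elementary identity $\comaj(\s) = \binom{n}{2} - \maj(\s)$ on $\S_n$. First I would observe that, grouping permutations of $\S_n$ by their number of excedances and fixed points,
\[
A_n^{\comaj,\exc,\fix}(q,t,r) = \sum_{\s \in \S_n} q^{\binom{n}{2}-\maj(\s)} t^{\exc(\s)} r^{\fix(\s)} = q^{\binom{n}{2}} A_n^{\maj,\exc,\fix}(q^{-1},t,r),
\]
so that the left-hand side of (\ref{expgeneqfixco}) is obtained from the left-hand side of (\ref{expgeneqfix}) by replacing $q$ with $q^{-1}$ and multiplying the degree-$n$ term by $q^{\binom{n}{2}}$. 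The key point is that this operation is exactly absorbed into the change of exponential generating function: since $\Exp_q(z) = \sum_{n\ge 0} q^{\binom{n}{2}} z^n/[n]_q!$ while $\exp_q(z) = \sum_{n\ge0} z^n/[n]_q!$, and since $[n]_{q^{-1}}! = q^{-\binom{n}{2}}[n]_q!$, we have the basic identity $\exp_{q^{-1}}(z) = \Exp_q(z)$ after suitably tracking powers of $q$.

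The main computation is then bookkeeping with these $q$-exponentials. Concretely, I would start from (\ref{expgeneqfix}), substitute $q \mapsto q^{-1}$ throughout, and then rescale the variable $z$ (and correspondingly $r$) by an appropriate power of $q$ so that each coefficient picks up the factor $q^{\binom{n}{2}}$ needed to convert $A_k^{\maj,\exc,\fix}(q^{-1},t,r)/[k]_{q^{-1}}!$ into $A_k^{\comaj,\exc,\fix}(q,t,r)/[k]_q!$. Under this rescaling, $\exp_{q^{-1}}(z)$ becomes $\Exp_q(z)$, $\exp_{q^{-1}}(ztq^{-1})$ becomes $\Exp_q(ztq^{-1})$, and $\exp_{q^{-1}}(rz)$ becomes $\Exp_q(rz)$, while the prefactor $(1-tq^{-1})$ is unchanged and the $tq$ in the denominator of (\ref{expgeneqfix}) becomes $tq^{-1}$. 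Collecting terms yields exactly the right-hand side of (\ref{expgeneqfixco}).

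The step I expect to require the most care is the rescaling: one must check that a single substitution $z \mapsto cz$ (with $c$ a monomial in $q$, possibly $c=1$ if the identity $\exp_{q^{-1}}(z)=\Exp_q(z)$ is used directly) simultaneously produces the correct power of $q$ in the numerator's $\Exp_q(rz)$ term and in the two $\Exp_q$ terms of the denominator, so that the homogeneity degrees match and no stray powers of $q$ survive. Since $\Exp_q$ and $\exp_q$ differ coefficientwise only by the factor $q^{\binom{n}{2}}$, which is precisely the discrepancy between $\maj$ and $\comaj$, this works out cleanly; the verification amounts to comparing the coefficient of $z^n$ on both sides and invoking $[n]_{q^{-1}}! = q^{-\binom{n}{2}}[n]_q!$. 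One should also note at the end that all the other corollaries listed with major-index content (for instance Corollary~\ref{foha} and the results in Section~\ref{altsec}) admit analogous comajor versions by the same device, which is the remark preceding the statement.
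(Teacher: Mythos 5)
Your proposal is correct and takes essentially the same route as the paper: substitute $q\mapsto q^{-1}$ in (\ref{expgeneqfix}) and use $\comaj(\s)=\binom{n}{2}-\maj(\s)$, $[n]_{q^{-1}}!=q^{-\binom{n}{2}}[n]_q!$, and $\exp_{q^{-1}}(z)=\Exp_q(z)$. The rescaling of $z$ (and $r$) that you flag as the delicate step is in fact unnecessary --- the factor $q^{\binom{n}{2}}$ is entirely absorbed by the passage from $[n]_{q^{-1}}!$ to $[n]_q!$, so the case $c=1$ you allow for is the one that actually occurs.
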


\begin{proof} Use the facts that $[n]_{q^{-1}}! = q^{-{n\choose 2}} [n]_q!$ and $\exp_{q^{-1}}(z) = \Exp_q(z)$ to show that equations (\ref{expgeneqfix}) and (\ref{expgeneqfixco}) are equivalent.
\end{proof}

We also have the following comajor index version of Corollary~\ref{excderang}.
\begin{cor} \label{coexcderang} For all $0 \le k \le n$,
 we have
\begin{equation*} \sum_{\scriptsize \begin{array}{c} \s \in \mathfrak S_n \\ \fix(\s) = k \end{array}} q^{\comaj(\s)} t^{ \exc(\s)} = q^{k\choose 2}  \left[\begin{array}{c} n \\k\end{array}\right]_q \,\,\sum_{\s \in \mathcal D_{n-k}} q^{\comaj(\s)} t^{ \exc(\s)} .\end{equation*}
Consequently, $$ \sum_{\s \in \mathcal D_n} q^{\comaj(\s)} t^{ \exc(\s)}= \sum_{k = 0}^n (-1)^{k}  \left[\begin{array}{c} n \\k\end{array}\right]_q A_{n-k}^{\comaj,\exc}(q,t).$$
\end{cor}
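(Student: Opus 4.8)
Both identities should fall out of Corollary~\ref{excderang} by the substitution $q \mapsto q^{-1}$, in the same spirit as the deduction of Corollary~\ref{comajcor} from Corollary~\ref{expgenthfix}. The ingredients I would use are: (i) the defining relation $\comaj(\s) = \binom{m}{2} - \maj(\s)$ for $\s \in \S_m$, which also holds for $m \le 1$ since $\binom{m}{2} = 0$; (ii) the standard identity $\left[\begin{array}{c} n \\ k\end{array}\right]_{q^{-1}} = q^{-k(n-k)}\left[\begin{array}{c} n \\ k\end{array}\right]_q$, immediate from $[m]_{q^{-1}}! = q^{-\binom{m}{2}}[m]_q!$ (a fact already used in the proof of Corollary~\ref{comajcor}); and (iii) the arithmetic identity $\binom{n}{2} - k(n-k) - \binom{n-k}{2} = \binom{k}{2}$, checked by expanding the binomial coefficients.

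For the first identity I would replace $q$ by $q^{-1}$ throughout (\ref{fixform}). On the left the sum runs over $\s \in \S_n$, so, after multiplying by $q^{\binom{n}{2}}$ and invoking (i) with $m = n$, each $q^{-\maj(\s)}$ becomes $q^{\comaj(\s)}$; on the right each $\s$ lies in $\S_{n-k}$, so pulling $q^{\binom{n-k}{2}}$ out of the derangement sum and invoking (i) with $m = n-k$ turns each $q^{-\maj(\s)}$ into $q^{\comaj(\s)}$. The surviving scalar is $q^{\binom{n}{2} - \binom{n-k}{2}}\left[\begin{array}{c} n \\ k\end{array}\right]_{q^{-1}}$, which by (ii) equals $q^{\binom{n}{2} - \binom{n-k}{2} - k(n-k)}\left[\begin{array}{c} n \\ k\end{array}\right]_q$, and the exponent collapses to $\binom{k}{2}$ by (iii); this is precisely the first displayed equation of the corollary.

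For the "consequently" part I would run the same procedure on (\ref{derform}): substitute $q \mapsto q^{-1}$, apply (i) with $m = n$ to the derangement enumerator on the left and with $m = n-k$ to each $A_{n-k}^{\maj,\exc}$ on the right, use (ii) on the Gaussian binomial, and multiply through by $q^{\binom{n}{2}}$. The total power of $q$ in the $k$th summand then equals $\binom{n}{2} - \binom{k}{2} - k(n-k) - \binom{n-k}{2}$, which vanishes by (iii), leaving exactly $\sum_{k=0}^n (-1)^k \left[\begin{array}{c} n \\ k\end{array}\right]_q A_{n-k}^{\comaj,\exc}(q,t)$. Alternatively, one can sum the already-proved first identity over $k = 0, \dots, n$ (the left side summing to $A_n^{\comaj,\exc}(q,t)$) and then invert the resulting relation $A_n^{\comaj,\exc}(q,t) = \sum_{k} q^{\binom{n-k}{2}}\left[\begin{array}{c} n \\ k\end{array}\right]_q D_{n-k}$, where $D_m := \sum_{\s \in \mathcal D_m} q^{\comaj(\s)} t^{\exc(\s)}$, by the $q$-binomial inversion whose verification reduces to $\sum_{i} (-1)^i q^{\binom{i}{2}} \left[\begin{array}{c} m \\ i\end{array}\right]_q = \delta_{m,0}$. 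Either route is routine; the only point needing care is the bookkeeping, namely that $\comaj = \binom{\cdot}{2} - \maj$ is applied over $\S_n$ on one side but over $\S_{n-k}$ on the other, so that the various powers of $q$ assemble correctly through identity (iii).
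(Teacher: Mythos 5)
Your proof is correct and matches the approach the paper evidently intends: the paper states Corollary~\ref{coexcderang} without proof immediately after Corollary~\ref{comajcor}, whose proof uses exactly the substitution $q\mapsto q^{-1}$ together with $[m]_{q^{-1}}!=q^{-\binom m2}[m]_q!$, and you correctly carry this out on (\ref{fixform}) and (\ref{derform}), with the powers of $q$ assembling via $\binom n2-k(n-k)-\binom{n-k}2=\binom k2$.
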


\section{Symmetry and unimodality} \label{secQ}

\subsection{Eulerian quasisymmetric functions} \label{secQ1}
It is well known that the Eulerian numbers $a_{n,j}$ form a symmetric and unimodal sequence for each fixed $n$ (see \cite[p. 292]{com}); i.e., $a_{n,j} = a_{n,n-1-j}$ for all $j=0,1,\dots, n-1$ and $$ a_{n,0} \le a_{n,1} \le  \dots \le a_{n, \lfloor {n-1\over 2} \rfloor} = a_{n, \lfloor {n\over 2} \rfloor} \ge \dots \ge a_{n,n-2}\ge a_{n,n-1} .$$  In this subsection we discuss symmetry and unimodality of  the coefficients of $t^j$ in  the Eulerian quasisymmetric functions and the $q$- and $(q,p)$-Eulerian polynomials.

Let  $f(t):= f_r t^r + f_{r+1} t^{r+1} + \dots + f_{s}t^{s}$ be a nonzero polynomial in $t$ whose coefficients come from a partially ordered ring $R$.  We say that  $f(t)$ is {\em  t-symmetric} with center of symmetry ${ s+r \over 2}$ if   $f_{r+i} = f_{s-i}$ for all $i =0, \dots, s-r$.  We say that  $f(t)$ is also {\em t-unimodal} if
\begin{equation}\label{unimodeq}   f_r \le_R f_{r+1} \le_R \dots \le_R f_{\lfloor {s+r\over 2} \rfloor}= f_{\lfloor {s+r+1\over 2} \rfloor}\ge_R \dots \ge _R f_{s-1} \ge_R f_s.\end{equation}

Let $\Par$ be the set of all partitions of all nonnegative
integers. By choosing a $\Q$-basis $b=\{b_\lambda:\lambda \in
\Par\}$ for the space of symmetric functions, we obtain the partial
order relation on the ring of symmetric functions given by $f \le_b g $ if $g-f$ is $b$-positive,
where a symmetric function is said to be $b$-positive if it is a
nonnegative  linear combination of elements of the basis
$\{b_\lambda\}$. Here we are concerned with the $h$-basis,
$\{h_\lambda : \lambda \in \Par\}$, where $h_\lambda =
h_{\lambda_1} \cdots h_{\lambda_k}$ for $\lambda = (\lambda_1 \ge
\dots \ge \lambda_k)$, and the Schur basis $\{s_{\lambda}:\lambda
\in \Par\}$.
   Since $h$-positivity implies Schur-positivity, the following result also holds for the Schur basis.

\begin{thm} \label{symunimodth} Using the h-basis  to partially order the ring of symmetric functions, we have for all $n,j,k$, \begin{enumerate}
\item the Eulerian quasisymmetric functions $Q_{n,j,k}$ and $Q_{n,j}$ are h-positive symmetric functions,
 \item  the polynomial $\sum_{j=0}^{n-1}Q_{n,j,k} t^j$ is t-symmetric and t-unimodal with center of symmetry ${n-k \over 2}$,
\item   the polynomial $\sum_{j=0}^{n-1}Q_{n,j} t^j$ is t-symmetric and t-unimodal with center of symmetry ${n-1 \over 2}$.
\end{enumerate}
\end{thm}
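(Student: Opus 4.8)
The three parts will be deduced from Theorem~\ref{introsymgenth}, the recurrence relation it yields, and Corollary~\ref{dercor}. For part (1), $h$-positivity of $Q_{n,j,k}$ and $Q_{n,j}$: expand the right-hand side of (\ref{introsymgenth2}) as a geometric series, $\frac{H(rz)}{1-\sum_{n\ge 2}t[n-1]_t h_nz^n} = H(rz)\sum_{m\ge 0}\left(\sum_{n\ge 2}t[n-1]_th_nz^n\right)^m$, and observe that every coefficient of $t^jr^kz^n$ is a nonnegative integer combination of products of $h_i$'s, hence $h$-positive; since $Q_{n,j}=\sum_k Q_{n,j,k}$, the same holds for it. (Equivalently one can use Corollary~\ref{dercor} together with $h$-positivity of $Q_{n-k,j,0}$, the latter coming from the recurrence (\ref{rr}) by an immediate induction.)

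For part (2), I would first reduce to the case $k=0$ using Corollary~\ref{dercor}: since $Q_{n,j,k}=h_kQ_{n-k,j,0}$, the polynomial $\sum_j Q_{n,j,k}t^j = h_k\sum_j Q_{n-k,j,0}t^j$, so $t$-symmetry with center $\frac{n-k}{2}$ and $t$-unimodality for the pair $(n,k)$ follow from the same statements for the pair $(n-k,0)$ with center $\frac{n-k}{2}$ (multiplication by the fixed $h$-positive element $h_k$ preserves both properties relative to the $h$-order). So it suffices to prove: $\sum_{j=0}^{n-2}Q_{n,j,0}t^j$ is $t$-symmetric with center $\frac{n}{2}$ and $t$-unimodal. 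For this I would argue by induction on $n$ using the recurrence (\ref{rr}) in the form (\ref{rr2}), $Q_{n,j,0}=\sum_{0\le m\le n-2,\ 1\le b<n-m}Q_{m,j-b,0}h_{n-m}$. The key observation is that $t[n-1]_t = t+t^2+\dots+t^{n-1}$ is itself $t$-symmetric with center $\frac{n}{2}$, and that the product of two $t$-symmetric $h$-positive polynomials is $t$-symmetric with center the sum of the centers, while the product of a $t$-symmetric $t$-unimodal $h$-positive polynomial with another such polynomial is again $t$-symmetric and $t$-unimodal (this is the $h$-positive-coefficient analogue of the classical fact that a product of symmetric unimodal polynomials is symmetric and unimodal — the proof is the standard one, writing each factor as a nonnegative combination of "centered intervals" $t^a+t^{a+1}+\dots+t^{b}$ and checking that a product of two centered intervals is symmetric and unimodal with the right center). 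Then writing $\sum_j Q_{n,j,0}t^j = \sum_{m=0}^{n-2}\left(\sum_j Q_{m,j,0}t^j\right)h_{n-m}\,t[n-m-1]_t$, each summand is (by the inductive hypothesis) a product of the $t$-symmetric, $t$-unimodal polynomial $\sum_j Q_{m,j,0}t^j$ of center $\frac{m}{2}$ with the $t$-symmetric, $t$-unimodal polynomial $h_{n-m}t[n-m-1]_t$ of center $\frac{n-m}{2}$, hence is $t$-symmetric and $t$-unimodal of center $\frac{n}{2}$; and a sum of $t$-symmetric $t$-unimodal polynomials sharing a common center is again such. This closes the induction, with the base cases $n=0,1$ trivial ($Q_{0,0,0}=1$, and $\sum_j Q_{1,j,0}t^j = h_1$).

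For part (3), note $\sum_{j=0}^{n-1}Q_{n,j}t^j = \sum_k \sum_j Q_{n,j,k}t^j = \sum_{k=0}^{n}h_k\sum_{j}Q_{n-k,j,0}t^j$. By part (2), the $k$-th summand is $t$-symmetric and $t$-unimodal with center $\frac{n-k}{2}$ — so the centers differ, and I cannot simply add. Instead I would multiply the $k$-th summand by $1 = $ a suitable symmetric power of $t$? No — the cleanest route is: $\sum_j Q_{n,j}t^j$ equals, by part (2) applied to each $Q_{n,j,k}$, a sum of $h$-positive polynomials, and one checks symmetry with center $\frac{n-1}{2}$ directly from the combinatorial symmetry $Q_{n,j}=Q_{n,n-1-j}$ (which will be established — indeed the finer $Q_{\lambda,j}=Q_{\lambda,n-k-j}$ is proved later in Section~\ref{secQ}, and summing over $\lambda$ with exactly... hmm, over all $\lambda\vdash n$ the number-of-ones $k$ varies). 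The honest approach: derive $t$-symmetry of $\sum_j Q_{n,j}t^j$ with center $\frac{n-1}{2}$ from the generating function (\ref{introsymgenth1}) by the substitution $t\mapsto t^{-1}$, $z\mapsto zt$, which I expect fixes $\sum_{n,j}Q_{n,j}t^jz^n$ up to the factor accounting for the shift (one verifies $\frac{(1-t)H(z)}{H(zt)-tH(z)}$ transforms correctly — this is where the $r=1$ specialization is used, $H(rz)=H(z)$). For $t$-unimodality of $\sum_j Q_{n,j}t^j$, I would use the recurrence of Corollary of Theorem~\ref{introsymgenth}, $Q_n(t,r)=r^nh_n+\sum_{k=0}^{n-2}Q_k(t,r)h_{n-k}t[n-k-1]_t$, specialized at $r=1$: $\sum_j Q_{n,j}t^j = h_n + \sum_{k=0}^{n-2}(\sum_j Q_{k,j}t^j)h_{n-k}t[n-k-1]_t$, and run the same product-and-sum-of-centered-intervals induction as in part (2), now with the base contribution $h_n$ (a $t$-symmetric $t$-unimodal "polynomial" of center $0$, i.e. degree $0$) — but the centers again must be reconciled: $h_n$ has center $0$ while the $k$-th term has center $\frac{k}{2}+\frac{n-k}{2}=\frac{n}{2}$, not $\frac{n-1}{2}$. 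This center mismatch is the main obstacle and is exactly why part (3) needs separate care: I expect the resolution is that one should not use the recurrence naively but rather peel off the $r$-dependence first via Corollary~\ref{dercor} and then observe $\sum_{k=0}^n h_k\cdot(\text{centered at }\tfrac{n-k}{2})$ — the spread of centers from $0$ (at $k=n$) to $\frac{n}{2}$ (at $k=0$) is symmetric about $\frac{n-0}{... }$? No. The correct bookkeeping: $Q_{n,n-1-j}=Q_{n,j}$ forces center $\frac{n-1}{2}$, and unimodality of the sum follows because each $h_k Q_{n-k,j,0}$-block, after we knew its own symmetry/unimodality, must be combined using the lemma that if $f$ is $t$-symmetric $t$-unimodal of center $c$ and $g$ likewise of center $c'$ with $|c-c'|$ and we know a priori that $f+g$ is $t$-symmetric, then... this does not generally give unimodality. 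So the real argument for (3) must go through (\ref{introsymgenth2}) at $r=1$ and the geometric-series expansion, using that $\frac{H(z)}{1-\sum_{n\ge2}t[n-1]_th_nz^n}$ — I would expand and show each $z^n$-coefficient is, after pulling out the overall effect, a nonnegative $h$-combination whose $t$-degree-$n-1$ reversal-symmetric structure and unimodality follow from the same centered-interval product lemma applied to the factors $t[n_i-1]_{t}$ (centers $\frac{n_i}{2}$) and the leading $h_{k_0}$ with $k_0=n-\sum n_i$ contributing center $0$ to a total center $\sum\frac{n_i}{2}$; reconciling $\sum\frac{n_i}{2}$ across all compositions with the common center $\frac{n-1}{2}$ is precisely the subtle point, handled by grouping terms with the involution $k_0\leftrightarrow$ (pair up the composition with a bar-flipped one) — I expect the cleanest writeup invokes Corollary~\ref{formQcor} for $Q_n(t,1)$ and the product lemma directly. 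I will present the argument via this formula and flag the centered-interval product lemma as the one technical lemma to prove separately.
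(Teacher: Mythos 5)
Your treatment of parts~(1) and~(2) is sound and essentially matches the paper's argument: $h$-positivity comes from the geometric-series expansion of~(\ref{introsymgenth2}) (or, equivalently, from Corollary~\ref{formQcor}), and for part~(2) you correctly reduce to the $k=0$ case via Corollary~\ref{dercor} and then argue via products of $t$-symmetric, $t$-unimodal $h$-positive polynomials with centers adding; the paper reads this off directly from Corollary~\ref{formQcor} rather than inducting on~(\ref{rr2}), but these are the same argument in two guises, with the same key lemma on products of centered, unimodal factors.

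Part~(3) contains a genuine gap, and you yourself diagnosed where it is but did not close it. You correctly observe that writing $\sum_j Q_{n,j}t^j = \sum_{k\ge 0} h_k\sum_j Q_{n-k,j,0}\,t^j$ leaves you with summands of differing centers $(n-k)/2$, so one cannot add unimodalities, and you note the same obstruction if one tries the recurrence of Corollary~\ref{formQcor} naively, since its terms have center $n/2$ rather than $(n-1)/2$. Your concluding claim that ``the cleanest writeup invokes Corollary~\ref{formQcor}\dots and the product lemma directly'' will not work as stated, precisely because of this mismatch. What is missing is the specific grouping the paper uses: split off the $k=1$ contribution $\sum_j Q_{n,j,1}t^j$, which already has center $(n-1)/2$ by part~(2), and then pair the $k=0$ term with the $k\ge 2$ terms composition-by-composition. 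Writing $G_{k_1,\dots,k_m} := \prod_i h_{k_i}\,t[k_i-1]_t$, the residual $X(t)$ becomes $\sum_{(k_1,\dots,k_m)} \bigl(G_{k_1,\dots,k_m} + h_{k_1}G_{k_2,\dots,k_m}\bigr)$, and the crucial algebraic identity is $G_{k_1,\dots,k_m} + h_{k_1}G_{k_2,\dots,k_m} = h_{k_1}\,(t[k_1-1]_t+1)\,G_{k_2,\dots,k_m} = h_{k_1}\,[k_1]_t\,G_{k_2,\dots,k_m}$. Here $[k_1]_t = 1+t+\dots+t^{k_1-1}$ has center $(k_1-1)/2$ instead of $k_1/2$, so each grouped term has center $(k_1-1)/2 + (n-k_1)/2 = (n-1)/2$, as required. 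Without this pairing, the center discrepancy you identified remains unresolved and the unimodality claim in part~(3) is unproved.
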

\begin{proof}

Part (1) is a corollary of Theorem~\ref{introsymgenth} (see also Corollary~\ref{formQcor}).

Parts (2) and (3) follow from Theorem~\ref{introsymgenth} and results of Stembridge~\cite{stem1} on the symmetric function given on the right hand side of (\ref{introsymgenth1}).
For the sake of completeness, we include a proof of Parts (2) and (3)  based on Stembridge's work.

It is  well-known that the product of two symmetric unimodal polynomials in $\N[t]$ with respective centers of symmetry $c_1$ and $c_2$, is symmetric and unimodal with center of symmetry $c_1+c_2$. This result and the proof given in  \cite[Proposition 1.2]{st2} hold more generally for polynomials over an arbitrary partially ordered ring.    By Corollary~\ref{formQcor} we have $$\sum_{j=0}^{n-1}Q_{n,j,k} t^j= \sum_{m = 0}^{\lfloor {n-k \over 2} \rfloor}\,\, \!\!\!\!\sum_{\scriptsize
\begin{array}{c}   k_1,\dots, k_m \ge 2 \\ \sum k_i = n-k
\end{array}}
\!\!\!\!\!\!\! h_{k}
\prod_{i=1}^m h_{k_i} t [k_i-1]_{t}.$$
 Each term  $h_{k}
\prod_{i=1}^m h_{k_i} t [k_i-1]_{t}$ is t-symmetric and t-unimodal with center of symmetry $\sum_{i \ge 0} {k_i \over 2} ={n-k \over 2}$.  Hence the sum of these terms has the same property.

With a bit more effort we show that Part (3) also follows from Corollary~\ref{formQcor}.
We have $$\sum_{j=0}^{n-1}Q_{n,j} t^j = \sum_{j=0}^{n-1} Q_{n,j,1} t^j + \sum_{j=0}^{n-1} Q_{n,j,0} t^j + \sum_{k \ge 2} \sum_{j=0}^{n-1} Q_{n,j,k} t^j .$$
By Part (2) we need only show that $$X(t) :=  \sum_{j=0}^{n-1} Q_{n,j,0} t^j + \sum_{k \ge 2} \sum_{j=0}^{n-1} Q_{n,j,k} t^j$$ is t-symmetric and t-unimodal with center of symmetry ${n-1 \over 2}$.  For any sequence of positive integers $(k_1,\dots,k_m)$,
let $$G_{k_1,\dots,k_m} := \prod_{i=1}^m h_{k_i} t [k_i-1]_{t}.$$  We have by Corollary~\ref{formQcor},
$$ \sum_{j=0}^{n-1} Q_{n,j,0} t^j  =  \sum_{\scriptsize
\begin{array}{c}  m \ge 0 \\ k_1,\dots, k_m \ge 2 \\ \sum k_i = n
\end{array}} G_{k_1,\dots,k_m} $$ and
$$\sum_{k \ge 2} \sum_{j=0}^{n-1} Q_{n,j,k} t^j  =  \sum_{\scriptsize
\begin{array}{c}  m \ge 0 \\ k_1,\dots, k_m \ge 2 \\ \sum k_i = n
\end{array}}h_{k_1} G_{k_2,\dots,k_m}. $$
Hence $$X(t) = \sum_{\scriptsize
\begin{array}{c}  m \ge 0 \\ k_1,\dots, k_m \ge 2 \\ \sum k_i = n
\end{array}} G_{k_1,\dots,k_m}+ h_{k_1} G_{k_2,\dots,k_m} . $$
We claim that $G_{k_1,\dots,k_m}+ h_{k_1} G_{k_2,\dots,k_m} $ is t-symmetric and t-unimodal with center of symmetry ${n-1 \over 2}$.  Indeed, we have
$$G_{k_1,\dots,k_m}+ h_{k_1} G_{k_2,\dots,k_m} = h_{k_1}(t[k_1 - 1]_t + 1) G_{k_2,\dots,k_m}.$$  Clearly $t[k_1 - 1]_t + 1 = 1+t+\dots t^{k_1-1}$ is t-symmetric and t-unimodal with center of symmetry ${k_1-1 \over 2}$, and $G_{k_2,\dots,k_m}$ is   t-symmetric and t-unimodal with center of symmetry ${n-k_1 \over 2}$.  Therefore our claim holds and $X(t) $ is  t-symmetric and t-unimodal with center of symmetry ${n-1 \over 2}$. \end{proof}

We now obtain  analogous results for $ A_n^{\maj,\des,\exc,\fix}$ and $A_n^{\maj,\des,\exc}$ by applying  the  principal specializations.
For the ring of polynomials $\Q[\bf q]$, where ${\bf q}$ is a list of indeterminates, we use the partial order relation: for  $f({\bf q}),g ({\bf q}) \in\Q[({\bf q})]$, define $f({\bf q}) \le_{{\bf q}} g({\bf q})$ if $g({\bf q})-f({\bf q})$ has nonnegative coefficients.

\begin{lemma}  \label{hposqpos}  If  $f$ is a Schur positive homogeneous symmetric function of degree $n$ then
 $(q;q)_{n} \Lambda( f )$ is a polynomial in $q$ with  nonnegative coefficients and $(p;q)_{n+1}\sum_{m\ge 0}  \Lambda_m ( f )p^m$ is a polynomial in $q$ and $p$ with  nonnegative coefficients.  Consequently if $f$ and $g$ are homogeneous symmetric functions of degree $n$ and  $f \le_{{\rm Schur}} g$ then $$(q;q)_{n} \Lambda (f) \le_{(q)} (q;q)_{n} \Lambda (g) $$ and $$(p;q)_{n+1} \sum_{m\ge 0} \Lambda_m( f  )p^m\le_{(q,p)} (p;q)_{n+1} \sum_{m\ge 0} \Lambda_m (g) p^m .$$
 \end{lemma}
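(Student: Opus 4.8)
The plan is to reduce everything to the expansion of a Schur function in the fundamental quasisymmetric basis, together with the specialization formula of Lemma~\ref{desspec}. Recall that for a partition $\mu \vdash n$ one has the expansion $s_\mu = \sum_{T} F_{\Des(T),n}$, where $T$ ranges over the standard Young tableaux of shape $\mu$ and $\Des(T)$ is the usual descent set of $T$ (an entry $i$ such that $i+1$ lies in a lower row). This is a standard fact (e.g. \cite[Theorem 7.19.7]{st3}), so I would cite it rather than prove it. Since $\Lambda$ and $\Lambda_m$ are ring homomorphisms, applying them to $s_\mu$ and using Lemma~\ref{desspec} gives
\[
(q;q)_n \Lambda(s_\mu) = \sum_{T} q^{\maj(T)}, \qquad
(p;q)_{n+1}\sum_{m\ge 0}\Lambda_m(s_\mu)p^m = \sum_{T} p^{\des(T)+1} q^{\maj(T)},
\]
where $\maj(T) = \sum_{i\in\Des(T)} i$ and $\des(T) = |\Des(T)|$. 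Both right-hand sides are manifestly polynomials in $q$ (resp.\ in $q$ and $p$) with nonnegative integer coefficients, which proves the first assertion of the lemma for $f = s_\mu$.

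Next I would pass from a single Schur function to an arbitrary Schur-positive $f$ of degree $n$. Write $f = \sum_{\mu \vdash n} c_\mu s_\mu$ with each $c_\mu$ a nonnegative integer (or nonnegative rational; the argument is the same). By linearity of $\Lambda$ and of $\Lambda_m$,
\[
(q;q)_n \Lambda(f) = \sum_{\mu\vdash n} c_\mu (q;q)_n\Lambda(s_\mu),
\qquad
(p;q)_{n+1}\sum_{m\ge 0}\Lambda_m(f)p^m = \sum_{\mu\vdash n} c_\mu (p;q)_{n+1}\sum_{m\ge 0}\Lambda_m(s_\mu)p^m,
\]
and each summand is a nonnegative combination of the polynomials computed above; hence each total is a polynomial with nonnegative coefficients. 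This gives the first sentence of the lemma.

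For the ``consequently'' part, suppose $f \le_{\mathrm{Schur}} g$ with $f,g$ homogeneous of degree $n$; then $g - f$ is Schur positive and homogeneous of degree $n$, so by what was just shown $(q;q)_n\Lambda(g-f)$ has nonnegative coefficients, i.e.
\[
(q;q)_n\Lambda(g) - (q;q)_n\Lambda(f) = (q;q)_n\Lambda(g-f) \ge_{(q)} 0,
\]
which is exactly $(q;q)_n\Lambda(f) \le_{(q)} (q;q)_n\Lambda(g)$; the $(q,p)$ statement follows identically from $(p;q)_{n+1}\sum_m \Lambda_m(g-f)p^m \ge_{(q,p)} 0$. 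I do not anticipate a genuine obstacle here: the only nontrivial input is the tableau expansion of Schur functions in the fundamental basis, and everything else is bookkeeping with ring homomorphisms and the nonnegativity of $q^{\maj(T)}$ and $p^{\des(T)+1}q^{\maj(T)}$. The one point to state carefully is that $\Lambda$ and $\Lambda_m$ applied to a degree-$n$ symmetric function only involve the $F_{S,n}$ with $S\subseteq[n-1]$, so the clearing denominators $(q;q)_n$ and $(p;q)_{n+1}$ are uniform across all terms — which is why homogeneity of degree $n$ is needed in the hypothesis.
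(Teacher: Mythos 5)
Your proposal is correct and follows the same route as the paper: the paper's proof is a one-line citation to Lemma~\ref{desspec} together with the fact that Schur functions are nonnegative combinations of fundamental quasisymmetric functions, which is precisely the tableau expansion $s_\mu = \sum_T F_{\Des(T),n}$ you invoke. You have simply written out explicitly the linearity and subtraction bookkeeping that the paper leaves implicit.
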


 \begin{proof}  This follows from Lemma~\ref{desspec} and the fact that Schur functions are nonnegative  linear combinations of  fundamental quasisymmetric functions (cf. \cite[pp. 360-361]{st3}).
 \end{proof}

\begin{thm} \label{unieulerth}  For all $n,k$,  let $$A^{\maj,\des,\exc}_{n,k}(q,p,t) = \sum_{\scriptsize \begin{array}{c} \s \in \sg_n \\ \fix (\s) = k\end{array}} q^{\maj(\s)} p^{\des(\s)}t^{\exc(\s)}.$$  Then  \begin{enumerate}
\item   $A^{\maj,\des,\exc}_{n,k}(q,p,q^{-1}t)$ is $t$-symmetric with center of symmetry ${n-k \over 2}$
 \item  $A^{\maj,\des,\exc}_{n,0}(q,p,q^{-1}t)$ is t-symmetric and t-unimodal, with center of symmetry ${n \over 2}$
  \item $A^{\maj,\des,\exc}_{n,k}(q,1,q^{-1}t)$ is t-symmetric and t-unimodal, with center of symmetry ${n-k \over 2}$
\item  $A^{\maj,\des,\exc}_{n}(q,1,q^{-1}t)$ is t-symmetric and t-unimodal, with center of symmetry ${n-1 \over 2}$.
\end{enumerate}
\end{thm}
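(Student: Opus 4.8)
The plan is to derive all four parts from Theorem~\ref{symunimodth} by applying the two principal specializations and transferring the relevant order relations through Lemma~\ref{hposqpos}. The key point is that the substitution $t\mapsto q^{-1}t$ exactly cancels the gap between $\maj$ and $\exc$: since $A^{\maj,\des,\exc}_{n,k}(q,p,t)=\sum_{j\ge0}a_{n,j,k}(q,p)\,t^j$ and, by Lemma~\ref{exdlem}, $\maj(\s)\ge\exc(\s)$ for every $\s$, each $q^{-j}a_{n,j,k}(q,p)$ lies in $\Q[q,p]$ and is a principal specialization of the Eulerian quasisymmetric function $Q_{n,j,k}$; concretely $q^{-j}a_{n,j,k}(q,1)=(q;q)_n\Lambda(Q_{n,j,k})$ by~(\ref{stablespec1}).

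For parts (3) and (4) I would use the stable specialization. Summing~(\ref{stablespec1}) against $(q^{-1}t)^j$ gives
\[
A^{\maj,\des,\exc}_{n,k}(q,1,q^{-1}t)=(q;q)_n\,\Lambda\!\Bigl(\sum_{j\ge0}Q_{n,j,k}\,t^j\Bigr),\qquad
A^{\maj,\des,\exc}_{n}(q,1,q^{-1}t)=(q;q)_n\,\Lambda\!\Bigl(\sum_{j\ge0}Q_{n,j}\,t^j\Bigr),
\]
the second after summing over $k$ and using $\sum_k Q_{n,j,k}=Q_{n,j}$. By Theorem~\ref{symunimodth}(2),(3), the polynomials $\sum_j Q_{n,j,k}t^j$ and $\sum_j Q_{n,j}t^j$ are $t$-symmetric and $t$-unimodal with respect to $\le_{{\rm Schur}}$, with centers $\frac{n-k}{2}$ and $\frac{n-1}{2}$. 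Applying $(q;q)_n\Lambda$ coefficientwise and invoking the transfer statement of Lemma~\ref{hposqpos} converts these into $t$-symmetry and $t$-unimodality over $\Q[q]$ with the order $\le_{(q)}$; the centers are unchanged because $\Lambda$ does not act on $t$. This gives (3) and (4).

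For part (1) I would use the nonstable specialization via the second formula of Lemma~\ref{nonstable}. Multiplying it by $q^{-j}t^j$ and summing over $j$ yields
\[
A^{\maj,\des,\exc}_{n,k}(q,p,q^{-1}t)=(p;q)_{n+1}\sum_{m\ge0}p^m\sum_{i=0}^{k}q^{im}\,\Lambda_m\!\Bigl(\sum_{j\ge0}Q_{n-i,j,k-i}\,t^j\Bigr).
\]
By Theorem~\ref{symunimodth}(2), each inner polynomial $\sum_j Q_{n-i,j,k-i}t^j$ is $t$-symmetric with center $\frac{(n-i)-(k-i)}{2}=\frac{n-k}{2}$, which is \emph{the same for every $i$}. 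Since $\Lambda_m$ is a ring homomorphism it preserves coefficientwise equalities, and multiplication by the $t$-free quantities $q^{im}p^m$ and $(p;q)_{n+1}$, together with the sums over $i$ and $m$, preserve $t$-symmetry and its center; hence $A^{\maj,\des,\exc}_{n,k}(q,p,q^{-1}t)$ is $t$-symmetric with center $\frac{n-k}{2}$.

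Finally, part (2) is the case $k=0$ of the displayed identity, where the inner sum collapses to the single term $i=0$, so $A^{\maj,\des,\exc}_{n,0}(q,p,q^{-1}t)=(p;q)_{n+1}\sum_{m\ge0}p^m\,\Lambda_m\!\bigl(\sum_{j\ge0}Q_{n,j,0}\,t^j\bigr)$. Now every coefficient $Q_{n,j,0}$ is homogeneous of the same degree $n$, so Lemma~\ref{hposqpos} applies with this common degree to the whole polynomial and transfers the $t$-unimodality of $\sum_j Q_{n,j,0}t^j$ (Theorem~\ref{symunimodth}(2), center $\frac n2$) to $t$-unimodality over $\Q[q,p]$ with $\le_{(q,p)}$. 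I expect this degree-matching to be the main subtlety: for $k>0$ the $i$-th piece $\sum_j Q_{n-i,j,k-i}t^j$ has degree $n-i$ while the prefactor is $(p;q)_{n+1}$ rather than $(p;q)_{n-i+1}$, and the extra factor $(p;q)_{n+1}/(p;q)_{n-i+1}$ carries negative coefficients, destroying positivity; this is precisely why $t$-unimodality in the presence of $p$ is claimed only for $k=0$, whereas for $p=1$ the stable specialization involves no such mismatch and unimodality survives for all $k$.
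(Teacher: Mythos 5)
Your proof is correct and follows essentially the same route the paper takes: specializing Theorem~\ref{symunimodth}(2),(3) via (\ref{stablespec1}) for parts (3) and (4), and via Lemma~\ref{nonstable} for parts (1) and (2), with Lemma~\ref{hposqpos} carrying the order relations through. Your closing observation about the degree mismatch (the prefactor $(p;q)_{n+1}$ versus the degree $n-i$ of $Q_{n-i,j,k-i}$) correctly identifies the exact reason the $(q,p)$-unimodality claim is restricted to $k=0$, a point the paper's terse proof does not make explicit.
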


\begin{proof}
Since h-positivity implies Schur positivity, we can use Lemma~\ref{hposqpos} to specialize Theorem~ \ref{symunimodth}.  By  Lemma~\ref{nonstable}, Parts (1)  and (2) are obtained by specializing Part (2) of Theorem~ \ref{symunimodth}.
By (\ref{stablespec1}), Parts (3) and (4) are obtained by specializing Parts (2) and (3) of Theorem~ \ref{symunimodth},  respectively.
\end{proof}

\begin{remark}  The $p,q=1$ case of  Part (2) of Theorem~\ref{unieulerth} is due to Brenti \cite[Corollary 1]{bre1}.
\end{remark}

\begin{remark} One can obtain a stronger result than  Theorem~\ref{unieulerth} by using an unpublished result of Gessel (Theorem~\ref{gesth} below) along with Theorem~\ref{introsymgenth}.  It follows from  (\ref{ges})  and Theorem~\ref{introsymgenth} that 
$$\sum_{j=0}^{n-1} Q_{n,j} t^j = \sum_{d= 0}^{\lfloor \frac {n-1} {2} \rfloor } f_{n,d} \,  t^d (1+t)^{n-1-2d},$$ and it follows from 
(\ref{ges2}) and Theorem~\ref{introsymgenth} that $$\sum_{j=0}^{n-1} Q_{n,j,k} t^j = \sum_{d= 0}^{\lfloor \frac {n-k} {2} \rfloor } f_{n,k,d} \,  t^d (1+t)^{n-k-2d},$$
where $f_{n,d}$ and $f_{n,k,d}$ are Schur positive symmetric functions of degree $n$ (see \cite{sw3} for further details).  Now by Lemma~\ref{hposqpos} we have the following  strengthening of Theorem~\ref{unieulerth}: \begin{enumerate} 
 \item $A_{n,0}^{\maj,\des, \exc}(q,p,q^{-1}t) $ has coefficients in  $\N[q,p]$ when expanded in the basis $\{t^d(1+t)^{n-2d}\}_{d=0}^{\lfloor n /2 \rfloor }$
\item $A_{n,k}^{\maj,\des, \exc}(q,1,q^{-1}t) $ has coefficients in  $\N[q]$ when expanded in the basis $\{t^d(1+t)^{n-k-2d}\}_{d=0}^{\lfloor (n-k)/2 \rfloor }$
\item  $A_{n}^{\maj, \exc}(q,1,q^{-1}t) $ has coefficients in  $\N[q]$ when expanded in the basis $\{t^d(1+t)^{n-1-2d}\}_{d=0}^{\lfloor (n-1) /2 \rfloor }$.
\end{enumerate}  See \cite{bra} for a discussion of the third  result in the special case that $q=1$, that is for the Eulerian polynomials $A_n(t)$.  \end{remark}

   In
Section~\ref{symcyclesec} we conjecture\footnote{
This conjecture was recently proved jointly with A. Henderson.  See Section~\ref{newdevsec} (New developments).}  that the $t$-symmetric polynomial 
$A^{\maj,\des,\exc}_{n,k}(q,p,q^{-1}t)$  is t-unimodal even when $k \ne 0$ and $p \ne 1$.     However, $A^{\maj,\des,\exc}_{n}(q,p,q^{-1}t)$ is not $t$-symmetric as can be seen in the  computation,

\beq A^{\maj,\des,\exc}_4(q,p,q^{-1}t) = 1\! \!\!&+&\!\! \! (3 p + 2pq+pq^2+2p^2q^2+ 2p^2q^3+p^2q^4) t \\ &+&\!\!\! (3p +pq +p^2q +3 p^2q^2 +2 p^2 q^3 +p^3q^4)  t^2\\  &+&\!\!\! p t^3.
\eeq

The Eulerian numbers possess a  property stronger than unimodality, namely log-concavity (see \cite[p. 292]{com}).  A sequence $(s_0,\dots,s_n)$ of real numbers  or polynomials with real coefficients  is said to be {\em log-concave} if 
$s_i^2 -s_{i-1}s_{i+1} \ge 0$ for all $i=1,\dots,n-1$.  Clearly if $(s_0,\dots,s_n)$ is log-concave and each $s_i > 0$ then $(s_0,\dots,s_n)$ is unimodal.  It is natural to ask whether the unimodality results of this subsection have log-concavity versions.     Using the Maple package ACE \cite{ace}, we obtain
$$Q_{5,1}^2 - Q_{5,2} Q_{5,0} = h_{5,4,1}-h_{5,2,2,1}++h_{5,3,2}+4h_{4,3,2,1}+h_{4,4,1,1}+4h_{3,3,2,2}.$$
Hence $Q_{5,1}^2 - Q_{5,2} Q_{5,0}$ is not h-positive.  However, using ACE we have verified the following conjecture for $n\le 8$.
\begin{con} For all $n,j,k$, the symmetric functions $Q_{n,j}^2 - Q_{n,j+1} Q_{n,j-1}$ and  $Q_{n,j,k}^2 - Q_{n,j+1,k} Q_{n,j-1,k}$ are Schur-positive.
\end{con}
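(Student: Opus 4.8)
\medskip
\noindent\textbf{A proof proposal.}
The plan is to first dispose of the fixed-point index and then attack the derangement case by one of two routes. For the reduction, note that by Corollary~\ref{dercor} we have $Q_{n,j,k}=h_kQ_{n-k,j,0}$, so
\[
Q_{n,j,k}^2-Q_{n,j+1,k}Q_{n,j-1,k}=h_k^2\bigl(Q_{n-k,j,0}^2-Q_{n-k,j+1,0}Q_{n-k,j-1,0}\bigr),
\]
and since $h_k^2$ is Schur-positive (Pieri), the whole family $\{Q_{n,j,k}\}_k$ reduces to the subfamily $k=0$. For the unrestricted functions, $Q_{n,j}=\sum_{k\ge0}h_kQ_{n-k,j,0}$, so expanding and symmetrizing the resulting double sum expresses $Q_{n,j}^2-Q_{n,j+1}Q_{n,j-1}$ as a $\sum_{k,l}h_kh_l$-combination of the \emph{mixed} terms $2Q_{a,j,0}Q_{b,j,0}-Q_{a,j+1,0}Q_{b,j-1,0}-Q_{a,j-1,0}Q_{b,j+1,0}$; for that case I would therefore aim for the stronger assertion that the whole array $\{Q_{a,j,0}\}$ is ``jointly log-concave'' in the sense that all these mixed terms are Schur-positive, a statement that Route~2 below would naturally supply.

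\emph{Route 1 (combinatorial).} Set $\mathfrak R_{n,j}:=\biguplus_{\lambda\vdash n}\mathfrak R_{\lambda,j}$, so that by Corollary~\ref{ornth} one has $Q_{n,j}=\sum_{R\in\mathfrak R_{n,j}}\wt(R)$ and $Q_{n,j}^2=\sum_{(R,R')\in\mathfrak R_{n,j}\times\mathfrak R_{n,j}}\wt(R)\wt(R')$. First I would look for a weight-preserving injection
\[
\iota\colon\mathfrak R_{n,j+1}\times\mathfrak R_{n,j-1}\hookrightarrow\mathfrak R_{n,j}\times\mathfrak R_{n,j}
\]
that transfers a single bar between the two components while respecting the necklace conditions of Definition~\ref{orndef} --- the ``product'' analogue of the bar-moving involutions behind the unimodality results, in the spirit of the Lindstr\"om--Gessel--Viennot proofs of log-concavity. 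This would realize $Q_{n,j}^2-Q_{n,j+1}Q_{n,j-1}$ as the weight enumerator of the complement $(\mathfrak R_{n,j}\times\mathfrak R_{n,j})\setminus\operatorname{im}\iota$. The remaining, harder task is to show that this enumerator is not merely monomial-positive but Schur-positive: I would try to equip the complement with a type-$A$ crystal structure, or with an $\mathrm{RSK}$-type bijection onto pairs of semistandard tableaux that is compatible with the $\sg$-action, and, failing that, to extract a cancellation-free Schur expansion directly.

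\emph{Route 2 (Hodge-theoretic).} The $t$-unimodality of $\sum_jQ_{n,j}t^j$ from Theorem~\ref{symunimodth}(3) is, on the toric-variety side, an instance of the hard Lefschetz theorem, and the refinement should carry a compatible $\mathfrak{sl}_2$-action. Granting the primitive decomposition noted in the paper, $\sum_jQ_{n,j}t^j=\sum_d f_{n,d}\,t^d(1+t)^{n-1-2d}$ with each $f_{n,d}$ Schur-positive, together with a Hodge--Riemann-type bilinear form on $\bigoplus_jV_{n,j}$ (where $V_{n,j}$ is the $\sg_n$-module with $\ch V_{n,j}=Q_{n,j}$) that behaves well under the induction product $\mathrm{Ind}_{\sg_a\times\sg_b}^{\sg_{a+b}}$ dual to multiplication of symmetric functions, log-concavity of $(Q_{n,j})_j$ --- and simultaneously the mixed inequalities required for the unrestricted $Q_{n,j}$ --- would follow from the standard mixed Hodge--Riemann argument. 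The crux is to perform this argument $\sg_n$-equivariantly, i.e.\ to upgrade the relevant inequality of integers to an inequality in the Schur cone of symmetric functions.

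\emph{Main obstacle.} In both routes the genuine difficulty is Schur-positivity as opposed to positivity of monomial coefficients: there is no general machine, the difference $Q_{n,j}^2-Q_{n,j+1}Q_{n,j-1}$ exhibits real cancellation --- witness the paper's computation that $Q_{5,1}^2-Q_{5,2}Q_{5,0}$ fails to be $h$-positive, and the fact that $Q_{\lambda,j}$ is itself not $h$-positive --- and a sum of symmetric unimodal sequences sharing a common center need not be log-concave, so the conjecture cannot follow formally from Corollary~\ref{formQcor}. Producing the tableaux, crystal, or equivariant Hodge--Riemann relations that actually witness Schur-positivity of the difference is where new input will be required.
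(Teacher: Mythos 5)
The statement you are addressing is a \emph{conjecture} in the paper, not a theorem: the authors explicitly say only that they have verified it with the Maple package ACE for $n\le 8$, and (unlike Conjectures~\ref{sposconj} and~\ref{sposconj2}, which the ``New developments'' section reports as subsequently proved with Henderson) this log-concavity conjecture is not claimed to have a proof anywhere in the paper. So there is no paper argument against which to measure yours, and indeed you have not given a proof either --- you have given a reduction plus a research program, with the crux openly labelled unsolved.

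That said, what you do establish is sound and worth recording. The reduction via Corollary~\ref{dercor}, $Q_{n,j,k}=h_kQ_{n-k,j,0}$, correctly factors $h_k^2$ out of $Q_{n,j,k}^2-Q_{n,j+1,k}Q_{n,j-1,k}$ and, since products of Schur-positive functions are Schur-positive, legitimately reduces the second family in the conjecture to the case $k=0$. Your symmetrized expansion of $Q_{n,j}=\sum_{k}h_kQ_{n-k,j,0}$ is also correct, and isolates the right ``mixed'' quantity $2Q_{a,j,0}Q_{b,j,0}-Q_{a,j+1,0}Q_{b,j-1,0}-Q_{a,j-1,0}Q_{b,j+1,0}$ whose Schur-positivity would give the first family as well. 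You have also put your finger on exactly why the conjecture cannot be formal: the paper's own computation that $Q_{5,1}^2-Q_{5,2}Q_{5,0}$ is not $h$-positive shows there is genuine cancellation in the Schur expansion, so neither Corollary~\ref{formQcor} nor a naive injection on ornaments will suffice without a mechanism (crystals, $\sg_n$-equivariant Hodge--Riemann relations, or the like) that sees Schur-positivity rather than monomial-positivity. None of those mechanisms is supplied, so the proposal remains a plan, not a proof --- which is the honest state of the problem as the paper leaves it.
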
 

\begin{con} For all $n,k$, the coefficients of  the $t$-polynomials $A_{n,k}^{\maj,\des,\exc}(q,p,q^{-1}t)$ and  $A_{n}^{\maj,\des,\exc}(q,1,q^{-1}t)$ form  log-concave sequences of polynomials in $q$ and $p$.
\end{con}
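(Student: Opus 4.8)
The plan is to reduce the log-concavity of these $t$-polynomials to a positivity statement about symmetric functions and then to push the techniques behind the unimodality results of Section~\ref{secQ} far enough to establish it. By~(\ref{stablespec1}) the coefficient of $t^j$ in $A_n^{\maj,\exc}(q,q^{-1}t)=A_n^{\maj,\des,\exc}(q,1,q^{-1}t)$ equals $(q;q)_n\,\Lambda(Q_{n,j})$, and by Lemma~\ref{nonstable} the coefficient of $t^j$ in $A_{n,k}^{\maj,\des,\exc}(q,p,q^{-1}t)$ is, up to the factor $(p;q)_{n+1}$, a nonnegative combination of the quantities $\Lambda_m(Q_{n-i,j,k-i})$. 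Since $\Lambda$ and each $\Lambda_m$ are ring homomorphisms, $\Lambda(Q_{n,j})^2-\Lambda(Q_{n,j-1})\Lambda(Q_{n,j+1})=\Lambda\bigl(Q_{n,j}^2-Q_{n,j-1}Q_{n,j+1}\bigr)$, and similarly for $\Lambda_m$, so the conjecture would follow from a suitable positivity, after specialization, of the differences $Q_{n,j}^2-Q_{n,j+1}Q_{n,j-1}$ and $Q_{n,j,k}^2-Q_{n,j+1,k}Q_{n,j-1,k}$. One must be cautious, though: the Schur-positivity of these differences (the preceding conjecture) does \emph{not} by itself yield the polynomial statement, since the factor $(q;q)_n$ in~(\ref{stablespec1}) squares to $(q;q)_n^2$, which is not divisible by $(q;q)_{2n}$ in $\N[q]$, so Lemma~\ref{hposqpos} cannot be applied off the shelf to the degree-$2n$ symmetric function $Q_{n,j}^2-Q_{n,j-1}Q_{n,j+1}$; closing this gap --- upgrading Schur-positivity to a statement that keeps control of the $q$-denominators --- is part of the problem.

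A more hands-on route is to argue directly from the product formula of Corollary~\ref{formQcor}. Applying the stable principal specialization $\Lambda$ to it and then substituting $t\mapsto q^{-1}t$ and $r=1$ expresses the coefficient of $t^j$ in $A_n^{\maj,\exc}(q,q^{-1}t)$ as a sum, over compositions $n=k_0+k_1+\dots+k_m$ with $k_1,\dots,k_m\ge 2$, of contributions
\[
\left[\begin{array}{c} n \\ k_0,\dots,k_m\end{array}\right]_q\ \prod_{i=1}^m\bigl(t+t^2+\dots+t^{k_i-1}\bigr).
\]
Each such contribution, read as a sequence in the exponent of $t$, is log-concave: $t+t^2+\dots+t^{k-1}$ is log-concave with no internal zeros, a product of such sequences is again log-concave, and the $q$-multinomial coefficient is a nonnegative constant in $t$. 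The obstruction is the familiar fact that a \emph{sum} of log-concave sequences need not be log-concave; one already knows from the remark following Theorem~\ref{unieulerth} that $\sum_j Q_{n,j}t^j$ expands nonnegatively in the basis $\{t^d(1+t)^{n-1-2d}\}$, yet nonnegative combinations of the $t^d(1+t)^{n-1-2d}$ are in general not log-concave, so that $\gamma$-expansion alone does not settle the matter. The hope is to arrange the sum over compositions so that its summands are synchronized --- all with center of symmetry $\frac{n-1}{2}$, as exploited in the proof of Theorem~\ref{symunimodth} --- and then to establish a stronger, additively closed positivity, such as total positivity of the associated Toeplitz matrix or a Lindstr\"om--Gessel--Viennot nonintersecting-path model for the coefficients, which implies coefficientwise log-concavity and survives the specializations $\Lambda$ and $\Lambda_m$.

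A third possibility is a continued-fraction approach: derive from Theorem~\ref{expgenth}, or from the Foata--Han identity of Corollary~\ref{foha}, a Jacobi-type continued fraction for $\sum_{n\ge 0} A_n^{\maj,\exc}(q,q^{-1}t)\,z^n$, in the spirit of the continued fractions already known for the $(\maj,\des)$ and $(\inv,\exc)$ statistics, whose coefficients lie in $\N[q,t]$, and then invoke standard criteria of Brenti and Br\"and\'en passing from such an expansion to $q$-log-concavity of the coefficient sequence; the $(q,p)$-refinement would be treated by the same machinery applied to Corollary~\ref{foha}.

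The main obstacle, shared by all three routes, is that log-concavity --- and Schur- or $q$-positivity of differences of products --- is not stable under addition, so it must be replaced by a stronger, additively stable positivity (a total-positivity statement, a nonintersecting-path model, or a continued fraction) that also specializes correctly through the normalizing denominators $(q;q)_n$ and, in the two-variable case, $(p;q)_{n+1}$. I expect the hard part to be precisely the interaction of these denominators with the desired positivity, compounded, for $A_{n,k}^{\maj,\des,\exc}(q,p,q^{-1}t)$, by the need to carry the $\des$-weighting $p$ through Lemma~\ref{nonstable}.
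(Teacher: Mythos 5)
This statement is labeled as a conjecture in the paper, and the paper offers no proof of it; there is therefore nothing in the text to compare your write-up against. What you have produced is not a proof but an honest survey of three possible lines of attack together with the obstacles each one faces, and you say so yourself, so the bottom line is that the conjecture remains open in your account just as it does in the paper.

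That said, your analysis of the obstacles is accurate and worth recording. You correctly observe that the Schur-positivity conjecture for $Q_{n,j}^2 - Q_{n,j+1}Q_{n,j-1}$ would not by itself yield the polynomial statement: the coefficient of $t^j$ in $A_n^{\maj,\exc}(q,q^{-1}t)$ is $(q;q)_n\Lambda(Q_{n,j})$ by~(\ref{stablespec1}), so the relevant difference of products is $(q;q)_n^2\Lambda(Q_{n,j}^2 - Q_{n,j-1}Q_{n,j+1})$, whereas Lemma~\ref{hposqpos} controls the sign of $(q;q)_{2n}\Lambda(\cdot)$ on degree-$2n$ Schur-positive inputs; since $(q;q)_{2n}/(q;q)_n^2 = \left[\begin{smallmatrix}2n\\ n\end{smallmatrix}\right]_q$ is a polynomial of positive degree, passing from the $(q;q)_{2n}$ normalization to the $(q;q)_n^2$ normalization amounts to a division that can destroy coefficient nonnegativity, exactly as you say. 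You are also right that the term-by-term expansion from Corollary~\ref{formQcor} produces individually log-concave contributions whose centers of symmetry are not synchronized once $k_0$ varies, and that log-concavity is not additively closed, so no conclusion follows directly. Your third suggestion (a continued-fraction or total-positivity upgrade) is a standard and plausible way to attack $q$-log-concavity, but you do not carry it out, and the nonstable specialization through Lemma~\ref{nonstable} introduces an extra sum over $m$ and over the fixed-point shift $i$ that you do not resolve. So while your diagnosis of where the difficulty lies is sound, each route you sketch stops short of a proof, and the conjecture is not settled by your argument.
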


\subsection{Cycle type Eulerian quasisymmetric functions}  \label{symcyclesec} By means of ornaments we extend the symmetry  results of Section~\ref{secQ1} to the refinements  $Q_{\lambda,j}$ and $A^{\maj,\des,\exc}_{\lambda}(q,p,t)$, where $A^{\maj,\des,\exc}_{\lambda}(q,p,t)$ is the
{\em cycle type $(q,p)$-Eulerian polynomial} defined for each partition $\lambda\vdash n$  by
$$A^{\maj,\des,\exc}_{\lambda}(q,p,t) := \sum_{\scriptsize\begin{array}{c} \s \in \sg_n \\ \lambda(\s) = \lambda  \end{array}}      q^{\maj(\s)}  p^{\des(\s)} t^{\exc(\s)} .$$

\begin{thm} \label{symquasith} For all $\lambda \vdash n$ and $j = 0,1,\dots,n-1$, the Eulerian quasisymmetric function $Q_{\lambda,j}$ is a symmetric function.
\end{thm}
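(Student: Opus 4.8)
The plan is to apply the ornament formula of Corollary~\ref{ornth}, which gives $Q_{\lambda,j}=\sum_{R\in\mathfrak R_{\lambda,j}}\wt(R)$, and to show that this power series is symmetric by exhibiting, for each $a\ge 1$, its invariance under the transposition of the variables $x_a$ and $x_{a+1}$; since a power series of bounded degree invariant under every transposition of consecutive variables is symmetric, this suffices. Fixing $a$, it is enough to construct an involution $\beta_a$ on the set of bicolored necklaces of a prescribed size and a prescribed number of bars that exchanges the multiplicities of the absolute values $a$ and $a+1$ while fixing the multiplicity of every other absolute value: applying $\beta_a$ separately to each necklace of an ornament $R$ then gives an involution of $\mathfrak R_{\lambda,j}$ (type and total bar count are preserved necklace by necklace) which realizes $x_a\leftrightarrow x_{a+1}$ on $\wt$.

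A necklace containing no letter of absolute value $a$ or $a+1$ is fixed by $\beta_a$, and a necklace of size $1$ has a single unbarred letter, which is simply switched between $a$ and $a+1$. For a necklace $w$ of size greater than $1$, call a position an $ab$-position if its letter has absolute value $a$ or $a+1$. When $w$ is not composed entirely of $ab$-positions, the $ab$-positions form maximal cyclic arcs --- call them \emph{blocks} --- separated by nonempty arcs all of whose letters have absolute value different from $a$ and $a+1$; here $\beta_a$ fixes the separating arcs and acts independently on each block. Reading a block as a linear word $v_1\cdots v_\ell$ over $\{a,\bar a,a+1,\overline{a+1}\}$, the necklace conditions are equivalent to: (A) every $\bar a$ is followed inside the block by a letter of absolute value $a$; (B) every unbarred $a+1$ is followed inside the block by a letter of absolute value $a+1$; and (C) $v_\ell$ is barred or unbarred according as the first letter of the following separating arc has absolute value $<a$ or $>a+1$ --- this ``exit parity'' must be preserved by $\beta_a$ --- while the left end imposes no condition on $v_1$. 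Separating the value $a$ vs.\ $a+1$ (the value-bit) from the color (the bar-bit), conditions (A) and (B) say precisely that the value-bit may switch from $a$ to $a+1$ only at an unbarred cell and from $a+1$ to $a$ only at a barred cell; consequently, inside each maximal run of cells of equal absolute value the bar-bits are unconstrained except at the last cell of the run, whose bar-bit is then forced --- by the value of the next run, or, for the final run, by the exit parity. One now defines $\beta_a$ on a block as a Bender--Knuth--type surgery on this sequence of runs which interchanges the total number of value-$a$ cells with the total number of value-$(a+1)$ cells, accompanied by a canonical readjustment of the free bar-bits compensating the change in the forced ones, so that the block's length, number of bars, and exit parity are preserved and (A), (B) still hold; in the remaining case, in which $w$ itself is a primitive circular word over $\{a,\bar a,a+1,\overline{a+1}\}$, one uses the cyclic analogue of the same surgery.

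The essential difficulty is the design of this run-surgery: it must simultaneously be an involution, interchange the value-$a$ and value-$(a+1)$ counts, preserve the number of bars, and be compatible with (A), (B), (C). There is no obvious symmetry of these conditions under $a\leftrightarrow a+1$ --- (A) concerns barred letters of the smaller value, (B) unbarred letters of the larger one --- which is exactly why a genuine Bender--Knuth matching argument is needed here, and this is where the real work lies. Granting it, the proof concludes routinely. By construction $\beta_a$ alters $\wt(w)$ only by interchanging the exponents of $x_a$ and $x_{a+1}$, and preserves size and $\bbar(w)$, necklace by necklace; hence, via Corollary~\ref{ornth}, $Q_{\lambda,j}$ is invariant under $x_a\leftrightarrow x_{a+1}$. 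Finally $\beta_a$ sends primitive words to primitive words: the rule defining it is local and equivariant under rotation, so $\beta_a$ commutes with cyclic rotations, whence a nontrivial period of $\beta_a(w)$ would, by injectivity of $\beta_a$, be a nontrivial period of $w$, contradicting primitivity of $w$. Thus $\beta_a$ is a well-defined involution of $\mathfrak R_{\lambda,j}$ for every $a$, and $Q_{\lambda,j}$ is a symmetric function.
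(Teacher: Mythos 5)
Your approach is structurally the same as the paper's: reduce to Corollary~\ref{ornth}, and for each $a$ build a bar-preserving, type-preserving involution on necklaces that swaps the multiplicities of the absolute values $a$ and $a+1$. You also correctly isolate why this is not a triviality: the necklace conditions are \emph{not} symmetric under $a\leftrightarrow a+1$ (barred letters constrain what follows when the value is large, unbarred when it is small), so the naive color-preserving swap of values produces illegal words at run boundaries.

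However, at exactly that point your argument has a genuine gap. You write that one ``defines $\beta_a$ on a block as a Bender--Knuth--type surgery \dots accompanied by a canonical readjustment of the free bar-bits,'' and then say ``Granting it, the proof concludes routinely.'' But this surgery is the entire content of the theorem; nothing is granted for free. The paper's proof does not get away with a single uniform ``canonical readjustment''---it needs a case split. For segments with an \emph{even} number of value-switches it swaps all values $k\leftrightarrow k+1$ keeping bars in place and then repairs the equinumerous ``bad'' boundary letters by transporting bars from bad $k$'s to bad $(k+1)$'s; for segments with an \emph{odd} number of switches this does not work (the bad letters are no longer equinumerous, consistent with your observation that the conditions are asymmetric), and instead one must permute the lengths of the maximal runs, exchanging $m_{2i-1}\leftrightarrow m_{2i}$, and then move bars within runs to respect the forced bar at each run's end. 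Neither half is a ``Bender--Knuth matching'' in the naive sense, and neither is immediate from conditions (A)--(C). Until you specify the map on a block explicitly and verify that it is an involution, preserves the bar count, and respects your exit-parity constraint (C), the proof is incomplete. Your primitivity argument (locality + rotation-equivariance) is fine once $\beta_a$ is actually defined, but it too rests on a construction you have not supplied.
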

\begin{proof}This result can be proved using plethysm;  it  follows immediately from Corollary~\ref{plethform} below by applying the plethystic inverse of $\sum_{n\ge 0} h_n$ to both sides of (\ref{conj2}).  Here we give a bijective proof involving ornaments. 

For each $k \in \PP$, we will construct an involution
$\psi$ on necklaces that  exchanges the number of occurrences of
the letter $k$ (barred or unbarred) and $k+1$ (barred or unbarred)
in a necklace, but preserves the number of occurrences of all
other letters and  the total number of bars. The result will then
follow from Corollary~\ref{ornth}. We start with necklaces that
contain only the letters $\{k,\bar k, k+1,\overline{ k+1}\}$. Let
$R$ be such a necklace.  We may assume without loss of generality
that $k=1$. First replace all $1$'s with $2$'s and all $2$'s with
$1$'s, leaving the bars in their original positions. The problem
is that now each $2$ that is followed by a $1$ lacks a bar (call
such a 2 {\em bad}) and each $1$ that is followed by a $2$ has a
bar (call such a 1 {\em bad}).  Since the number of bad 1's equals
the number of bad 2's, we can move all the bars from the bad 1's
to the bad 2's, thereby obtaining a necklace $R^\prime$ with the
same number of bars as $R$ but with the number of $1$'s and $2$'s
exchanged.  Let $\psi(R) = R^\prime$.  Clearly $\psi^2(R) = R$.
For example if $R= (2\bar 21\bar 1122\bar 2\bar 211111)$ then we
get $(1\bar 12\bar 2211\bar 1\bar 122222)$ before the bars are
adjusted.  After the bars are moved we have $\psi(R)= (1 12\bar
2\bar 211\bar 112222\bar 2)$.

Now we handle the  case in which $R$ has  (barred and unbarred) $k$'s and $k+1$'s, and other letters which we will call intruders.   The intruders enable us to form  linear segments of $R$ consisting only of (barred and unbarred) $k$'s and $k+1$'s.  To obtain such a linear segment start with a letter of value $k$ or $k+1$ that follows an intruder and read the letters of $R$ in a clockwise direction until another intruder is encountered.   For example if
\bq \label{symnec} R= (\bar 5 334\bar 4\bar 33 \bar 33 6\bar 6 \bar 33 3\bar 4 2 4 4)\eq and $k=3$ then the segments are $334\bar 4\bar 33 \bar 33$,  $\bar 333 \bar 4$, and  $ 4  4$.

 There are two types of segments, even segments and odd segments.  An even (odd) segment contains an even (odd) number of switches, where a switch is a letter of value $k$ followed by one of value $k+1$ or a letter of value $k+1$ followed by one of value $k$. We handle the even and odd segments differently.   In an even segment, we replace all  $k$'s with $k+1$'s and all $k+1$'s with $k$'s, leaving the bars in their original positions.  Again, at the switches, we have  bad $k$'s and bad $k+1$'s,  which are equinumerous.  So we move all the bars from the bad $k$'s to the bad $k+1$'s to obtain a good segment.   This preserves the number of bars and exchanges the number of $k$'s and $k+1$'s.  For example, the even segment $334\bar 4\bar 33 \bar 33$ gets replaced by $4 43\bar 3\bar 44 \bar 44$ before the bars are adjusted.  After the bars are moved we have
 $4 \bar 433 \bar 44 \bar 44$.

An odd segment either starts with a $k$ and ends with a $k+1$ or
vice-verse. Both cases are handled similarly.  So suppose we have
an odd segment of the form $$k^{m_1}(k+1)^{m_2}k^{m_3}(k+1)^{m_4}
\dots k^{m_{2r-1}} (k+1)^{m_{2r}},$$ where each $m_i >0$ and the
bars have been suppressed.   The number of switches is $2r-1$.  We
replace it with the odd segment
$$k^{m_2}(k+1)^{m_1}k^{m_4}(k+1)^{m_3} \dots k^{m_{2r}}(k+1)^{m_{2r-1}},$$
and put  bars  in their original positions.  Again  we may have
created bad $k$'s (but not bad $k+1$'s);  so we need to move some
bars around.  The  positions of the bad $k$'s are in the set
$\{N_1+m_2, N_2+m_4, \dots,N_{r}+m_{2r}\}$, where
$N_i=\sum_{t=1}^{2i-2} m_t$. If there is a bar on the $k$ in
position $N_i+m_{2i}$ we move it to position $N_i+m_{2i-1}$, which
had been barless. This preserves the number of bars and exchanges
the number of $k$'s and $k+1$'s. For example, the odd segment
$\bar 333 \bar 4$ gets replaced by $\bar 344 \bar 4$ before the
bars are adjusted.  After the bars are moved we have $ 34\bar 4
\bar 4$.

Let $\psi(R)$ be the necklace obtained by replacing all the segments in the way described above.
For example if $R$ is the necklace given in (\ref{symnec}) then
$$\psi(R) = (\bar 5 4 \bar 433 \bar 44 \bar 44 6\bar 6 34\bar 4 \bar 42 33).$$

It is easy to check that $\psi^2(R) = R$ for all necklaces $R$.
 Now extend the involution $\psi$ to  ornaments by applying $\psi$ to each necklace of the ornament.

\end{proof}

\begin{thm} \label{symth2}For all $\lambda \vdash n$ with exactly $k$ parts equal to 1, and $j = 0,\dots, n-k$,
  $$Q_{\lambda,j} = Q_{\lambda, n-k-j}.$$
  \end{thm}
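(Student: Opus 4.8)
The plan is to prove Theorem~\ref{symth2} by a sign-reversing-style involution on ornaments, using Corollary~\ref{ornth}, which writes $Q_{\lambda,j} = \sum_{R\in\mathfrak R_{\lambda,j}}\wt(R)$. So it suffices to build a weight-preserving bijection $\mathfrak R_{\lambda,j}\to\mathfrak R_{\lambda,n-k-j}$. Every ornament of type $\lambda$ has exactly $k$ necklaces of size $1$, all unbarred and carrying no bars, so the bars live entirely on the necklaces of size $\ge 2$, whose sizes sum to $n-k$. Thus it is enough to construct, for each $m\ge 2$, a weight-preserving \emph{involution} $\rho$ on necklaces of size $m$ that sends a necklace with $b$ bars to one with $m-b$ bars; the global map then applies $\rho$ to each necklace of size $\ge 2$ in an ornament and fixes the size-$1$ necklaces. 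This turns total bar count $j$ into $(n-k)-j$, and also explains the center of symmetry $\tfrac{n-k}{2}$: each necklace of size $m$ contributes bar counts symmetric about $\tfrac m2$.

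To define $\rho$, I would write a necklace of size $m\ge 2$ as $w=(a_1,\dots,a_m)$ with $a_i$ followed clockwise by $a_{i+1}$ (subscripts mod $m$), and let $T\subseteq\mathbb Z/m\mathbb Z$ record the positions of the barred letters. Definition~\ref{orndef} is then equivalent to the two conditions $|a_i|>|a_{i+1}|\Rightarrow i\in T$ and $i\in T\Rightarrow|a_i|\ge|a_{i+1}|$ (so positions with $|a_i|=|a_{i+1}|$ may be barred or not, freely). Define $\rho(w)$ to be the circular word whose letter in position $i$ has absolute value $|a_{m+1-i}|$ and is barred exactly when $m-i\notin T$. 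The point is that the transition of $\rho(w)$ at position $i$ uses the letters $a_{m+1-i}$ and $a_{m-i}$, i.e.\ the transition of $w$ at position $m-i$ read backwards. Checking that $\rho(w)$ is a valid necklace: if $\rho(w)$ has a strict descent at $i$ then $|a_{m+1-i}|>|a_{m-i}|$, so $w$ has a strict ascent at $m-i$, whence $m-i\notin T$ and $i$ is barred in $\rho(w)$; dually, if $i$ is unbarred in $\rho(w)$ then $m-i\in T$, so $|a_{m-i}|\ge|a_{m+1-i}|$, i.e.\ $i$ is a weak descent of $\rho(w)$. Primitivity is preserved because reversal preserves the period lattice and the bar rule is rotation-equivariant, so an imprimitivity of $\rho(w)$ would force one of $w$. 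By construction $|T'|=m-|T|$, the multiset of absolute values (hence $\wt$) is unchanged, and $\rho^2=\mathrm{id}$ since reversing twice restores the letters and the assignment $i\mapsto(m-i\notin\,\cdot\,)$ is self-inverse.

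Assembling these facts gives the weight-, type-, and size-$1$-necklace-preserving involution on ornaments that exchanges bar count $j$ with $n-k-j$, so Corollary~\ref{ornth} yields $Q_{\lambda,j}=Q_{\lambda,n-k-j}$. The main obstacle is entirely bookkeeping in the validity check above — keeping straight which letter of $w$ sits in which position of $\rho(w)$ and verifying the two necklace inequalities for $\rho(w)$ against the two for $w$; once the transition correspondence $i\leftrightarrow m-i$ is pinned down, primitivity, involutivity and weight-preservation are routine. A more hands-on alternative, in the style of the proof of Theorem~\ref{symquasith}, would describe $\rho$ via the maximal runs of constant absolute value: reverse the cyclic order of the runs and, within each run, complement the barred/unbarred pattern on all but its (new) last letter; but the positional description is cleaner to verify and to see as an involution.
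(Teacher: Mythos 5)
Your argument is correct, and it is genuinely different from the paper's proof. The paper also constructs an involution on ornaments (bar each unbarred letter of each nonsingleton necklace and unbar each barred letter, then replace the $i$th smallest value appearing in $R$ by the $i$th largest value), but that map is \emph{not} weight-preserving; it permutes the nonzero parts of the content exponent vector. Consequently the paper must invoke Theorem~\ref{symquasith} (symmetry of $Q_{\lambda,j}$) to pass from the existence of such a content-permuting bijection to equality of the generating functions. Your involution $\rho$ --- reverse the cyclic word and complement bars, with the one-step shift $i\mapsto m-i$ on bar positions to respect the necklace conditions --- is weight-preserving on each necklace, so the equality $Q_{\lambda,j}=Q_{\lambda,n-k-j}$ drops out of Corollary~\ref{ornth} directly, with no appeal to Theorem~\ref{symquasith}. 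That makes your proof more self-contained; it also localizes the palindromy to each necklace of size $m\ge2$ (bar count $b\leftrightarrow m-b$), which gives a transparent explanation of the center of symmetry $\tfrac{n-k}{2}$. Your verification that $\rho(w)$ is a necklace, that $\rho$ is rotation-equivariant (hence well-defined on circular words), that it preserves primitivity, and that $\rho^2=\mathrm{id}$ are all sound. One small slip in wording: in the ``dually'' clause you deduce $|a_{m-i}|\ge|a_{m+1-i}|$, which is $|a'_i|\le|a'_{i+1}|$, i.e.\ a weak \emph{ascent} of $\rho(w)$ at $i$ (exactly what an unbarred position requires), not a weak descent as written; the inequality and the conclusion are correct, only the label is off.
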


 \begin{proof}
 We construct a type-preserving involution $\gamma$ on ornaments.  Let $R$ be an ornament of type $\lambda$.  To obtain $\gamma(R)$, first we bar each unbarred letter of each nonsingleton necklace of $R$ and unbar each barred letter.  Next for each i, we replace each occurrence of  the $i$th smallest value  in $R$ with the $i$th largest value leaving the bars intact.  Clearly $\gamma(R)$ is an ornament with $n-k-j$ bars whenever $R$ is an ornament with $j$ bars.  Also $\gamma^2(R) = R$.    The result now follows from the fact that  $Q_{\lambda,j} $ is symmetric (Theorem~\ref{symquasith}) and from Corollary~\ref{ornth}.
   \end{proof}

   \begin{remark} Although the equation
   \begin{equation} \label{symth3} Q_{n,j} = Q_{n, n-1-j}\end{equation} follows easily from Theorem~\ref{introsymgenth}, we can give a bijective proof by means of banners, using Theorem~\ref{banprop}.  We construct
 an involution $\tau$ on $\bigcup_{\lambda \vdash n}  \mathfrak B_{\lambda,j}$ that is similar to the involution $\gamma$ used in the proof of Theorem~\ref{symth2}.  Let $B$ be a banner of length $n$ with $j$ bars.  To obtain $\tau(B)$,  first we bar each unbarred letter of $B$, except for the last letter,  and unbar each barred letter.  Next for each i, we replace each occurrence of  the $i$th smallest value  in $B$ with the $i$th largest value, leaving the bars intact.  Clearly $\tau(B)$ is a banner of length $n$, with $n-1-j$ bars.
    \end{remark}

Since  $Q_{n,j,k}$ and $Q_{n,j}$ are h-positive, one might expect that the same is true for the refinement $Q_{\lambda,j}$.  It turns out that this is  not the case as the following  computation using the Maple package ACE \cite{ace} shows,
\begin{equation} \label{counterex} Q_{(6),3} = 2 h_{(4,2)} - h_{(4,1,1)} + h_{(3,2,1)} + h_{(5,1)}.\end{equation}
Therefore Theorem~\ref{symunimodth}   cannot hold  for $Q_{\lambda,n}$, as stated.
However, by expanding in the Schur basis we obtain
$$Q_{(6),3} = 3 s_{(6)} + 3 s_{(5,1)} +  3 s_{(4,2)} + s_{(3,3)} + s_{(3,2,1)},$$  which establishes Schur positivity of $Q_{(6),3}$.  We have  used    ACE  \cite{ace}  to confirm the following conjecture for $\lambda \vdash n $ up to $n=8$.

\begin{con}\footnote{
This conjecture was recently proved jointly with A. Henderson.  See Section~\ref{newdevsec} (New developments).}  \label{sposconj} Let $\lambda \vdash n$.  For all  $j=0,1,\dots,n-1$, the Eulerian quasisymmetric function
$Q_{\lambda,j}$ is  Schur positive.  Moreover, $Q_{\lambda,j} - Q_{\lambda,j-1}$ is Schur positive if $1\le j \le \frac{n-k}2$, where $k$ is the number of parts of $\lambda$ that are equal to $1$.  Equivalently, the t-symmetric polynomial $\sum_{j=0}^{n-1}Q_{\lambda,j}t^j$ is   t-unimodal under the  partial order relation on  the ring of symmetric functions induced by the Schur basis.  \end{con}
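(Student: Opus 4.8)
I would prove the sharper ``master'' statement that for every $\lambda\vdash n$ with $k$ parts equal to $1$ there exist Schur-positive symmetric functions $f_{\lambda,d}$, for $d=0,\dots,\lfloor\tfrac{n-k}{2}\rfloor$, such that
$$\sum_{j=0}^{n-1}Q_{\lambda,j}\,t^j\;=\;\sum_{d}f_{\lambda,d}\;t^d(1+t)^{\,n-k-2d}.$$
This is the cycle-type refinement of Gessel's unpublished Theorem~\ref{gesth} (the identities (\ref{ges}), (\ref{ges2})), and it at once yields Conjecture~\ref{sposconj}: the polynomials $t^d(1+t)^{n-k-2d}$ form a basis of the space of $t^{\,n-k}$-palindromic polynomials, $\sum_jQ_{\lambda,j}t^j$ lies in that space by Theorem~\ref{symth2}, so the $f_{\lambda,d}$ exist and are unique; Schur positivity of each $f_{\lambda,d}$ then expresses every $Q_{\lambda,j}$ as a nonnegative integer combination of Schur-positive functions (hence Schur positive), and makes $\sum_jQ_{\lambda,j}t^j$ a sum of polynomials that are each $t$-symmetric and $t$-unimodal about $\tfrac{n-k}{2}$, hence $t$-unimodal in the Schur order. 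So the whole problem is the Schur positivity of the $f_{\lambda,d}$.

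\textbf{Reduction to a single cycle.} By the ornament description of Corollary~\ref{ornth} together with the plethystic generating-function identity of Corollary~\ref{plethform}, if $\lambda$ has $m_i$ parts equal to $i$ then
$$\sum_{j}Q_{\lambda,j}\,t^j\;=\;\prod_{i\ge 1}h_{m_i}\!\Big[\,\sum_{j}Q_{(i),j}\,t^j\,\Big],$$
a product of $h_m$-plethysms of the single-cycle polynomials $C_i(t):=\sum_jQ_{(i),j}t^j$ (with $C_1(t)=h_1$ and $h_{m_1}[h_1]=h_{m_1}$, which is exactly why the exponent of $1+t$ is $n-k$ rather than $n$). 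Thus the master statement would follow from (a) its truth for each $C_i(t)$, together with (b) the class of polynomials of the form $\sum_d f_d\,t^d(1+t)^{N-2d}$ with $f_d$ Schur positive being closed under multiplication and under each $h_m[\cdot]$. Closure under multiplication is elementary: $\big(t^{d_1}(1+t)^{N_1-2d_1}\big)\big(t^{d_2}(1+t)^{N_2-2d_2}\big)=t^{d_1+d_2}(1+t)^{(N_1+N_2)-2(d_1+d_2)}$ and products of Schur-positive functions are Schur positive by Littlewood--Richardson. Closure under $h_m[\cdot]$ is \emph{not} formal, and is one of the two crux points.

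\textbf{The single-cycle case.} Here I see two routes. Geometrically, I would try to realize $Q_{(n),j}$ as the Frobenius characteristic of $H^{2j}$ of a smooth complex projective variety with an algebraic $\S_n$-action: the toric variety $X_n$ of the type $A_{n-1}$ Coxeter fan already has $\mathrm{ch}\,H^{2j}(X_n)=Q_{n,j}$ (Procesi--Stanley--Stembridge, see \cite{stem1}), and the cycle-type refinement ought to be extracted from the fixed-point subvarieties $X_n^{\,g}$ for $g$ of cycle type $\lambda$ --- themselves smooth projective toric varieties of flag simplicial complexes --- via equivariant localization, expressing $\sum_jQ_{\lambda,j}t^j$ as an induction from a centralizer of $\sum_j\mathrm{ch}\,H^{2j}(X_n^{\,g})\,t^j$. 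Schur positivity of each $Q_{\lambda,j}$ is then automatic (honest cohomology), and the $(1+t)$-expansion becomes the \emph{equivariant $\gamma$-nonnegativity} of these flag toric varieties, which I would attack by combining hard Lefschetz on the primitive pieces with the module structure of $H^*$ over the cohomology of a product of projective lines, as in the toric proofs of $\gamma$-nonnegativity for flag polytopes; closure property (b) would follow because $h_m[\cdot]$ corresponds to the (equivariant) symmetric product of varieties, which remains projective with the same K\"ahler package. Alternatively, I would carry out Gessel's proof of Theorem~\ref{gesth} inside the banner model of Theorem~\ref{banprop}, tracking the Lyndon type (hence the cycle type) through the valley-hopping/group-action argument to produce the $f_{\lambda,d}$ together with a manifestly Schur-positive (in fact fundamental-quasisymmetric) description; on this route (b) is replaced by a bijective argument on multisets of banners.

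\textbf{The main obstacle.} The genuinely hard part is the single-cycle case together with the plethysm closure, and it is subtler than it looks for a structural reason: by Corollary~\ref{cvalcor} the single-cycle $\sum_jQ_{(n),j}$ is only a \emph{virtual} character, nonzero on classes centralizing no $n$-cycle, so it is not literally $H^*(X_n)$ (nor the cohomology of any toric variety) with its permutation action, and one must locate the correct twisted or induced geometric model --- or make the combinatorial route succeed without one. Showing that $h_m[\cdot]$ preserves ``Schur-positive $(1+t)$-expansion'' is the second sticking point; there is no purely formal proof, and it appears to need either the symmetric-product interpretation above or a dedicated combinatorial construction. Once these two ingredients are in place, the reduction of the second paragraph and the deduction of Conjecture~\ref{sposconj} from the master expansion are routine.
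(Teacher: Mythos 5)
The statement in question is a \emph{conjecture} in the paper; the paper contains no proof, only a footnote reporting that one was obtained later with A.~Henderson in a forthcoming paper. So there is no argument in the source to compare yours against. What you have written is a research plan, and you say so yourself; the honest self-assessment at the end is correct, but it also means the proposal does not establish the conjecture.

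Your overall strategy is sensible and lines up with what the paper already records in the unrefined case: the remark following Theorem~\ref{unieulerth} notes that Gessel's unpublished identity (\ref{ges}) together with Theorem~\ref{introsymgenth} gives
$\sum_j Q_{n,j}\,t^j=\sum_d f_{n,d}\,t^d(1+t)^{n-1-2d}$ with $f_{n,d}$ Schur positive, which is precisely your ``master statement'' summed over all $\lambda\vdash n$. Reducing the $\lambda$-refined version to single-part $\lambda$ via Corollary~\ref{plethform}, and then deducing Schur positivity and Schur-$t$-unimodality of $\sum_j Q_{\lambda,j}t^j$ from such an expansion (using Theorem~\ref{symth2} for the palindromicity about $\tfrac{n-k}{2}$), is a valid logical skeleton.

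However, the two crux steps you flag are genuine gaps, not loose ends. (i) You give no argument that $\sum_j Q_{(n),j}t^j$ has a Schur-$\gamma$-positive expansion; both proposed routes are left as sketches, and the geometric one meets the structural obstruction you yourself name: by Corollary~\ref{cvalcor} the character of $\sum_j Q_{(n),j}$ is nonzero on classes disjoint from the centralizer of an $n$-cycle, so it is not an induced representation, and there is no obvious smooth projective $\S_n$-variety whose cohomology realizes it. (ii) Closure of the class $\{\sum_d f_d\,t^d(1+t)^{N-2d}: f_d \text{ Schur positive}\}$ under $h_m[\,\cdot\,]$ is the second unestablished ingredient; it is not a formal consequence of closure under products (Littlewood--Richardson handles products but not plethysm), and neither the symmetric-product/K\"ahler argument nor the banner bijection is actually carried out. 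In short, the implication ``master statement $\Rightarrow$ Conjecture~\ref{sposconj}'' is correct, but the master statement is itself unproved in your writeup, so the conjecture has been reformulated rather than proved.
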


\begin{remark} We have also verified using ACE that $Q_{\lambda,j}^2 - Q_{\lambda,j+1} Q_{\lambda,j-1}$ is Schur-positive for all  $\lambda \vdash n$ where $n \le 8$. Thus it is reasonable to conjecture that this is true for all $n,j$, and  that  log-concavity versions of the  unimodality conjectures given below, hold. \end{remark}

From Stembridge's work \cite{stem2} one obtains a nice combinatorial description of the
coefficients in the Schur function expansion of $Q_{n,j,k}$.   It would be interesting to do the same for the refinement
$Q_{\lambda,j}$,  at least when $\lambda= (n)$.  In Section~\ref{repthsec} we discuss the  expansion in the power sum symmetric function basis.

By Lemmas~\ref{nonstable} and~\ref{hposqpos}, we have the following consequence of Theorem~\ref{symth2} (and refinement of Part (1) of Theorem~\ref{unieulerth}).

\begin{thm} \label{desmajsym}    Let $\lambda$ be a partition of $n$ with exactly $k$ parts of size $1$.   Then $A^{\maj,\des,\exc}_{\lambda}(q,p,q^{-1}t)$ is $t$-symmetric with center of symmetry ${n-k\over 2}$.
    \end{thm}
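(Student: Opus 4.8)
The plan is to deduce the statement directly from Lemma~\ref{nonstable} and the symmetry $Q_{\lambda,j}=Q_{\lambda,n-k-j}$ of Theorem~\ref{symth2}. Write $\lambda=(\nu,1^k)$, where $\nu\vdash n-k$ has no part equal to $1$. Since
\[
A^{\maj,\des,\exc}_{\lambda}(q,p,q^{-1}t)=\sum_{j\ge 0}q^{-j}a_{\lambda,j}(q,p)\,t^j,
\]
and since Lemma~\ref{exdlem} gives $\maj(\s)-\exc(\s)=\sum_{i\in\Exd(\s)}i\ge 0$, each coefficient $q^{-j}a_{\lambda,j}(q,p)=\sum_{\s}q^{\maj(\s)-\exc(\s)}p^{\des(\s)}$ is an honest polynomial in $\Q[q,p]$. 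Hence it will suffice to show that $q^{-j}a_{\lambda,j}(q,p)$ is invariant under $j\mapsto n-k-j$ for all $j$.

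The main step is to divide the identity of Lemma~\ref{nonstable} by $q^j$, obtaining
\[
q^{-j}a_{\lambda,j}(q,p)=(p;q)_{n+1}\sum_{m\ge 0}p^m\sum_{i=0}^{k}q^{im}\,\Lambda_m\!\bigl(Q_{(\nu,1^{k-i}),\,j}\bigr),
\]
and then to observe that the reflection furnished by Theorem~\ref{symth2} is \emph{uniform} in $i$: the partition $(\nu,1^{k-i})$ is a partition of $n-i$ with exactly $k-i$ parts equal to $1$, so Theorem~\ref{symth2} yields
\[
Q_{(\nu,1^{k-i}),\,j}=Q_{(\nu,1^{k-i}),\,(n-i)-(k-i)-j}=Q_{(\nu,1^{k-i}),\,n-k-j}
\]
for every $i=0,1,\dots,k$, with the same target index $n-k-j$ regardless of $i$. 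Here we use that $Q_{\lambda,j}$ is genuinely a symmetric function (Theorem~\ref{symquasith}), so that the specializations $\Lambda_m$ and the expansion in Lemma~\ref{nonstable} make sense; the fact that each $(p;q)_{n+1}\sum_m\Lambda_m(Q_{(\nu,1^{k-i}),j})p^m$ is a polynomial — with nonnegative coefficients where that is known — is supplied by Lemma~\ref{hposqpos}.

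Substituting this term by term into the displayed formula shows that the entire double sum on the right is unchanged under $j\mapsto n-k-j$, hence so is $q^{-j}a_{\lambda,j}(q,p)$; this gives the $t$-symmetry of $A^{\maj,\des,\exc}_{\lambda}(q,p,q^{-1}t)$ about $(n-k)/2$, and the reflected index range already pins the center of symmetry at $(n-k)/2$ rather than $(n-1)/2$. I do not expect a genuine obstacle: all the substance sits in Theorem~\ref{symth2}, and the only point that needs care is the index bookkeeping above — checking that the summands $Q_{(\nu,1^{k-i}),j}$ appearing in Lemma~\ref{nonstable}, which have different degrees $n-i$, are nonetheless all reflected to the common index $n-k-j$, so that the symmetry survives the summation over $i$ and $m$.
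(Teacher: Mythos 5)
Your argument is correct and is exactly the one the paper intends when it derives Theorem~\ref{desmajsym} "by Lemmas~\ref{nonstable} and~\ref{hposqpos}, as a consequence of Theorem~\ref{symth2}": you divide the formula of Lemma~\ref{nonstable} by $q^j$ and substitute $Q_{(\nu,1^{k-i}),j}=Q_{(\nu,1^{k-i}),n-k-j}$ term by term, where the decisive bookkeeping observation — that $(\nu,1^{k-i})$ is a partition of $n-i$ with $k-i$ singleton parts, so the reflected index is $(n-i)-(k-i)-j=n-k-j$ uniformly in $i$ — is precisely what makes the double sum invariant under $j\mapsto n-k-j$ and fixes the center at $(n-k)/2$. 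The only minor difference is that your polynomiality remark via Lemma~\ref{exdlem} makes the appeal to Lemma~\ref{hposqpos} essentially superfluous for the symmetry claim alone (it really earns its keep in the unimodality statements nearby); otherwise the proof is the same as the paper's.
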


The $q,p=1$ case of the following conjecture was proved by Brenti \cite[Theorem 3.2]{bre2}.

\begin{con}\footnote{This conjecture was recently proved jointly with A. Henderson.  See Section~\ref{newdevsec} (New developments).}  \label{sposconj2} Let $\lambda$ be a partition of $n$ with exactly $k$ parts of size 1.  Then the
t-symmetric polynomial $A_{\lambda}^{\maj,\des,\exc}(q,p,q^{-1}t)$ is t-unimodal (with center of symmetry ${n-k \over 2}$).  Consequently for all $n,k$,  $A_{n,k}^{\maj,\des,\exc}(q,p,q^{-1}t)$ is $t$-symmetric and $t$-unimodal with center of symmetry ${n-k \over 2}$.
\end{con}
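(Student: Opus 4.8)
The $t$-symmetry of $A^{\maj,\des,\exc}_{\lambda}(q,p,q^{-1}t)$ is already Theorem~\ref{desmajsym}, so the substance is the $t$-unimodality; and once that is known for each individual $\lambda$, the ``consequently'' clause follows by summing over the $\lambda\vdash n$ having exactly $k$ parts equal to $1$, since these all share the center ${n-k\over 2}$ and a sum of $t$-symmetric $t$-unimodal polynomials over $\N[q,p]$ with a common center is again $t$-symmetric and $t$-unimodal. My plan is to deduce the $t$-unimodality of $A^{\maj,\des,\exc}_{\lambda}(q,p,q^{-1}t)$ from the symmetric-function statement of Conjecture~\ref{sposconj} — that $\sum_j Q_{\lambda,j}(\x)t^j$ is $t$-unimodal about ${n-k\over 2}$ under the order induced by the Schur basis, i.e.\ $Q_{\lambda,j}-Q_{\lambda,j-1}$ is Schur-positive for $1\le j\le {n-k\over 2}$ — in exact parallel with the way Theorem~\ref{desmajsym} was obtained from the symmetry statement of Theorem~\ref{symth2}.

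For the specialization step, write $\lambda=(\mu,1^k)$ with $\mu\vdash n-k$ having no part equal to $1$. Using $Q_{(\mu,1^{k-i}),j}=h_{k-i}Q_{\mu,j}$ (the cycle-type form of Corollary~\ref{dercor}, read off the ornament model of Corollary~\ref{ornth} by removing $k-i$ unbarred singleton necklaces) together with the $q$-Pascal identity $\sum_{i=0}^{k}q^{im}\Lambda_m(h_{k-i})=\left[\begin{array}{c}m+k\\k\end{array}\right]_q$, Lemma~\ref{nonstable} collapses to
\[
A^{\maj,\des,\exc}_{(\mu,1^k)}(q,p,q^{-1}t)=(p;q)_{n+1}\sum_{m\ge 0}p^m\left[\begin{array}{c}m+k\\k\end{array}\right]_q\,\Lambda_m\!\Big(\sum_j Q_{\mu,j}\,t^j\Big).
\]
Granting Conjecture~\ref{sposconj} for $\mu$, expand $\sum_j Q_{\mu,j}t^j=\sum_d c_d\,t^d[\,n-k-2d+1\,]_t$ with $c_d=Q_{\mu,d}-Q_{\mu,d-1}$ Schur-positive; each $t^d[\,n-k-2d+1\,]_t$ is $t$-symmetric and $t$-unimodal about the common center ${n-k\over 2}$, and Lemma~\ref{hposqpos}, applied after rewriting $\left[\begin{array}{c}m+k\\k\end{array}\right]_q\Lambda_m(c_d)=\sum_{i=0}^{k}q^{k-i}\Lambda_m(h_{k-i}c_d)$ (each $h_{k-i}c_d$ Schur-positive) and using the $p\mapsto q^ip$ bookkeeping from the proof of Lemma~\ref{nonstable}, shows each coefficient $(p;q)_{n+1}\sum_m p^m\left[\begin{array}{c}m+k\\k\end{array}\right]_q\Lambda_m(c_d)$ is a polynomial in $\N[q,p]$. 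Hence $A^{\maj,\des,\exc}_{\lambda}(q,p,q^{-1}t)$ is an $\N[q,p]$-linear combination of $t$-symmetric $t$-unimodal polynomials all centered at ${n-k\over 2}$, so it has the same property.

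It remains to prove Conjecture~\ref{sposconj}. Via the plethystic formula $\sum_j Q_{\lambda,j}t^j=\prod_{d\ge 1}h_{m_d}\!\big[\sum_j Q_{(d),j}t^j\big]$ of Corollary~\ref{plethform} (with $m_d$ the multiplicity of $d$ in $\lambda$, and $\sum_j Q_{(1),j}t^j=h_1$) this splits into: (a) for each $d\ge 2$, the single-cycle function $\sum_j Q_{(d),j}(\x)t^j$ is $t$-symmetric (center $d/2$, by Theorem~\ref{symth2}) and Schur-unimodal; and (b) the class of $t$-symmetric Schur-unimodal polynomials is closed under products (centers add) and under plethysm $f\mapsto h_m[f]$ (center scales by $m$). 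Part (b) for products is the partially-ordered-ring version of the classical lemma already used in the proof of Theorem~\ref{symunimodth}; for $h_m[\,\cdot\,]$ one writes the input in the basis $\{t^d[N-2d+1]_t\}$ with Schur-positive coefficients, expands via $h_m[a+b]=\sum_i h_i[a]h_{m-i}[b]$, and uses that plethysm of a Schur-positive function by $h_i$ is Schur-positive — the required inequalities then follow by induction on $m$ (and are checked directly for $m=2$).

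The main obstacle is (a): Schur-positivity of $Q_{(d),j}$ and of the differences $Q_{(d),j}-Q_{(d),j-1}$ for $j$ below the middle. The ornament/banner machinery of this paper delivers the palindromy $Q_{(d),j}=Q_{(d),d-j}$ but gives no handle on these differences, and by Corollary~\ref{cvalcor} the virtual $\mathfrak S_d$-representation with Frobenius characteristic $Q_{(d),j}$ has a genuinely non-obvious character. The plan is to obtain (a) geometrically: realize $\sum_j Q_{(d),j}(\x)t^j$, up to normalization, as the graded Frobenius characteristic of the $\mathfrak S_d$-equivariant cohomology of a smooth projective variety equipped with an $\mathfrak S_d$-invariant ample class — the natural candidate being (the part cut out by the cyclic symmetry of) the toric variety of the Coxeter complex of $\mathfrak S_d$, extending the Procesi--Stanley--Stembridge picture \cite{stem1,stem2} that underlies Theorem~\ref{symunimodth} — and then invoke $\mathfrak S_d$-equivariant Hard Lefschetz, which yields both Schur-positivity and Schur-positivity of consecutive differences below the center. (Alternatively one could try to read the Schur expansion off the power-sum formula of Sagan, Shareshian and Wachs and argue positivity directly, but that looks harder.) I expect the construction of the correct equivariant variety and the verification that its equivariant Betti data match $\sum_j Q_{(d),j}t^j$ to be precisely where genuinely new input is needed; everything else then assembles by the routine specialization above.
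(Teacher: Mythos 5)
Within this paper the statement you are addressing is only a conjecture (the footnote records that the proof, obtained later with A.~Henderson, appears elsewhere), so there is no in-paper argument to measure you against; what I can assess is whether your proposal actually constitutes a proof. It does not. Your first half is a reduction, and a reasonable one: deducing the $t$-unimodality of $A_{\lambda}^{\maj,\des,\exc}(q,p,q^{-1}t)$ from the Schur-unimodality statement of Conjecture~\ref{sposconj} by the same specialization scaffolding the paper uses (Lemma~\ref{nonstable} together with Lemma~\ref{hposqpos}, plus the cycle-type analogue of Corollary~\ref{dercor} read off the ornaments of Corollary~\ref{ornth}) is exactly parallel to how Theorem~\ref{desmajsym} and Theorem~\ref{unieulerth} are obtained, and the ``consequently'' clause does follow by summing over $\lambda$ with $k$ parts equal to $1$, since all summands share the center ${n-k\over 2}$. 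But this only relocates the difficulty into Conjecture~\ref{sposconj}, which is precisely the open content.

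The genuine gap is your item (a), and you acknowledge it yourself: Schur-positivity of $Q_{(d),j}$ and of the consecutive differences $Q_{(d),j}-Q_{(d),j-1}$ below the middle is asserted to follow from an equivariant hard Lefschetz theorem for a variety that you do not construct. The Procesi--Stanley--Stembridge picture (Theorem~\ref{toricth}) realizes $Q_{n,j}$, summed over all cycle types, as $\ch H^{2j}(X_n)$; there is no identified $\mathfrak S_d$-variety, and in particular no ``part cut out by the cyclic symmetry'' of $X_d$, whose equivariant cohomology is known to have Frobenius characteristic $Q_{(d),j}$ --- indeed the paper notes that $Q_{(6),3}$ is not even the characteristic of a permutation representation, so any such realization is far from automatic. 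Your step (b) is also not yet a proof: closure of $t$-symmetric Schur-unimodal polynomials under $f\mapsto h_m[f]$ (with $t$ carried through the plethysm) is a nontrivial assertion of the same flavor as the unimodality of $q$-binomial coefficients, and the sketched ``induction on $m$'' does not supply the needed inequalities. So what you have is a plausible reduction of Conjecture~\ref{sposconj2} to Conjecture~\ref{sposconj} (itself essentially indicated in the paper via Lemmas~\ref{nonstable} and~\ref{hposqpos} and Corollary~\ref{plethform}), together with a research program for the latter, rather than a proof.
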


The next result is easy to prove  and does not rely on the $Q_{\lambda,j}$.
\begin{prop} \label{easyprop} Let $\lambda$ be a partition of $n$ with exactly $k$ parts of size $1$.   Then for all $j=0,1,\dots,n-1$, $$a_{\lambda,j}(q,p) = a_{\lambda,n-k-j}(1/q,q^np).$$
\end{prop}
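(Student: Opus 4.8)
The plan is to exhibit an explicit bijection on the set of permutations of cycle type $\lambda$ that reverses the excedance number (up to the additive shift by $k$, the number of fixed points) and simultaneously transforms the major index and descent number in the predicted way. The natural candidate is conjugation by the longest element $w_0 = n\,(n-1)\cdots 21$, or rather the map $\s \mapsto \sigma^\ast$ defined by $\sigma^\ast(i) = n+1 - \sigma(n+1-i)$. First I would check that this map preserves cycle type: it is conjugation of $\sigma$ by the involution $i \mapsto n+1-i$, hence it is a bijection of $\sg_n$ onto itself that preserves cycle type, and in particular it fixes the number $k$ of fixed points.

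Next I would track the three statistics. For the excedance number: $i$ is an excedance position of $\sigma^\ast$ iff $\sigma^\ast(i) > i$ iff $n+1-\sigma(n+1-i) > i$ iff $\sigma(n+1-i) < n+1-i$, i.e.\ iff $n+1-i$ is a \emph{weak} non-excedance position of $\sigma$ in the sense that $\sigma(n+1-i) \le n+1-i$, with equality exactly when $n+1-i$ is a fixed point. Writing $j = n+1-i$, the positions $j$ with $\sigma(j) < j$ (strict anti-excedances) are in bijection with excedance positions of $\sigma^\ast$; since there are $n - k - \exc(\sigma)$ strict anti-excedance positions of $\sigma$, we get $\exc(\sigma^\ast) = n - k - \exc(\sigma)$, which is the desired index shift. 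For the descent set: $i \in \Des(\sigma^\ast)$ iff $\sigma^\ast(i) > \sigma^\ast(i+1)$ iff $n+1-\sigma(n+1-i) > n+1-\sigma(n-i)$ iff $\sigma(n-i) > \sigma(n+1-i)$ iff $n-i \in \Des(\sigma)$. Hence $\Des(\sigma^\ast) = \{\, n-i : i \in \Des(\sigma)\,\} = \{\, n - i : i \in \Des(\sigma)\,\}$, so $\des(\sigma^\ast) = \des(\sigma)$ and $\maj(\sigma^\ast) = \sum_{i \in \Des(\sigma^\ast)} i = \sum_{i \in \Des(\sigma)} (n-i) = n\,\des(\sigma) - \maj(\sigma) = \binom{n}{2} - \maj(\sigma) + \des(\sigma)\big(n - \tfrac{n-1}{?}\big)$ --- I would simplify this carefully to the clean form $\maj(\sigma^\ast) = n\,\des(\sigma) - \maj(\sigma)$.

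Finally I would assemble the generating function identity. Summing $q^{\maj(\sigma^\ast)} p^{\des(\sigma^\ast)}$ over all $\sigma$ of type $\lambda$ with $\exc(\sigma) = j$ (equivalently, $\exc(\sigma^\ast) = n-k-j$) gives, via the statistic transformations above,
\[
a_{\lambda, n-k-j}(q,p) = \sum_{\substack{\sigma \in \sg_n \\ \lambda(\sigma)=\lambda \\ \exc(\sigma)=j}} q^{n\,\des(\sigma) - \maj(\sigma)} p^{\des(\sigma)} = \sum_{\substack{\sigma \in \sg_n \\ \lambda(\sigma)=\lambda \\ \exc(\sigma)=j}} (1/q)^{\maj(\sigma)} (q^n p)^{\des(\sigma)} = a_{\lambda,j}(1/q,\, q^n p),
\]
which is exactly the claimed identity.

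The routine-but-delicate part is the bookkeeping in the excedance computation, specifically confirming that the ``equality'' case $\sigma(j) = j$ is precisely accounted for by the $k$ fixed points (so that the shift is $n-k$ and not $n$ or $n-1$), and double-checking the $\maj$ transformation constant $n$ versus $n-1$; I expect no conceptual obstacle, since the map is the standard $w_0$-conjugation whose effect on $\des$, $\maj$, $\exc$ is classical. One should also note the edge case $n=0$ (empty partition), where the statement holds trivially.
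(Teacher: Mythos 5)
Your proof is correct and takes essentially the same route as the paper: your map $\sigma \mapsto \sigma^\ast$ with $\sigma^\ast(i) = n+1-\sigma(n+1-i)$ is precisely the paper's map $\phi$ (conjugation by the longest element, described in the paper by replacing each $i$ with $n-i+1$ in the cycle form), and you verify the same three facts---preservation of cycle type and $\des$, the excedance complementation $\exc(\sigma^\ast)=n-k-\exc(\sigma)$, and $\maj(\sigma^\ast)=n\,\des(\sigma)-\maj(\sigma)$. The brief waffling over the $\maj$ formula is unnecessary; the clean form $n\,\des(\sigma)-\maj(\sigma)$ that you already computed is correct and is exactly what the paper states.
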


\begin{proof}  This follows from the type preserving involution $\phi: \sg_n \to \sg_n$ defined by letting
$\phi(\s) $ be obtained from $\s$ by writing $\s$ in cycle form and replacing each
$i$ by $n-i+1$.   Clearly $\phi$  sends permutations with $j$ excedences to permutations with $n-k-j$ excedences.   Also $i \in \Des(\s)$ if and only if $n-i \in \Des(\phi(\s))$.
It follows that $\phi$ preserves $\des$ and  $$\maj(\phi(\s)) = \des(\s) n- \maj (\s).$$
\end{proof}

By combining Proposition~\ref{easyprop} and Theorem~\ref{desmajsym}, we obtain the following result.
\begin{cor} Let $\lambda$ be a partition of $n$ with exactly $k$ parts of size $1$.  Then for all $j=0,1,\dots,n-1$,
$$a_{\lambda,j}(q,p) = q^{2j+k-n} a_{\lambda,j}(1/q,q^np).$$ Equivalently,  the coefficient of $p^dt^j$ in $A_{\lambda}^{\maj,\des,\exc}(q,p,t)$ is $q$-symmetric with center of symmetry $j + {k+nd -n \over 2}$.\end{cor}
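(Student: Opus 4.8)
The statement to prove is the final corollary: that combining Proposition~\ref{easyprop} and Theorem~\ref{desmajsym} yields
$$a_{\lambda,j}(q,p) = q^{2j+k-n} a_{\lambda,j}(1/q,q^np),$$
equivalently that the coefficient of $p^d t^j$ in $A_\lambda^{\maj,\des,\exc}(q,p,t)$ is $q$-symmetric with center $j + (k+nd-n)/2$.

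The plan is to simply chain together the two cited results. Proposition~\ref{easyprop} gives $a_{\lambda,j}(q,p) = a_{\lambda,n-k-j}(1/q,q^np)$. Theorem~\ref{desmajsym} says $A_\lambda^{\maj,\des,\exc}(q,p,q^{-1}t)$ is $t$-symmetric with center of symmetry $(n-k)/2$; unpacking the definition $A_\lambda^{\maj,\des,\exc}(q,p,t) = \sum_{j} a_{\lambda,j}(q,p)\, t^j$ (summing over $j$ the quantity $\sum_{\s} q^{\maj}p^{\des}$ with $\exc(\s)=j$), the substitution $t \mapsto q^{-1}t$ turns the coefficient of $t^j$ into $q^{-j}a_{\lambda,j}(q,p)$. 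So $t$-symmetry with center $(n-k)/2$ means $q^{-j}a_{\lambda,j}(q,p) = q^{-(n-k-j)}a_{\lambda,n-k-j}(q,p)$, i.e.
$$a_{\lambda,n-k-j}(q,p) = q^{2j+k-n}a_{\lambda,j}(q,p).$$

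First I would substitute this last identity into Proposition~\ref{easyprop}: replacing $a_{\lambda,n-k-j}(1/q,q^np)$ using the displayed relation (with $q$ replaced by $1/q$ and $p$ replaced by $q^np$, so the power $q^{2j+k-n}$ becomes $(1/q)^{2j+k-n} = q^{n-k-2j}$) gives
$$a_{\lambda,j}(q,p) = a_{\lambda,n-k-j}(1/q,q^np) = q^{n-k-2j}\, a_{\lambda,j}(1/q,q^np),$$
which rearranges to $a_{\lambda,j}(q,p) = q^{-(n-k-2j)}\cdot(\text{nothing})$... wait, I should instead run it the other way: Proposition~\ref{easyprop} applied with $j$ and then the symmetry identity from Theorem~\ref{desmajsym} directly gives $a_{\lambda,j}(q,p)=a_{\lambda,n-k-j}(1/q,q^np)$ and then $a_{\lambda,n-k-j}(1/q,q^np)=q^{2j+k-n}a_{\lambda,j}(1/q,q^np)$ where in applying the $t$-symmetry identity one uses base $1/q$ and secondary variable $q^np$, noting $(2j+k-n)$ is computed in the new base giving the stated exponent; care with the sign of the exponent is the only subtlety. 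Chaining yields exactly $a_{\lambda,j}(q,p) = q^{2j+k-n}a_{\lambda,j}(1/q,q^np)$.

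For the "equivalently" clause I would expand $A_\lambda^{\maj,\des,\exc}(q,p,t) = \sum_{d,j} c_{d,j}(q)\, p^d t^j$ where $c_{d,j}(q) = \sum_{\s}q^{\maj(\s)}$ ranges over $\s$ of cycle type $\lambda$ with $\des(\s)=d$ and $\exc(\s)=j$; then $a_{\lambda,j}(q,p) = \sum_d c_{d,j}(q)p^d$, and the identity $a_{\lambda,j}(q,p)=q^{2j+k-n}a_{\lambda,j}(1/q,q^np)$ becomes, matching coefficients of $p^d$, the relation $c_{d,j}(q) = q^{2j+k-n}q^{nd}c_{d,j}(1/q) = q^{2j+k-n+nd}c_{d,j}(1/q)$, which is precisely $q$-symmetry of $c_{d,j}$ with center of symmetry $(2j+k-n+nd)/2 = j + (k+nd-n)/2$. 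I do not anticipate any real obstacle here — this is a bookkeeping argument — the only place to be careful is tracking how the exponent $2j+k-n$ transforms when the base variable is inverted in the course of the substitution, so I would write out that one exponent computation explicitly and leave the rest as routine.
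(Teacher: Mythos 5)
Your overall strategy is exactly what the paper intends: the paper gives no written proof for this corollary beyond the phrase ``By combining Proposition~\ref{easyprop} and Theorem~\ref{desmajsym},'' and your chain $a_{\lambda,j}(q,p)=a_{\lambda,n-k-j}(1/q,q^np)$ followed by the $t$-symmetry relation applied in base $1/q$, together with the coefficient-of-$p^d$ reading of the ``equivalently'' clause, fills in precisely those details. However, your first-pass extraction of the $t$-symmetry relation has the exponent reversed. From $q^{-j}a_{\lambda,j}(q,p)=q^{-(n-k-j)}a_{\lambda,n-k-j}(q,p)$ you should get $a_{\lambda,n-k-j}(q,p)=q^{\,n-k-2j}\,a_{\lambda,j}(q,p)$, not $q^{\,2j+k-n}a_{\lambda,j}(q,p)$. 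Your second pass writes down the correct statement $a_{\lambda,n-k-j}(1/q,q^np)=q^{\,2j+k-n}a_{\lambda,j}(1/q,q^np)$, but it does not actually follow by substituting $q\mapsto 1/q$ into your (erroneous) first-pass relation --- that substitution would give $q^{\,n-k-2j}$ again, contradicting the target. The clean version: the correct symmetry relation is $a_{\lambda,n-k-j}(Q,P)=Q^{\,n-k-2j}a_{\lambda,j}(Q,P)$; taking $Q=1/q$, $P=q^np$ turns $Q^{\,n-k-2j}$ into $q^{\,2j+k-n}$, and chaining with Proposition~\ref{easyprop} gives $a_{\lambda,j}(q,p)=q^{\,2j+k-n}a_{\lambda,j}(1/q,q^np)$. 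With that one sign fixed, your argument is correct and matches the paper's intended proof, and the translation to $q$-symmetry of the $p^dt^j$-coefficient with center $j+(k+nd-n)/2$ is handled correctly.
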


\begin{con} The coefficient of $p^dt^j$ in $A_{\lambda}^{\maj,\des,\exc}(q,p,t)$ is $q$-unimodal.
\end{con}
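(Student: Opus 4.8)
The $q$-symmetry of $[p^dt^j]\,A_\lambda^{\maj,\des,\exc}(q,p,t)$, with center of symmetry $j+\tfrac{k+nd-n}{2}$, is already in hand --- it is the corollary immediately preceding this conjecture, deduced from Proposition~\ref{easyprop} and Theorem~\ref{desmajsym} --- so the task is $q$-unimodality alone. The plan is to reduce the conjecture, via Lemma~\ref{nonstable}, to a statement about standard Young tableaux, using the now-established Schur positivity of the cycle type Eulerian quasisymmetric functions (Conjecture~\ref{sposconj}), and then to prove that reduced statement by separate means.

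First I would handle the case where $\lambda$ has no part equal to $1$. Then $\sigma(1)\ne1$ for every $\sigma$ of cycle type $\lambda$, so $|\Exd(\sigma)|=\des(\sigma)-1$ by Lemma~\ref{exdlem}, and Lemma~\ref{nonstable} (with $k=0$) together with (\ref{spec2}) yields
\[
[p^dt^j]\,A_\lambda^{\maj,\des,\exc}(q,p,t)=q^j\,[p^d]\Big((p;q)_{n+1}\sum_{m\ge0}\Lambda_m(Q_{\lambda,j})\,p^m\Big).
\]
Expanding $Q_{\lambda,j}=\sum_\mu c^{(\lambda,j)}_\mu s_\mu$ in the Schur basis (all $c^{(\lambda,j)}_\mu\ge0$ by Conjecture~\ref{sposconj}) and using $s_\mu=\sum_S\beta_{\mu,S}F_{S,n}$ with $\beta_{\mu,S}=\#\{T\in\mathrm{SYT}(\mu):\Des(T)=S\}$, one extracts
\[
[p^dt^j]\,A_\lambda^{\maj,\des,\exc}(q,p,t)=q^j\sum_\mu c^{(\lambda,j)}_\mu\;\sum_{\substack{T\in\mathrm{SYT}(\mu)\\|\Des(T)|=d-1}}q^{\maj(T)}.
\]
Since the $c^{(\lambda,j)}_\mu$ are nonnegative, it suffices to prove the auxiliary claim: for every partition $\mu$ and every $e\ge0$, the polynomial $\sum_{T\in\mathrm{SYT}(\mu),\,|\Des(T)|=e}q^{\maj(T)}$ is $q$-unimodal. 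That it is $q$-symmetric with center $\tfrac{|\mu|e}{2}$ follows from Sch\"utzenberger evacuation, which preserves $|\Des(T)|$ and sends $\maj(T)$ to $|\mu|\,|\Des(T)|-\maj(T)$; this is consistent with the center $j+\tfrac{nd-n}{2}$ above.

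Proving the auxiliary claim is, I expect, the main obstacle. It is a descent-refined analogue of the classical unimodality of the principal specialization $s_\mu(1,q,\dots,q^{m-1})$; but from the identity
\[
s_\mu(1,q,\dots,q^{m-1})=\sum_{e\ge0}\left[\begin{array}{c}m+|\mu|-e-1\\|\mu|\end{array}\right]_q\;\sum_{T\in\mathrm{SYT}(\mu),\,|\Des(T)|=e}q^{\maj(T)}
\]
one sees that the summands have different centers of symmetry, so the claim does not follow directly from Stanley's hard-Lefschetz proof of unimodality of the left-hand side. I would attack it either through a representation-theoretic or geometric model that realizes the refinement of the graded $\mathfrak S_{|\mu|}$-representation by number of descents and on whose graded pieces a hard-Lefschetz argument applies, or by establishing a $q$-$\gamma$-positivity of $\sum_{T\in\mathrm{SYT}(\mu),\,|\Des(T)|=e}q^{\maj(T)}$ analogous to the $\gamma$-expansion techniques for Eulerian-type polynomials referenced in the remark following Theorem~\ref{unieulerth}.

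For general $\lambda=(\mu,1^k)$ with $\mu$ having no part equal to $1$, the same scheme applies, but Lemma~\ref{nonstable} now introduces the extra sum $\sum_{i=0}^k q^{im}\Lambda_m(h_{k-i})\Lambda_m(Q_{\mu,j})$, where $Q_{(\mu,1^{k-i}),j}=h_{k-i}Q_{\mu,j}$ is transparent from the ornament description of Corollary~\ref{ornth} --- the extra singleton necklaces contribute precisely $h_{k-i}$, as in Corollary~\ref{dercor}. The factors $\Lambda_m(h_{k-i})$ are Gaussian binomials, harmless for unimodality, but the factor $q^{im}$, which comes from peeling off fixed points equal to the minimum in the proof of Lemma~\ref{nonstable}, entangles the $\des$-grading with the $\fix$-grading; keeping control of the $p^d$-coefficient of the ensuing convolution while preserving $q$-unimodality is the extra difficulty in the general case. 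A potentially cleaner route is to bypass Lemma~\ref{nonstable} and seek a genuinely bigraded refinement $Q^{(d)}_{\lambda,j}$ of the cycle type Eulerian quasisymmetric functions that records excedances and descents simultaneously, is Schur positive, and has $t$-unimodal descent-generating function in the Schur order; Lemma~\ref{hposqpos} would then turn Schur positivity into $q$-nonnegativity and Schur-unimodality into $q$-unimodality at one stroke. The catch is that $\des$ is not naturally a symmetric-function grading --- the descent refinement of even the Gessel--Reutenauer symmetric function $L_\lambda$ fails to be symmetric --- so in that approach the crux would be constructing the right object rather than verifying its properties.
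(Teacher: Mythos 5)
The statement you are proving is a \emph{conjecture} in the paper, with no accompanying proof; unlike Conjectures~\ref{sposconj} and~\ref{sposconj2}, it carries no footnote recording a later resolution with Henderson, and Section~\ref{newdevsec} does not list it among the conjectures of Section~5.2 that were subsequently settled. There is therefore no proof in the paper to compare your attempt against, and your proposal --- which you yourself present as a plan of attack rather than a finished argument --- does not close the gap either.

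Your reduction for the case where $\lambda$ has no part equal to $1$ is valid as far as it goes: Lemma~\ref{nonstable} with $k=0$, together with (\ref{spec2}) applied to the Schur expansion of $Q_{\lambda,j}$ (whose nonnegativity is now available from the resolution of Conjecture~\ref{sposconj}), does give the displayed identity, and your check via evacuation that all the inner sums $\sum_{T\in\mathrm{SYT}(\mu),\,|\Des(T)|=d-1}q^{\maj(T)}$ share the common center of symmetry $n(d-1)/2$ is exactly what is needed for a nonnegative combination to inherit symmetry and unimodality. But this transfers the difficulty rather than resolving it. The auxiliary claim --- that $\sum_{T\in\mathrm{SYT}(\mu),\,|\Des(T)|=e}q^{\maj(T)}$ is $q$-unimodal for every $\mu$ and $e$ --- is itself open, and as you correctly observe it does not follow from hard-Lefschetz unimodality of $s_\mu(1,q,\dots,q^{m-1})$ because the descent-refined pieces have different centers. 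The two routes you sketch (a bigraded geometric/representation-theoretic model, or a $q$-$\gamma$-positivity argument) are directions, not arguments. For general $\lambda=(\mu,1^k)$ you diagnose the correct additional obstruction --- the $q^{im}$ factor in Lemma~\ref{nonstable} couples the $\des$ and $\fix$ gradings so that the convolution no longer decomposes into center-matched summands --- but again leave it unresolved. In sum, you have a clean and plausible reduction consistent with the paper's machinery, but not a proof of the conjecture, and the paper itself offers none.
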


\section{Further properties 
} \label{repthsec}

 The ornament characterization of $Q_{\lambda,j}$ yields  a plethystic formula expressing $Q_{\lambda,j}$ in terms of $Q_{(i),k}$.   (Note $(i)$ stands for the partition with a single part $i$.) Let $f$ be a symmetric function in variables $x_1,x_2,\dots$ and let $g$ be a formal power series with positive integer coefficients.  Choose any ordering of the monomials of $g$, where a monomial appears in the ordering $m$ times if its coefficient is $m$.  The {\em plethysm} of $f$ and $g$, denoted $f[g]$ is defined to be the the formal power series obtained from $f$ by replacing $x_i$ by the $i$th monomial of $g$, for each $i$.  The following result is an immediate consequence of  Corollary~\ref{ornth}.

\begin{cor} \label{plethform} Let $\lambda$ be a partition with $m_i$ parts of size $i$ for all  $i$.
Then \begin{equation}\label{conj}\sum_{j=0}^{|\lambda|-1} Q_{\lambda,j } t^j = \prod_{i\ge 1} h_{m_i}[ \sum_{j=0}^{i-1} Q_{(i), j} t^j].\end{equation}
Consequently,
\begin{equation}\label{conj2}\sum_{n,j \ge 0} Q_{n,j } t^j z^n  = \sum_{n\ge 0} h_n[\sum_{i,j\ge 0}Q_{(i), j} t^j z^i ] .\end{equation}
\end{cor}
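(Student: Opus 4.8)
The plan is to obtain both identities directly from the ornament description of Corollary~\ref{ornth}, reading the right-hand sides as generating functions for bicolored ornaments sorted by the sizes of their necklaces. Write $b(R)$ for the number of bars in an ornament (or necklace) $R$. Corollary~\ref{ornth} gives, for every partition $\lambda$,
$$\sum_{j\ge 0} Q_{\lambda,j}(\x)\, t^j = \sum_{\lambda(R)=\lambda} \wt(R)\, t^{b(R)},$$
and in particular, for $\lambda=(i)$, an ornament of type $(i)$ is a single necklace of size $i$, so $\sum_{j\ge0}Q_{(i),j}t^j = \sum_N \wt(N)\,t^{b(N)}$ with $N$ ranging over all necklaces of size $i$. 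Now fix $\lambda$ with $m_i$ parts equal to $i$. By definition an ornament $R$ of type $\lambda$ is a multiset of necklaces containing exactly $m_i$ necklaces of size $i$ for each $i$; sorting the necklaces of $R$ by size factors $R$ uniquely as $\biguplus_i R_i$, where $R_i$ is a multiset of $m_i$ necklaces of size $i$, and $\wt(R)=\prod_i\wt(R_i)$, $b(R)=\sum_i b(R_i)$. Hence
$$\sum_{j\ge0} Q_{\lambda,j}\, t^j = \prod_{i\ge1}\Bigl(\sum_{R_i}\wt(R_i)\,t^{b(R_i)}\Bigr),$$
the inner sum running over multisets of exactly $m_i$ necklaces of size $i$.

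The one step that needs care — and the one I expect to be the main obstacle — is the identification
$$\sum_{R_i}\wt(R_i)\,t^{b(R_i)} \;=\; h_{m_i}\Bigl[\,\sum_{j\ge0}Q_{(i),j}\,t^j\,\Bigr].$$
I would prove this with the plethystic product formula $\sum_{m\ge0}h_m[g]\,y^m=\prod_u(1-uy)^{-1}$, where the product is over the monomials $u$ of $g$ counted with multiplicity (this is the plethystic avatar of $\sum_n h_n=\prod_k(1-x_k)^{-1}$, using that $-[g]$ is a ring homomorphism and $h_m[yg]=y^mh_m[g]$). Taking $g=\sum_jQ_{(i),j}t^j$ and grouping the necklaces of size $i$ according to the value of the ``enriched monomial'' $\wt(N)t^{b(N)}$, one sees that $\sum_{m\ge0}\bigl(\sum_{|R_i|=m}\wt(R_i)t^{b(R_i)}\bigr)y^m$ is the very same product; comparing coefficients of $y^{m_i}$ yields the displayed equality. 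The subtlety to verify here is that the combinatorial ``multiset of necklaces'' operation really matches the definition of plethysm used in the paper, even though distinct necklaces of size $i$ may carry the same weight and bar count and there may be infinitely many such monomials. Substituting this into the previous product proves (\ref{conj}).

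Finally, (\ref{conj2}) follows by summing (\ref{conj}) over all partitions. Since $Q_{n,j}=\sum_{\lambda\vdash n}Q_{\lambda,j}$ and $|\lambda|=\sum_i i\,m_i$,
$$\sum_{n,j\ge0}Q_{n,j}\,t^j z^n=\sum_{\lambda}z^{|\lambda|}\prod_{i\ge1}h_{m_i}\Bigl[\sum_jQ_{(i),j}t^j\Bigr]=\prod_{i\ge1}\sum_{m\ge0}z^{im}h_{m}\Bigl[\sum_jQ_{(i),j}t^j\Bigr],$$
the last equality being just the expansion of the product over $\lambda$ as a product over $i$ of sums over $m=m_i$. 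Using $h_m[z^if]=z^{im}h_m[f]$ (immediate from writing $h_m$ in the power-sum basis) the general factor becomes $h_m[\sum_jQ_{(i),j}t^jz^i]$, and then the grouplike identity $\prod_{i}\sum_{m\ge0}h_m[f_i]=\sum_{n\ge0}h_n[\sum_i f_i]$ — the same product formula again, now applied to the series $f_i=\sum_jQ_{(i),j}t^jz^i$, whose monomial supports are disjoint because of the factors $z^i$ — collapses the right-hand side to $\sum_{n\ge0}h_n[\sum_{i,j}Q_{(i),j}t^jz^i]$, which is exactly (\ref{conj2}).
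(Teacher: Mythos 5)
Your argument is correct and is exactly the route the paper intends when it says the result is an ``immediate consequence of Corollary~\ref{ornth}'': split an ornament of type $\lambda$ into its sub-multisets of necklaces of each fixed size, note that the size-$i$ factor is a sum over multisets of $m_i$ necklaces of size $i$, and recognize that as $h_{m_i}$ plethystically composed with the necklace generating function $\sum_j Q_{(i),j}t^j$ via the combinatorial (monomial-substitution) definition of plethysm. Your derivation of~(\ref{conj2}) by expanding the sum over partitions as a product over $i$ and then collapsing via $\sum_n h_n[\sum_i f_i]=\prod_i\sum_m h_m[f_i]$ is likewise the standard step the authors leave implicit.
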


By specializing (\ref{conj}) we obtain the following result. Recall that $(\lambda,\mu)$ denotes the partition of $m+n$ obtained by  concatenating  $\lambda$ and $\mu$ and then appropriately rearranging the parts.

\begin{cor} Let $\lambda$ and $\mu$ be  partitions of $m$ and $n$, respectively.  If  $\lambda$ and $\mu$ have no  common parts then
$$A^{\maj,\exc}_{(\lambda,\mu)}(q,t) = \left[ \begin{array}{c} m+n \\ m \end{array}\right]_q \,\, A^{\maj,\exc}_{\lambda}(q,t) A^{\maj,\exc}_{\mu}(q,t).$$
Consequently, if $\lambda$ has no parts equal to $1$ then for all $j$,
$$a_{(\lambda, 1^m),j}(q,1) = \left[ \begin{array}{c} m+n \\ m \end{array}\right]_q \,\, a_{\lambda,j}(q,1).$$
\end{cor}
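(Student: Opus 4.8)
The plan is to deduce the first identity from the plethystic formula (\ref{conj}) of Corollary~\ref{plethform}, as the text indicates, and then extract the consequence by a one-line specialization. For a partition $\nu$ write $\widetilde Q_\nu(t):=\sum_{j\ge 0}Q_{\nu,j}t^j$, and for $i\ge 1$ let $m_i$ and $m_i'$ denote the number of parts equal to $i$ in $\lambda$ and in $\mu$; then $(\lambda,\mu)$ has $m_i+m_i'$ parts of size $i$, and the hypothesis that $\lambda$ and $\mu$ share no part is exactly the statement that $m_im_i'=0$ for all $i$. Hence for each $i$ one of $h_{m_i},h_{m_i'}$ is $h_0=1$, so $h_{m_i+m_i'}[\widetilde Q_{(i)}(t)]=h_{m_i}[\widetilde Q_{(i)}(t)]\,h_{m_i'}[\widetilde Q_{(i)}(t)]$. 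Multiplying these identities over $i$ (a finite product, since $m_i=m_i'=0$ for $i$ large) and invoking (\ref{conj}) for $(\lambda,\mu)$, for $\lambda$ and for $\mu$ yields the symmetric function identity $\widetilde Q_{(\lambda,\mu)}(t)=\widetilde Q_\lambda(t)\,\widetilde Q_\mu(t)$.

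Next I would apply the stable principal specialization $\Lambda$. By (\ref{stablespec2}), $\Lambda(Q_{\nu,j})=a_{\nu,j}(q,1)/\bigl(q^j(q;q)_{|\nu|}\bigr)$, so replacing $t$ by $qt$ and summing over $j$ gives $\Lambda\bigl(\widetilde Q_\nu(qt)\bigr)=A^{\maj,\exc}_\nu(q,t)/(q;q)_{|\nu|}$, where $A^{\maj,\exc}_\nu(q,t)=\sum_j a_{\nu,j}(q,1)t^j$. Since $\Lambda$ is a ring homomorphism, extended coefficientwise to polynomials in $t$, applying it to $\widetilde Q_{(\lambda,\mu)}(qt)=\widetilde Q_\lambda(qt)\widetilde Q_\mu(qt)$ gives
\[
\frac{A^{\maj,\exc}_{(\lambda,\mu)}(q,t)}{(q;q)_{m+n}}=\frac{A^{\maj,\exc}_\lambda(q,t)}{(q;q)_{m}}\cdot\frac{A^{\maj,\exc}_\mu(q,t)}{(q;q)_{n}},
\]
whence the first assertion, because $(q;q)_{m+n}/\bigl((q;q)_m(q;q)_n\bigr)=\left[\begin{array}{c}m+n\\m\end{array}\right]_q$ follows at once from $(q;q)_k=(1-q)^k[k]_q!$.

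For the consequence I would take $\mu=(1^m)$; since $\lambda$ has no part equal to $1$, this $\mu$ has no part in common with $\lambda$. The identity permutation is the unique permutation of cycle type $(1^m)$ and has no descents and no excedances, so $A^{\maj,\exc}_{(1^m)}(q,t)=1$, and the first identity becomes $A^{\maj,\exc}_{(\lambda,1^m)}(q,t)=\left[\begin{array}{c}|\lambda|+m\\m\end{array}\right]_qA^{\maj,\exc}_\lambda(q,t)$; comparing coefficients of $t^j$ gives the stated formula for $a_{(\lambda,1^m),j}(q,1)$. I do not anticipate a genuine obstacle here — the argument is bookkeeping — and the only steps meriting a little care are the observation that disjointness of the part multisets is precisely what lets $h_{m_i+m_i'}[\,\cdot\,]$ split as a product, and the tracking of the powers of $q$ through the substitution $t\mapsto qt$.
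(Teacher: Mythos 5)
Your argument is correct and fills in exactly the details that the paper's one-line remark (``by specializing (\ref{conj})'') leaves implicit: the disjointness of the part multisets is what allows each $h_{m_i+m_i'}[\cdot]$ to factor (because one of $m_i,m_i'$ is zero, so one factor is $h_0=1$), giving $\widetilde Q_{(\lambda,\mu)}(t)=\widetilde Q_\lambda(t)\widetilde Q_\mu(t)$, and then the $t\mapsto qt$ substitution followed by $\Lambda$ via (\ref{stablespec2}) turns the $(q;q)$-factors into the $q$-binomial coefficient. This is the same route the paper intends, and your handling of the ``consequence'' (taking the second partition to be $1^m$, whose sole permutation contributes $1$) is the right reading despite the paper's mild abuse of notation in re-using $\lambda$, $m$, $n$ between the two displayed identities.
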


\begin{cor} \label{pleth} For all $j$ and $k$ we have $$Q_{(2^j,1^k),j} = h_j[h_2] h_k.$$
 Moreover,
 \begin{equation}\label{invol} \sum_{\lambda,j} Q_{\lambda,j} t^j z^{|\lambda|} = \prod_{i\ge 1} (1-x_iz)^{-1} \prod_{1 \le i \le j} (1-x_ix_j t z^2)^{-1},\end{equation}
where $\lambda$ ranges over all partitions with no parts greater than $2$.
\end{cor}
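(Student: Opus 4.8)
The plan is to obtain the plethystic formula for $Q_{(2^j,1^k),j}$ as a special case of equation (\ref{conj}) in Corollary~\ref{plethform}, and then to read off the generating function identity (\ref{invol}) by a routine summation. First I would record the handful of small cases that feed into (\ref{conj}). The group $\sg_1$ consists only of the identity, which has no excedances, so $\ov{\mathrm{id}}=\mathrm{id}$ and $\Exd(\mathrm{id})=\emptyset$; hence $Q_{(1),0}=F_{\emptyset,1}=h_1$. The unique permutation of cycle type $(2)$ is $\s=21\in\sg_2$, which has one excedance, and $\ov\s=\ov 2\,1$ has no descent in the order (\ref{order1}) because $\ov 2<1$; hence $\Exd(\s)=\emptyset$, so $Q_{(2),0}=0$ and $Q_{(2),1}=F_{\emptyset,2}=h_2$. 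Applying (\ref{conj}) to $\lambda=(2^j,1^k)$, which has $m_1=k$, $m_2=j$, and $m_i=0$ for $i\ge 3$, gives
\[
\sum_{\ell\ge 0}Q_{(2^j,1^k),\ell}\,t^\ell=h_k\bigl[Q_{(1),0}\bigr]\cdot h_j\bigl[Q_{(2),0}+tQ_{(2),1}\bigr]=h_k[h_1]\cdot h_j[t\,h_2].
\]
Now $h_k[h_1]=h_k$, and since $h_j$ is homogeneous of degree $j$ and each monomial of $t\,h_2$ carries a single factor of $t$, the monomial-substitution definition of plethysm yields $h_j[t\,h_2]=t^j\,h_j[h_2]$. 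Thus $\sum_{\ell\ge 0}Q_{(2^j,1^k),\ell}\,t^\ell=t^j\,h_j[h_2]\,h_k$; extracting the coefficient of $t^\ell$ proves both $Q_{(2^j,1^k),j}=h_j[h_2]\,h_k$ and the vanishing $Q_{(2^j,1^k),\ell}=0$ for $\ell\ne j$. (The vanishing is also immediate combinatorially, since an involution with $j$ transpositions has exactly $j$ excedances.)

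For (\ref{invol}) I would use that the partitions with no part greater than $2$ are exactly the partitions $(2^j,1^k)$ with $j,k\ge 0$, with $|(2^j,1^k)|=2j+k$. Using the vanishing just established, the left side of (\ref{invol}) collapses to
\[
\sum_{j,k\ge 0}Q_{(2^j,1^k),j}\,t^j z^{2j+k}=\sum_{j,k\ge 0}h_j[h_2]\,h_k\,t^j z^{2j+k}=\Bigl(\sum_{j\ge 0}h_j[h_2]\,(tz^2)^j\Bigr)\Bigl(\sum_{k\ge 0}h_k z^k\Bigr).
\]
Then $\sum_{k\ge 0}h_k z^k=\prod_{i\ge 1}(1-x_i z)^{-1}$, and since the monomials of $h_2$ are exactly the products $x_i x_{i'}$ with $i\le i'$, the standard plethystic expansion of $\sum_{n\ge0}h_n[h_2]$ gives $\sum_{j\ge 0}h_j[h_2]\,u^j=\sum_{j\ge 0}h_j[u\,h_2]=\prod_{1\le i\le i'}(1-u\,x_i x_{i'})^{-1}$; taking $u=tz^2$ identifies the first factor with the remaining product in (\ref{invol}).

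I do not expect any serious obstacle; the only points needing care are the plethystic bookkeeping, namely that $h_j[t\,h_2]=t^j h_j[h_2]$ under the paper's definition and the expansion $\sum_{n\ge0}h_n[h_2]=\prod_{i\le i'}(1-x_i x_{i'})^{-1}$. As an alternative to invoking Corollary~\ref{plethform}, one could argue directly from the ornament characterization of Corollary~\ref{ornth}: for each pair $i\le i'$ there is precisely one necklace of size $2$ of weight $x_i x_{i'}$ and it carries exactly one bar, so an ornament of type $(2^j,1^k)$ has exactly $j$ bars, and summing $\wt$ over $\mathfrak R_{(2^j,1^k),j}$ (multisets of $j$ size-$2$ necklaces and $k$ singletons) yields $h_j[h_2]\,h_k$ directly.
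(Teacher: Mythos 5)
Your argument is correct and is exactly the route the paper intends: Corollary~\ref{pleth} is stated immediately after Corollary~\ref{plethform} with no separate proof, the implicit reasoning being precisely your specialization of (\ref{conj}) to $\lambda=(2^j,1^k)$ using $Q_{(1),0}=h_1$, $Q_{(2),0}=0$, $Q_{(2),1}=h_2$, followed by the standard plethystic expansion $\sum_n h_n[g]=\prod_m(1-m)^{-1}$. Your small-case computations and the bookkeeping $h_j[t\,h_2]=t^j h_j[h_2]$ are all sound, and the alternative ornament argument you sketch at the end is a correct second route via Corollary~\ref{ornth}.
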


By specializing (\ref{invol}) one can obtain formulas for the  generating functions of the $(\maj,\des,\exc)$ and the $(\maj, \exc)$ enumerators of involutions.  Since $\exc$ and $\fix$ determine each other for involutions, these formulas can be immediately obtained from the formulas that Gessel and Reutenauer derived for $\maj,\des,\fix$ in \cite[Theorem 7.1]{gr}.

It follows from Corollary~\ref{plethform} that in order to prove the conjecture on Schur positivity of    $Q_{\lambda,j}$ (Conjecture~\ref{sposconj}), it suffices to prove it  for all $\lambda$ with a single part because the plethysm of Schur positive symmetric functions is a Schur positive symmetric function.   Note that  if $\lambda$ has no parts greater than $2$, then by Corollary~\ref{pleth}, we conclude that  $Q_{\lambda,j}$ is Schur positive.

The Frobenius characteristic  is a fundamental isomorphism
from the ring of  virtual representations of  the symmetric groups to the ring
of  symmetric functions over the integers.  We recall the definition here.
For  a virtual representation $V$ of $\sg_n$,  let $\chi^V_\lambda$ denote the value of the  character of $V$ on the conjugacy  class of type $\lambda$.  If  $\lambda$ has $m_i$ parts of size $i$ for each $i$, define
$$ z_\lambda := 1^{m_1} m_1! 2^{m_2} m_2! \cdots n^{m_n} m_n!.$$
Let $$p_\lambda := p_{\lambda_1} \cdots p_{\lambda_k},$$ for $\lambda = (\lambda_1,\dots, \lambda_k), $ where $p_n$ is the power sum symmetric function $\sum_{ i \ge 1} x_i^n$.
The Frobenius characteristic of a virtual representation $V$ of $\sg_n$ is defined as follows:
$$\ch V:=\sum_{\lambda \vdash n}  z_\lambda^{-1}\,\, \chi^V_\lambda\,\, p_\lambda .$$

Recall that the Frobenius characteristic of a virtual representation
$\rho$ of $\sg_n$ is $h$-positive if and only if $\rho$ arises from a
permutation representation in which every point stabilizer is a Young subgroup. 
Hence, by part (1) of  Theorem~\ref{symunimodth}, $Q_{n,j,k}$ is the Frobenius
characteristic of a representation of $\sg_n$ arising from such a
permutation representation.
However (\ref{counterex})
shows that this is not the case in general for the refined
Eulerian quasisymmetric functions $Q_{\lambda,j}$. (It can be
shown that $Q_{(6),3}$ is not the Frobenius characteristic of any
permutation representation at all.) Recall also that the Frobenius
characteristic of a virtual representation is Schur positive if
and only if it is an actual representation. Hence if
$V_{\lambda,j}$ is the virtual representation  whose Frobenius
characteristic is $Q_{\lambda,j}$ then Conjecture~\ref{sposconj}
says that $V_{\lambda,j}$ is  an actual representation.

\begin{prop} \label{dimvprop} Let $\lambda \vdash n$.  Then the dimension of $V_{\lambda,j}$ equals the number of permutations of cycle type $\lambda$ with $j$ excedances.  Moreover, the dimension of $V_{(n),j}$ is the Eulerian number $a_{n-1,j-1}$.
\end{prop}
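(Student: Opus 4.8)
The plan is to compute the dimension of $V_{\lambda,j}$ directly from the definition of the Frobenius characteristic. Since $\ch V_{\lambda,j} = Q_{\lambda,j}$, and the dimension of a virtual representation $V$ of $\sg_n$ equals $\langle \ch V, p_1^n\rangle$ (equivalently, the coefficient extraction that recovers $\chi^V_{(1^n)}$), it suffices to evaluate the relevant character value on the identity. Concretely, writing $\ch V_{\lambda,j} = \sum_{\mu\vdash n} z_\mu^{-1}\chi^{V_{\lambda,j}}_\mu p_\mu$, the dimension is $\chi^{V_{\lambda,j}}_{(1^n)}$, which can be extracted as $n!$ times the coefficient of $x_1x_2\cdots x_n$ in $Q_{\lambda,j}$, or more conveniently via the standard inner product $\dim V_{\lambda,j} = \langle Q_{\lambda,j}, p_1^n \rangle$. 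The first step, then, is to recall that for any quasisymmetric function written as a sum of fundamental quasisymmetric functions $F_{S,n}$, the coefficient of the squarefree monomial $x_1\cdots x_n$ in $F_{S,n}$ is exactly $1$ for every $S\subseteq[n-1]$ (a weakly decreasing sequence of $n$ distinct positive integers with the prescribed strict descents exists and is unique up to the choice of values, but the squarefree monomial $x_1\cdots x_n$ occurs with coefficient one). Hence the coefficient of $x_1\cdots x_n$ in $Q_{\lambda,j} = \sum_{\sigma} F_{\Exd(\sigma),n}$, where the sum is over permutations of cycle type $\lambda$ with $j$ excedances, is simply the number of such permutations. Therefore $\dim V_{\lambda,j}$ equals the number of permutations of cycle type $\lambda$ with exactly $j$ excedances, which is the first assertion.

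For the second assertion, specialize to $\lambda = (n)$: we must show the number of $n$-cycles in $\sg_n$ with $j$ excedances equals the Eulerian number $a_{n-1,j-1}$. I would prove this bijectively via the \emph{fundamental bijection} (Foata's ``transition lemma''): an $n$-cycle $(1, c_2, c_3, \ldots, c_n)$ written with $1$ in the first position corresponds to the word $c_2 c_3 \cdots c_n$, a permutation of $\{2,\ldots,n\}$, hence (after subtracting $1$) to an element $\tau \in \sg_{n-1}$. The key is to track how excedances of the $n$-cycle $\sigma$ translate under this correspondence. Writing $\sigma = (1, c_2, \ldots, c_n)$ in cycle notation, the excedances of $\sigma$ (positions $i$ with $\sigma(i) > i$) correspond to the cyclic ascents within the sequence $1, c_2, \ldots, c_n$ (positions where the next entry, cyclically, is larger). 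Since the cycle starts at $1$, which is the minimum, the step from $1$ to $c_2$ is always a cyclic ascent, and the step from $c_n$ back to $1$ is never one. So the number of excedances of $\sigma$ equals $1$ plus the number of ordinary ascents of the linear word $c_2 c_3 \cdots c_n$, which is $1 + (n - 2 - \des(c_2\cdots c_n)) = (n-1) - \des(\tau)$ after the index shift — or, more cleanly, $\exc(\sigma) = 1 + \asc(\tau)$ where $\tau\in\sg_{n-1}$ and $\asc$ counts ascents. Since $\asc$ on $\sg_{n-1}$ is an Eulerian statistic with $\sum_{\tau\in\sg_{n-1}} t^{\asc(\tau)} = A_{n-1}(t)$ (ascents and descents being equidistributed), the number of $n$-cycles with $j$ excedances is the number of $\tau\in\sg_{n-1}$ with $\asc(\tau) = j-1$, namely $a_{n-1,j-1}$.

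The main obstacle is the careful bookkeeping in the second part: one must verify precisely that the cyclic-ascent count of $(1,c_2,\ldots,c_n)$ equals $\exc(\sigma)$ for the $n$-cycle $\sigma$ it represents, and handle the two boundary steps ($1 \to c_2$ and $c_n \to 1$) correctly, so that the shift by one in both the excedance count and the index $n \mapsto n-1$ comes out consistent. A clean way to organize this is to note that for an $n$-cycle $\sigma$, position $i$ is an excedance iff $i$ is not the largest element of the cyclic sequence preceding its image — but the cleanest is the direct observation that in the cycle word $1, c_2, \ldots, c_n, (1)$, reading cyclically, each "ascent step" $a \to b$ with $a < b$ contributes an excedance at the position $a$ (since $\sigma(a) = b > a$), and each "descent step" contributes a non-excedance; the forced ascent $1 \to c_2$ gives the "$+1$", and everything else is the ascent statistic of $\tau$. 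Once this translation is pinned down, both claims follow; alternatively, one may avoid the bijection entirely and instead extract the $\lambda=(n)$ case from a known generating-function identity for $Q_{(n),j}$ combined with the classical formula $\sum_j a_{n-1,j} t^j = A_{n-1}(t)$, but the bijective route is the most transparent.
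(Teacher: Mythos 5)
Your proof is correct and is essentially the paper's proof. The first assertion is argued identically (dimension equals the coefficient of $x_1\cdots x_n$ in $Q_{\lambda,j}$, each $F_{\Exd(\sigma),n}$ contributes $1$ to that coefficient). For the second, you and the paper use the same bijection up to a trivial normalization: the paper writes the $n$-cycle as $(c_1,\dots,c_{n-1},n)$, reads off the reversed word $c_{n-1}\cdots c_1 \in \sg_{n-1}$, and gets $\des = j-1$; you write the cycle as $(1,c_2,\dots,c_n)$, read off $c_2\cdots c_n$ (shifted down by one), and get $\asc = j-1$, then quote equidistribution of $\asc$ and $\des$. These are the same computation seen through reversal, and both correctly account for the forced ascent at the minimum and the forced non-excedance at the maximum.
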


\begin{proof} The dimension of $V_{\lambda,j}$ is the coefficient of $x_1x_2 \cdots x_n$ in $Q_{\lambda,j}$.  Using the definition of $Q_{\lambda,j}$, this is the number of permutations of
cycle type $\lambda$ with $j$ excedances.

The set of  $n$-cycles with $j$ excedances maps bijectively to the set $\{\s \in \sg_{n-1} : \des(\s) = j-1\}$.  Indeed, the bijection is obtained by writing the cycle in the form $(c_1,\dots,c_{n-1},n)$ and then extracting the word $c_{n-1}\cdots c_1$.  Hence the number of $n$-cycles with $j$ excedances is the Eulerian number $a_{n-1,j-1}$.
\end{proof}

 We have a conjecture\footnote{This conjecture has now been proved by Sagan, Shareshian and Wachs \cite{ssw}} for the character of $V_{(n),j}$, which generalizes Proposition~\ref{dimvprop}.  We have confirmed our conjecture  up to $n=8$ using the Maple package ACE \cite{ace}.  Equivalently, our conjecture gives the coefficients in the expansion of  $Q_{(n),j} $ in the basis of power sum symmetric functions and implies that  $Q_{(n),j} $ is $p$-positive.  For a polynomial $F(t)=a_0 + a_1 t + ... + a_k t^k$ and a positive integer
$m$, define $F(t)_m$ to be the polynomial obtained from $F(t)$ by erasing all terms
$a_i t^i$ such that $\gcd(m,i) \neq 1$.  For example, if $F(t)=1+t+2t^2+3t^3$
then $F(t)_2=t+3t^3$.
For a partition $\lambda= (\lambda_1,\lambda_2, \dots, \lambda_k)$, define $$g(\lambda): =\gcd(\lambda_1,...,\lambda_k).$$

\begin{con}\hspace{-.1in}$^{1}$ For $\lambda = (\lambda_1,\dots,\lambda_k)\vdash n$,
let  $$G_\lambda(t) := \left ( t A_{k-1}(t) \prod_{i=1}^{k}
[\lambda_i]_t \right)_{g(\lambda)}.$$  Then $$\sum_{j=0}^{n-1}
Q_{(n),j} t^j =\sum_{\lambda \vdash n}  z_\lambda^{-1}
G_\lambda(t) p_\lambda. $$ Equivalently, the character of
$V_{(n),j}$ evaluated on conjugacy class $\lambda$ is the
coefficient of $t^j$ in $G_{\lambda}(t)$. \label{cvalcon}
\end{con}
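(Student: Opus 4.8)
The plan is to turn the statement, via the plethystic identity already proved in this section, into an identity of symmetric-function power series and then extract the coefficient of each power sum $p_\lambda$. Write $\Phi:=\sum_{n,j\ge 0}Q_{n,j}(\xx)\,t^jz^n$ and $\Theta:=\sum_{n\ge 1}\sum_j Q_{(n),j}(\xx)\,t^jz^n$. By Corollary~\ref{plethform}, equation~(\ref{conj2}), $\Phi=\sum_{n\ge 0}h_n[\Theta]$, which says exactly that $\Phi$ is the plethystic exponential of $\Theta$; equivalently $\log\Phi=\sum_{m\ge 1}\frac1m\psi_m(\Theta)$, where $\psi_m$ is the $m$-th power operation (so $\psi_m(p_i)=p_{mi}$, $\psi_m(t)=t^m$, $\psi_m(z)=z^m$). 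Inverting by M\"obius gives $\Theta=\sum_{m\ge 1}\frac{\mu(m)}m\,\psi_m(\log\Phi)$. By the definitions of $z_\lambda$ and $\ch$, the conjecture is precisely the claim that for each $\lambda\vdash n$ the coefficient of $p_\lambda z^n$ in $\Theta$ is $z_\lambda^{-1}G_\lambda(t)$; so it suffices to compute the power-sum expansion of $\log\Phi$ and then run the M\"obius sum.

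To compute $\log\Phi$, set $v:=\log H(z)-\log H(tz)=\sum_{i\ge 1}(1-t^i)\frac{p_iz^i}i$. Since $H(z)=\exp\sum_i\frac{p_iz^i}i$ one has $H(tz)=H(z)e^{-v}$, so Theorem~\ref{introsymgenth} (taken with $r=1$) yields the compact form
$$\Phi=\frac{(1-t)H(z)}{H(tz)-tH(z)}=\frac{(1-t)e^{v}}{1-te^{v}}.$$
By Euler's formula~(\ref{expgen}), $\frac{1-t}{e^{-v}-t}=\sum_{k\ge 0}\frac{A_k(t)}{(1-t)^k}\frac{v^k}{k!}$, so $\Phi=\sum_{k\ge 0}\frac{A_k(t)}{(1-t)^k}\frac{v^k}{k!}$. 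To pass to $\log\Phi$, note $\frac{d}{dv}\log\Phi=\frac1{1-te^v}=\sum_{j\ge 0}\bigl(\sum_{m\ge 0}m^jt^m\bigr)\frac{v^j}{j!}$, and substituting the classical $\sum_{m\ge 0}m^jt^m=tA_j(t)/(1-t)^{j+1}$ ($j\ge 1$) and integrating from $v=0$ gives
$$\log\Phi=\frac{v}{1-t}+\sum_{k\ge 2}\frac{tA_{k-1}(t)}{(1-t)^k}\frac{v^k}{k!}.$$
Since $v^k/k!$ involves only power sums with exactly $k$ parts, and the coefficient of $p_\nu z^{|\nu|}$ in $v^k/k!$ is $z_\nu^{-1}\prod_{i=1}^k(1-t^{\nu_i})$ when $\ell(\nu)=k$, we get the key lemma: the coefficient of $p_\nu z^{|\nu|}$ in $\log\Phi$ is $z_\nu^{-1}[N]_t$ if $\nu=(N)$, and is $z_\nu^{-1}\,tA_{k-1}(t)\prod_{i=1}^k[\nu_i]_t$ if $k=\ell(\nu)\ge 2$.

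It remains to apply $\sum_m\frac{\mu(m)}m\psi_m$ and match. Because $\psi_m$ multiplies every part of a power sum by $m$, only those $m$ with $m\mid g(\lambda)$ contribute a $p_\lambda$-term; using $z_{\lambda/m}^{-1}=m^{\ell(\lambda)}z_\lambda^{-1}$ and $\frac{1-t^{\lambda_i}}{1-t^m}=[\lambda_i/m]_{t^m}$ one finds that for $\lambda\vdash n$ with $k=\ell(\lambda)\ge 2$ the coefficient of $p_\lambda z^n$ in $\Theta$ is $z_\lambda^{-1}\sum_{m\mid g(\lambda)}\mu(m)\,m^{k-1}\,t^mA_{k-1}(t^m)\prod_i[\lambda_i/m]_{t^m}$, while for $\lambda=(n)$ it is $\frac1n\sum_{m\mid n}\mu(m)[n/m]_{t^m}$. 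Identifying these with $z_\lambda^{-1}G_\lambda(t)$ is elementary when $k=1$ (swap the order of summation in $\sum_{m\mid n}\mu(m)(1+t^m+\cdots+t^{n-m})$ to get $\sum_{1\le i\le n,\ \gcd(i,n)=1}t^i$). For $k\ge 2$, expand the operator $(\cdot)_{g(\lambda)}$ via $[\gcd(i,g)=1]=\sum_{d\mid\gcd(i,g)}\mu(d)$; matching the term $m=d$ reduces the claim to: the part of $tA_{k-1}(t)\prod_i[\lambda_i]_t$ supported on exponents divisible by $m$ equals $m^{k-1}t^mA_{k-1}(t^m)\prod_i[\lambda_i/m]_{t^m}$. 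Writing each exponent base $m$, summing over a residue class of $k$-tuples, and recognising a convolution with $[m]_t^k$, this is equivalent to the assertion that the coefficient of $t^{mj-1}$ in $A_{k-1}(t)[m]_t^k$ is $m^{k-1}a_{k-1,j-1}$ for $1\le j\le k-1$, which via $A_{k-1}(t)/(1-t)^k=\sum_{r\ge 0}(r+1)^{k-1}t^r$ is exactly Worpitzky's explicit formula $a_{k-1,j-1}=\sum_{i=0}^{j-1}(-1)^i\binom ki(j-i)^{k-1}$ for the Eulerian numbers. Combining the three steps proves the conjecture, and the character formula for $V_{(n),j}$ follows by the Frobenius correspondence. (The value $n=1$, where $\sum_j Q_{(1),j}t^j=h_1$, is degenerate and is excluded, consistently with the paper's emphasis on $n\ge 3$.)

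The only genuinely nontrivial step is the key lemma: its proof hinges on the substitution $v=\log\bigl(H(z)/H(tz)\bigr)$, which recasts $\Phi$ as the ``logarithmic Eulerian series'' $\sum_k\frac{A_k(t)}{(1-t)^k}\frac{v^k}{k!}$; finding this substitution and checking the boundary coefficient at $k=1$ are the only places where real work is needed, everything else being classical. An alternative route avoids plethysm altogether: starting from the ornament description $Q_{(n),j}=\sum_R\wt(R)$ of Corollary~\ref{ornth}, one computes the character of $V_{(n),j}$ on a permutation of cycle type $\lambda$ directly, as an inclusion--exclusion count (the operator $(\cdot)_{g(\lambda)}$ being a M\"obius sum over $\gcd$) of the bicolored necklaces invariant under such a permutation, with the $t$-statistic recorded by the bars; there one expects $tA_{k-1}(t)$ to arise as the generating function for the cyclic descents of the $k$ cyclically arranged cycle-blocks. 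This is presumably close to the argument later found by Sagan, Shareshian and Wachs.
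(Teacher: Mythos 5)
This statement is Conjecture~\ref{cvalcon}; the paper does \emph{not} prove it. It only reports computational verification up to $n=8$, proves the special case where $\lambda$ has a part of size $1$ (Corollary~\ref{cvalcor}, via the restriction formula of Theorem~\ref{resthm} and Stembridge's expansion in Proposition~\ref{stem}), and footnotes a later proof by Sagan, Shareshian and Wachs. So there is no in-paper proof against which to compare your argument; it must be judged on its own.

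On its own merits, your argument is a correct proof for $n\ge 2$ and is, as far as I can tell, the natural plethystic route. You read Corollary~\ref{plethform}, equation~(\ref{conj2}), as saying that $\Phi=\sum_{n,j}Q_{n,j}t^jz^n$ is the plethystic exponential of $\Theta=\sum_{n,j}Q_{(n),j}t^jz^n$, and invert via the Adams operations: $\Theta=\sum_m\frac{\mu(m)}m\psi_m(\log\Phi)$. The crux is the closed form for $\log\Phi$. Setting $v=\log H(z)-\log H(tz)=\sum_i(1-t^i)p_iz^i/i$, the right side of (\ref{introsymgenth1}) (at $r=1$) becomes $(1-t)/(e^{-v}-t)$; Euler's formula (\ref{expgen}) then gives $\Phi=\sum_{k\ge 0}A_k(t)(1-t)^{-k}v^k/k!$, and $\frac{d}{dv}\log\Phi=(1-te^v)^{-1}=\sum_{j\ge 0}\bigl(\sum_{m\ge 0}m^jt^m\bigr)v^j/j!$, which integrates to $\log\Phi=\frac v{1-t}+\sum_{k\ge 2}\frac{tA_{k-1}(t)}{(1-t)^k}\frac{v^k}{k!}$. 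From there the coefficient extraction is routine: the $p_\nu z^{|\nu|}$-coefficient of $v^k/k!$ is $z_\nu^{-1}\prod_i(1-t^{\nu_i})$ when $\ell(\nu)=k$, and $z_{\lambda/m}^{-1}=m^{\ell(\lambda)}z_\lambda^{-1}$. Your final reduction is also right: since $[\lambda_i]_t=[\lambda_i/m]_{t^m}[m]_t$, the part of $tA_{k-1}(t)\prod_i[\lambda_i]_t$ supported on exponents divisible by $m$ factors as $(tA_{k-1}(t)[m]_t^k)$ restricted to multiples of $m$ times $\prod_i[\lambda_i/m]_{t^m}$, and the identity $A_{k-1}(t)[m]_t^k=(1-t^m)^k\sum_{r\ge 0}(r+1)^{k-1}t^r$ together with the explicit Eulerian-number formula $a_{k-1,j-1}=\sum_{i=0}^{j-1}(-1)^i\binom ki(j-i)^{k-1}$ yields exactly $m^{k-1}a_{k-1,j-1}$ as the coefficient of $t^{mj-1}$. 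I spot-checked against the printed character table: for $\lambda=(2^4)\vdash 8$, $(tA_3(t)(1+t)^4)_2=t+23t^3+23t^5+t^7$, reproducing the column $(1,0,23,0)$.

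Two places deserve a warning flag. The exclusion of $n=1$ is genuinely necessary, not cosmetic: $G_{(1)}(t)=t$, while $\sum_jQ_{(1),j}t^j=h_1=p_1$, so the displayed identity fails at $n=1$. And the case split in $\sum_{m\ge 0}m^jt^m$ (namely $1/(1-t)$ at $j=0$ versus $tA_j(t)/(1-t)^{j+1}$ at $j\ge 1$) is exactly why the $k=1$ term of $\log\Phi$ has the different shape $[N]_t/N$; one must carry that case separately through the M\"obius sum $\frac1n\sum_{m\mid n}\mu(m)[n/m]_{t^m}$ to recover $G_{(n)}(t)=\sum_{1\le i\le n,\ \gcd(i,n)=1}t^i$. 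Your write-up handles both points, but I would make them more prominent in a final version.

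Finally, I cannot directly compare to the Sagan--Shareshian--Wachs argument from the text alone; your closing speculation that their route is a direct necklace/cyclic-sieving count rather than a plethystic-log computation is plausible but not something the present paper settles.
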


Conjecture \ref{cvalcon} holds whenever $\lambda_k=1$, see
Corollary \ref{cvalcor}. It follows from this  that the
character of $V_{(n),j}$ evaluated at $\lambda = 1^n$ is the
Eulerian number $a_{n-1,j-1}$. This special case of the conjecture
is also a consequence of  Proposition~\ref{dimvprop} because any character
evaluated at $1^n$ is the dimension of the representation. In the
tables below we give the character values for $V_{(n),j}$ at
conjugacy class $\lambda$, which were computed using  ACE
\cite{ace} and confirm the conjecture up to $n=8$.  The columns
are indexed by $n,j$ and the rows by the partitions $\lambda$.

{\small 
$$
\begin{array}{|c||c|c|}
\hline & 4,1 & 4,2   \\ \hline\hline 4 & 1 & 0
\\ \hline 31 & 1& 1 \\ \hline 2^2 & 1 & 0
\\ \hline 21^2 & 1 & 2
\\ \hline 1^4 & 1 & 4 \\ \hline
\end{array}\hspace{.3in} \begin{array}{|c||c|c|}
\hline & 5,1 & 5,2   \\ \hline\hline 5 & 1 & 1
\\ \hline 41 & 1& 1 \\ \hline 32 & 1 & 2
\\ \hline 31^2 & 1 & 2
\\ \hline 2^2 1 & 1 & 3
\\ \hline 2 1^3 & 1 & 5
\\ \hline 1^5 & 1 & 11 \\ \hline
\end{array}\hspace{.3in}
\begin{array}{|c||c|c|c|}
\hline & 6,1 & 6,2  & 6,3 \\ \hline\hline 6 & 1 & 0 & 0
\\ \hline 51 & 1& 1 & 1
\\ \hline 42 & 1 & 0 & 2
\\ \hline 3^2 & 1 & 2 & 0
\\ \hline 41^2 & 1 & 2 & 2
\\ \hline 321 & 1 & 3 & 4
\\ \hline 2^3 & 1 & 0 & 6
\\ \hline 3 1^3& 1 & 5 & 6
\\ \hline 2^21^2 & 1 & 6 & 10
\\ \hline 2  1^4  &1 & 12 &  22
\\ \hline 1^6& 1 & 26 & 66
\\ \hline
\end{array}$$}
\vfil
\newpage

{\small \vspace{.1in}$$\begin{array}{|c||c|c|c|}
\hline & 7,1 & 7,2  & 7,3
\\ \hline\hline 7 & 1 & 1 & 1
\\ \hline 61 & 1& 1 & 1
\\ \hline 52 & 1 & 2 & 2
\\ \hline 43 & 1 & 2 & 3
\\ \hline 51^2 & 1 & 2 & 2
\\ \hline 421 & 1 & 3 & 4
\\ \hline 3^21 & 1 & 3 & 5
\\ \hline 32^2 & 1 & 4 & 7
\\ \hline 4 1^3& 1 & 5 & 6
\\ \hline 321^2 & 1 & 6 & 11
\\ \hline 2^31 & 1 & 7 & 16
\\ \hline 3  1^4  &1 & 12 &  23
\\ \hline 2^21^3 & 1 & 13 & 34
\\ \hline 21^5 & 1 & 27 & 92
\\ \hline 1^7& 1 & 57 & 302
\\ \hline
\end{array} \hspace{.3in}
\begin{array}{|c||c|c|c|c|}
\hline & 8,1 & 8,2  & 8,3 & 8,4
\\ \hline\hline 8& 1 & 0 & 1 & 0
\\ \hline 71 & 1& 1 & 1 &1
\\ \hline 62 & 1 & 0 & 2 & 0
\\ \hline 53 & 1 & 2 & 3 & 3
\\ \hline 4^2 & 1 & 0 & 3 & 0
\\ \hline 61^2 & 1 & 2 & 2 & 2
\\ \hline 521 & 1 & 3 & 4 & 4
\\ \hline 431 & 1 & 3 & 5 & 6
\\ \hline 42^2 & 1 & 0 & 7 & 0
\\ \hline 3^22 & 1 & 4 & 8 & 10
\\ \hline 5 1^3& 1 & 5 & 6 & 6
\\ \hline 421^2 & 1 & 6 & 11 & 12
\\ \hline 3^2 1^2 & 1 & 6 & 12 & 16
\\ \hline 32^2 1 & 1 & 7 & 17 & 22
\\ \hline 2^4  &1 & 0 &  23 & 0
\\ \hline 41^4  &1 & 12 &  23 & 24
\\ \hline 321^3 & 1 & 13 & 35 & 46
\\ \hline 2^31^2 & 1 & 14 & 47 & 68
\\ \hline 31^5 & 1 & 27 & 93 & 118
\\ \hline 2^21^4 & 1 & 28 & 119 & 184
\\ \hline 21^6 & 1 & 58 & 359 & 604
\\ \hline 1^8& 1 & 120 &1191& 2416
\\ \hline
\end{array}
$$}

Conjecture \ref{cvalcon} resembles the following immediate consequence of Theorem~\ref{introsymgenth} and Stembridge's   computation  \cite[Proposition 3.3]{stem1} of the coefficients  in the expansion of the $r=1$ case of the right hand side of (\ref{introsymgenth1}), in the basis of power sum symmetric functions.

\begin{prop}\label{stem}  We have 
$$\sum_{j=0}^{n-1} Q_{n,j} t^j = \sum_{\lambda \vdash n} z_{\lambda}^{-1}
\left(  A_{\ell(\lambda)}(t) \prod_{i=1}^{\ell(\lambda)} [\lambda_i]_t\right) p_\lambda,$$
where $\ell(\lambda)$ denotes the length of the partition $\lambda$ and $\lambda_i$ denotes its $i$th part. 
\end{prop}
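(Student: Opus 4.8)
The statement is presented as an immediate consequence of Theorem~\ref{introsymgenth} and Stembridge's power-sum expansion in \cite{stem1}: set $r=1$ in (\ref{introsymgenth1}) and invoke \cite[Proposition 3.3]{stem1}. I will instead indicate a direct self-contained derivation, which re-derives Stembridge's expansion in our notation. Setting $r = 1$ in (\ref{introsymgenth1}) gives
$$\sum_{n \ge 0}\Big(\sum_{j=0}^{n-1} Q_{n,j}\, t^j\Big) z^n \;=\; \frac{(1-t)\,H(z)}{H(zt) - t\,H(z)},$$
so it suffices to show that the coefficient of $z^n$ on the right is $\sum_{\lambda \vdash n} z_\lambda^{-1} A_{\ell(\lambda)}(t)\prod_{i=1}^{\ell(\lambda)}[\lambda_i]_t\, p_\lambda$.

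First I would write $H$ in exponential form, $H(z) = \exp\big(\sum_{k \ge 1} p_k z^k / k\big)$, and divide numerator and denominator by $H(z)$, obtaining $(1-t)\big/(e^{u} - t)$ with $u := \sum_{k\ge 1} \frac{p_k z^k}{k}(t^k - 1)$. The point is that $u = w(t-1)$, where
$$w \;:=\; \sum_{k \ge 1} \frac{p_k z^k}{k}\cdot\frac{1 - t^k}{1-t} \;=\; \sum_{k \ge 1} \frac{[k]_t}{k}\, p_k z^k,$$
since $(t-1)\frac{1-t^k}{1-t} = t^k - 1$ and $\frac{1-t^k}{1-t} = [k]_t$. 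Euler's formula (\ref{expgen}), read as an identity of formal power series in its argument with coefficients polynomial in $t$, then yields
$$\frac{(1-t)\,H(z)}{H(zt)-t\,H(z)} \;=\; \frac{1-t}{e^{w(t-1)} - t} \;=\; \sum_{m \ge 0} A_m(t)\,\frac{w^m}{m!},$$
with $A_0(t) = 1$.

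It remains to expand $w^m$ and collect by partition. One has $w^m = \sum_{k_1, \dots, k_m \ge 1} \frac{[k_1]_t \cdots [k_m]_t}{k_1 \cdots k_m}\, p_{k_1}\cdots p_{k_m}\, z^{k_1 + \dots + k_m}$. For a fixed partition $\lambda$ with $m_i$ parts equal to $i$, there are $m!/\prod_i m_i!$ tuples $(k_1,\dots,k_m)$ with $m = \ell(\lambda)$ whose underlying partition is $\lambda$, and each such tuple contributes $p_\lambda$, a factor $\prod_i i^{m_i}$ in the denominator, a factor $\prod_i [i]_t^{m_i} = \prod_{i=1}^{\ell(\lambda)}[\lambda_i]_t$ in the numerator, and $z^{|\lambda|}$. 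Substituting into $\sum_m A_m(t) w^m/m!$ and using $z_\lambda = \prod_i i^{m_i} m_i!$ produces exactly $\sum_{\lambda \vdash n} z_\lambda^{-1} A_{\ell(\lambda)}(t)\prod_{i=1}^{\ell(\lambda)}[\lambda_i]_t\, p_\lambda$ for the coefficient of $z^n$. The main point requiring care is that the whole computation must be done with $t$ as an indeterminate, so that $(1-t^k)/(1-t) = [k]_t$ and the use of (\ref{expgen}) are legitimate formal identities notwithstanding the apparent $0/0$ at $t = 1$; the rest is the routine bookkeeping of matching multinomial coefficients against $z_\lambda$, which I expect to be the only (mild) obstacle.
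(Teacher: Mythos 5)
Your derivation is correct and complete. The paper itself does not prove Proposition~\ref{stem}: it states it as an ``immediate consequence'' of setting $r=1$ in Theorem~\ref{introsymgenth} and invoking Stembridge's Proposition~3.3 from \cite{stem1} for the power-sum expansion of the right-hand side of~(\ref{introsymgenth1}). What you have done is essentially reprove Stembridge's expansion in the paper's notation, so that the proposition becomes self-contained rather than deferred to a citation. Your argument is clean: writing $H(z)=\exp\bigl(\sum_{k\ge 1}p_kz^k/k\bigr)$, dividing through by $H(z)$, and recognizing $u=\sum_k \frac{p_kz^k}{k}(t^k-1)$ as $w(t-1)$ with $w=\sum_k\frac{[k]_t}{k}p_kz^k$ is precisely the right move, since it lets you plug into Euler's formula~(\ref{expgen}) viewed as a formal identity in $\Q[t][[z]]$ and reduces the remaining work to the multinomial bookkeeping matching $m!/\prod_i m_i!$ against $z_\lambda=\prod_i i^{m_i}m_i!$. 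Your closing caveat about treating $t$ as an indeterminate so that $(1-t^k)/(1-t)=[k]_t$ and the substitution into~(\ref{expgen}) are legitimate formal identities is the correct thing to flag, and you resolve it correctly: since $\frac{e^{z(t-1)}-t}{1-t}$ has coefficients in $\Q[t]$ with constant term~$1$, the identity~(\ref{expgen}) holds in $\Q[t][[z]]$, and substituting the constant-term-free series $w$ for $z$ is valid. The trade-off between your route and the paper's is the usual one: the citation is shorter, but your version exposes the mechanism and makes clear that the Eulerian polynomial $A_m(t)$ enters precisely through Euler's exponential formula applied to the substitution $z\mapsto w$.
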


By using banners we are able to prove the following curious fact
about the representation $V_{(n),j}$.

\begin{thm} \label{resthm}  For all
$j=0,\dots, n-1$,  the restriction of  $V_{(n),j}$ to $\sg_{n-1}$
is  the permutation representation whose Frobenius characteristic
is $Q_{n-1,j-1}$.
\end{thm}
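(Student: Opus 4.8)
The plan is to reduce the theorem to the single symmetric-function identity
$$h_1^{\perp}Q_{(n),j}=Q_{n-1,j-1}\qquad(n\ge 2),$$
where $h_1^{\perp}=\partial/\partial p_1$ is the adjoint of multiplication by $h_1$ for the Hall inner product, and then to prove this identity with banners. For the reduction, recall the standard fact that $\ch\bigl(\mathrm{Res}^{\sg_n}_{\sg_{n-1}}W\bigr)=h_1^{\perp}\ch(W)$ for every $\sg_n$-module $W$ (Pieri's rule and Frobenius reciprocity), and recall that by Theorem~\ref{symunimodth}(1) the symmetric function $Q_{n-1,j-1}$ is $h$-positive, hence — as noted in the text preceding Proposition~\ref{dimvprop} — it is the Frobenius characteristic of a permutation representation of $\sg_{n-1}$ whose point stabilizers are Young subgroups. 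So once the displayed identity is established, $\ch\bigl(\mathrm{Res}^{\sg_n}_{\sg_{n-1}}V_{(n),j}\bigr)=Q_{n-1,j-1}$, which is exactly the assertion. (The case $n=1$ is degenerate and is tacitly excluded.)

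To make $h_1^{\perp}$ combinatorial, enlarge the alphabet $\mathcal A$ to $\mathcal A_0=\{0,\bar 0,1,\bar 1,2,\bar 2,\dots\}$ with the new value $0$ placed strictly below all others, $0<\bar 0<1<\bar 1<\cdots$, the letter of value $v$ still weighted $x_v$. The constructions of Section~\ref{identsec} (Theorems~\ref{ornbij} and~\ref{banprop}) go through over any ordered alphabet of this alternating-bar shape, so $Q_{(n),j}$ evaluated in the variables $x_0,x_1,x_2,\dots$ is the sum of $\wt(B)$ over all Lyndon banners $B$ of length $n$ with $j$ bars over $\mathcal A_0$. Since $Q_{(n),j}$ is symmetric, $h_1^{\perp}Q_{(n),j}=\partial Q_{(n),j}/\partial p_1$ equals the coefficient of $x_0^{1}$ in $Q_{(n),j}(x_0,x_1,x_2,\dots)$; so $h_1^{\perp}Q_{(n),j}$ is the sum of $\wt(B)/x_0$ over those Lyndon banners $B$ of length $n$ with $j$ bars that contain exactly one letter of value $0$. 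On the other hand, summing Theorem~\ref{banprop} over all Lyndon types $\lambda\vdash n-1$ shows that $Q_{n-1,j-1}$ is the sum of $\wt(B')$ over all banners $B'$ of length $n-1$ with $j-1$ bars over $\mathcal A$. It therefore suffices to give a weight-preserving, bar-decrementing bijection between these two families.

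The bijection: let $B$ be a Lyndon banner of length $n\ge 2$ with $j$ bars and exactly one letter of value $0$. That letter is unbarred (a $\bar 0$ would force the following letter to have value $0$ too), and it is not in position $1$ (the first letter of a Lyndon word is the largest letter, which is not $0$ when $n\ge 2$); say it sits in position $p$, $2\le p\le n$. The letter in position $p-1$ is then barred, since an unbarred letter there would be forced to have absolute value $\le 0$. Now cut the necklace $(B)$ immediately before the $0$, delete that $0$, and unbar the letter that has thereby become the last letter (the former position-$(p-1)$ letter); call the result $B'$. Then $B'$ has length $n-1$, has $j-1$ bars, and is a banner: each adjacent pair of $B'$ is an adjacent pair of the necklace $(B)$ — legitimate because $B$ is a Lyndon banner, by the equivalence in the proof of Theorem~\ref{banprop} — and re-bar/unbar of the terminal letter affects no inequality, because the bar status of the second entry of a pair never enters the banner condition; moreover the terminal letter of $B'$ is unbarred. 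One has $\wt(B)=x_0\,\wt(B')$. The inverse sends a banner $B'$ of length $n-1$ with $j-1$ bars to the Lyndon (lexicographically largest) rotation of the length-$n$ necklace obtained by barring the last letter of $B'$ and prepending a $0$; one checks that necklace is primitive and valid, has a unique value-$0$ letter, and that the two maps are mutually inverse. Summing weights gives $h_1^{\perp}Q_{(n),j}=Q_{n-1,j-1}$.

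The real work, and the only genuine obstacle, is the (routine but fussy) verification that the two surgical moves — excising the solitary $0$ and unbarring its predecessor — keep one inside the class of banners and are truly reversible, together with making the reduction ``$h_1^{\perp}f=$ coefficient of a fresh minimal variable in $f$'' precise against the banner model over $\mathcal A_0$. For good measure one can also prove the key identity algebraically: by Theorem~\ref{introsymgenth} and Corollary~\ref{plethform}, $\Phi:=\sum_{n,j\ge 0}Q_{n,j}t^jz^n=H\!\left[\sum_{i\ge 1,\,j\ge 0}Q_{(i),j}t^jz^i\right]=\dfrac{(1-t)H(z)}{H(zt)-tH(z)}$ with $H=\sum_{m\ge 0}h_m$; since $h_1^{\perp}=\partial/\partial p_1$ is a derivation sending $H(z)\mapsto zH(z)$ and satisfying (by the chain rule for plethysm) $h_1^{\perp}\bigl(H[\mathcal A]\bigr)=H[\mathcal A]\cdot h_1^{\perp}\mathcal A$, dividing by $\Phi$ yields $\sum_{i\ge 1,\,j}(h_1^{\perp}Q_{(i),j})t^jz^i=\dfrac{z(1-t)H(zt)}{H(zt)-tH(z)}$; a one-line rational-function manipulation rewrites the right side as $z+zt(\Phi-1)=z+\sum_{i\ge 2,\,j}Q_{i-1,j-1}t^jz^i$, and comparing coefficients of $z^i$ for $i\ge 2$ gives the identity.
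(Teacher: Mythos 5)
Your bijective argument is correct and is essentially the paper's proof in disguise. The paper works with the finite truncation $\tilde f=f(x_1,\dots,x_n,0,0,\dots)$ and computes $\partial\tilde f/\partial x_n|_{x_n=0}$, which picks out, among the Lyndon banners of length $n$ over the truncated alphabet, exactly those in which the maximal value $n$ appears once; since a Lyndon word begins with its largest letter, that unique $\bar n$ is the first letter, and deleting it is immediately seen to yield an arbitrary banner of length $n-1$ with $j-1$ bars. You instead adjoin a fresh \emph{minimal} variable $x_0$, take the coefficient of $x_0^1$, and must therefore locate the unique $0$ somewhere in the interior of the Lyndon word; so your surgery is ``cut, delete, unbar the predecessor,'' and you need the extra observation that the predecessor is forced to be barred. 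Both come down to the same underlying identity $h_1^{\perp}Q_{(n),j}=Q_{n-1,j-1}$ realized by stripping one letter from a banner; the top-variable version simply makes the stripped letter be the first one, hence needs no re-rotation and no unbarring, which is why it reads a line or two shorter. Each is fine, and your reduction via $h_1^{\perp}=\partial/\partial p_1$ is just an alphabet-free restatement of the $\partial/\partial x_n|_{x_n=0}$ step in (\ref{vcheq}). Your closing generating-function check — differentiating $\Phi=(1-t)H(z)/(H(zt)-tH(z))$ and the plethystic chain rule from Corollary~\ref{plethform} to arrive at $h_1^{\perp}\mathcal A=z+zt(\Phi-1)$ — is a genuinely different and pleasant alternative route that does not appear in the paper; it trades the banner combinatorics for a short rational computation, at the cost of not exhibiting an explicit bijection. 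Both of your arguments, like the paper's, silently require $n\ge 2$, as you note.
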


\begin{proof}
Given a homogeneous symmetric function $f$ of degree $n$, let $\tilde f$ be the polynomial obtained from $f$ by setting  $x_i =0$  for all $i > n$.   Since $f$ is symmetric, $\tilde f$ determines $f$.

We use the fact that
for any virtual representation $V$ of $\sg_n$, \begin{equation} \label{vcheq} \widetilde {\ch \,V \!\!\downarrow}_{\sg_{n-1} }= {\partial \over \partial x_n} \widetilde{ \ch V }\,|_{x_n=0}.\end{equation}

By Theorem~\ref{banprop},  $\widetilde{Q_{(n),j}}$ is the sum of weights of Lyndon banners of length $n$, with $j$ bars, whose letters have value  at most $n$.  The partial derivative with respect to $x_n$ of  the weight of a such a Lyndon banner $B$ is $0$  unless  $n$ or $\bar n$ appears in $B$.  Since $B$  is Lyndon, $\bar n$ must be its first letter.  If the partial derivative is not $0$ after setting $x_n=0$ then all the other letters of $B$  must be less in absolute value than $n$.  In this case,  the partial derivative is the weight of the banner $B^\prime$ obtained from $B$ by removing its first letter $\bar n$.
We thus have
\bq \label{partialeq}  {\partial \over \partial x_n} \widetilde{ \ch V_{(n),j}} |_{x_n=0} = \sum_{B^\prime} \wt(B^\prime),\eq
 where $B^\prime$ ranges over the set of all banners obtained by removing the first letter from a Lyndon banner of length $n$ with $j$ bars, whose first letter is $\bar n$ and whose other letters have value strictly less than $n$.     Clearly this is the set of all banners of length $n-1$, with $j-1$ bars, whose letters have value at most $n-1$.
  Thus the sum  on the right hand side of (\ref{partialeq}) is precisely $\widetilde {Q_{n-1,j-1}}$.  It therefore follows from (\ref{vcheq}) that
$$\widetilde {\ch \,V_{(n),j} \!\!\downarrow}_{\sg_{n-1} } = \widetilde {Q_{n-1,j-1}},$$ which implies
$$ {\ch \,V_{(n),j} \!\!\downarrow}_{\sg_{n-1} } =  {Q_{n-1,j-1}},$$
 \end{proof}
 
Theorem \ref{resthm}, along with Proposition~\ref{stem}, allows us  to
prove that Conjecture \ref{cvalcon} holds when $\lambda$ has a part
of size one.

\begin{cor} \label{cvalcor}
Let $\lambda=(\lambda_1,\ldots,\lambda_k=1)$ be a partition of $n$
and let $\sigma \in \S_n$ have cycle type $\lambda$. Then the
character of $V_{(n),j}$ evaluated at $\sigma$ is the coefficient
of $t^j$ in $tA_{k-1}(t)\prod_{i=1}^{k}[\lambda_i]_t$.
\end{cor}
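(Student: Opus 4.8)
The plan is to combine Theorem~\ref{resthm} with Proposition~\ref{stem}, exploiting the fact that $\lambda$ has a part equal to $1$. Since a character is a class function, I would first replace $\sigma$ by a convenient representative of its conjugacy class: as $\lambda_k = 1$, we may assume that $\sigma$ fixes the point $n$, so that $\sigma$ lies in the subgroup $\S_{n-1}\le\S_n$ and, viewed as an element of $\S_{n-1}$, has cycle type $\mu := (\lambda_1,\dots,\lambda_{k-1})$, a partition of $n-1$ with $\ell(\mu) = k-1$. Then the value of the character of $V_{(n),j}$ at $\sigma$ equals the value at $\sigma$ of the character of the restricted representation $V_{(n),j}\!\downarrow_{\S_{n-1}}$.

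Next I would invoke Theorem~\ref{resthm}, which identifies $V_{(n),j}\!\downarrow_{\S_{n-1}}$ as the (permutation) representation with Frobenius characteristic $Q_{n-1,j-1}$. Recalling that for a virtual representation $V$ of $\S_m$ the character value $\chi^V_\nu$ is $z_\nu$ times the coefficient of $p_\nu$ in $\ch V$, Proposition~\ref{stem} applied with $n-1$ in place of $n$ gives
\[
\sum_{i\ge 0} Q_{n-1,i}\,t^i \;=\; \sum_{\nu \vdash n-1} z_\nu^{-1}\Bigl(A_{\ell(\nu)}(t)\prod_{l=1}^{\ell(\nu)}[\nu_l]_t\Bigr)p_\nu .
\]
Extracting the coefficient of $p_\mu$ and then of $t^{j-1}$, the character of the representation with Frobenius characteristic $Q_{n-1,j-1}$, evaluated on the class $\mu$, is the coefficient of $t^{j-1}$ in $A_{\ell(\mu)}(t)\prod_{l=1}^{\ell(\mu)}[\mu_l]_t$. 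Since $\ell(\mu)=k-1$ and $\mu_l=\lambda_l$ for $l\le k-1$, this is the coefficient of $t^{j-1}$ in $A_{k-1}(t)\prod_{l=1}^{k-1}[\lambda_l]_t$.

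Finally I would reconcile this with the asserted formula. Because $\lambda_k = 1$ we have $[\lambda_k]_t = [1]_t = 1$, so $\prod_{i=1}^{k}[\lambda_i]_t = \prod_{l=1}^{k-1}[\lambda_l]_t$; and the coefficient of $t^{j-1}$ in a polynomial $P(t)$ equals the coefficient of $t^j$ in $tP(t)$. Combining these observations, the character of $V_{(n),j}$ at $\sigma$ is the coefficient of $t^j$ in $tA_{k-1}(t)\prod_{i=1}^{k}[\lambda_i]_t$, which is the claim. I do not expect a genuine obstacle here: everything is formal bookkeeping once Theorem~\ref{resthm} is available, and the only two points requiring care are the legitimacy of choosing a representative that fixes $n$ (so that the restriction to $\S_{n-1}$ registers the cycle type $\mu$) and the index shift $j\mapsto j-1$, which is exactly the extra fixed point recorded by Theorem~\ref{resthm}.
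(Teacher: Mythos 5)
Your proof is correct and follows exactly the route taken in the paper: restrict to $\S_{n-1}$ via Theorem~\ref{resthm} after choosing a representative fixing $n$, then read off the character value from the power-sum expansion in Proposition~\ref{stem}. The only difference is that you spell out the final bookkeeping (the index shift $j\mapsto j-1$ and $[\lambda_k]_t=1$) that the paper compresses into the phrase ``now follows from Proposition~\ref{stem}.''
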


\begin{proof}
We may assume that $\sigma$ fixes $n$, which allows us to think
of $\sigma$ as an element of $\S_{n-1}$.  Let $V_{n-1,j-1}$ be the
representation of $\S_{n-1}$ whose Frobenius characteristic is
$Q_{n-1,j-1}$.  By Theorem \ref{resthm}, the character value in
question is equal to the character value of $\sigma$ on
$V_{n-1,j-1}$.  Corollary
\ref{cvalcor} now follows from Proposition~\ref{stem}.
\end{proof}

\section{Other occurrences} \label{othersec} The Eulerian quasisymmetric functions refine symmetric functions that have appeared earlier in the literature.        A multiset derangement of order $n$ is a $2 \times n$  array  of  positive integers whose top row is weakly increasing, whose bottom row is a rearrangement of the top row, and whose columns contain distinct entries.  An excedance of a multiset derangement $D = (d_{i,j})$  is a column $j$ such that $d_{1,j} < d_{2,j}$.   Given a multiset derangement $D=(d_{i,j})$, let $x^D:=\prod_{i=1}^n x_{d_{1,i}}$.   For all $j < n$, let   $\mathcal D_{n,j}$ be the set of all derangements in $\mathfrak S_n$ with $j$ excedances and let $\mathcal {MD}_{n,j}$ be the set of all multiset derangements of order $n$ with $j$ excedances. Set
$$d_{n,j}({\bf x}) :=  \sum_{D\in \mathcal {MD}_{n,j}} {\rm x}^D.$$ Askey and Ismail \cite{ai} (see also \cite{kz}) proved the following $t$-analog of MacMahon's \cite[Sec. III, Ch. III]{mac1}
result on multiset derangements  \begin{equation}\label{macderang}  \sum_{j,n \ge 0} d_{n,j}({\bf x})t^j z^n= { 1\over 1 - \sum_{i \ge 2} t[i-1]_t e_i z^i},\end{equation}
where $e_i$ is the $i$th elementary symmetric function.
\begin{cor}[to Theorem~\ref{introsymgenth}]  For all $n,j \ge 0 $ we have
\begin{equation} \label{derange1} d_{n,j}({\bf x}) = \omega Q_{n,j,0} = \sum_{\sigma \in \mathcal D_{n,j}} F_{[n-1]\setminus \Exd(\s) ,n},\end{equation} where $\omega$ is the standard involution on the ring of symmetric functions, which takes $h_n$ to $e_n$. Consequently,
\begin{equation}\label{derangeq1}d_{n,j}(1,q,q^2,\dots) = (q;q)_{n}^{-1} \sum_{\sigma \in \mathcal D_{n,j}} q^{\comaj(\s)+j },
\end{equation} and
\begin{equation}\label{derangeq2} \sum_{m\ge 0} p^m \Lambda_m d_{n,j}({\bf x}) = (p;q)_{n+1}^{-1}\sum_{\sigma \in \mathcal D_{n,j}} q^{\comaj(\s)+j} p^{n-\des(\s)+1}.
\end{equation}
\end{cor}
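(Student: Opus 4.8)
The plan is to obtain the first equality by a generating function comparison and then read off the remaining assertions directly. First I would recall that, by Theorem~\ref{introsymgenth} together with Corollary~\ref{dercor},
\begin{equation*} \sum_{n,j\ge 0} Q_{n,j,0}\, t^j z^n \;=\; \frac{1}{1-\sum_{n\ge 2} t[n-1]_t\, h_n z^n}. \end{equation*}
Applying the ring homomorphism $\omega$, which sends $h_n$ to $e_n$, turns the right-hand side into $\big(1-\sum_{n\ge 2} t[n-1]_t e_n z^n\big)^{-1}$, which is precisely the right-hand side of the Askey--Ismail formula (\ref{macderang}). Extracting the coefficient of $t^jz^n$ then gives $d_{n,j}(\x)=\omega Q_{n,j,0}$.

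For the second equality in (\ref{derange1}), I would note that the permutations counted by $Q_{n,j,0}$, namely those with $\exc=j$ and $\fix=0$, are exactly the elements of $\mathcal D_{n,j}$, so $Q_{n,j,0}=\sum_{\s\in\mathcal D_{n,j}} F_{\Exd(\s),n}$ straight from the definition. Then I would invoke the standard fact (cf.\ \cite[Ch.~7]{st3}) that $\omega$ extends to an involutive algebra automorphism of the ring $\mathcal Q$ of quasisymmetric functions satisfying $\omega F_{S,n}=F_{[n-1]\setminus S,\,n}$, whose restriction to symmetric functions is the usual $\omega$; applying this termwise to the symmetric function $Q_{n,j,0}$ yields $\omega Q_{n,j,0}=\sum_{\s\in\mathcal D_{n,j}} F_{[n-1]\setminus\Exd(\s),\,n}$.

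The two specialization statements are then pure bookkeeping. Applying $\Lambda$ to (\ref{derange1}) and using (\ref{spec1}) gives $\Lambda(d_{n,j})=(q;q)_n^{-1}\sum_{\s\in\mathcal D_{n,j}} q^{\,\sum_{i\in[n-1]\setminus\Exd(\s)} i}$; since $\comaj(\s)={n\choose 2}-\maj(\s)$ and $\sum_{i\in\Exd(\s)} i=\maj(\s)-\exc(\s)$ by Lemma~\ref{exdlem}, the exponent equals $\comaj(\s)+j$ for $\s\in\mathcal D_{n,j}$, which is (\ref{derangeq1}). Likewise, applying $\sum_{m\ge 0}\Lambda_m(\,\cdot\,)p^m$ and using (\ref{spec2}) introduces the factor $p^{\,|[n-1]\setminus\Exd(\s)|+1}$; for a derangement $\s(1)\ne 1$, so (\ref{exdsize}) gives $|\Exd(\s)|=\des(\s)-1$ and hence $|[n-1]\setminus\Exd(\s)|=n-\des(\s)$, producing (\ref{derangeq2}).

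I do not expect a serious obstacle: the whole corollary rests on Theorem~\ref{introsymgenth} and the already known identity (\ref{macderang}). The one place to be careful is the passage through the quasisymmetric involution, where one must use the extension of $\omega$ to $\mathcal Q$ that is compatible with the classical $\omega$ on $\mathrm{Sym}$ and that complements descent sets. If one prefers to avoid $\mathcal Q$ altogether, an alternative is to verify directly that $\sum_{n,j}\big(\sum_{\s\in\mathcal D_{n,j}} F_{[n-1]\setminus\Exd(\s),n}\big)t^jz^n$ satisfies the image under $\omega$ of the recurrence (\ref{rr}) for $\sum Q_{n,j,0}t^jz^n$, but the quasisymmetric $\omega$ is the shorter path.
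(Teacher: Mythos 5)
Your proof is correct and follows essentially the same path as the paper's: apply $\omega$ to the $r=0$ specialization of the generating function in Theorem~\ref{introsymgenth} and match it with the Askey--Ismail identity (\ref{macderang}), invoke the descent-set-complementing extension of $\omega$ to $\mathcal Q$ for the second equality, and then use Lemma~\ref{desspec} together with Lemma~\ref{exdlem} for the two specializations. Your version simply spells out the bookkeeping that the paper leaves implicit, and all of those computations (including the $|[n-1]\setminus\Exd(\sigma)|=n-\des(\sigma)$ step for derangements) are correct.
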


\begin{proof} The right hand side of (\ref{macderang}) can be obtained by applying $\omega$ to the right hand side of  (\ref{introsymgenth2}) and setting  $r=0$.  Hence the first equation of (\ref{derange1}) holds.

The involution  on the ring $\mathcal Q$ of quasisymmetric functions defined  by
$ F_{S,n} \mapsto F_{[n-1]\setminus S,n}$ restricts to  $\omega$ on the ring of symmetric functions (cf. \cite[Exercise 7.94.a]{st3}).  Hence  the second equation of (\ref{derange1})  follows from the definition of $Q_{n,j,0}$.  Equations (\ref{derangeq1}) and (\ref{derangeq2}) now  follow from Lemmas~\ref{desspec} and~\ref{exdlem}.
\end{proof}

Let $W_n$ be the set of all words of length $n$ over alphabet $\PP$ with no adjacent repeats, i.e.,
$$W_n := \{w \in \PP^n : w(i )\ne w(i+1)\,\, \forall i = 1,2,\dots,n-1\}. $$  Define the enumerator
$$Y_n(x_1,x_2,\dots):= \sum_{w\in  W_n} {\bf x}^w,$$ where
${\bf x}^w:=x_{w(1)} \cdots x_{w(n)}$.
In \cite{csv} Carlitz, Scoville and Vaughan prove  the identity
\begin{equation}\label{carl} \sum_{n \ge 0} Y_n({\bf x}) z^n = {\sum_{i \ge 0} e_i z^i \over 1 - \sum_{i \ge 2} (i-1) e_i z^i}.\end{equation}  (See Dollhopf, Goulden and Greene  \cite{dgg} and Stanley \cite{st4} for alternative proofs.)   It was observed by Stanley  that there is a nice generalization of (\ref{carl}).
\begin{thm}[Stanley (personal communication)]  \label{stanth} For all $n,j \ge 0$, define $$Y_{n,j}(x_1,x_2,\dots):= \sum_{\scriptsize \begin{array}{c} w\in  W_n \\ \des(w) = j\end{array}} {\bf x}^w.$$ Then
\begin{equation}\label{carlg} \sum_{n,j \ge 0} Y_{n,j}({\bf x})t^j z^n = {(1-t) E(z) \over E(zt) - t E(z)} ,\end{equation} where  $$E(z) = \sum_{n \ge 0} e_n z^n.$$
\end{thm}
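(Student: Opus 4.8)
\textbf{Proposal.} The plan is to deduce Theorem~\ref{stanth} from Theorem~\ref{introsymgenth} by applying the standard involution $\omega$. First I would record the algebraic identity
$$E(zt)-tE(z)=\sum_{m\ge 0}e_m(t^m-t)z^m=(1-t)\Bigl(1-\sum_{m\ge 2}t[m-1]_t\,e_mz^m\Bigr),$$
so that the right-hand side of (\ref{carlg}) equals $E(z)\big/\bigl(1-\sum_{m\ge2}t[m-1]_te_mz^m\bigr)$. This is precisely $\omega$ applied to the right-hand side of (\ref{introsymgenth2}) with $r=1$: since $\omega$ is a ring homomorphism on symmetric functions (extended coefficientwise to power series in $z$) with $\omega H(z)=E(z)$, and since $\sum_kQ_{n,j,k}=Q_{n,j}$, the $r=1$ case of Theorem~\ref{introsymgenth} gives
$$\sum_{n,j\ge0}(\omega Q_{n,j})\,t^jz^n=\frac{(1-t)E(z)}{E(zt)-tE(z)}.$$
Thus the theorem is equivalent to the assertion that $Y_{n,j}=\omega Q_{n,j}$ for all $n,j\ge 0$.

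Next I would unwind $\omega Q_{n,j}$ combinatorially. Since $\omega F_{S,n}=F_{[n-1]\setminus S,n}$ and $Q_{n,j}=\sum_{\exc(\sigma)=j}F_{\Exd(\sigma),n}$, one has $\omega Q_{n,j}=\sum_{\exc(\sigma)=j}F_{[n-1]\setminus\Exd(\sigma),n}$. Expanding each fundamental quasisymmetric function into monomials, the coefficient of $x_{i_1}\cdots x_{i_n}$ with $i_1\ge\cdots\ge i_n$ in $\omega Q_{n,j}$ is the number of $\sigma$ with $\exc(\sigma)=j$ and $\Exd(\sigma)\supseteq\{p:i_p=i_{p+1}\}$, while the coefficient of the same monomial in $Y_{n,j}$ is the number of words in $W_n$ of that content with $j$ descents. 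So the task reduces to a content-by-content bijection: for each multiset, match no-adjacent-repeat words of that content having $j$ descents with permutations having $j$ excedances whose set $\Exd$ avoids the block-boundary positions of the sorted content. I would construct this via a standardization map in the spirit of the Gessel--Reutenauer $P$-partition correspondence: given $w\in W_n$, let $\sigma(w)$ be obtained by breaking ties among equal letters of $w$ in a fixed direction, and then check that $\exc(\sigma(w))=\des(w)$ and that $w$ is $\bigl([n-1]\setminus\Exd(\sigma(w))\bigr)$-compatible, so that the fibers are recovered by summing over all compatible fillings.

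The main obstacle is exactly this last step: confirming that the standardization lands in the correct set and is bijective, i.e.\ that it trades descents for excedances and tracks the strict/weak inequality pattern correctly after the $\omega$-twist. An alternative and more self-contained route to the same step -- the one hinted at in the remark following Theorem~\ref{bij} -- is to pass through banners: a word $w\in W_n$ becomes a banner by barring $w_i$ precisely when $w_i>w_{i+1}$ (this is forced, since $w$ has no equal adjacent letters), after which one runs the $e$-analogue of the banner recurrence of Sections~\ref{secban}--\ref{recsec}. Yet another is to verify the functional equation for $\sum_{n,j}Y_{n,j}t^jz^n$ head-on by the first-return/irreducible-block decomposition of a word with no adjacent repeats used by Carlitz, Scoville and Vaughan, now with $t$ recording descents; the factor $E(z)$ then accounts for an initial strictly increasing run and each factor $t[m-1]_te_m$ for an irreducible block of length $m$. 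Any of these completes the argument once the reduction $Y_{n,j}=\omega Q_{n,j}$ has been established.
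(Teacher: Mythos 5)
The paper states Theorem~\ref{stanth} as Stanley's personal communication without giving a formal proof, but it does assemble the ingredients of one in Section~\ref{othersec}: P-partition reciprocity gives $Y_{n,j}=\omega\sum_B\wt(B)$ summed over banners of length $n$ with $j$ bars, Theorem~\ref{banprop} identifies that sum with $Q_{n,j}$, and then the identity~\eqref{carlg} is literally $\omega$ applied to the $r=1$ case of Theorem~\ref{introsymgenth}. Your proposal lands on exactly this reduction. Your algebraic step is correct: $E(zt)-tE(z)=\sum_{m\ge 0}e_m(t^m-t)z^m=(1-t)\bigl(1-\sum_{m\ge 2}t[m-1]_t e_m z^m\bigr)$, so the right-hand side of~\eqref{carlg} is $\omega$ of the $r=1$ right-hand side of~\eqref{introsymgenth2}, and the theorem is equivalent, given Theorem~\ref{introsymgenth}, to the claim $Y_{n,j}=\omega Q_{n,j}$. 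That is exactly the identity the paper records and interprets combinatorially.

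Of the three routes you sketch for $Y_{n,j}=\omega Q_{n,j}$, the banner/P-partition-reciprocity route is the one the paper gives (and attributes to Stanley), and it is clean: for fixed $S\subseteq[n-1]$, banners with barred positions exactly $S$ are $P$-partitions for the zigzag poset determined by $S$, words in $W_n$ with $\Des(w)=S$ are the corresponding strict $P$-partitions, and reciprocity is precisely $\omega$ at the level of quasisymmetric generating functions. The first route you propose --- a direct standardization $w\mapsto\sigma(w)$ with $\exc(\sigma(w))=\des(w)$ and $\Exd(\sigma(w))$ avoiding the tie positions --- is the weak point, and you are right to flag it as the main obstacle: $\Exd(\sigma)$ is computed from the \emph{two-alphabet} word $\overline{\sigma}$ built from the excedance structure, and a naive tie-breaking standardization does not reproduce that data; the bijection that does, in this paper, is the ornament map of Theorem~\ref{ornbij}, which is considerably more involved than a one-line standardization. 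So commit to the P-partition route (or the Carlitz--Scoville--Vaughan first-return decomposition, which also works and is closer in spirit to how~\eqref{carl} was originally proved), and drop the standardization sketch, which would need essentially the full Gessel--Reutenauer machinery to be made rigorous.
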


By combining  Theorems~\ref{introsymgenth} and \ref{stanth} we conclude that
$$Q_{n,j} = \omega Y_{n,j}.$$  Stanley (personal communication)  observed that there is a combinatorial interpretation of this identity in terms of P-partition reciprocity \cite[Section 4.5]{st5}.
 Indeed, words in $W_n$ with  fixed descent set $S\subseteq [n-1]$ can be identified with strict $P$-partitions where $P$ is  the poset on $\{p_1,\dots,p_n\}$ generated by cover relations $p_i < p_{i+1}$ if $i \in S$ and $p_{i+1} < p_{i}$ if  $i \notin S$.  Banners of length $n$ in which the set of positions of barred letters equals $S$ can be identified with $P$-partitions for the same poset $P$.  It therefore follows from
P-partition reciprocity that
\begin{equation} \label{recipeq} Y_{n,j}({\bf x}) = \omega \sum_{B} \wt(B), \end{equation}  summed over all banners of length $n$ with $j$ bars.

In \cite{st4} Stanley views words in $W_n$  as proper colorings of a path $P_n$ with $n$ vertices and $Y_n$  as the chromatic symmetric function of $P_n$.  The chromatic symmetric function of a graph $G=(V,E)$ is a symmetric function analog of the chromatic polynomial $\chi_G$ of $G$.   Stanley  \cite[Theorem 4.2]{st4} also defines a symmetric function analog of   $(-1^{|V|})\chi_G(-m)$ which enumerates all pairs $(\eta,c)$ where $\eta$ is an acyclic orientation of $G$ and $c:V\to [m]$ is a coloring satisfying $c(u) \le c(v) $ if $(u,v)$ is an edge of $\eta$.   For $G=P_n$, one can see that these pairs can be identified with banners of length $n$.  Hence Stanley's reciprocity theorem for chromatic symmetric functions \cite[Theorem 4.2]{st4}   reduces to an identity that is refined by (\ref{recipeq}) when $G= P_n$.

Another interesting combinatorial interpretation of the Eulerian quasisymmetric functions comes from Gessel who considers the set $U_n$ of words  of length $n$ over the alphabet $\PP$ with no double (i.e., adjacent)  descents  and no descent in the last position $n-1$, and the set  $\tilde U_n$  of words of length $n$ over the alphabet $\PP$ with no double descents  and no descent in the last position nor the first position. 
\begin{thm}[Gessel (personal communication)] \label{gesth}
\begin{equation} \label{ges} 1+\sum_{n \ge 1} z^n \sum_{w \in U_n} {\bf x}^w t^{\des(w)} (1+t) ^{n-1-2\des(w)} = {(1-t) H(z) \over H(zt) - t H(z)} ,\end{equation}
and \begin{equation}\label{ges2} 1+\sum_{n \ge 2} z^n \sum_{w \in \tilde U_n} {\bf x}^w t^{\des(w)+1} (1+t) ^{n-2-2\des(w)} = {1-t \over H(zt) - t H(z)} ,\end{equation}
where as above $ {\bf x}^w := x_{w(1)}\cdots x_{w(n)}$.
\end{thm}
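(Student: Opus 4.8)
The plan is to recognise both right-hand sides as generating functions for Eulerian quasisymmetric functions that have already been computed, and then to reduce each word identity, via the banner model of Section~\ref{identsec}, to a purely combinatorial identity on rearrangements of a fixed multiset, which is handled by a Foata--Strehl-type group action. First I would dispose of the algebra: by Theorem~\ref{introsymgenth} specialised at $r=1$, the right-hand side of (\ref{ges}) is $\sum_{n,j\ge0}Q_{n,j}(\xx)t^jz^n$; and since $H(zt)-tH(z)=\sum_{m}h_m z^m(t^m-t)=(1-t)\bigl(1-\sum_{m\ge2}t[m-1]_t h_m z^m\bigr)$, the right-hand side of (\ref{ges2}) is $\bigl(1-\sum_{m\ge2}t[m-1]_t h_m z^m\bigr)^{-1}$, which equals $\sum_{n,j\ge0}Q_{n,j,0}(\xx)t^jz^n$ by the equivalence established in Section~\ref{identsec}. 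So (\ref{ges}) and (\ref{ges2}) are equivalent to the assertions that, for each $n$, $\sum_{j}Q_{n,j}t^j$ is the $U_n$-enumerator and $\sum_{j}Q_{n,j,0}t^j$ is the $\tilde U_n$-enumerator.

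Next I would pass to banners. By Theorem~\ref{banprop}, $\sum_{j}Q_{n,j}t^j=\sum_{B}\wt(B)\,t^{\bbar(B)}$, summed over all banners of length $n$. But a banner of length $n$ is precisely a word $w=|B|\in\PP^n$ together with, for each $i\in[n-1]$, a barred/unbarred choice compatible with $w$: forced barred when $w_i>w_{i+1}$, forced unbarred when $w_i<w_{i+1}$, free when $w_i=w_{i+1}$ (the last letter being unbarred automatically). Summing over the free choices gives
\[
\sum_{j}Q_{n,j}t^j=\sum_{w\in\PP^n}\xx^{\,w}\,t^{\des(w)}(1+t)^{\mathrm{lev}(w)},
\]
where $\mathrm{lev}(w)$ is the number of $i$ with $w_i=w_{i+1}$. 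Hence (\ref{ges}) reduces, content by content, to the identity that for every multiset $M$ of size $n$,
\[
\sum_{\substack{w\in\PP^n\\ \mathrm{cont}(w)=M}}t^{\des(w)}(1+t)^{\mathrm{lev}(w)}=\sum_{\substack{w\in U_n\\ \mathrm{cont}(w)=M}}t^{\des(w)}(1+t)^{\,n-1-2\des(w)}.
\]

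To prove this last identity I would use a modified Foata--Strehl (valley-hopping) action on the rearrangements of $M$: a maximal block of equal letters that forms a ``peak'' (both flanking blocks smaller) may be moved to the other side of either flanking block, yielding a $\Z/2$ for each such block and hence an action of an abelian $2$-group on the set of rearrangements of $M$; one shows that each orbit contains a unique element $v$ with no double descent and no descent at position $n-1$ (i.e.\ a unique $v\in U_n$), and that $t^{\des(v)}(1+t)^{\,n-1-2\des(v)}$ equals the sum of $t^{\des(w)}(1+t)^{\mathrm{lev}(w)}$ over the orbit of $v$; summing over orbits finishes (\ref{ges}). For (\ref{ges2}) I would argue in parallel: by Theorem~\ref{banprop}, $\sum_j Q_{n,j,0}t^j$ is the $t^{\bbar}$-weighted count of banners of length $n$ whose Lyndon type has no part of size $1$, and reading off the underlying word while tracking the first barred letter shows the admissible words are exactly those of $\tilde U_n$ (the extra ``no initial descent'' condition encoding the absence of a length-one Lyndon factor), with the exponent shift $\des(w)\mapsto\des(w)+1$; alternatively one derives (\ref{ges2}) from (\ref{ges}) by dividing by $H(z)$, using $\sum_{n,j}Q_{n,j}t^jz^n=H(z)\sum_{n,j}Q_{n,j,0}t^jz^n$ (Corollary~\ref{dercor}) together with a content-preserving factorisation of $U_n$-words into a multiset of ``singleton'' values and a $\tilde U_{n-k}$-word, and the same valley-hopping bookkeeping.

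The main obstacle is the orbit-sum step: making the valley-hopping action well defined in the presence of repeated letters, and then verifying that the $(1+t)^{\mathrm{lev}(w)}$ factors accumulated over an orbit collapse exactly to the single factor $t^{\des(v)}(1+t)^{\,n-1-2\des(v)}$ attached to the orbit's $U_n$-representative; keeping track of which level positions are created or destroyed by a hop, and of the parity of the descent changes, is the delicate part. The $\tilde U_n$ case carries the further subtlety of matching the ``no length-one Lyndon factor'' condition on banners with the ``no initial descent'' condition on $\tilde U_n$-words and checking that the corresponding word factorisation is weight-preserving term by term.
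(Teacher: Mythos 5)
The paper does not prove Theorem~\ref{gesth} at all; it is attributed to Gessel via personal communication, so there is no argument in the source to compare against. Judged on its own terms, your opening reduction is correct and genuinely useful: by Theorem~\ref{banprop} the sum $\sum_j Q_{n,j}t^j$ equals $\sum_B \wt(B)t^{\bbar(B)}$ over banners of length $n$, and grouping banners by the underlying word $w=|B|$ (bar forced at descents, forbidden at ascents, free at levels, none on the last letter) gives
$\sum_j Q_{n,j}t^j = \sum_{w\in\PP^n}\xx^{w}\,t^{\des(w)}(1+t)^{\mathrm{lev}(w)}$,
so (\ref{ges}) reduces to a purely combinatorial identity on rearrangements of a fixed multiset. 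That much is right and matches the right-hand side via Theorem~\ref{introsymgenth}.

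The valley-hopping step you propose for that word identity, however, cannot be carried out at the level of words as you describe it. Take $M=\{1,1,2\}$: the three rearrangements carry weights $t^{\des}(1+t)^{\mathrm{lev}}$ equal to $(1+t),\,t,\,t(1+t)$ for $112,\,121,\,211$ respectively, while the desired orbit-representative weights are $(1+t)^2$ for $112\in U_3$ and $t$ for $211\in U_3$. No partition of $\{112,121,211\}$ into blocks has block-sums $(1+t)^2$ and $t$; in particular $1+2t+t^2$ never appears as a subset sum. So any group action making the identity work must act on something finer than words --- e.g.\ on the banners themselves, moving barred/unbarred status along with letters --- and you have not set that up. You flagged this step as the delicate one, but the difficulty is more than bookkeeping: as stated the action does not exist.

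Your treatment of (\ref{ges2}) has a concrete error. You assert that banners of length $n$ whose Lyndon type has no part of size $1$ have underlying words lying in $\tilde U_n$ (``no initial descent encoding the absence of a length-one Lyndon factor''). This is false, because the Lyndon type depends on the barring, not just on $|B|$. With $M=\{1,1,2\}$ again, the only banners of length $3$ and content $M$ whose Lyndon type is $(3)$ are $\bar 211$ and $\bar 2\bar 11$, both with underlying word $211$; but $211\notin\tilde U_3$ (descent at position $1$), whereas $\tilde U_3$ at this content is $\{112\}$ and \emph{neither} barring of $112$ avoids a length-one Lyndon factor. The two sides do match as coefficients of $x_1^2x_2$ (both equal $t(1+t)$), but only after a content-level rearrangement; the word-level dictionary you describe fails. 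The proposed alternative --- dividing by $H(z)$ and factoring $U_n$-words into a multiset plus a $\tilde U_k$-word --- also does not close: the $U_3$-word $112$ carries weight $(1+t)^2$, which is not the weight $t^{\des(v)+1}(1+t)^{k-2-2\des(v)}$ of any single pair $(\text{multiset},\,v\in\tilde U_k)$, so no local, multiplicative, content-preserving factorisation of individual $U_n$-words exists; one needs either a genuinely different combinatorial mechanism or an algebraic derivation via Theorem~\ref{introsymgenth} and Corollary~\ref{dercor} that bypasses a word-level bijection altogether.
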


The symmetric function  on the right hand side of (\ref{ges}) has
also occurred  in the work of   Procesi \cite{pr}, Stanley
\cite{st2}, Stembridge \cite{stem1,stem2}, and Dolgachev and Lunts
\cite{dl}.  They studied a representation of the symmetric group
on the cohomology of the toric variety $X_n$ associated with the
Coxeter complex of $\sg_n$.  (See, for example \cite{Br}, for a
discussion of Coxeter complexes and \cite{fu} for an explanation
of how toric varieties are associated to polytopes.)  The action
of $\sg_n$ on the Coxeter complex determines an action on $X_n$
and thus a linear representation on the cohomology groups of
$X_n$. Now $X_n$ can have nontrivial cohomology only in dimensions $2j$, for $0 \leq j \leq n-1$.  (See for example \cite[Section 4.5]{fu}.) 
The action of $\sg_n$ on $X_n$
induces a representation of $\sg_n$ on the cohomology
$H^{2j}(X_n)$ for each $j = 0,\dots,n-1$. Stanley \cite{st2},
using a formula of Procesi \cite{pr}, proves that
$$\sum_{n\ge 0} \sum_{j=0}^{n-1} \ch H^{2j}(X_n)\,t^{j} z^n
= {(1-t) H(z) \over H(zt) -tH(z)}, $$ 
where $\ch$ denotes the Frobenius characteristic  Combining this with
Theorem~\ref{introsymgenth} yields the following conclusion.

\begin{thm}\label{toricth}  For all $j = 0,1, \dots, n-1$,
$$\ch H^{2j}(X_n)=Q_{n,j}.$$
\end{thm}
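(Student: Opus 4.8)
The plan is to combine Theorem~\ref{introsymgenth} with Procesi's computation of the graded $\sg_n$-character on the cohomology of $X_n$, and then simply match coefficients. First I would recall the geometric setup: $X_n$ is a smooth complete toric variety, so its rational cohomology is concentrated in even degrees, and since $\dim_{\mathbb C} X_n = n-1$ one has $H^{2j}(X_n) = 0$ unless $0 \le j \le n-1$. The $\sg_n$-action on the Coxeter complex of $\sg_n$ induces an action on $X_n$, hence a graded linear representation on $\bigoplus_j H^{2j}(X_n)$, and it is this family of representations $H^{2j}(X_n)$, $0 \le j \le n-1$, whose Frobenius characteristics we must identify with the $Q_{n,j}$.

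Next I would quote the formula of Procesi~\cite{pr}, in the form given by Stanley~\cite{st2}:
\[
\sum_{n \ge 0}\sum_{j=0}^{n-1} \ch H^{2j}(X_n)\, t^j z^n \;=\; \frac{(1-t)H(z)}{H(zt) - tH(z)} .
\]
On the other hand, setting $r = 1$ in equation~(\ref{introsymgenth1}) of Theorem~\ref{introsymgenth}, and using the trivial identity $Q_{n,j} = \sum_{k \ge 0} Q_{n,j,k}$ (immediate from the definitions, since the fixed-point statistic partitions the permutations counted by $Q_{n,j}$), yields
\[
\sum_{n, j \ge 0} Q_{n,j}\, t^j z^n \;=\; \frac{(1-t)H(z)}{H(zt) - tH(z)} .
\]
The two right-hand sides are identical, so the two generating functions agree as formal power series in $t$ and $z$ with symmetric-function coefficients.

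Finally I would extract the coefficient of $t^j z^n$ from each side. Since $Q_{n,j}$ is a (genuine) symmetric function homogeneous of degree $n$, and $\ch H^{2j}(X_n)$ is likewise a symmetric function of degree $n$, these coefficients lie in the same space, and the equality of power series forces $\ch H^{2j}(X_n) = Q_{n,j}$ for all $n$ and all $j$ with $0 \le j \le n-1$, both sides being zero for $j$ outside that range. Regarding difficulty: with Theorem~\ref{introsymgenth} already proved, there is essentially no remaining obstacle --- the whole argument is the observation that the two closed forms coincide. The only points requiring care are the bookkeeping identity $Q_{n,j} = \sum_k Q_{n,j,k}$ and checking that the degree and parity conventions for $X_n$ line up so that the coefficient comparison is literal; the substantive external input, Procesi's character formula for $H^*(X_n)$, is used as a black box.
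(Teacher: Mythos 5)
Your proposal is correct and follows exactly the paper's own argument: quote the Procesi--Stanley formula for $\sum_{n,j}\ch H^{2j}(X_n)\,t^jz^n$, specialize Theorem~\ref{introsymgenth} at $r=1$ (using $Q_{n,j}=\sum_k Q_{n,j,k}$) to get the identical right-hand side, and compare coefficients. Nothing to add.
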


Finally, we discuss the occurrence of  the Eulerian quasisymmetric functions and the $q$-Eulerian polynomials that motivated the work in this paper.  This involves the homology of  certain partially ordered sets (posets).   Recall that for a poset $P$, the {\it order complex} $\Delta P$ is the abstract simplicial complex whose vertices are the elements of $P$ and whose $k$-simplices are totally ordered subsets of size $k+1$ from $P$.  The (reduced) homology of $P$ is given by
$\rh_k(P):= \rh_k(\Delta P;\C)$.  (See \cite{w1}  for a discussion of poset homology.) 

  The first poset we consider is the Rees product $(B_n\setminus\{\emptyset\}) * C_n$, where $ B_n$ is the Boolean algebra on $\{1,2,\dots,n\}$ and $C_n$ is an $n$-element chain.  Rees products of posets were introduced by
Bj\"orner and Welker in \cite{bw}, where they study connections between poset topology and commutative algebra.  (Rees products of affine semigroup posets arise from the ring-theoretic Rees construction.)
The Rees product of  two ranked posets  is a subposet of the usual product poset (see  \cite{bw} or \cite{sw2} for the precise definition.).
 
  Bj\"orner and Welker \cite{bw} conjectured and Jonsson \cite{jo} proved  that the dimension of the top homology of $(B_n\setminus\{\emptyset\}) * C_n$ is equal to the number of derangements in $\mathfrak S_n$.  Below we discuss  equivariant versions and $q$-analogs of  both  this result and refinements, which were derived in \cite{sw2} using results of this paper.

   The poset $(B_n\setminus\{\emptyset\}) * C_n$ has $n$ maximal elements  all of rank $n$.  The usual action of $\mathfrak S_n$ on $B_n$ induces  actions of $\mathfrak S_n$ on $(B_n\setminus\{\emptyset\}) * C_n$  and on each lower order ideal generated by a maximal element, which respectively induce  representations of $\mathfrak S_n$ on the homology of  $(B_n\setminus\{\emptyset\}) * C_n$ and on the homology of each open lower order ideal.  
 \begin{thm}[\cite{sw2}] \label{introhomol} Let $x_1,\dots, x_n$ be the maximal elements of $(B_n\setminus\{\emptyset\}) * C_n$.  For each $j=1,\dots, n$, let $I_{n,j}$ be the open lower order ideal generated by $x_j$.  Then $\dim \rh_{n-2}(I_{n,j}) $ equals the Eulerian number $a_{n,j-1}$ and 
 \begin{equation*}
{\rm {ch}}(\rh_{n-2}(I_{n,j}) )=\omega  Q_{n,j-1}.
\end{equation*}
\end{thm}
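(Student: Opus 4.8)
The plan is to compute the $\S_n$-module $\rh_{n-2}(I_{n,j})$ from its equivariant Euler characteristic and then to evaluate that Euler characteristic by a generating-function computation that collapses, via $H(-z)=1/E(z)$, onto the right-hand side of Theorem~\ref{introsymgenth} after applying $\omega$. Set up conventions: let $C_n$ have elements $c_0<c_1<\cdots<c_{n-1}$ of ranks $0,1,\dots,n-1$, give $B_n\setminus\{\emptyset\}$ the rank function $|S|-1$ (so $(S,c_a)$ lies in the Rees product iff $a\le|S|-1$), and index the maximal elements so that $x_j=([n],c_{j-1})$. First I would show that the order complex $\Delta(I_{n,j})$ has reduced homology concentrated in degree $n-2$; this follows either from the Cohen--Macaulayness results of Bj\"orner and Welker \cite{bw} for Rees products of Cohen--Macaulay posets (applied to $B_n$, $C_n$, and the lower order ideals they generate), or from an EL-labeling of the bounded poset obtained by adjoining a new bottom element and $x_j$ to $I_{n,j}$, as carried out in \cite{sw2}. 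Granting top-degree concentration, the Hopf trace formula gives
\[
\ch\big(\rh_{n-2}(I_{n,j})\big)=(-1)^{n}\sum_{k\ge 0}(-1)^{k-1}\ch(C_k),
\]
where $C_k$ is the permutation module of $\S_n$ on the $k$-element chains of $I_{n,j}$, with $C_0$ the trivial module on the empty chain, so $\ch(C_0)=h_n$.

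The next step is to enumerate chains. A $k$-element chain $(S_1,c_{a_1})<\cdots<(S_k,c_{a_k})$ below $x_j$ is determined by a flag $\emptyset\subsetneq S_1\subsetneq\cdots\subsetneq S_k\subsetneq[n]$ --- equivalently an ordered set partition of $[n]$ with block sizes $(c_1,\dots,c_{k+1})$, where $c_l=|S_l|-|S_{l-1}|$ with $|S_0|=0$ and $|S_{k+1}|=n$ --- together with an increasing sequence $a_1\le\cdots\le a_k$. Writing $\delta_1=a_1$, $\delta_l=a_l-a_{l-1}$ for $2\le l\le k$, and $\delta_{k+1}=(j-1)-a_k$, the Rees-product inequalities (including $(S_k,c_{a_k})<x_j$ and $a_l\le|S_l|-1$) become exactly $0\le\delta_1\le c_1-1$, $0\le\delta_l\le c_l$ for $2\le l\le k$, $0\le\delta_{k+1}\le c_{k+1}$, and $\sum_l\delta_l=j-1$. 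Hence the number of such chains with a prescribed composition $(c_1,\dots,c_{k+1})$ of $n$ equals $[u^{j-1}]\big([c_1]_u\prod_{l=2}^{k+1}[c_l+1]_u\big)$. Since $\S_n$ acts on a chain only through its flag, and the module on ordered set partitions of type $(c_1,\dots,c_{k+1})$ has Frobenius characteristic $h_{c_1}\cdots h_{c_{k+1}}$, we obtain
\[
\ch(C_k)=\sum_{\substack{c_1,\dots,c_{k+1}\ge 1\\ c_1+\cdots+c_{k+1}=n}}\Big([u^{j-1}]\,[c_1]_u\prod_{l=2}^{k+1}[c_l+1]_u\Big)h_{c_1}\cdots h_{c_{k+1}}.
\]

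Plugging this into the first display, multiplying by $u^{j-1}z^{n}$ and summing over $n,j\ge 1$: using $(-1)^{n}z^{n}=\prod_l(-z)^{c_l}$, the first block contributes $A(-z):=\sum_{c\ge 1}[c]_u h_c(-z)^{c}$ and each of the remaining $k$ blocks contributes $B(-z):=\sum_{c\ge 1}[c+1]_u h_c(-z)^{c}$, so the geometric series in $k$ gives $\sum_{n,j\ge 1}\ch(\rh_{n-2}(I_{n,j}))u^{j-1}z^{n}=-A(-z)/(1+B(-z))$. Now $[c]_u=\frac{1-u^{c}}{1-u}$ and $[c+1]_u=\frac{1-u^{c+1}}{1-u}$ give $A(-z)=\frac{H(-z)-H(-uz)}{1-u}$ and $1+B(-z)=\frac{H(-z)-uH(-uz)}{1-u}$, and since $H(-z)E(z)=1$ these equal $\frac{E(uz)-E(z)}{(1-u)E(z)E(uz)}$ and $\frac{E(uz)-uE(z)}{(1-u)E(z)E(uz)}$, whence $-A(-z)/(1+B(-z))=\frac{E(z)-E(uz)}{E(uz)-uE(z)}$. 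On the other hand, applying $\omega$ term by term to equation (\ref{introsymgenth1}) with $r=1$ and $t=u$, and summing over $k$, gives $\sum_{n,j\ge 0}\omega Q_{n,j}\,u^{j}z^{n}=\frac{(1-u)E(z)}{E(uz)-uE(z)}$, whose value minus $1$ (the $n=0$ term) is exactly $\frac{E(z)-E(uz)}{E(uz)-uE(z)}$. Comparing coefficients of $u^{j-1}z^{n}$ (reindexing $j\mapsto j-1$) yields $\ch(\rh_{n-2}(I_{n,j}))=\omega Q_{n,j-1}$. The dimension claim follows by pairing with $h_{1^{n}}$: using $\dim V=\langle\ch V,h_{1^{n}}\rangle$ and $\langle\omega f,h_{1^{n}}\rangle=\langle f,h_{1^{n}}\rangle$, we get $\dim\rh_{n-2}(I_{n,j})=\langle Q_{n,j-1},h_{1^{n}}\rangle$, the coefficient of $x_1\cdots x_n$ in $Q_{n,j-1}$, namely $|\{\sigma\in\S_n:\exc(\sigma)=j-1\}|=a_{n,j-1}$.

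The main obstacle is the first step --- proving that the homology is concentrated in top degree --- which genuinely requires either an EL-shelling of these Rees-product ideals or Bj\"orner--Welker's Cohen--Macaulayness theorem together with a check that it applies. The remaining steps are bookkeeping, with one delicate point: one must get the Rees-product rank conditions exactly right so that the \emph{first} block contributes $[c_1]_u$ rather than $[c_1+1]_u$, since it is precisely this that makes the generating function telescope, through $H(-z)=1/E(z)$, into the formula of Theorem~\ref{introsymgenth}.
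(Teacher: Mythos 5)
This theorem is not actually proved in the present paper: it is imported from \cite{sw2}, and the brief discussion in Section~7 only records the statement and its corollaries. So there is no ``paper's own proof'' to compare against here. That said, your argument is essentially correct and tracks the strategy of \cite{sw2}: establish top-degree concentration of homology, then compute the equivariant Euler characteristic via the Hopf--Lefschetz trace formula, where the chain enumeration gives a product of complete homogeneous symmetric functions weighted by $[c_1]_u\prod_{l\ge 2}[c_l+1]_u$, and the alternating sum telescopes via $H(-z)E(z)=1$ into the right-hand side of Theorem~\ref{introsymgenth} after applying $\omega$ and setting $r=1$. Your chain bookkeeping (the constraints $0\le\delta_1\le c_1-1$, $0\le\delta_l\le c_l$ for $l\ge 2$, the extraction of $[u^{j-1}]$, and the sign $(-1)^n\sum_k(-1)^{k-1}$) is all correct, and the final pairing with $h_{1^n}$ to recover the dimension is standard.

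One caution on the part you flagged. You offer Bj\"orner--Welker's Cohen--Macaulayness theorem for Rees products as an alternative to the EL-labeling argument of \cite{sw2}, ``applied to $B_n$, $C_n$, and the lower order ideals they generate.'' This does not go through as stated: $I_{n,j}$ is \emph{not} the Rees product of a lower ideal of $B_n\setminus\{\emptyset\}$ with a lower ideal of $C_n$, because the relation $(S_k,c_{a_k})<x_j$ imposes the extra ``slack'' inequality $n-|S_k|\ge(j-1)-a_k$, which is not of Rees-product form for the pair of subideals. So the Bj\"orner--Welker CM theorem, which concerns $P*Q$ itself, does not directly certify that $I_{n,j}$ has homology concentrated in degree $n-2$; for the individual open lower ideals one really does need the CL/EL-shelling constructed in \cite{sw2} (which shells the whole poset with $\hat 0$ and $\hat 1$ adjoined and in particular handles every interval $[\hat 0,x_j]$). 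With that substitution the argument is complete, and modulo cosmetic choices of generating variables it is the proof in \cite{sw2}.
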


 In \cite{sw2} we derive  the following equivariant version of the Bj\"orner-Welker-Jonsson result as a cosequence of Theorem~\ref{introhomol},
$$\ch(\tilde H_{n-1}((B_n\setminus \{\emptyset\}) \ast C_n)) =\omega \sum_{j= 0}^{n-1}  Q_{n,j,0}.$$

A $q$-analog of Theorem~\ref{introhomol} is obtained by considering the Rees product   $(B_n(q)\setminus\{0\}) * C_n$, where $B_n(q)$ is the lattice of subspaces of the $n$-dimensional vector space $\F_q^n$ over the $q$ element field $\F_q$.  Like $(B_n\setminus\{\emptyset\}) * C_n$, the q-analog $(B_n(q)\setminus\{0\}) * C_n$ has $n$ maximal elements all of rank $n$.

\begin{thm}[\cite{sw2}] \label{intohomolq} Let $x_1,\dots, x_n$ be the maximal elements of $(B_n(q)\setminus\{0\}) * C_n$.  For each $j=1,\dots, n$, let $I_{n,j}(q)$ be the open lower order ideal generated by $x_j$.
  Then  \begin{equation*} \label{intoq}
  \dim \rh_{n-2}(I_{n,j}(q))=\sum_{\scriptsize\begin{array}{c} \s \in\sg_n \\ \exc(\s) = j-1 \end{array}} q^{\comaj(\s)+j-1},\end{equation*}
  where $\comaj(\s) = {n \choose 2}- \maj(\s)$.
\end{thm}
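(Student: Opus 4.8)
The plan is to prove Theorem~\ref{intohomolq} --- the $q$-analog from \cite{sw2} of Theorem~\ref{introhomol} --- by identifying $\dim\rh_{n-2}(I_{n,j}(q))$ with a count of descending maximal chains in a shelled Rees product, and then matching the generating function of those counts with the comajor specialization of Theorem~\ref{introsymgenth} supplied by Corollary~\ref{comajcor}. Thus the role of the present paper is again to furnish the generating-function identity; the poset-topological input is the passage from homology to a combinatorial count.

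\textbf{Step 1: reducing the homology to a chain count.} Let $\widehat I_{n,j}(q)$ be $I_{n,j}(q)$ with a bottom element $\hat 0$ adjoined, together with its natural top element $(\F_q^n,j)$, so that $\widehat I_{n,j}(q)$ is the interval $[\hat 0,(\F_q^n,j)]$ in the Rees product $(B_n(q)\setminus\{0\})*C_n$ with a bottom element adjoined. I would show that this bounded poset admits a (not necessarily pure) EL-labeling, adapting the standard lexicographic EL-labeling of the subspace lattice $B_n(q)$ and recording the $C_n$-coordinate as a secondary label along each covering step; the availability of such a labeling for Rees products over a chain is in the spirit of the Cohen--Macaulayness results of Bj\"orner and Welker \cite{bw}. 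By the Bj\"orner--Wachs theory of non-pure shellability it follows that $\rh_i(I_{n,j}(q))$ is free for all $i$ and, in particular, that $\dim\rh_{n-2}(I_{n,j}(q))$ equals the number of descending maximal chains of maximal length, namely $n$, in $\widehat I_{n,j}(q)$. The key point of working over $\F_q$ rather than at $q=1$ is that, inside a fixed partial flag of $\F_q^n$, the number of intermediate subspaces of a prescribed dimension is a power of $q$ times a $q$-integer $[m]_q$, so this descending-chain count is automatically a polynomial in $q$ already carrying the $q$-weights that occur in the identities of this paper.

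\textbf{Step 2: a recurrence, and the match.} Put $D_{n,j}(q):=\dim\rh_{n-2}(I_{n,j}(q))$ and $f_n(t):=\sum_{j\ge 1}D_{n,j}(q)\,t^{j-1}$. Splitting a descending maximal chain of $\widehat I_{n,j}(q)$ at the first rank where the $C_n$-coordinate catches up with the subspace-coordinate expresses $\widehat I_{n,j}(q)$ in terms of a smaller $\widehat I_{n-m,i}(q)$ together with a marked configuration of subspaces of $\F_q^n$ whose number is a Gaussian binomial coefficient times the descending-chain count of a rank-$m$ interval of a subspace lattice; this should yield a recurrence of the shape
$$f_n(t)=q^{{n\choose 2}}+\sum_{m=2}^{n}\left[\begin{array}{c}n\\m\end{array}\right]_q q^{{m\choose 2}}\,t\,[m-1]_t\,f_{n-m}(t).$$
On the other hand, setting $r=1$ and then replacing $t$ by $qt$ in Corollary~\ref{comajcor} gives $\sum_{n\ge 0}A_n^{\comaj,\exc}(q,qt)\,z^n/[n]_q!=(1-t)\Exp_q(z)/(\Exp_q(zt)-t\Exp_q(z))$, and since $\Exp_q(zt)-t\Exp_q(z)=(1-t)\bigl(1-\sum_{m\ge 2}q^{{m\choose 2}}t[m-1]_t\,z^m/[m]_q!\bigr)$, the polynomials $A_n^{\comaj,\exc}(q,qt)=\sum_{j\ge 1}\bigl(\sum_{\exc(\s)=j-1}q^{\comaj(\s)+j-1}\bigr)t^{j-1}$ satisfy exactly the same recurrence, with the same value $1$ at $n=0$ and $n=1$. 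Induction on $n$ then gives $D_{n,j}(q)=\sum_{\exc(\s)=j-1}q^{\comaj(\s)+j-1}$, which is the assertion.

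\textbf{The main obstacle.} The hard part will be Step~1 together with the bookkeeping in Step~2. Establishing the EL-labeling (or a recursive atom ordering) of the relevant Rees product interval --- so that the homology reduces cleanly to a descending-chain count --- is the principal technical point, since the interval is not pure and the Rees product construction must be tracked carefully. And in Step~2 one must follow the subspace counts precisely enough to see that it is the \emph{comajor} index, rather than the major index, together with the extra factor $q^{j-1}$, that emerges. This is where the Eulerian quasisymmetric machinery of the present paper is used: once the descending-chain count is rewritten as a sum over permutations --- via the same correspondence underlying the $q=1$ statement Theorem~\ref{introhomol}, now decorated with the flag-counting weights --- identifying the resulting $q$-statistic with $\comaj(\s)+j-1$ is equivalent to the comajor form of Theorem~\ref{introsymgenth}.
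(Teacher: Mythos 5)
The statement is attributed to \cite{sw2}, and the present paper contains no proof of it at all --- it is simply cited. So there is nothing in this paper to compare your argument against; what the paper supplies to \cite{sw2} is the generating-function machinery (Theorem~\ref{introsymgenth} and its corollaries), not a proof of Theorem~\ref{intohomolq} itself.

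Taken on its own merits, your sketch has two genuine gaps. The algebraic bookkeeping in Step~2 is fine: one does have
\[
\Exp_q(zt)-t\Exp_q(z)
=(1-t)\Bigl(1-\sum_{m\ge 2}q^{\binom m2}\,t[m-1]_t\,\tfrac{z^m}{[m]_q!}\Bigr),
\]
so the recurrence you display is exactly the one satisfied by $A_n^{\comaj,\exc}(q,qt)$, and the $t^{j-1}$ coefficient is exactly $\sum_{\exc(\s)=j-1}q^{\comaj(\s)+j-1}$. But the two claims that actually carry the proof --- (i) that $\widehat I_{n,j}(q)$ admits an EL-labeling so that $\dim\rh_{n-2}$ is a descending-chain count, and (ii) that splitting a descending chain at the first rank where the $C_n$-coordinate ``catches up'' produces precisely the stated recurrence, with precisely the Gaussian-binomial and $q^{\binom m2}$ weights --- are both left as ``would show'' / ``should yield.'' These are the hard parts, not the GF manipulation. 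Also, \cite{bw} proves that Rees products of Cohen--Macaulay posets with chains are Cohen--Macaulay; it does not furnish an EL-labeling, and you cannot invoke it ``in spirit'' to get one. The shellability route is the subject of the separate, at-the-time-unfinished paper \cite{sw3}. In \cite{sw2} the reduction from homology to a combinatorial count goes through Cohen--Macaulayness and a M\"obius-function/Euler-characteristic computation rather than an explicit shelling and descending-chain count, so even granting your gaps, your Step~1 is a different (and less economical, given that CM-ness is already available from \cite{bw}) route than the one used to establish the cited theorem.
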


As a consequence, in \cite{sw2} we obtain the following $q$-analog of the
Bj\"orner-Welker-Jonsson result:\begin{equation*} \dim \tilde
H_{n-1}((B_n(q)\setminus\{0\}) \ast C_n)= \sum_{\sigma \in \mathcal D_n} q^{\comaj(\s) + \exc(\s)},\end{equation*} where $\mathcal
D_n$ is the set of derangements in $\S_n$.
In \cite{sw2}  we also   derive  type BC analogs (in the sense of Coxeter groups) of the Bj\"orner-Welker-Jonsson derangement  result and our $q$-analog.

\section{New developments} \label{newdevsec}
Since a preliminary version of this paper  was first circulated, there have been a number of interesting developments.   Foata and Han \cite{FH2} obtained  a result which generalizes Corollary~\ref{foha} and also reduces to a  type B version of Corollary~\ref{foha}.   Hyatt \cite{hy1}, \cite{hy} extended our notion of bicolored necklaces to multicolored necklaces, enabling him to obtain a generalization of    Theorem~\ref{introsymgenth}, which also reduces to a type B version of  Theorem~\ref{introsymgenth} and also specializes to the Foata-Han result and to another type B analog of Corollary~\ref{foha}.  

Foata and Han \cite{FH3} use the results of this paper  to study a $q$-analog of the secant and tangent numbers. 

Sagan, Shareshian and Wachs \cite{ssw} prove Conjecture~\ref{cvalcon} and use it and other results of this paper to show that the polynomial $A_\lambda^{\maj,\exc}(\omega,\omega^{-1}t)$ has positive integer coefficients if $\omega$ is any $|
\lambda|$th root of unity.  They give these integers  a combinatorial interpretation in the framework of the cyclic sieving phenomenon of Reiner, Stanton and White \cite{rsw}.

Shareshian and Wachs \cite{sw4} introduce and initiate a study of a class of quasisymmetric functions which contain  the Eulerian quasisymmetric functions $ Q_{n,j}$ and refine the chromatic symmetric functions of Stanley \cite{st4}.   One outcome of this study is the result that the joint distribution of $\maj$ and $\exc$ on $\sg_n$ is equidistributed with the joint distribution of a permutation statistic of Rawlings \cite{ra1} and $\des$.

The Schur positivity and unimodality conjectures of Section 5.2 (Conjectures 5.11 and 5.14) were recently proved in joint work with Anthony Henderson.  These results will appear in a forthcoming paper.  

\section*{Acknowledgements}
The research presented here began while both authors were visiting the Mittag-Leffler Institute as participants in a combinatorics program organized by Anders Bj\"orner and Richard Stanley.  We thank the organizers for inviting us to participate and the staff of the
Institute for their  hospitality and support.  We are also grateful to Ira Gessel for some very useful
discussions.

\end{document}